\documentclass[11pt]{amsart}
\usepackage{amsfonts,latexsym,amsthm,amssymb,amsmath,amscd,euscript,tikz, tikz-cd}
\usepackage[alphabetic, msc-links, bibtex-style, nobysame]{amsrefs}
\usepackage{stackengine}
\usepackage{framed}
\usepackage{xfrac}
\usepackage[makeroom]{cancel}
\usepackage{faktor}
\usepackage{braket}
\usepackage{pgf,tikz,pgfplots}
\pgfplotsset{compat=1.17}
\usepgfplotslibrary{fillbetween} 
\usepackage{mathrsfs}
\usetikzlibrary{arrows}
\definecolor{wrwrwr}{rgb}{0.3803921568627451,0.3803921568627451,0.3803921568627451}
\definecolor{rvwvcq}{rgb}{0.08235294117647059,0.396078431372549,0.7529411764705882}
\definecolor{mblue}{rgb}{0.2, 0.3, 0.8}
\definecolor{morange}{rgb}{1, 0.5, 0}
\definecolor{mgreen}{rgb}{0.1, 0.4, 0.2}
\definecolor{mred}{rgb}{0.5, 0, 0}
\definecolor{ForestGreen}{RGB}{34,139,34}
\usepackage{hyperref}
\hypersetup{colorlinks=true,citecolor=ForestGreen,linkcolor = blue,urlcolor =black,linkbordercolor={1 0 0}}
\usepackage{float}

\numberwithin{equation}{section}

\usepackage{lmodern}
\usepackage{supertabular}
\usepackage{verbatim}
\usepackage{amssymb}
\usepackage{enumerate}
\usepackage{stmaryrd}
\usepackage{bbm}
\usepackage{mathtools}
\usepackage[nopatch=footnote]{microtype}
\usepackage{setspace}
\usepackage{tikz,tikz-cd}
\usetikzlibrary{matrix,calc,positioning,arrows,decorations.pathreplacing,patterns,knots}

\newcommand{\la}{\langle}
\newcommand{\rg}{\rangle}

\newtheorem{theorem}{{Theorem}}[section]
\newtheorem*{theorem*}{Theorem}
\newtheorem{lemma}[theorem]{Lemma}
\newtheorem{proposition}[theorem]{Proposition}

\newtheorem*{corollary*}{Corollary}

\theoremstyle{definition}
\newtheorem{definition}{Definition}
\newtheorem{remark}{Remark}

\usepackage{lmodern,url,enumerate,mathtools,microtype}
\usepackage[hmargin = 1in,vmargin=1in]{geometry}
\usepackage{graphicx}
\usepackage{subcaption}

\newcommand{\ve}{\varepsilon}

\newcommand{\mr}[1]{{\rm #1}}
\newcommand{\cA}{\mathcal{A}}

\newcommand{\cH}{\mathcal{H}}
\newcommand{\cJ}{\mathcal{J}}
\newcommand{\cL}{\mathcal{L}}
\newcommand{\cM}{\mathcal{M}}
\newcommand{\cO}{\mathcal{O}}\newcommand{\cP}{\mathcal{P}}

\newcommand{\cW}{\mathcal{W}}

\newcommand{\bC}{\mathbb{C}}\newcommand{\bD}{\mathbb{D}}
\newcommand{\bF}{\mathbb{F}}
\newcommand{\bG}{\mathbb{G}}\newcommand{\bH}{\mathbb{H}}

\newcommand{\bN}{\mathbb{N}}
\newcommand{\bO}{\mathbb{O}}\newcommand{\bP}{\mathbb{P}}
\newcommand{\bQ}{\mathbb{Q}}\newcommand{\bR}{\mathbb{R}}
\newcommand{\bS}{\mathbb{S}}\newcommand{\bT}{\mathbb{T}}
\newcommand{\bV}{\mathbb{V}}

\newcommand{\bZ}{\mathbb{Z}}

\newcommand{\fg}{\mathfrak{g}}

\newcommand{\fk}{\mathfrak{k}}

\newcommand{\fp}{\mathfrak{p}}

\newcommand{\nc}{\newcommand}

\nc{\on}{\operatorname}
\nc{\p}{\partial}
\nc{\ol}{\overline}
\nc{\ul}{\underline}
\nc{\pa}{\partial}

\nc{\pb}{\partial_b}
\nc{\pc}{\partial_c}
\nc{\pd}{\partial_d}
\nc{\pe}{\partial_e}
\nc{\pf}{\partial_f}
\nc{\pg}{\partial_g}
\nc{\ph}{\partial_h}
\nc{\pari}{\partial_i}
\nc{\pj}{\partial_j}
\nc{\pk}{\partial_k}
\nc{\pl}{\partial_l}
\nc{\pell}{\partial_\ell}
\nc{\parm}{\partial_m}
\nc{\pn}{\partial_n}
\nc{\po}{\partial_o}
\nc{\pp}{\partial_p}
\nc{\pq}{\partial_q}
\nc{\pr}{\partial_r}
\nc{\ps}{\partial_s}
\nc{\pt}{\partial_t}
\nc{\pu}{\partial_u}
\nc{\pv}{\partial_v}
\nc{\pw}{\partial_w}
\nc{\px}{\partial_x}
\nc{\py}{\partial_y}
\nc{\pz}{\partial_z}

\nc{\Spec}{\on{Spec}}

\nc{\sn}{\mr{sn}}
\nc{\cn}{\mr{cn}}
\nc{\dn}{\mr{dn}}

\nc{\thru}{~\!\!--~\!\!}

\numberwithin{equation}{section}
\makeatletter
\@addtoreset{equation}{section}
\makeatother

\allowdisplaybreaks

\title[Topology of minimal surfaces in the sphere from capillarity]{Topology of minimal surfaces in the sphere from capillarity}
\date{\today}

\author[Benjy Firester]{Benjy Firester}
\address{Department of Mathematics, MIT \newline 
{\href{mailto:benjyfir@mit.edu}{benjyfir@mit.edu}}}
\author[Raphael Tsiamis]{Raphael Tsiamis}
\address{Department of Mathematics, Columbia University \newline
{\href{mailto:r.tsiamis@columbia.edu}{r.tsiamis@columbia.edu}}}

\begin{document}

\begin{abstract}
We present a general construction of embedded minimal and constant mean curvature surfaces in $\mathbb{S}^n$ and one-phase free boundaries joined by a smooth interpolation by capillary hypersurfaces. This framework recovers all known families and produces new minimal surfaces in the sphere with rich topological structures as sphere bundles over base spaces which include space-form products, projective planes over division algebras, Stiefel manifolds, complex quadrics, and twisted products and quotients of Lie subgroups of $\textup{SO}(n)$. We show these bundles are non-trivial and study their homotopy types using topological obstructions, including characteristic classes and tools from $K$-theory and stable homotopy theory. Finally, we prove uniqueness results for the rotationally invariant capillary CMC problem. 
\end{abstract}

\maketitle

\vspace{-0.5cm}

\section{Introduction} 

We present a general framework to construct minimal and constant mean curvature (CMC) surfaces in the sphere that belong to smooth families of capillary surfaces and exhibit rich topologies.
For small capillary angles, these interpolating families converge to the novel one-phase singularity models constructed in the companion paper~\cite{one-phase-isoparametric}.
The resulting minimal hypersurfaces are realized as sphere bundles over distinguished high codimension submanifolds in the sphere, which include projective planes over the division algebras, Stiefel manifolds, complex quadrics, and homogeneous symmetric spaces. 
Our construction employs a free boundary ODE, producing effectively computable solutions with precise quantitative descriptions while displaying significant topological complexity.
Finally, we prove uniqueness properties for the capillary CMC problem.

The new surfaces we construct address key questions in topology and minimal surface theory.
Distinguishing fiber bundles is a fundamental and difficult problem in topology that has influenced the creation of several disciplines and techniques, including $K$-theory, homotopy theory, and spectral sequences. 
Major accomplishments, from Milnor’s discovery of exotic spheres as $\bS^3$-bundles over $\bS^4$~\cites{james-whitehead, milnor} to the James–Whitehead classification of sphere bundles by homotopy-theoretic methods, have far-reaching applications outside topology.
The geometry of sphere bundles has received contributions from Chern, do Carmo, Kobayashi, Hsiang, Wallach, and other mathematicians, with recent results advancing this topic further~\cites{ chern-spanier , kobayashi , wallach-docarmo , kassel}.
A central question is the embeddability of sphere bundles into homogeneous spaces, notably the round sphere $\bS^n$, as well as the realization of Lie group and homogeneous space products as bundles; notably, Hsiang-Szczarba studied the embedding problem for sphere bundles over spheres~\cite{hsiang-embeddability}.
In our setting, the base manifolds exhibit a wealth of topological behaviors, and the question of realizing sphere bundles as minimal or CMC hypersurfaces becomes significantly more challenging.
We construct such embeddings for general families of sphere bundles using foliations of the sphere and identify their diffeomorphism through topological ideas such as characteristic classes, cohomology operations, $K$-theory, and methods from stable homotopy theory, including the Adams $J$-homomorphism.

The study of minimal surfaces in the sphere, equivalently minimal cones, is a classical topic in geometric analysis and the regularity theory of area-minimizing submanifolds.
A fundamental problem, going back to Hsiang-Lawson~\cite{hsiang-lawson-jdg}*{Ch.~III \S~3} and Hsiang-Hsiang~\cite{hsiang-hsiang}*{Problem~3}, is to understand the landscape of $G$-invariant minimal hypersurfaces in the sphere.
Our framework advances this program beyond the low cohomogeneity context by producing a rich package of minimal, capillary, and one-phase geometries associated with foliations of the sphere.

Our construction utilizes the theory of isoparametric hypersurfaces in $\bS^n$, namely hypersurfaces with constant principal curvatures.
Isoparametric foliations greatly generalize the equivariant framework proposed by Hsiang and Lawson~\cite{hsiang-lawson-jdg}, and their classification has been an important problem in differential geometry with contributions of many authors~\cites{Cartan1939 , dorfmeister-neher , grove-halperin , stolz, thorbergsson, cecil-chi-jensen , chi-jdg-III, chi-jdg-IV , miyaoka , siffert-AGAG }.
Every isoparametric foliation of the sphere comes with two distinguished focal submanifolds $M_1, M_2$ of codimensions $m_1+1$ and $m_2+1$, together with a family of regular leaves $M_s$ between them having $g$ distinct principal curvatures, where $g \in \{1,2,3,4, 6\}$.
Our theorems construct minimal and CMC surfaces as sphere bundles over the focal as well as the regular leaves, leading to a wide range of topological types.
In Section~\ref{subsec:isoparametric}, we provide further background on isoparametric foliations and their topology.

\begin{theorem}\label{thm:new-minimal-surfaces}
    For any isoparametric hypersurface $M \subset \bS^{n-1}$ with at least two distinct principal curvatures, let $M_1, M_2$ be the associated focal manifolds.
    There exist three closed embedded minimal surfaces $\mathbf{S}_{M_1}, \mathbf{S}_{M_2}, \bar{\mathbf{S}}_M \subset \bS^n$ of two classes with the following properties.
    \begin{enumerate}
        \item[\textup{I}.] The surfaces $\mathbf{S}_{M_i}$ are minimal embeddings of the sphere bundle $S(\nu_{M_i} \oplus \mathbf{1})$ over $M_i$, where $\nu_{M_i} \to M_i$ denotes the rank $m_i+1$ normal bundle of the embedding $M_i \hookrightarrow \bS^{n-1}$.
        \item[\textup{II}.] The surface $\bar{\mathbf{S}}_M$ is a minimal embedding of $M \times \bS^1$ in the sphere $\bS^n$.
    \end{enumerate}
\end{theorem}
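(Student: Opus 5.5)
We reduce the construction to an ODE problem in cohomogeneity two, using the isoparametric structure. Write $\bS^n \subset \bR^{n+1} = \bR^n \oplus \bR$ and points as $(\cos\phi\, x, \sin\phi)$ with $x \in \bS^{n-1}$ and $\phi \in (-\pi/2,\pi/2)$. Let $M_s$, $s \in [0, \pi/g]$, be the isoparametric family in $\bS^{n-1}$, parametrized by the distance $s$ to $M_1$, so that $M_0 = M_1$ and $M_{\pi/g}=M_2$. Hypersurfaces of $\bS^n$ that are unions of $(\phi,s)$-slices, i.e. of the form $\{(\cos\phi\,x,\sin\phi): x\in M_{s},\ (s,\phi)\in\gamma\}$ for a curve $\gamma$ in the orbit space $Q=[0,\pi/g]\times[-\pi/2,\pi/2]$, have mean curvature determined by $\gamma$ alone. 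This holds because the mean curvature of $M_s$ is the explicit function $H(s)=\tfrac{g}{2}\big(m_1\cot(\tfrac{g s}{2})-m_2\tan(\tfrac{g s}{2})\big)$ up to normalization. The area functional becomes a weighted length
\[
L(\gamma)=\int_\gamma (\cos\phi)^{n-2}\,\big(\sin(\tfrac{g s}{2})\big)^{m_1}\big(\cos(\tfrac{g s}{2})\big)^{m_2}\,d\ell
\]
with respect to the metric $d\phi^2+\cos^2\!\phi\, ds^2$. Minimal hypersurfaces therefore correspond to geodesics of the conformally changed metric on $Q$. A hypersurface closes up smoothly in one of two ways: the curve meets the boundary edge $s=0$ or $s=\pi/g$ orthogonally, where the fiber $M_s$ collapses to $M_i$, or the curve is a closed curve in the interior.

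For class I, we look for a geodesic arc $\gamma$ that starts orthogonally on the edge $\{s=0\}$ at a point $(0,\phi_0)$ and ends orthogonally on the same edge at $(0,-\phi_0)$. By the reflection symmetry $\phi\mapsto-\phi$, it suffices to shoot from $(0,\phi_0)$ and require that $\gamma$ cross $\{\phi=0\}$ orthogonally. Topologically, the preimage of such an arc is as follows. Over each $x$-direction of the normal disk to $M_1$, the arc sweeps out a hemisphere glued to its mirror image, so the total space is the unit sphere bundle of $\nu_{M_1}\oplus\mathbf 1$. Indeed, near $s=0$ the points are $\exp_p(s\,v)$ with $v\in S(\nu_p)$, together with the $\phi$-coordinate, and an arc from $\phi_0$ to $-\phi_0$ with $s$ vanishing exactly at the endpoints yields an $\bS^{m_1+1}$ fiber. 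The analogous construction at $s=\pi/g$ gives $\mathbf S_{M_2}$.

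For class II, we look for a curve symmetric under $\phi\mapsto-\phi$ that crosses $\{\phi=0\}$ orthogonally at two points and avoids the edges. The preimage is then $M\times \bS^1$. Alternatively, and more simply, we take the closed geodesic given by the fixed point of a second reflection: the curve $s=s_*$ with $H(s_*)$ chosen so that $\{s=s_*\}$ is totally geodesic for the weighted metric. This is a fixed-$s$ rotation surface, a join-type product $M_{s_*}\times\bS^1$ after reparametrization. I would first check whether the vertical line $s=s_*$, where $s_*$ is the minimal isoparametric leaf, is a geodesic. It is, being fixed by the reflection $s\mapsto \pi/g-s$ when $m_1=m_2$; in general, the zero set of the $s$-derivative of the weight is a geodesic. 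This gives the embedded suspension-type surface $\bar{\mathbf S}_M$ without a shooting argument.

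The main obstacle is the class I shooting problem: proving existence of $\phi_0$ for which the geodesic orthogonal to the edge hits $\{\phi=0\}$ orthogonally, while the arc remains embedded and stays away from the other edge and the poles $\phi=\pm\pi/2$. I plan a continuity and intermediate value argument on the angle $\theta(\phi_0)$ at which the geodesic first meets $\{\phi=0\}$. As $\phi_0\to 0$, linearization along the edge should show the geodesic bends past orthogonal. As $\phi_0\to\pi/2$, comparison with the singular solution near the pole, following the companion paper's capillary interpolation with small angle limit, should give the opposite sign. Embeddedness would follow from a monotonicity of $\phi$ along the arc, obtained from a first integral or Sturm comparison for the geodesic ODE. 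The singular starting point at the edge needs a standard power-series or fixed-point argument for the initial value problem with $\dot s$ forced and the $m_1\cot$ term singular.
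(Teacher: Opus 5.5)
Your class I set-up is the paper's Type I shooting in different coordinates. Here $\tan\phi_0$ is the initial height $a=f(0)$ of the graph profile, and meeting $\{\phi=0\}$ orthogonally means $f'=-\infty$ at the zero. The gap is in the two endpoint behaviours: you have one backwards, and the other is the heart of the theorem.

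\emph{The small-height end is reversed.} As $\phi_0\to0$, linearizing about the totally geodesic equator gives $F$-invariant Jacobi fields, i.e.\ solutions of $\cL_M f=0$. The one with $f(0)=1$, $f'(0)=0$ is the hypergeometric function~\eqref{eqn:hypergeometric-isoparametric}, which has a simple zero $t_M\in(0,1)$. So small shots cross $\{\phi=0\}$ transversally at an angle of order $\phi_0$: nearly tangent, not past orthogonal. This is the one-phase end of the family.

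\emph{The large-height end is not proved.} You must show that for large $\phi_0$ the profile turns vertical ($f'\to-\infty$ with $f>0$) before reaching the equator. ``Comparison with the singular solution near the pole'' does not give this. For $g=1$ every shot $f_a=a(1-2t^2)$ hits the equator at angle $\arctan a<\tfrac{\pi}{2}$, so the claim genuinely needs $g\ge 2$. In the paper this is Proposition~\ref{prop:dont-shoot-too-high}, proved as follows:
\begin{itemize}
\item The Lyapunov quantity $\Psi=f\bigl(f-\tfrac g2A_Mf'\bigr)-\tfrac1{n-2}$ has an invariant region (Proposition~\ref{prop:Psi-one-sided}, via quadratic discriminant estimates valid for $g\ge3$; the case $g=2$ is explicit). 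This rules out a zero once $f>(n-2)^{-1/2}$ at the minimal leaf $t=\sqrt{m_1/(m_1+m_2)}$, where $f'\le0$ for these shots.
\item The needed lower bound on $f_a$ at that leaf for large $a$ comes from one of two routes. If $(n-1)^2<4g(n-2)$, it follows by comparison with the $\lambda\to\infty$ profile (Lemma~\ref{lemma:f-infty-analysis}). Otherwise it follows from a blow-up at the minimal leaf to~\eqref{eqn:phi-limit-equation} and the hypergeometric asymptotics of the inverse $r(y)$.
\end{itemize}
Also, for large $\phi_0$ the ``first crossing angle'' need not exist. Run the continuity step as the paper does: take $a^*=\sup\{a: f_a\text{ reaches zero}\}$, which is finite, and use Lemma~\ref{lemma:convergence-of-solutions} to see that $f_{a^*}$ reaches zero with infinite slope.

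Your alternative class II construction does not work. In $Q$ with metric $d\phi^2+\cos^2\phi\,ds^2$, the edges $\phi=\pm\tfrac{\pi}{2}$ are single points, the poles $(0,\pm1)$, where every leaf collapses. So $Q$ is a lune, and the meridian $s=s_*$ is not a closed geodesic but an arc joining its two vertices. It is a geodesic of the weighted metric when $M_{s_*}$ is minimal. However, its preimage $\{(\cos\phi\,x,\sin\phi):x\in M_{s_*}\}=\bS^n\cap\bigl(C(M_{s_*})\times\bR\bigr)$ is the spherical suspension of $M_{s_*}$. It is minimal but has conical singularities at $(0,\pm1)$, and it is homeomorphic to the suspension of $M$, not to $M\times\bS^1$. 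Your account of how curves in $Q$ close up overlooks the vertices, where curves produce singular points.

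What you need is the curve you first describe: symmetric under $\phi\mapsto-\phi$ and meeting $\{\phi=0\}$ orthogonally at two interior points. The proposal gives no existence argument for it. The paper obtains it by a separate shooting in the position $t_1$ of the first free-boundary point, with vertical initial slope:
\begin{itemize}
\item For $t_1$ near $0$, the profile returns to the equator with finite slope (Proposition~\ref{prop:window-near-zero}).
\item For $t_1$ within $\bar\ve_M$ of $\sqrt{m_1/(m_1+m_2)}$, it never returns (Proposition~\ref{prop:small-breather}). This uses a blow-up to~\eqref{eqn:phi-limit-equation} with data $y(-1)=0$, $y'(-1)=\infty$, and a Riccati comparison for $L(\tau)$.
\item Lemma~\ref{lemma:convergence-of-solutions} then gives a $t_*$ whose second zero also has infinite slope.
\end{itemize}
Doubling this profile gives $\bar{\mathbf S}_M\cong M\times\bS^1$.
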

In Section~\ref{section:topology}, we characterize the topology of the surfaces $\mathbf{S}_{M_i}$ and $\bar{\mathbf{S}}_M \cong M \times \bS^1$.
For the sphere bundles $\mathbf{S}_{M_i}$, we obtain the following complete classification result.

\begin{theorem}\label{thm:non-triviality-theorem}
    The hypersurfaces $\mathbf{S}_{M_i} \subset \bS^n$ exhibit the following product and non-product behaviors:
    \begin{enumerate}[(i)]
        \item If $g \in \{1,2\}$, then $\mathbf{S}_{M_i}$ is a Clifford torus $\bS^{n-k-1} \times \bS^k$ for $1 \leq k \leq n-2$.
        \item If $g \geq 3$ and $M$ is not an OT-FKM isoparametric hypersurface, then $\mathbf{S}_{M_i}$ is not stably homotopy equivalent to a product $M_i \times \bS^r$. 
        In particular, it is not homotopy equivalent to any $M_i \times \bS^r$.
        \item Finally, if $M$ is an OT-FKM hypersurface with multiplicities $(m_1, m_2)$, then $\mathbf{S}_{M_1} \cong M_1 \times \bS^{m_1+1}$ always.
        For $M_2$, either $\mathbf{S}_{M_2}$ is not homotopy equivalent to any product $M_2 \times \bS^r$, or $M_2 \cong \bS^{m_1} \times \bS^{m_1 + m_2}$ and $\mathbf{S}_{M_2} \cong \bS^{m_1} \times \bS^{m_1+m_2} \times \bS^{m_2+1}$ as smooth sphere bundles, when
\[
(m_1,m_2)\in\{ (1, 2d) , (2d,1) , (2,5),(5,2),(3,4) , (4,3) \}, \qquad d \in \bN^*.
\]
        \end{enumerate}
    Furthermore, the homotopy types of the minimal surfaces $\mathbf{S}_{M_i}$ are characterized as follows:
    \begin{enumerate}[$(a)$]
        \item  For $g = 3$, the sphere bundles $\mathbf{S}_{\bF} := \mathbf{S}_{M_1} \cong \mathbf{S}_{M_2}$ are twisted products of Lie groups.
    In particular, $\mathbf{S}_{\bR} \cong \bS^2 \times \bS^2 / \sigma$ is the quotient space of an involution $\sigma$.
    \item 
    For isoparametric foliations with $g \in \{ 4, 6 \}$ except for the one coming from the isotropy representation of $G_2 / \textup{SO}(4)$, the minimal surfaces $\mathbf{S}_{M_1}$ and $\mathbf{S}_{M_2}$ are not homotopy equivalent.
    In the latter case, the surfaces $\mathbf{S}_{M_1}$ and $\mathbf{S}_{M_2}$ are diffeomorphic but not isometric.
    \end{enumerate}
\end{theorem}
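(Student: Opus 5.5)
The plan is to reduce everything to the topology of the rank-$(m_i+2)$ vector bundle $E_i := \nu_{M_i}\oplus\mathbf{1}$ over $M_i$. By Theorem~\ref{thm:new-minimal-surfaces}, $\mathbf{S}_{M_i} = S(E_i)$. A sphere bundle $S(E)$ with a trivial summand is a product exactly when $E$ is trivial. Moreover, the stable homotopy type of the Thom space detects the stable fiber-homotopy class of $E$ via the $J$-homomorphism.

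Since $M_i\subset\bS^{n-1}\subset\bR^n$, we have $TM_i\oplus\nu_{M_i}\oplus\mathbf{1}\cong\mathbf{n}$. Hence $E_i$ is stably the negative of $TM_i$, and triviality questions for $E_i$ become questions about the stable tangent bundle of the focal manifold.

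\textbf{Case (i).} For $g\in\{1,2\}$ the focal manifolds are great spheres $\bS^k\subset\bS^{n-1}$. Their normal bundles, plus a trivial line, are trivial, so $S(E_i)\cong\bS^k\times\bS^{n-k-1}$. Tracking the explicit equivariant construction should identify the embedding as a Clifford torus.

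\textbf{Case (ii).} For $g\ge 3$ non-OT-FKM, the foliations are homogeneous, coming from isotropy representations of rank-two symmetric spaces, together with the $g=3$ Cartan examples. The focal manifolds are explicit homogeneous spaces: projective planes $\bF P^2$, flag-type manifolds, complex quadrics, and the $G_2$ examples.

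For each I would compute a nonvanishing characteristic invariant of $E_i\cong -TM_i$ stably. Candidates are a Stiefel–Whitney class via Wu's formula, or a Pontryagin class, for example $p_1(\bH P^2)\neq 0$. If needed, I would use a $K$-theoretic or $J$-invariant obstruction when ordinary classes vanish.

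A nonzero stable class rules out stable fiber-homotopy triviality. That class is pulled back through the projection and detected by Steenrod operations on the Thom space, or by the cohomology ring of $S(E_i)$ through the Gysin sequence. This obstructs a stable homotopy equivalence with $M_i\times\bS^r$, whose stable splitting is a wedge of suspensions of $M_i$.

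\textbf{Case (iii), OT-FKM.} Here $M_1$ is a Stiefel-type manifold whose normal bundle is trivialized by the Clifford system $P_0,\dots,P_{m_1}$. The maps $x\mapsto P_jx$ give explicit normal frames, which yields $\mathbf{S}_{M_1}\cong M_1\times\bS^{m_1+1}$.

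For $M_2$, the normal bundle is related to the tautological bundle over a sphere fibration. Triviality of $E_2$ reduces to whether a certain clutching element in $\pi_{*}(\mathrm{SO})$ vanishes. By Bott periodicity and the Adams $J$-homomorphism computation, this happens exactly in the listed $(m_1,m_2)$ cases, where also $M_2\cong\bS^{m_1}\times\bS^{m_1+m_2}$ by parallelizability-type arguments. Otherwise the image of $J$ is nonzero, and this obstructs a homotopy equivalence with a product.

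\textbf{Parts (a) and (b).} For (a), with $g=3$, $M_1\cong M_2\cong\bF P^2$ are congruent. I would write $S(E)$ as a homogeneous quotient $G/H'$ of the isotropy group, giving twisted products of Lie groups. For $\bR$, I would identify $S(\nu\oplus\mathbf1)$ over $\bR P^2$ with the quotient of $\bS^2\times\bS^2$ by the antipodal-swap involution.

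For (b), I would compare cohomology: Betti numbers via Gysin, or the mod-2 cohomology rings. For $G_2/\mathrm{SO}(4)$ the two focal manifolds are diffeomorphic, which gives the diffeomorphism of the bundles. Non-isometry should follow from distinct induced metrics, for instance different volumes or different second fundamental form data.

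The main obstacle is case (iii) for $M_2$. It requires precise identification of the clutching class and the $J$-homomorphism computation across all Clifford system types, including the definite/indefinite ambiguity when $m_1\equiv 0 \pmod 4$.
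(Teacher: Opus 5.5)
Your proposal has a genuine gap. It sits at the step that should turn information about the bundle $E_i=\nu_{M_i}\oplus\mathbf 1$ into an obstruction to an \emph{abstract} (stable) homotopy equivalence $\mathbf S_{M_i}\simeq M_i\times\bS^r$. Such an equivalence need not cover $\mathrm{id}_{M_i}$. It also need not respect the splitting $\Sigma^\infty(\mathbf S_{M_i})_+\simeq\Sigma^\infty(M_i)_+\vee\Sigma^\infty\mathrm{Th}(\nu_{M_i})$. So nontriviality of $E_i$ alone proves nothing, and several links in your chain fail as stated:
\begin{itemize}
\item ``$S(E)$ is a product iff $E$ is trivial'' is about bundle isomorphism, not homotopy type.
\item A nonzero class in $\widetilde{KO}(M_i)$ does not rule out stable fiber-homotopy triviality, since it may lie in $\ker J$.
\item $p_1\neq 0$ is not a homotopy invariant.
\end{itemize}
Detection by Steenrod operations on the fiber class $u$ does work in some cases: $Sq^m u=w_m(\nu)u\neq0$ over $\bF\bP^2$, $Sq^2u\neq 0$ over $Q^3$, and $\mathcal P^1u\neq0$ mod $3$ over $\bC\bP^3$ (where $w(T\bC\bP^3)=1$).

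It fails for $M_1\cong\mathrm{SU}(6)/\mathrm{Sp}(3)$ in the $(4,4,5)$ family. There $H^*(M_1)$ is torsion-free and concentrated in degrees $0,5,9,14$, $w(TM_1)=1$, and all mod-$p$ Wu classes vanish. Hence $H^*(\mathbf S_{M_1};\bF_p)\cong H^*(M_1\times\bS^5;\bF_p)$ as modules over the Steenrod algebra, and the obstruction lives only in $\widetilde{KO}^0(M_1)\cong\bZ_2$ in filtration $9$. Your fallback of a ``$K$-theoretic or $J$ obstruction'' would give $J(E_1)\neq0$, but you have no way to transfer it to the total space.

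The missing idea is Lemma~\ref{lemma:obstructions-to-triviality}. Since $\mathbf S_{M_i}$ is a closed hypersurface of $\bS^n$, it is stably parallelizable. This is a property of the total space, which the paper carries across the stable equivalence to get $J(T(M_i\times\bS^r))=0$, and then $J(TM_i)=0$ by pulling back along a section. Proposition~\ref{prop:all-nontrivial-isoparametric} then shows $J(TM_i)\neq0$ with concrete detectors:
\begin{itemize}
\item $w_1$;
\item $w_2$, or odd $c_1$, on a spherical class;
\item $p_1/2\not\equiv 0\pmod{24}$ on a spherical class;
\item Atiyah--Todd for $\bC\bP^3$;
\item the $KO$ Atiyah--Hirzebruch computation plus injectivity of $J$ on $\pi_8(O)$ for $\mathrm{SU}(6)/\mathrm{Sp}(3)$.
\end{itemize}

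For $M_2$ in (iii), your mechanism (clutching class, Bott periodicity, stable $J$) is stable, and stable invariants provably cannot decide the question. The delicate cases are those with $\xi$ trivial and $M_2\cong\bS^m\times\bS^{\ell-1}$, e.g.\ $(m_1,m_2)=(3,8)$ with $M_2\cong\bS^3\times\bS^{11}$. There $M_2$ is parallelizable and $\nu_{M_2}$ is stably trivial. Moreover $\mathbf S_{M_2}\simeq_s M_2\times\bS^{\ell-m}$ by Lemma~\ref{lemma:ot-fkm-family-I}, so the image of $J$ is zero, yet the theorem asserts $\mathbf S_{M_2}\not\simeq M_2\times\bS^r$.

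The paper's argument is unstable:
\begin{itemize}
\item Lemma~\ref{lemma:fiberwise-homotopy-equivalent} (Serre spectral sequence, vanishing Euler class, fiber degree $\pm1$, Dold's theorem) upgrades a homotopy equivalence of total spaces to a fiber homotopy trivialization of $S(\nu_{M_2}\oplus\mathbf 1)$.
\item Fiberwise suspension and restriction to $B\cong\bS^{\ell-1}$, where $\nu_{M_2}|_B\oplus\mathbf 1^{\oplus(m-1)}\cong T\bS^{\ell-1}$, make $S(T\bS^{\ell-1})$ fiber homotopically trivial.
\item Adams/Milnor--Spanier allows this only for $\ell\in\{1,2,4,8\}$.
\item The residual case $\ell=2m=16$, i.e.\ $(8,7)$, is handled by the square-zero-class argument in $H^8$.
\item The infinite family $(1,2d)$ comes from the explicit mapping-torus description when $m=1$.
\end{itemize}
You also need the preliminary reduction to $M_2\cong\bS^m\times\bS^{\ell-1}$ via $w(\xi)$ and the index $q$. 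None of this appears in your sketch.

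Two smaller points. In (b), Betti numbers and mod-$2$ rings do not separate $(g,m)=(6,2)$. From $x^3=2y$ on $Q^5$ and the generators $\tfrac13L^2,\tfrac16L^3,\tfrac1{18}L^4,\tfrac1{18}L^5$ on $G_2/\mathrm U(2)^+$, the two mod-$2$ rings are isomorphic. This persists for the sphere bundles, since $u^2=w_3(\nu)u=0$ in both. You need integral (mod-$3$) information: $x^5$ is twice a generator, while $L^5$ is $18$ times one.

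In (a), $G\times_K\bS^{m+1}$ is not a homogeneous space $G/H'$, because $K$ fixes the poles of $S(\nu_p\oplus\mathbf 1)$. For $\bF=\bR$ you need $\nu\oplus\mathbf 1\cong\eta\oplus\mathbf 1^{\oplus 2}$. This follows from $w_1,w_2$, since rank-$3$ bundles over $\bR\bP^2$ are in the stable range.
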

The results $(ii)$ and $(iii)$ imply that $\mathbf{S}_{M_i}$ are non-trivial as sphere bundles, and are not even homeomorphic to products over $M_i$.
A more precise version of this result, together with the topological methods used to obtain it, is presented in Section~\ref{section:topology}.
We refer the reader to Propositions~\ref{prop:all-nontrivial-isoparametric} and~\ref{prop:trivial-product-bundles-ot-fkm} for the non-trivial homotopy type of the surfaces $\mathbf{S}_{M_i}$, Proposition~\ref{prop:g=3Topology} for the characterization of $\mathbf{S}_{\bF} := \mathbf{S}_{M_i}$ when $g=3$, and Proposition~\ref{prop:surfaces-are-distinct} for the distinct homotopy types of the $\mathbf{S}_{M_i}$.

As exemplified in Table~\ref{table:diffeomorphism-types}, the diffeomorphism types of the surfaces $\mathbf{S}_{M_i},\bar{\mathbf{S}}_{M}$ are very diverse.
Among the Type I examples are twisted products of exceptional Lie groups realized as nontrivial sphere bundles over $\bF\bP^2$ for $\bF \in \{\bR,\bC,\bH,\bO\}$, as well as bundles over Stiefel manifolds, complex quadrics, and the twistor space of $G_2/\textup{SO}(4)$. 
Moreover, the Type II examples are products $M \times \bS^1$, so our framework encompasses multiple constructions which result in genuinely twisted bundle topologies as well as products. 
In some low-dimensional examples, corresponding to~\hyperref[thm:non-triviality-theorem]{Theorem \ref{thm:non-triviality-theorem}$(iii)$}, these bundles decompose into explicit products such as $\bS^3 \times \bS^2 \times \bS^2$ and $\bS^7 \times \bS^6 \times \bS^2$, while in the Veronese and exceptional homogeneous cases the twisting survives and can be detected by characteristic classes and stable homotopy obstructions. 

\begin{table}
\centering
\makebox[\textwidth][c]{
\begin{tabular}{|c|c|c|l|l|}
\hline
$g$ & $(m_1,m_2)$ & $\bS^n$ & Type I: $\mathbf{S}_{M_1}$, $\mathbf{S}_{M_2}$ & Type II: $\bar{\mathbf{S}}_M$ \\
\hline \hline
$1$ & $(n-2,n-2)$ & $\bS^n$ & $\bS^{n-1}$ & $\bS^{n-2} \times \bS^1$ \\
\hline
$2$ & $(p,q)$ & $\bS^{p+q+2}$ & $\bS^p \times \bS^{q+1}$, $\bS^{p+1} \times \bS^q$ & $\bS^p \times \bS^q \times \bS^1$ \\
\hline
$3$ & $(1,1)$ & $\bS^5$ & $\textup{SO}(3) \times_{\textup{O}(2)} \bS^2\,{}^{(\textcolor{blue}{\dagger})}$ & $\textup{SO}(3)/ \bZ_2^{\oplus 2} \times \bS^1$ \\
    & $(2,2)$ & $\bS^8$ & $\textup{SU}(3) \times_{\textup{U}(2)} \bS^3\,{}^{(\textcolor{blue}{\dagger})}$ & $\textup{SU}(3)/\bT^2 \times \bS^1$ \\
    & $(4,4)$ & $\bS^{14}$ & $\textup{Sp}(3) \times_{\textup{Sp}(2)\textup{Sp}(1)} \bS^5\,{}^{(\textcolor{blue}{\dagger})}$ & $\textup{Sp}(3)/\textup{Sp}(1)^3 \times \bS^1$ \\
    & $(8,8)$ & $\bS^{26}$ & $F_4 \times_{\textup{Spin}(9)} \bS^9\,{}^{(\textcolor{blue}{\dagger})}$ & $F_4/\textup{Spin}(8) \times \bS^1$ \\
\hline
$4$ & $(1,2)$ & $\bS^{8}$ & $\bS^3 \times \bS^2 \times \bS^2$, $\bS^3 \times \bS^1 \times \bS^3$ & $\bS^3 \times \bS^2 \times \bS^1 \times \bS^1$ \\
& $(1,6)$ & $\bS^{16}$ & 
$\bS^7 \times \bS^6 \times \bS^2$, $\bS^7 \times \bS^1 \times \bS^7$ & $\bS^7 \times \bS^6 \times \bS^1 \times \bS^1$ \\
 & $(1,2d-2)$ & $\bS^{4d}$ & $\bV(2, 2d) \times \bS^2, \bS^{2d-1} \times \bS^1 \times \bS^{2d-1}$ & $\bV(2,2d)\times \bS^1\times \bS^1$ \\
    & $(2,5)$ & $\bS^{16}$ & $\bS^5 \times \bS^7 \times \bS^3$, $\bS^7 \times \bS^2 \times \bS^6$ & $\bS^5 \times \bS^7 \times \bS^2 \times \bS^1$ \\
    & $(3,4)$ & $\bS^{16}$ & $\bS^4 \times \bS^7 \times \bS^4$, $\bS^7 \times \bS^3 \times \bS^5$ & $\bS^4 \times \bS^7 \times \bS^3 \times \bS^1$ \\
    & $(m, \ell\!-\!m\!-\!1)$ & $\bS^{2\ell}$ & $\bV(2,C_{m-1}) \times \bS^{m+1}$, $\bS^{\ell-m}$-bdl/$(\bS^{k\delta(m)}$-bdl/$\bS^m) \,{}^{(\textcolor{blue}{\ddagger})}$ & $\bV(2,C_{m-1}) \times \bS^m \times \bS^1$ \\
\cline{2-5}
    & $(2,2)^{\textcolor{red}{*}}$ & $\bS^{10}$ & $\bS^3$-bdl/$\bC\bP^3\,{}^{(\textcolor{blue}{\dagger})}$, $\bS^3$-bdl/$Q^3\,{}^{(\textcolor{blue}{\dagger})}$ & $ \textup{SO}(5) / \bT^2 \times \bS^1$ \\
    & $(4,5)^{\textcolor{red}{*}}$ & $\bS^{20}$ & $\bS^5$-bdl/$M_1^{14}\,{}^{(\textcolor{blue}{\dagger})}$, $\bS^6$-bdl/$M_2^{13}\,{}^{(\textcolor{blue}{\dagger})}$ & $M_{(4,4,5)} \times \bS^1$ \\
\hline
$6$ & $(1,1)$ & $\bS^8$ & $\bS^2$-bdl/$(\bS^3 \!\times\! \bR\bP^2)\,{}^{(\textcolor{blue}{\dagger})}$ & $\textup{SO}(4)/\bZ_2^{\oplus 2} \times \bS^1$ \\
    & $(2,2)$ & $\bS^{14}$ & $\bS^3$-bdl/$Q^5\,{}^{(\textcolor{blue}{\dagger})}$, $\bS^3$-bdl/$Z_{G_2}\,{}^{(\textcolor{blue}{\dagger})}$ & $G_2/ (\textup{U}(1) \times \textup{U}(1)) \times \bS^1$ \\
\hline
\end{tabular}
}
\caption{Some diffeomorphism types of the minimal surfaces $\mathbf{S}_{M_i},\bar{\mathbf{S}}_M$ from Theorem~\ref{thm:new-minimal-surfaces}.\\ 
${}^{(\textcolor{blue}{\dagger})}$: Non-trivial bundle. ${}^{(\textcolor{blue}{\ddagger})}$: Iterated non-trivial bundles depending on parameters. For $g=3$, $\mathbf{S}_{M_1} \cong \mathbf{S}_{M_2}$. $^{(\textcolor{red}{*})}$: Exceptional homogeneous cases. 
For $(g,m_1,m_2)=(4,4,5)$, $M_1^{14} \cong \textup{U}(5)/(\textup{Sp}(2) \times \textup{U}(1))$ and $M_2^{13} \cong \textup{U}(5) / ( \textup{SU}(2) \times \textup{U}(3))$, with regular leaf $M_{(4,4,5)} \cong \textup{U}(5)/ ( \textup{SU}(2) \times \textup{SU}(2) \times \textup{U}(1))$.
$Q^n$: complex quadric. $\bV(2,k)$: Stiefel manifold~\eqref{eqn:stiefel-manifold}. $\bV(2,C_{m-1})$: Clifford-Stiefel manifold~\eqref{eqn:CliffordStiefel}. 
$Z_{G_2} = G_2/\textup{U}(2)^+$: twistor space of $G_2/\textup{SO}(4)$. }
\label{table:diffeomorphism-types}
\end{table}

Another key feature of the minimal surfaces $\mathbf{S}_{M_i}$ and $\bar{\mathbf{S}}_M$ is that they belong to smooth interpolating families of capillary hypersurfaces.
\begin{theorem}\label{thm:capillary-interpolation}
    Consider an isoparametric hypersurface $M \subset \bS^{n-1}$ with at least two distinct principal curvatures and associated focal manifolds $M_1, M_2$. 
    For every $\theta \in (0,\tfrac{\pi}{2}]$, there exist capillary minimal surfaces $\Sigma_{M_1,\theta},\Sigma_{M_2,\theta}, \bar{\Sigma}_{M,\theta} \subset \bS^{n}_+$ of two classes, with contact angle $\theta$ and the following properties.
\begin{enumerate}[(i)]
    \item The surfaces $\Sigma_{M_i, \theta} \subset \bS^n_+$ are diffeomorphic to the normal disk bundles $\Sigma_{M_i,\theta} \cong D(\nu_{M_i})$.
    \item The surface $\bar{\Sigma}_{M , \theta} \subset \bS^n_+$ is diffeomorphic to $M \times [0,1]$.
\end{enumerate}
Moreover, the members of each family $\Sigma_{M_i,\theta}$ and $\bar{\Sigma}_{M,\theta}$ are ambiently isotopic to each other.
\end{theorem}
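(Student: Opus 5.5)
We will look for the capillary surfaces as hypersurfaces invariant under the isoparametric foliation, which reduces the problem to an ODE for a profile curve in a two-dimensional orbit space. Write $\bS^n_+ = \{(x,t): x \in \bR^n,\ t\ge 0,\ |x|^2+t^2=1\}$ and parametrize points as $(\cos\phi\, y, \sin\phi)$ with $y \in \bS^{n-1}$ and $\phi\in[0,\pi/2]$. Let $M_s$, $s\in[0,\pi/g]$, be the isoparametric leaves of $\bS^{n-1}$, with $M_0=M_1$ and $M_{\pi/g}=M_2$ the focal submanifolds. A hypersurface of the form $\{(\cos\phi\, y,\sin\phi): y\in M_{s}\}$, with $(s,\phi)$ running along a curve $\gamma$ in the quarter-strip $Q=[0,\pi/g]\times[0,\pi/2]$, has mean curvature that depends only on $\gamma$. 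Since $M_s$ has constant principal curvatures $\cot(s+k\pi/g)$ with multiplicities $m_1,m_2$ alternating, the volume of $M_s$ is $c\,\sin^{m_1}(gs/2)\cos^{m_2}(gs/2)$ (up to the usual normalization). The minimal surface equation then becomes the geodesic equation for the conformally weighted metric
\[
\bigl(\cos^{n-2}\phi\,\sin^{m_1}(gs/2)\cos^{m_2}(gs/2)\bigr)^{2/(n-1)}\,(\cos^2\!\phi\, ds^2+d\phi^2)
\]
on $Q$. The capillary condition at $\{t=0\}=\{\phi=0\}$ becomes the requirement that $\gamma$ meet the bottom edge at angle $\theta$. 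For the CMC variants we would add the Lagrange multiplier term, but the statement here only concerns the minimal case.

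\emph{Type I.} The curve $\gamma$ starts perpendicularly on the side $s=0$ (respectively $s=\pi/g$) at some height $\phi_0\in(0,\pi/2)$, where the orbit degenerates to $M_i$. It then runs to the bottom edge and hits it at some $s_*$, making an angle $\theta(\phi_0)$. The resulting hypersurface is the union of the tubes $\{y: \mathrm{dist}(y,M_i)=s\}$ over $s\le s_*$, that is, a normal disk bundle $D(\nu_{M_i})$. Smoothness at the focal side is standard: the singular ODE has an analytic solution for each $\phi_0$, continuous in $\phi_0$. By comparison with known solutions, as $\phi_0\to\pi/2$ the curve hugs the top and then drops almost vertically, giving $\theta\to\pi/2$. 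As $\phi_0\to 0$, the rescaled solution converges to the one-phase free-boundary profile of \cite{one-phase-isoparametric}, giving $\theta\to 0$. Continuity and the intermediate value theorem then give every $\theta\in(0,\pi/2]$. The value $\theta=\pi/2$ corresponds, after reflecting across $\{t=0\}$, to the closed surfaces $\mathbf{S}_{M_i}$.

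\emph{Type II.} Here $\gamma$ starts perpendicularly on the top edge $\phi=\pi/2$ is not allowed, since that orbit degenerates to a point. Instead, we use a curve that is symmetric about $s=s_0$ and meets the bottom edge at both ends. Equivalently, we shoot from the bottom edge at $s=a$ with angle $\theta$ and require the curve to return to the bottom edge. We would instead shoot horizontally from an interior point $(s_0,\phi_0)$ at its maximal height, producing the family $M\times[0,1]$. The argument is the same continuity and IVT argument in $\phi_0$, applied to the angle at either endpoint, and it additionally needs the two endpoints to have equal angle. A cleaner choice is to use the two-parameter shooting from a point $(s_0,\phi_0)$ with horizontal tangent and impose equal angles at both ends via a degree argument.

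\emph{Embeddedness and isotopy.} Once each $\gamma$ is shown to be a graph, namely $\phi$ monotone in $s$ for Type I, and a graph over the $s$-axis for Type II, embeddedness follows. The isotopy statement then follows by choosing the family $\phi_0(\theta)$ continuously. A continuous selection may fail at nonuniqueness points, but any continuous path of embedded graphical profiles yields an isotopy, so we take a connected component of the solution set. The main obstacle is the limiting analysis $\phi_0\to0$: we need the angle to tend to $0$ and the curve to stay graphical without oscillating, which requires the blow-up to the one-phase solution and a Sturm-type comparison argument.
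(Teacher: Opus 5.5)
Your reduction to a shooting problem for a foliation-invariant profile curve is the framework the paper uses. The paper works with the graph $f=\tan\phi$ as a function of $t=\sin\frac{gs}{2}$ rather than with orbit-space geodesics. Note also a slip: for a profile curve the Hsiang--Lawson metric is $V^{2}(\cos^2\phi\,ds^2+d\phi^2)$ with $V=\cos^{n-2}\phi\,\sin^{m_1}\frac{gs}{2}\cos^{m_2}\frac{gs}{2}$, not $V^{2/(n-1)}$.

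\textbf{Type I.} The genuine gap is at the large-height end of the shooting range, which you treat as routine. Your claim that as $\phi_0\to\frac{\pi}{2}$ the curve ``hugs the top and drops almost vertically, giving $\theta\to\frac{\pi}{2}$'' is unsupported, since there are no explicit comparison solutions for $g\geq 3$. It is also not what happens. The paper shows (Proposition~\ref{prop:dont-shoot-too-high}) that there is $C(M)$ such that every shot from the focal submanifold with $\tan\phi_0>C(M)$ becomes vertical ($f'=-\infty$) while still strictly above the equator. So the graphical profile, which is what gives embeddedness and the disk-bundle topology, never reaches $\{\phi=0\}$, and $\theta(\phi_0)$ is undefined near $\frac{\pi}{2}$.

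The correct mechanism is different. The set of heights whose profile reaches the equator is bounded, so its supremum $a^*$ is finite. A compactness argument as in Lemma~\ref{lemma:convergence-of-solutions} then shows the profile at $a^*$ meets the equator with $f'=-\infty$, i.e.\ $\theta=\frac{\pi}{2}$.

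Proving this height bound is the core of the paper. It uses the Lyapunov quantity $\Psi=f\bigl(f-\frac{g}{2}A_Mf'\bigr)-\frac{1}{n-2}$, which vanishes exactly at vertical zeros. Its sign is trapped past the minimal leaf $t_\alpha=\sqrt{m_1/(m_1+m_2)}$ (Proposition~\ref{prop:Psi-one-sided}, Lemma~\ref{lemma:psi-detect-blowup}). This is combined with a blow-up analysis at $t_\alpha$, leading to the limit equation~\eqref{eqn:phi-limit-equation} and its hypergeometric inverse, which shows $f(t_\alpha)>\frac{1}{\sqrt{n-2}}$ for large initial height.

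By contrast, the end $\phi_0\to 0$ that you single out as the main obstacle is the easy one. There $\frac{1}{a}f_a$ converges to the hypergeometric solution $f_M$ of $\mathcal{L}_M f=0$, which has a transverse zero.

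\textbf{Type II.} Your argument does not yet produce a solution. The profile is not symmetric when $m_1\neq m_2$. The ``degree argument'' for a two-parameter shooting comes with no boundary behaviour that would force a zero.

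What is needed is a one-parameter shooting from the equator at $t_1$, with slope encoding $\theta$, and control at both ends of the $t_1$-range:
\begin{enumerate}
\item[(a)] shots from $t_1$ near $0$ return to the equator with strictly smaller angle (Proposition~\ref{prop:window-near-zero});
\item[(b)] shots from $t_1\geq t_\alpha-\bar{\varepsilon}_M$ never return at all (Proposition~\ref{prop:small-breather}, proved via a blow-up at $t_\alpha$ and a Riccati comparison for $L(\tau)$).
\end{enumerate}
Compactness then yields an intermediate $t_*$ whose profile returns vertically. So the angle difference $R[t_1]$ goes from positive to $-\infty$, and the intermediate value theorem gives equal contact angles at both ends. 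Without the no-return window near the minimal leaf, nothing prevents the return angle from staying below $\theta$ for every $t_1$.
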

The minimal surfaces $\mathbf{S}_{M_1},\mathbf{S}_{M_2},\bar{\mathbf{S}}_M \subset \bS^{n}$ of Theorem~\ref{thm:new-minimal-surfaces} are formed by doubling the free-boundary surfaces $\Sigma_{M_1,\frac{\pi}{2}},\Sigma_{M_2,\frac{\pi}{2}},\bar{\Sigma}_{M,\frac{\pi}{2}}$ along their boundaries on the equator.
    Moreover, the cones $\mathbf{C}_{M_i, \theta} := C(\Sigma_{M_i,\theta})$ and $\bar{\mathbf{C}}_{M,\theta} := C(\bar{\Sigma}_{M,\theta})$ are capillary minimal cones in $\bR^{n+1}_+$, which admit a more precise description: they belong to smooth families that converge, after rescaling by the contact angle, to solutions $U_{M_1}$ and $U_{M_2}$ of the one-phase free boundary problem~\eqref{eqn:one-phase-problem}.
    These new singularity models of the Bernoulli problem are constructed and studied in~\cite{one-phase-isoparametric}*{Theorem 1.1}, together with one-phase solutions with disconnected spherical traces, denoted $\bar{U}_M$.
\begin{theorem}\label{thm:smooth-interpolation}
Consider an isoparametric foliation of $\bS^{n-1}$ with $g \geq 2$ principal curvatures and focal submanifolds $\{ M_1, M_2 \}$.
    There exist smooth one-parameter families of capillary cones $\{ \tilde{\mathbf{C}}_{M_i, a}\}_{a \in (0,a^*_{M_i}]}$ that interpolate between the free-boundary minimal cone over $\mathbf{S}_{M_i} \cap \bS^n_+$ and the homogeneous solution $U_{M_i}$ of the one-phase problem through cones of every angle.
    More precisely,
    \begin{enumerate}[(i)]
        \item As $a \downarrow 0$, the rescaled cones $\frac{1}{a} \tilde{\mathbf{C}}_{M_i, a}$ converge in $C^{\infty}_{\text{loc}}$ to the graph of the function $U_{M_i}$ solving the one-phase free boundary problem~\eqref{eqn:one-phase-problem}.
        \item As $a \in (0,a^*_{M_i}]$, the cones $\tilde{\mathbf{C}}_{M_i,a}$ attain every capillary angle $\theta \in (0,\frac{\pi}{2}]$.
        \item The cone $\tilde{\mathbf{C}}_{M_i, a^*_{M_i}}$ is the free-boundary cone $\mathbf{C}_{M_i, \frac{\pi}{2}}$, whose double has link given by the minimal surface $\mathbf{S}_{M_i} \subset \bS^{n}$.
    \end{enumerate}
    When the hypersurface $M \subset \bS^{n-1}$ satisfies $m_1 = m_2$, the halved cone $\bar{\mathbf{C}}_{M, \frac{\pi}{2}} := C( \bar{\mathbf{S}}_M) \cap \bR^{n+1}_+$ also admits an interpolation to the solution $\bar{U}_M$ of the one-phase problem~\eqref{eqn:one-phase-problem} by a smooth family $\{ \check{\mathbf{C}}_{M,a} \}_{a \in (0, a^*_M]}$ of capillary cones attaining every angle, which satisfies the properties $(i)$\thru$(iii)$.
\end{theorem}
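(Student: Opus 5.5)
We reduce the problem to a one-parameter family of solutions of a singular ODE and obtain all three properties by shooting. Let $t \in (0,\pi/g)$ parametrize the isoparametric leaves $M_t \subset \bS^{n-1}$, with $M_1$ at $t=0$ and $M_2$ at $t=\pi/g$. We look for hypersurfaces of $\bS^n_+$ that are invariant under the foliation in the sense that they are unions of leaves $M_t$ placed at varying heights. Such a hypersurface is determined by a profile curve $\gamma$ in the two-dimensional orbit space, described in coordinates $(t,\phi)$, where $\phi$ is the latitude above the equator $\bS^{n-1}$.

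Since the volume of $M_t$ is an explicit multiple of $\sin^{m_1}(gt/2)\cos^{m_2}(gt/2)$ (up to the factor $\cos^{n-1}\phi$ from the height), the minimality of the cone $C(\gamma)$ is equivalent to $\gamma$ being a geodesic for a conformally weighted metric on the orbit space. The geodesic equation is a second-order ODE. It is singular at $t=0$ and $t=\pi/g$ (the focal sets) and at $\phi=0$ (the equator).

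For $\tilde{\mathbf C}_{M_i,a}$ we shoot from the focal set. Say $i=1$: start at $(t,\phi)=(0,a)$, leaving perpendicularly to the focal axis. The singular initial value problem has a unique analytic solution depending smoothly on $a$, by the standard Briot–Bouquet / fixed-point argument used for rotational problems. Smoothness at the focal point is exactly what makes the resulting surface a disk bundle $D(\nu_{M_1})$. We then follow the solution until it first hits $\phi=0$ at an angle $\theta(a)$, which gives the capillary angle.

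For the limit $a\downarrow0$ we rescale by $\phi=a\,u$, $t=a$-independent. To leading order the ODE becomes the ODE for a homogeneous degree-one $U$ on the cone over $\bS^{n-1}$, i.e. $\Delta_{\bS^{n-1}}u+(n-1)u=0$ on $\{u>0\}$ for $u=u(t)$. By the companion construction~\cite{one-phase-isoparametric}, this has solution $U_{M_1}$ vanishing at some $t_0\in(0,\pi/g)$ with $|\nabla u|$ constant there. Continuous dependence of ODE solutions on parameters, applied on compact $t$-intervals slightly beyond $t_0$, together with transversality of the crossing of $\phi=0$ (a nondegenerate zero), gives $C^\infty_{\rm loc}$ convergence. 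It also gives $\theta(a)\sim a|\nabla U|\to 0$. Smoothness of the family follows from the implicit function theorem at the transversal crossing.

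It remains to reach $\theta=\pi/2$. Define $a^*$ as the first $a$ at which the orbit hits the equator orthogonally. For $a$ near $\pi/2$ (the orbit starting near the pole) we need to show the curve either fails to reach the equator transversally or meets it at an angle exceeding $\pi/2$. Intermediate-value continuity of $\theta(a)$ on $(0,a^*]$ then yields every angle in $(0,\pi/2]$. Reflecting across $\phi=0$ at $a^*$ produces a smooth closed geodesic, hence the doubled surface $\mathbf S_{M_1}$, with topology $S(\nu\oplus\mathbf 1)$.

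The hard step is this global one: proving that the angle $\theta(a)$ actually reaches $\pi/2$ and that the orbit stays inside the domain (no return to the focal axis or self-intersection) for all $a\le a^*$. My first approach is a comparison and monotonicity argument: compare the solution with the explicit totally geodesic and isoparametric solutions, and use a Sturm-type argument on the linearized Jacobi field $\partial_a\gamma$ to show $\theta$ is monotone up to $a^*$, which would also give the ambient isotopy within each family. Embeddedness follows from $\phi$ being a graph over $t$ along the orbit, which I would establish through a convexity/sign argument on the ODE.

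For $\check{\mathbf C}_{M,a}$ when $m_1=m_2$, the symmetry $t\mapsto \pi/g-t$ lets us shoot from the middle leaf $t=\pi/(2g)$ at height $a$ horizontally. The same analysis applies, with limit $\bar U_M$.
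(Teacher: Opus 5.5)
Your skeleton matches the paper's: shoot from the focal set with $f'(0)=0$, rescale as $a\downarrow 0$ onto the linear problem $\cL_M f=0$ (whose hypergeometric solution has a zero), use smooth dependence and the implicit function theorem at a transversal zero, and close with a continuity argument in $a$.

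\textbf{The gap.} The step you flag as hard is left open, and it is the main content of the theorem. You need a uniform statement: for every sufficiently large initial height, the profile becomes vertical ($f'\to-\infty$) at \emph{positive} height, i.e.\ before it reaches the equator.

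Without this, the set of $a$ for which $f_a$ reaches zero could contain all admissible heights up to the pole, with $\theta(a)$ bounded away from $\pi/2$. Then neither $a^*$ nor property $(iii)$ exists.

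Neither tool you propose supplies this bound.
\begin{itemize}
\item A Sturm argument on $\partial_a\gamma$ would at best give monotonicity of $\theta(a)$. That is not needed: continuity plus $a^*=\sup\{a : f_a \text{ reaches zero}\}$ already gives surjectivity once this supremum is finite. It is also not sufficient: a monotone angle can saturate below $\pi/2$.
\item There are no useful explicit barriers. For $g\ge 3$ the available explicit invariant minimal hypersurfaces are essentially the equator and the singular suspension of the minimal leaf $t_\alpha=\sqrt{m_1/(m_1+m_2)}$. Neither prevents the profile from reaching zero.
\end{itemize}

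\textbf{What the paper uses instead} (Proposition~\ref{prop:dont-shoot-too-high}) is a Lyapunov quantity, not a comparison:
\[
\Psi = f\bigl(f-\tfrac{g}{2}A_M f'\bigr)-\tfrac{1}{n-2}.
\]
It equals $-\tfrac{1}{n-2}$ at a transversal zero and tends to $0$ at a vertical zero.

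The discriminant estimates of Proposition~\ref{prop:Psi-one-sided} are where $g\ge 3$ enters. They show that once $\Psi\ge 0$ and $\Psi'\ge 0$ at some $t_0\ge t_\alpha$, both stay positive, so the profile cannot reach zero (Lemma~\ref{lemma:psi-detect-blowup}).

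At $t_\alpha$ one has $A_M(t_\alpha)=0$, so $\Psi'(t_\alpha)=-(g-2)ff'\ge 0$ and $\Psi(t_\alpha)=f(t_\alpha)^2-\tfrac{1}{n-2}$. Everything therefore reduces to showing $f_a(t_\alpha)>1/\sqrt{n-2}$ for large $a$.

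This splits by the sign of $(n-1)^2-4g(n-2)=(n-3)^2-4(g-1)(n-2)$, which is the Simons stability threshold for the cone $C(M)\subset\bR^n$.
\begin{itemize}
\item When it is negative, comparison with the scale-invariant limit $f_\infty$ gives $f_a(t_\alpha)\ge c_0 a$ (Lemma~\ref{lemma:f-infty-analysis}).
\item Otherwise, one blows up at scale $|f_a'(t_\alpha)|^{-1}$ around $t_\alpha$ and obtains the limit equation~\eqref{eqn:phi-limit-equation}, whose inverse is hypergeometric. The power-law behavior $y\sim C(-r)^\beta$ with $0<\beta<1/g$ then shows the limiting $\bar\Psi$ is positive, a contradiction.
\end{itemize}

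Some mechanism of this kind, one that detects vertical blowup before the zero, is what your proposal is missing. The same quantity also handles your $m_1=m_2$ family for $g\ge 3$: shooting horizontally from $t_\alpha$ gives $\Psi(t_\alpha)=a^2-\tfrac{1}{n-2}>0$ once $a$ is large.
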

The one-phase solution $\bar{U}_M$ of Type II exists for arbitrary isoparametric hypersurfaces, and is constructed in our companion paper~\cite{one-phase-isoparametric}*{Theorem 1.1}.
The resulting solution $\bar{U}_M$ then satisfies the subsequential convergence 
\[
\tfrac{1}{\tan \theta_i} \bar{\mathbf{C}}_{M,\theta_i} \to \textup{graph} (\bar{U}_M), \qquad \tfrac{1}{a_i} \bar{\mathbf{C}}_{M,a_i} \to \textup{graph} (\bar{U}_M)
\]
for some $\theta_i , a_i \downarrow 0$.
The property $m_1 = m_2$ applies to all isoparametric hypersurfaces with $g=3$ (called the \textit{Cartan-M\"unzner family}) or $g=6$ distinct principal curvatures.
Other examples are the Clifford tori $\bS^p \times \bS^p \subset \bS^{2p+1}$ and the homogeneous hypersurfaces with $(g,m_1,m_2) = (4,2,2)$.

When $g = 1$ and $M = \bS^{n-2}$, the isoparametric leaves are the distance spheres from the poles, which is a distinguished case.
The one-phase problem reduces to an $\textup{O}(n-1)$-invariant ansatz shown to be minimizing in $n \geq 7$ by De Silva-Jerison~\cite{desilva-jerison-cones}, which we prove to be unique in \cite{one-phase-isoparametric}.
\begin{theorem}\label{thm:uniqueness-of-axisymmetric}
For any angle $\theta \in (0,\frac{\pi}{2}]$, the axisymmetric cone $\mathbf{C}_{n,1,\theta}$ of~\cite{FTW-1} is the unique $\textup{O}(n-1)$-invariant non-flat capillary minimal cone in $\bR^{n+1}_+$.
In particular, the Clifford torus $\bS^{n-2} ( \sqrt{\frac{n-2}{n-1}}) \times \bS^1 ( \sqrt{\frac{1}{n-1}})$ and the sphere $\bS^{n-1}$ are the only rotationally invariant embedded minimal hypersurfaces of $\bS^n$.
\end{theorem}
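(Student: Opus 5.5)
Since $\mathbf{C}$ is a cone, it is enough to work with its link $\Sigma = \mathbf{C}\cap \bS^n_+$. This link is a capillary minimal hypersurface of the hemisphere meeting the equator $\bS^{n-1}=\partial\bS^n_+$ at angle $\theta$. Write points of $\bS^n$ as $(x,t)$ with $x\in\bR^n$, $t=x_{n+1}$. We take the $\textup{O}(n-1)$ action on $\bR^{n+1}$ to fix the $x_1$ axis and the $x_{n+1}$ axis, so that it preserves the half-space. Orbit space coordinates are then $r=|(x_2,\dots,x_n)|$ together with $x_1$ and $t$, and the orbits in $\bS^n$ are spheres $\bS^{n-2}$ of radius $r$. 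The orbit space is the two-dimensional hemisphere $\{x_1^2+r^2+t^2=1,\ r\ge 0\}$, which we parametrize by spherical angles. An invariant hypersurface corresponds to a curve $\gamma$ in this orbit space, and $\gamma$ is a geodesic for the weighted metric $r^{2(n-2)}(ds^2)$ (Hsiang–Lawson reduction). The capillary condition says that $\gamma$ meets the boundary arc $\{t=0\}$ at angle $\theta$. Smoothness says that $\gamma$ either meets the axis $r=0$ orthogonally or stays away from it.

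The plan has four steps.

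1. Write the reduced ODE for $\gamma$ in the form $\phi'' = F(\phi,\phi')$ using arclength parametrization, or as a graph over the appropriate angle. The ODE has two conserved or monotone quantities coming from the reflection symmetries in $x_1$ and $t$.

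2. Classify the possible profiles.
 (a) Profiles reaching the axis $r=0$ orthogonally. By ODE uniqueness from the axis (a singular initial value problem, handled with a Frobenius-type or fixed-point argument), these form a one-parameter family indexed by the height of the axis point. These give the $\mathbf{C}_{n,1,\theta}$ family together with the totally geodesic (flat) pieces.
 (b) Profiles avoiding the axis, which give products with $\bS^1$. Such a curve must leave the boundary and return to it. By the symmetry $t\mapsto$ reflection through the apex, it is symmetric about its maximum.

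3. Show that for each $\theta$ exactly one non-flat member satisfies the boundary angle condition. Let $\Theta(h)$ be the angle at which the solution with parameter $h$ first hits $\{t=0\}$. The claim is that $\Theta$ is strictly monotone in $h$, which yields uniqueness for each $\theta$. To prove this, differentiate in $h$: the Jacobi field $J=\partial_h\gamma$ satisfies the linearized equation. A Sturm comparison should show that $J$ has no zero before the boundary. That forces a sign on $\partial_h\Theta$.

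4. The $\theta=\pi/2$ case gives free boundary solutions. Doubling these produces closed rotationally invariant minimal hypersurfaces of $\bS^n$. Conversely, any rotationally invariant closed embedded minimal hypersurface can be cut by a totally geodesic hypersurface of symmetry, using Alexandrov reflection or the ODE symmetry, into a free boundary piece. This reduces the problem to the $\theta=\pi/2$ uniqueness. The two survivors are the equatorial sphere and the Clifford torus $\bS^{n-2}(\sqrt{(n-2)/(n-1)})\times\bS^1(\sqrt{1/(n-1)})$. The torus corresponds to the fixed point of the ODE with $r$ constant.

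The main obstacle is the monotonicity and no-oscillation step. Geodesics of the weighted metric generally spiral around the Clifford fixed point, so embeddedness has to be used. The idea is that the angle-to-boundary map is monotone only on the first arc, and embeddedness rules out later crossings. I would first try to write the ODE as a first-order autonomous system in the phase plane $(\phi,\phi')$. The Clifford point is a center-type equilibrium there, and a rotation-number argument would show that an embedded profile meeting the boundary twice must be either the axis solution or the trivial one. If this is hard for general $n$, I would fall back on a Lyapunov-type first integral combined with the companion paper's uniqueness of the $\textup{O}(n-1)$ one-phase solution as the $\theta\to0$ limit, and then continue uniquely in $\theta$ by the implicit function theorem, using nondegeneracy from the Jacobi field sign.
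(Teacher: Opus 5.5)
There is a genuine gap, and it sits where the paper's proof does its real work.

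\textbf{The classification in Step~2 is inverted.} By the uniqueness for the singular initial value problem at the axis that you invoke, every profile leaving $r=0$ orthogonally is a great circle of the orbit sphere passing through the point $r=1$. Such a profile is the link of a flat half-hyperplane $\{x_{n+1}=a\,x_1\}$; these are the paper's $g=1$ Type I solutions $f=a(1-2\tau^2)$, and the paper notes there are no non-trivial ones. So family (a) contains no non-flat cone at all. The cones $\mathbf{C}_{n,1,\theta}$ have links $\cong\bS^{n-2}\times[0,1]$, whose profiles avoid the axis and have both endpoints on $\{x_{n+1}=0\}$. Consequently the monotonicity you set up in Step~3, indexed by the height of the axis point, is applied to the wrong family. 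Also, reflections produce no conserved quantities. The single first integral comes from the continuous rotation in the $(x_1,x_{n+1})$-plane; it is the Clairaut-type quantity $\cW$ of Lemma~\ref{lemma:conserved-quantity}. That rotation is exactly the symmetry the capillary boundary breaks.

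\textbf{The symmetry claim assumes the theorem.} The sentence ``by the symmetry \ldots\ it is symmetric about its maximum'' does not give what you need. Rotation invariance does make every profile symmetric about the great circle through its turning point. That turning point is the point closest to $r=1$, not in general the highest point. In the paper this symmetry is the coincidence of the two branches $u(\theta_0\pm\tau)$ of the autonomous equation~\eqref{eqn:shared-autonomous-u-theta}. However, among the isometries of the $(x_1,x_{n+1})$-plane, the capillary configuration is invariant only under $x_1\mapsto -x_1$. Nothing you wrote forces the turning point onto $\{x_1=0\}$. A priori the two-ended profiles form a two-parameter family (angular position and depth of the turning point), and equal contact angles impose only one equation. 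The symmetric branch is just one candidate solution set, and proving it is the only one is the content of the theorem.

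\textbf{How the paper closes this gap.} First, Lemma~\ref{lemma:axisymmetric-same-side-angle-inequality} excludes arcs with both endpoints on the same boundary ray. For $H_0=0$ this is immediate, because $h=f-tf'$ solves a linear homogeneous ODE and so cannot change sign. Then Proposition~\ref{prop:axisymmetric-is-symmetric} does three things:
\begin{enumerate}[(i)]
\item it rewrites the equal-angle condition as $w(\pi-\theta_0)=w(\theta_0)$, where $w=\sqrt{1-z^2}$ vanishes at the turning point;
\item it rules out $\cW\le 0$ by bounding the angle swept by the increasing branch strictly below $\frac{\pi}{2}$;
\item it uses $w''+w<0$ to show that $w(\tau)/\sin\tau$ is strictly decreasing, which forces $\theta_0=\frac{\pi}{2}$, i.e.\ $f'(0)=0$.
\end{enumerate}
Only after evenness is known is there a one-parameter family, indexed by $a=f(0)$ on the symmetry hyperplane. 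The strict monotonicity of $a\mapsto\sqrt{1-t_a^2}\,|f_a'(t_a)|$ is then quoted from \cite{FTW-1}. A Jacobi-field proof of that monotonicity would also have to control the moving endpoint $t_a$, since non-vanishing of $J$ alone does not fix the sign of the derivative.

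\textbf{Other points.} Your diagnosis of the main obstacle (spiraling, embeddedness, rotation numbers) misses that the real difficulty is the location of the turning point. Your implicit-function fallback yields only local uniqueness near the branch coming from the one-phase limit, not uniqueness for each $\theta$. Your Step~4 reduction of the closed case to the free-boundary case is reasonable, once the turning-point symmetry is derived from rotation invariance rather than from reflections alone.
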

The uniqueness of the Clifford torus $\bS^1(\frac{1}{\sqrt{2}}) \times \bS^1(\frac{1}{\sqrt{2}})$ in $\bS^3$ was proved in celebrated work of Brendle~\cite{clifford-tori}, later extended to embedded Weingarten tori~\cite{weingarten-tori}.
In Section~\ref{section:axisymmetric}, we prove a more general rigidity Theorem~\ref{thm:even-function-axisymmetric} for axisymmetric solutions of the capillary CMC problem.
Our general result is connected to the classification problem for embedded CMC tori in $\bS^3$ studied by Andrews-Li and Perdomo~\cites{andrews-li , perdomo }.

Constant mean curvature hypersurfaces arise as the natural constrained-area analogues of minimal hypersurfaces, notably as the boundaries of isoperimetric sets.
Their construction, global geometry, and topology have received great attention through various techniques, including gluing and min-max methods~\cites{korevaar-kusner, korevaar-kusner-solomon, meeks, kapouleas-cmc, compact-cmc-kapouleas , cmc-min-max , pmc-min-max , mazurowski , cmc-delaunay-ends , mazzeo-gafa , meeks-mira-perez-ros }.
Furthermore, capillary CMC surfaces capture the proper notion of variational solutions when the enclosed volume is prescribed, providing the correct generalization of minimal hypersurfaces with boundary and the necessary additional flexibility for performing slicing and $\mu$-bubble constructions in problems coming from scalar curvature geometry and general relativity~\cites{huisken-yau , qing-tian, brendle-eichmair , eichmair-metzger}.

Our framework from Theorems~\ref{thm:new-minimal-surfaces}\thru\ref{thm:smooth-interpolation} extends to the construction of minimal and CMC surfaces in the sphere with any prescribed non-negative mean curvature, along with capillary deformations in $\bS^n_+$ through every contact angle. 
The CMC deformations are ambiently isotopic to the minimal examples, so they maintain the same topological complexities and assemble into smooth families.
\begin{theorem}\label{thm:cmc-construction}
    For any isoparametric hypersurface $M \subset \bS^{n-1}$ with at least two distinct principal curvatures and any $H \geq 0$, there exist three closed embedded surfaces $\mathbf{S}^{H}_{M_1}, \mathbf{S}^{H}_{M_2}, \bar{\mathbf{S}}^{H}_M \subset \bS^n$ of constant mean curvature $H$.
    For every $H$, we have smooth ambient isotopies
    \[
    \mathbf{S}^H_{M_1} \simeq_{\textup{isot}} \mathbf{S}^0_{M_1} =: \mathbf{S}_{M_1}, \qquad \mathbf{S}^{H}_{M_2} \simeq_{\textup{isot}} \mathbf{S}^0_{M_2} =: \mathbf{S}_{M_2}, \qquad \bar{\mathbf{S}}^H_M \simeq_{\textup{isot}} \bar{\mathbf{S}}^0_M =: \bar{\mathbf{S}}_M.
    \]
    The surfaces $\Sigma^H_{M_i , \frac{\pi}{2}} := \mathbf{S}^H_{M_i} \cap \bS^n_+$ and $\bar{\Sigma}^H_{M,\frac{\pi}{2}} := \bar{\mathbf{S}}^H_M \cap \bS^n_+$ arise as endpoints of capillary families:
    \begin{enumerate}[$(i)$]
        \item The surfaces $\Sigma^H_{M_i, \frac{\pi}{2}} \subset \bS^n_+$ belong to a smooth family of capillary hypersurfaces $a \mapsto \Sigma^H_{M_i,a} \subset \bS^n_+$ of constant mean curvature $H$, attaining every contact angle $\theta \in (0,\frac{\pi}{2}]$, with each $\Sigma^H_{M_i,a}$ diffeomorphic to the open normal neighborhood of the focal submanifold $M_i$.
        \item For every $\theta \in (0, \frac{\pi}{2}]$, there exists a capillary hypersurface $\bar{\Sigma}^H_{M,\theta} \subset \bS^n_+$ of constant mean curvature $H$ diffeomorphic to $M \times [0,1]$.
        If the multiplicities of $M$ satisfy $m_1 = m_2$, then these surfaces belong to a smooth, surjective capillary CMC family $a \mapsto \bar{\Sigma}^H_{M,a} \subset \bS^n_+$.
    \end{enumerate}
\end{theorem}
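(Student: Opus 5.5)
Reduce the problem to an ODE. Write $\bS^n = \{ (x\cos r, \sin r) : x \in \bS^{n-1},\ r \in [-\tfrac{\pi}{2},\tfrac{\pi}{2}]\}$, with $r$ the latitude and $\bS^n_+ = \{r>0\}$. Let $t \in [0,\pi/g]$ be the distance from $M_1$ parametrizing the isoparametric leaves $M_t \subset \bS^{n-1}$; their principal curvatures are known in closed form, with multiplicities $m_1,m_2$. Seek hypersurfaces invariant under the foliation, of the form $\{(x\cos r, \sin r) : x \in M_{t},\ (t,r) \in \gamma\}$, where $\gamma$ is a curve in the orbit space $[0,\pi/g]\times[-\tfrac{\pi}{2},\tfrac{\pi}{2}]$. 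The mean curvature of such a hypersurface is a first-order expression in the curvature of $\gamma$ plus terms coming from the leaf's mean curvature $h(t) = \sum_j m_j \cot(t - \text{shift}_j)$ (scaled by $1/\cos r$) and from the $\cos r$ warping. Thus "$H$ constant" becomes a second-order ODE for $\gamma$, written in arclength form $(t',r')=(\cos\phi,\sin\phi)$ with $\phi' = F(t,r,\phi) + H$.

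The ODE has singular points on the focal boundary $t=0$ and $t=\pi/g$. For Type I we start $\gamma$ orthogonally at a point $(t,r)=(0,r_0)$ or $(\pi/g,r_0)$ on the focal boundary. Near there the hypersurface is a tube over $M_i$ swept by normal disks, so the singular initial-value problem must be solved by a fixed-point/power-series argument giving a solution smooth in $r_0$. Smoothness of the resulting hypersurface across $M_i$ comes from evenness of the solution. For Type II we start orthogonally at an interior point $(t_0, \tfrac{\pi}{2})$-type configuration, or symmetrically through the equator, giving a strip $M\times[0,1]$.

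Capillarity is then a shooting argument in the initial parameter. Let $\theta(a)$ be the angle at which $\gamma_a$ first hits the equator $r=0$. Show that $\theta$ is continuous on $(0,a^*]$ and tends to $0$ as $a\downarrow 0$. For the latter, blow up: the rescaled curves converge to the one-phase profile of the companion paper, which meets $\{u=0\}$ tangentially in the graph picture, i.e. at angle $\to 0$. At the other end there is a parameter $a^*$ at which the curve hits the equator orthogonally. For $H \ge 0$, produce this by comparison with the minimal case, since increasing $H$ bends $\gamma$ monotonically, together with an intermediate value argument. The intermediate value theorem then gives every $\theta\in(0,\pi/2]$.

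Doubling the free-boundary piece across the equator (reflection symmetry $r\mapsto -r$ of the ODE) gives the closed $\mathbf{S}^H$. The isotopies to the $H=0$ surfaces follow from continuity of the whole construction in $H$ along a path $s\mapsto sH$ along which the topology of $\gamma$ (embedded, meeting $r=0$ once) persists, so the hypersurfaces vary by a smooth family of embeddings.

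The main obstacle is embeddedness and control of the orbit-space curve. One must show that $\gamma_a$ reaches $r=0$ before self-intersecting or hitting the opposite focal boundary. This requires monotonicity: for instance $r$ is strictly decreasing along $\gamma$ until the equator. It also requires smooth, not merely continuous, dependence on $a$ where the first-hitting time is transversal. I would try a Sturm-type comparison of $\phi$ with the angle of the leaf direction, using $H\ge 0$ to keep the sign of $\phi'$ favorable, and handle Type II with $m_1\neq m_2$ only by existence per angle, since symmetry is then lacking for a global family.
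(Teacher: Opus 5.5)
Your overall architecture (an orbit-space profile ODE, shooting in the initial height, the intermediate value theorem, doubling) matches the paper's, but the steps that are specific to $H>0$ are either wrong or missing. The first failing step is $\theta(a)\to0$ as $a\downarrow0$. For $H>0$ there is no one-phase limit: $f_a/a$ satisfies $\cL_M(f_a/a)=-\tfrac{4H}{g^2a}+O(a)$, and this forcing diverges. What actually happens, and what the paper uses, is that $f_a''(0)=-4\tfrac{H+(n-1)a}{g^2(m_1+1)}$ keeps $f_a''\le-\tfrac{2H}{g^2(m_1+1)}$ near $t=0$. Hence $f_a$ vanishes at $t_a\sim g\sqrt{(m_1+1)a/(2H)}\to0$ with normalized slope $\sqrt{1-t_a^2}\,|f_a'(t_a)|\sim\tfrac{2\sqrt{2H}}{g\sqrt{m_1+1}}\,a^{1/2}$, so the caps collapse onto $M_i$. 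The conclusion survives, but your mechanism does not.

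The more serious gap is the other endpoint. You need some $a$ whose profile turns vertical ($f'\to-\infty$) while still above the equator. ``Comparison with the minimal case since increasing $H$ bends $\gamma$ monotonically'' does not give this. There is no comparison principle for this nonlinear second-order ODE, and the curves being compared pass through different points of the orbit space, where the leaf terms differ. Extra bending also makes the curve both turn and descend faster, so a curvature comparison cannot decide the race between $f\to0$ and $f'\to-\infty$.

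This is exactly where the paper does new work (Lemma~\ref{lemma:shooting-endpoint-H0geq0}$(i)$ for $g\ge3$; the case $g=2$ is explicit). It introduces the modified Lyapunov quantity $\Psi_H=f(f-\tfrac g2A_Mf')+\tfrac{H}{n-2}f\sqrt{1+\tfrac{g^2}{4}(1-t^2)\tfrac{(f')^2}{1+f^2}}-\tfrac{1}{n-2}$. This quantity tends to $0$ at a zero with vertical slope, and its one-sided behaviour (Proposition~\ref{prop:psi-more-general}) shows that $\Psi_H(t_\alpha)>0$ at $t_\alpha=\sqrt{m_1/(m_1+m_2)}$ prevents the shot from ever reaching zero. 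The paper then proves $f_a(t_\alpha)\to\infty$ as $a\to\infty$, using concavity, $f_a-\tfrac g2A_Mf_a'>0$, and the bound $f_a''\le c_N(f_a')^3$ coming from the $H$-term. By contrast, the embeddedness you single out as the main obstacle is easy. For $H\ge0$, every critical point satisfies $(1-t_c^2)f''(t_c)=-\tfrac{4}{g^2}(H+(n-1)f(t_c))<0$, so Type I profiles decrease strictly until they vanish or blow up (Lemma~\ref{lemma:cmc-equation-preliminaries}).

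For Type II with $m_1\neq m_2$ you give no mechanism. Starting horizontally at an interior point leaves two parameters (leaf and height) against two conditions (equal angles at both ends), with no symmetry to reduce them. Moreover, $(t_0,\tfrac{\pi}{2})$ is the pole of $\bS^n$, where the orbit space degenerates. The paper instead shoots from the equator at $(t_1,0)$ with the angle $\theta$ already imposed, so only the angle at the second zero has to be matched. It then applies the intermediate value theorem in $t_1$ to $R[t_1]=(1-t_1^2)f'(t_1)^2-(1-\tau^2)f'(\tau)^2$:
\begin{enumerate}
\item[(a)] For $t_1$ near $0$, the second zero is reached with strictly smaller normalized slope (Lemma~\ref{lemma:some-M-breather}), so $R>0$.
\item[(b)] Let $t_{\alpha,H}$ be defined by $H\sqrt{1-t^2}=(n-2)A_M(t)$. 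For $t_1$ just below $t_{\alpha,H}$, one has $f-\tfrac g2A_Mf'<0$ initially, and the shot never returns to zero.
\item[(c)] By compactness, some intermediate shot therefore ends with vertical slope, and $R[t_1]\to-\infty$ as $t_1$ approaches it.
\end{enumerate}
For $m_1=m_2$, the relevant symmetry is $t\leftrightarrow\sqrt{1-t^2}$ about the minimal leaf, with horizontal shots from $t=\tfrac{1}{\sqrt{2}}$; reflection through the equator is only used for doubling.

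Finally, the isotopy via continuity of the construction in $H$ is unsupported, because the free-boundary solution is selected by a supremum or intermediate-value argument and need not vary continuously with $H$. It is also unnecessary. Both surfaces are doubles of embedded profile arcs in the orbit space with the same endpoint behaviour, and such arcs can be interpolated directly.
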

In fact, our framework can be applied, after small modifications, to the construction of CMC surfaces $\mathbf{S}^H_{M_i}, \bar{\mathbf{S}}^H_M \subset \bS^n$ with mean curvature $H \in (-\ve_n, \infty)$, for some $\ve_n > 0$.
However, one cannot generally produce capillary surfaces of every angle when $H$ is negative; see Section~\ref{section:CMC-in-the-sphere} for more details.

\begin{remark}
The capillary constructions in~\cites{FTW-1,new-minimal-surfaces} correspond to the specific case of Theorem~\ref{thm:new-minimal-surfaces} where $M$ is the Clifford hypertorus, with $\mathbf{S}_{M_1}, \mathbf{S}_{M_2}$ being Clifford tori $\bS^{n-k} \times \bS^{k-1}$ or $\bS^{n-k-1} \times \bS^k$.
These results produced the first instances of complete capillary interpolations through every contact angle converging to a homogeneous one-phase solution in the rescaled small-angle limit, notably exhibiting the first examples of minimizing cones for the capillary problem.
The surfaces $\mathbf{S}_{M_i}$ of Theorem~\ref{thm:new-minimal-surfaces} are minimal embeddings of sphere bundles in $\bS^n$ and recover constructions of Gorodski~\cite{gorodski} as special examples, coming from isotropy representations of rank-$3$ compact symmetric spaces.
The construction of the minimal surfaces in the most symmetric Type II case for $M = \bS^p \times \bS^p \subset \bS^{2p + 1}$ was obtained by~\cite{carlotto-schulz} and~\cite{spherical-bernstein-xin-zhou} for $p=1$, then generalized to arbitrary $M = \bS^p \times \bS^q$ in~\cite{new-minimal-surfaces}.
Moreover,~\cite{lai-wei-2026} constructed minimal embeddings $M \times \bS^1 \hookrightarrow \bS^n$ using the approach of~\cite{riedler-shrinkers}, and~\cites{huang-wei-2022 , lai-wei-tori } adapted the construction of~\cite{carlotto-schulz} to CMC surfaces.
\end{remark}

\smallskip
\noindent\textbf{Acknowledgments:} We are grateful to Zihui Zhao for helpful conversations and comments on a preliminary version of this manuscript. 

\section{Preliminaries}\label{section:preliminaries}

We first recall some properties of the main objects of study in Theorems~\ref{thm:new-minimal-surfaces} through~\ref{thm:smooth-interpolation}, namely one-phase free boundaries and isoparametric hypersurfaces.
The \textit{one-phase Bernoulli} problem seeks pairs $(u,\Omega)$ of a function $u$ and a domain $\Omega$ with the property that $\Omega = \{ u > 0 \}$ and
\begin{equation}\label{eqn:one-phase-problem}\tag{OP}
        \Delta u = 0 \; \text{ in } \; \Omega, \qquad u = 0 \quad \text{in } \; \Omega^{\text{\sffamily{C}}}, \qquad |\nabla u| = 1 \; \text{ on } \; \partial \Omega.
\end{equation}
Solutions of the problem~\eqref{eqn:one-phase-problem} are critical points of the \textit{Alt-Caffarelli functional} $\cJ(u) := \int_B |\nabla u|^2 + \chi_{ \{ u > 0 \} }$.
The one-phase Bernoulli problem is closely connected to the theory of minimal surfaces, with many constructions and theorems having direct counterparts.
In low dimensions, classical methods such as the Weierstrass representation and gluing techniques have led to classification results and new examples~\cites{ entire-hairpins, traizet , jerison-kamburov, n-dim-catenoid ,  hines-kolesar-mcgrath}.
We refer the reader to~\cites{ one-phase-simon-solomon , FTW-1 , FTW-stability-one-phase } for some recent results on the one-phase problem and its emergent connections to the theory of minimal surfaces via capillary interpolations.

An important phenomenon in the one-phase Bernoulli problem is that the energy $\cJ(u)$ can be formally recovered as the limit under rescaling of an appropriate capillary energy $\cA^{\theta}$ with $\theta \downarrow 0$.
Moreover, given a family of minimizers $u_{\theta}$ of $\cA^{\theta}$, a subsequence of $\frac{u_{\theta}}{\tan \theta}$ converges, as $\theta \downarrow 0$, to a minimizer $u_0$ of $\cJ$.
This idea was utilized in~\cite{improved-regularity} and further developed in~\cites{FTW-1 , new-minimal-surfaces}, notably producing the first examples of non-trivial families exhibiting this limit, with both minimizing and non-minimizing behaviors occurring.
Crucially, these constructions lay the groundwork for a unifying emerging picture: not only do minimal surfaces have analogues in one-phase theory, but minimal submanifolds and free boundaries in fact form the two endpoints of an interpolating family of capillary hypersurfaces.
Theorem~\ref{thm:capillary-interpolation} exhibits this phenomenon in a fairly general setting.

\subsection{Isoparametric hypersurfaces of the sphere}\label{subsec:isoparametric}

A function $F: \bS^{n-1} \to \bR$ is called \textit{isoparametric} if there exist smooth functions $a, b : \bR \to \bR$ such that
\begin{equation}\label{eqn:distance-properties}
\bigl|\nabla^{\bS^{n-1}} F \bigr|^2 = a(F), \qquad \Delta_{\bS^{n-1}} F = b(F).
\end{equation}
Geometrically, this property ensures that the level sets $M_s := F^{-1}(s)$ of the function $F$ produce a foliation (possibly singular) of $\bS^{n-1}$ with constant mean curvature and a local transverse coordinate.
Conversely, any CMC foliation by equidistant hypersurfaces is locally isoparametric.

The leaves of such a foliation have constant principal curvatures and are called \textit{isoparametric hypersurfaces} $M \subset \bS^{n-1}$.
By work of Cartan and M\"unzner~\cites{Cartan1939, cheese1, cheese2}, these are characterized by parameters $(g, m_1, m_2)$ where $g \in \{1,2,3,4,6\}$ is the number of distinct principal curvatures and $(m_1, m_2)$ are the multiplicities of the first two principal curvatures, while $m_i = m_{i+2}$ modulo $g$.
The parameters $(g, m_1, m_2)$ satisfy
\begin{equation}\label{eqn:counting-multiplicities}
    g( m_1 + m_2) = 2 (n-2).
\end{equation}
Let $M_s$ denote the parallel surface to $M$ at distance $s$, which is also isoparametric except for two endpoints, called the \textit{focal submanifolds}, of codimensions $m_1 + 1$ and $m_2+ 1$.
The surfaces $M_s$ foliate $\bS^{n-1}$ and precisely one $M_s$ is minimal, which we will distinguish as $M$.
By rescaling, every point in $\bR^{n}$ lies on a dilate of exactly one $M_s$.
A function on $\bS^{n-1}$ that is constant on each leaf of a foliation (resp. one-homogeneous on $\bR^n$ and constant along dilated leaves) is called \textit{$F$-invariant}.

A large class of isoparametric foliations arises from orbits of isotropy representations of Riemannian symmetric spaces of rank $2$.
Concretely, Let $G/K$ be a compact irreducible symmetric space of rank $2$ with Cartan decomposition $\fg = \fk \oplus \fp$.
The isotropy representation is the $K$-action on $\fp$, and restricting to the unit sphere $S(\fp)$ produces a cohomogeneity one action due to $\text{rank}(G/K) = 2$, hence the orbit decomposition produces an isoparametric foliation, which we call \textit{homogeneous}.
A point in the interior of a Weyl chamber gives a principal orbit $K/K_0$, thus a homogeneous isoparametric hypersurface, while points on the two chamber walls give the two singular orbits $K/K_{\pm}$, namely the focal manifolds with sphere bundle structures $K/K_0 \to K/K_{\pm}$.
The homogeneous class includes all foliations with $m_1=1$ or $m_2=1$, and is classified according to the rank-$2$ symmetric spaces in~\cite{hsiang-lawson-jdg}.
The construction of $G$-invariant minimal surfaces in the sphere is the main objective of the \textit{low cohomogeneity} program initiated by Hsiang, Hsiang, and Lawson~\cites{ hsiang-lawson-jdg , hsiang-hsiang ,  spherical-bernstein , spherical-bernstein-2 , hsiang-sterling }.

Summarizing the above results, we have $g \in \{ 1 , 2 , 3 , 4 , 6 \}$ with corresponding characteristic triples $(g,m_1,m_2)$, which produce the following isoparametric foliations and associated focal submanifolds:
\begin{enumerate}[$\bullet$]
\item $g=1$: The isoparametric foliation comes from the equatorial minimal sphere $\bS^{n-2} \subset \bS^{n-1}$ and its parallel surfaces, corresponding to the spherical suspension $\bS^{n-1} = \Sigma  \bS^{n-2}$ of the round sphere.
We denote $m_1 = m_2 = n-2$ and the focal points are the north and south poles.
\item $g=2$: The foliation is given by the Clifford hypertori $\bS^{n-k-1}(\sin s)\times \bS^{k-1}(\cos s)$ for $s \in (0,\frac{\pi}{2})$ with multiplicities $(m_1, m_2) = (n-k-1, k-1)$.
The focal submanifolds are the high codimension equatorial spheres $M_1 \cong \bS^{n-k-1}$ and $M_2 \cong \bS^{k-1}$ lying in complementary orthogonal subspaces. 
\item $g=3$: We have $m_1 = m_2 \in \{1,2,4, 8\}$, with focal submanifolds given by the two standard Veronese embeddings of a projective plane
$\bF \bP^2 \hookrightarrow \bS^{3m+1}$, related by the antipodal map.
Here, $\bF = \bR , \bC , \bH , \bO$ is one of the four real division algebras of dimension $m = 1,2,4,8$, respectively.
The regular leaves $\{ M_s \}_{s \in (0, \frac{\pi}{3})}$ of the associated foliation are the distance tubes around the focal submanifolds, given by one of the Wallach flag manifolds corresponding to $m = 1,2,4,8$,
\[
\textup{SO}(3) / (\bZ_2 \oplus \bZ_2) , \qquad \textup{SU}(3) / \bT^2, \qquad \textup{Sp}(3) / \textup{Sp}(1)^3, \qquad F_4 / \textup{Spin}(8).
\]
\item $g=4$:
Every isoparametric family is either of OT-FKM type~\cites{ferus-karcher-munzner,OTg4} or one of the two exceptional homogeneous families with multiplicity pairs $(m_1, m_2) \in \{ (2,2) , (4,5) \}$.

\smallskip \noindent \textbf{OT-FKM:}
For the OT-FKM family, $(m_1, m_2) = (m , k \, \delta(m) - m - 1)$, where $\delta(m)$ is the dimension of an irreducible $C_{m-1}$-module over the Clifford algebra with $(m-1)$ generators.
The values of $\delta(m)$ are computed using Bott periodicity as
\begin{equation}\label{eqn:delta(m)-values}
\delta ( 8k +j) = 2^{4k} \delta(j), \qquad \text{where} \quad \delta(1) = 1, \; \delta(2) = 2, \; \delta(3) = \delta(4) = 4, \; \delta(j) = 8, \; 5 \leq j \leq 8.
\end{equation}
The focal manifolds are expressible as sphere bundles
\[
M_1 \cong S(\eta)\to \bS^{k \delta(m)-1},
\qquad
M_2\cong S(\xi)\to \bS^{m},
\]
of vector bundles of rank $(k \delta(m) -m)$ over $\bS^{k\delta(m)-1}$ and rank $k \delta(m)$ over $\bS^m$, cf.~\cites{wang-topology-clifford , ojm}.
For example, when $m=1$, the foliation is homogeneous and has a focal submanifold $M_1\cong \bV(2,k \, \delta(m))$ where $\bV(k,n)$ is the Stiefel manifold of orthonormal $k$-frames in $\bR^n$, with
\begin{equation}\label{eqn:stiefel-manifold}
\bV(k,n) := \{ A \in \bR^{n \times k} : A^{\top} A = I_k \} \cong \textup{O}(n)/ \textup{O}(n-k) \cong \textup{SO}(n) / \textup{SO}(n-k), \quad \text{for } \; k < n.
\end{equation}
More generally, given a representation of the Clifford algebra $C_{m-1}$ on $\bR^{k \delta(m)}$ by skew-symmetric matrices $E_1, \dots, E_{m-1}$ satisfying $E_i E_j + E_j E_i = - 2 \delta_{ij}I$, we have $M_1 \cong \mathbb{V}(2, C_{m-1})$ where 
\begin{equation}\label{eqn:CliffordStiefel}
\bV(2, C_{m-1}) = \bigl\{ (u,v) \in \bS^{2 k \delta(m) - 1} : |u| = |v| = \tfrac{1}{\sqrt{2}}, \; \la u,v \rg = 0, \; \la E_i u, v \rg = 0, \; 1 \leq i \leq m-1 \}
\end{equation}
is called the \textit{Clifford-Stiefel manifold} of Clifford orthonormal $2$-frames, cf.~\cite{cecil}*{(11)}.

The OT-FKM isoparametric hypersurfaces, also called the \textit{Clifford family} due to their construction via Clifford algebras, form an infinite family that exhibits many interesting topological properties.
We refer the reader to~\cites{wang-topology-clifford , topology-g-4 } for a detailed presentation of these behaviors.

\smallskip \noindent \textbf{Homogeneous families:}
In the homogeneous exceptional cases, we have 
\begin{align*}
(m_1,m_2)=(2,2)&:\qquad
M_1\cong \bC\bP^3,
&&M_2\cong \widetilde{\bG}(2,5) \cong Q^3,\\
(m_1,m_2)=(4,5)&:\qquad
M_1\cong \textup{U}(5)/( \textup{Sp}(2)\times \textup{U}(1)),&&M_2\cong \textup{U}(5)/(\textup{SU}(2)\times \textup{U}(3)),
\end{align*}
where $\widetilde{\bG}(k,n)$ denotes the Grassmannian of oriented $k$-planes in $\bR^n$ and $Q^n$ denotes the complex quadric $n$-fold, given by $Q^n := \{ [z] \in \bC \bP^{n+1} : \la z, z \rg = 0 \}$.
This satisfies the diffeomorphism
\begin{equation}\label{eqn:oriented-grassmannian-quadric}
    \widetilde{\bG} (2 , n+2) \cong Q^n \cong \textup{SO}(n+2) / (\textup{SO}(n) \times \textup{SO}(2) ) \, .
\end{equation}
The two homogeneous foliations come from the rank $2$ symmetric spaces $G/K$ with $(G,K) = (\textup{SO}(5) \times \textup{SO}(5) / \Delta \textup{SO}(5) )$ and $(\textup{SO}(10), \textup{U}(5))$, whose principal orbits produce the regular leaves
\[
M_{(4,2,2)} \cong \textup{SO}(5) / \bT^2, \qquad M_{(4,4,5)} \cong \textup{U}(5)/ ( \textup{SU}(2) \times \textup{SU}(2) \times \textup{U}(1))
\]
where $\bT^2$ is the maximal torus in $\textup{SO}(5)$.

\item $g=6$: We have $m_1 = m_2 \in \{ 1, 2\}$.
For $m_1 = 1$, the unique isoparametric foliation is homogeneous, given by the isotropy representation of $G_2 / \textup{SO}(4)$, with leaves $\{ M_s \} \subset \bS^7$ arising as inverse images under the Hopf fibration $\bS^7 \to \bS^4$ of the foliation $(\tilde{M}_s)$ of $\bS^4$ with $(g,m_1,m_2) = (3,1,1)$, coming from the Veronese embedding of $\bR \bP^2$~\cites{dorfmeister-neher , siffert-PAMS}.
The two focal submanifolds are non-congruent minimal homogeneous embeddings of $\bR \bP^2 \times \bS^3 \hookrightarrow \bS^7$ with different Ricci spectra, hence non-isometric~\cites{cecil , topology-g-4 }.
The regular leaf is $M \cong \textup{SO}(4) / \bZ_2^{\oplus 2}$, cf.~\cite{miyaoka}.

For $m_1 =2$, the only known example in $\bS^{13}$ is homogeneous and comes from the adjoint representation of $G_2$ on $\fg_2$, namely the isotropy representation of $(G_2 \times G_2)/ \Delta G_2$, with regular leaf $G_2/(\textup{U}(1) \times \textup{U}(1)) \cong G_2/\bT^2$, the flag manifold of $G_2$.
The focal leaves are respectively diffeomorphic to maximally parabolic quotients $G_2/ \textup{U}(2)^{\pm}$, where $\textup{U}(2)^{\pm}$ are the two non-conjugate subgroups of $G_2$ such that $G_2/\textup{U}(2)^+$ is the twistor space of $G_2/ \textup{SO}(4)$ and $G_2/\textup{U}(2)^- $ is the reduced $G_2$-twistor space of $\bS^6 = G_2/ \textup{SU}(3)$.
The latter manifold can be identified with the complex quadric $Q^5$, which satisfies the diffeomorphisms of~\eqref{eqn:oriented-grassmannian-quadric}.
Moreover, $Q^5 \cong \bP_{\bC} ( T \bS^6)$ is the projectivization of the tangent bundle of $\bS^6$ viewed as a complex rank-$3$ bundle.
The classification of isoparametric hypersurfaces with $(g,m) = (6,2)$ appears to not be fully understood~\cites{miyaoka , siffert-AGAG}.
\end{enumerate} 
Each isoparametric hypersurface $M_s$ separates the sphere $\bS^{n-1}$ into two connected components $D_1, D_2$ with $D_1 \cup D_2 = \bS^{n-1}$ and $D_1 \cap D_2 = M_s$, which are disk bundles over the focal manifolds $M_1, M_2$.
Taking $M_i$ to have codimension $m_i + 1$, we obtain the disk bundles
\[
\bD^{m_1 + 1} \to D_1 \to M_1, \qquad \bD^{m_2 + 1} \to D_2 \to M_2.
\]
The volume density of the leaves $M_s$ is computed as 
\begin{equation}\label{eqn:isoparametric-volume-density}
v(s) = C \left( \sin \tfrac{gs}{2} \right)^{m_1} \left( \cos \tfrac{gs}{2} \right)^{m_2},
\end{equation}
so the mean curvature of the leaf $M_{s}$ is constant and given by
\begin{equation}\label{eqn:h-sigma}
\cH(s) = - \frac{d}{d\sigma}\bigg\rvert_{\sigma = s} \log v(\sigma) = \frac{g}{2} \left( m_2 \tan \frac{g s}{2} - m_1 \cot \frac{gs}{2} \right).
\end{equation}
For any $F$-invariant function $\phi = \phi(s)$ on $\bS^{n-1}$ and $g_s$ the induced metric on $M_s$, we have
\begin{equation}\label{eqn:laplacian-of-f}
\Delta_{\bS^{n-1}} \phi = \phi'' + ( \partial_s \log \sqrt{\det g_s} ) \phi' =  \phi'' - \cH \phi' = \phi'' + \tfrac{v'}{v} \phi'.
\end{equation}
We refer the reader to~\cite{siffert-wuzyk}*{\S 2.2} for important properties of isoparametric hypersurfaces and a more detailed summary of their classification.

\subsection{The minimal surface equation}\label{sec:minimal-surface-equation}
 
Consider the Euclidean space $\bR^{n+1}_+ = \{ (x,z) \in \bR^n \times \bR : z \geq 0 \}$ with boundary plane $\Pi = \{ z = 0 \}$ and let $\bS^n_+ := \bS^n \cap \{ z \geq 0 \}$ denote the upper hemisphere.
For $M \subset \bS^{n-1}$ an isoparametric hypersurface as in Section~\ref{subsec:isoparametric}, we define $s$ to be the natural parameter given by the normal distance to the first focal submanifold, so $\{ M_s \}_{s \in (0,s_g)}$ are the parallel leaves with $s=0$ and $s_g = \frac{\pi}{g}$ corresponding to the focal submanifolds $M_1$ and $M_2$.

A cone $\mathbf{C} \subset \bR^{n+1}_+$ is a capillary minimal cone with contact angle $\theta$ along $\Pi$ if and only if its link $\Sigma := \mathbf{C} \cap \bS^n_+$ is a minimal hypersurface with boundary lying in the equator $\bS^{n-1} = \bS^n_+ \cap \Pi$ and meeting the container plane $\Pi$ at a constant angle $\theta$.
In the free-boundary case $\theta = \frac{\pi}{2}$, doubling across the equator produces a closed embedded $\bZ_2$-symmetric (reflection-invariant) minimal hypersurface in the sphere.
We will examine $F$-invariant cones $\mathbf{C} \subset \bR^{n+1}_+ = \bR^n \times \bR_{\geq 0}$ expressed as the graphs of $U(\rho, \omega) = \rho \phi( s (\omega))$, where $\rho = |x|$ and $s$ is the parameter along the isoparametric foliation:
\[
\mathbf{C} = \bigl\{ ( \rho \omega , \rho \phi(s) ) : \rho \geq 0, \; \omega \in M_s, \; s \in I \bigr\},
\]
where $I \subset [0,\frac{\pi}{g}]$ is an interval.
The spherical link is given by
\begin{equation}\label{eqn:spherical-link-of-Sigma}
\Sigma = \mathbf{C} \cap \bS^n_+ =  \left\{ \left( \frac{\omega}{\sqrt{1 + \phi(s)^2}}, \frac{\phi(s)}{\sqrt{1 + \phi(s)^2}} \right) : \omega \in M_s, \; s \in I  \right\} \subset \bS^n_+ \, .
\end{equation}
A more convenient parametrization is given by the meridional variable $t := \sin \frac{gs}{2}$, for which the volume element~\eqref{eqn:isoparametric-volume-density} corresponds to the weighted measure
\[
v(s) \, ds = \tfrac{2}{g} C t^{m_1} (1-t^2)^{\frac{m_2 - 1}{2}} \, dt, \qquad t = \sin \tfrac{gs}{2}, \qquad dt = \tfrac{g}{2} \sqrt{1-t^2} \, ds.
\]
Let $f(t) := \phi(s)$ be the corresponding function in~\eqref{eqn:spherical-link-of-Sigma}, defined on an interval $J = (t_1, t_2) \subset [0,1]$.

\begin{definition}\label{def:linear-legendre-operator}
We denote by $\cL_{M}$ the linear Legendre-type operator with self-adjoint expression
\begin{equation}\label{eqn:self-adjoint-legendre}
\begin{split}
    \cL_{M} f &:= (1-t^2) f'' + \bigl( \tfrac{m_1}{t} - (m_1 + m_2 + 1) t \bigr) f' + \tfrac{4(n-1)}{g^2} f \\
    &\;= \frac{1-t^2}{p_{M}(t)} \left[ \bigl( p_{M}(t) f'(t) \bigr)' + \frac{4 (n-1) }{g^2} \frac{p_{M} (t)}{1 -t^2} f(t)  \right] ,
\end{split}
\end{equation}
where $p_{M}(t) = t^{m_1} (1 - t^2)^{\frac{m_2 + 1}{2}}$.
We also define the function 
\begin{equation}\label{eqn:A-gm1m2}
    A_{M}(t) := \tfrac{2}{g(m_1+ m_2)} \sqrt{1-t^2} \, \cH(t(\omega)) = t - \tfrac{m_1}{m_1 + m_2} t^{-1} .
\end{equation}
\end{definition}
The operator $\cL_M$ is related to the Laplacian in $\bS^{n-1}$ applied to functions that are invariant along an isoparametric foliation with parameters $(g,m_1,m_2)$, and it also arises as the linearization of the following nonlinear equation \eqref{eqn:ode-star}.

\begin{proposition}\label{prop:mean-curvature-equation}
The hypersurface $\Sigma = \mathbf{C} \cap \bS^n_+$ has constant mean curvature $H_0$ if and only if the function $f$ is positive on an interval $(t_1, t_2) \subset (0,1)$ and satisfies the equation
\begin{equation}\tag{$\textup{C}\star$}\label{eqn:CMC-equation}
        \begin{split}
            &(1-t^2) f'' + (f - tf') + \frac{4-g^2}{g^2} f + (n-2) \left( \frac{4}{g^2} + (1-t^2) \frac{(f')^2}{1+f^2} \right) \left( f - \frac{g}{2} A_M(t) f' \right) \\
            &\;\;\quad = -\frac{4 H_0}{g^2} \left( 1 + \frac{g^2}{4} (1-t^2) \frac{(f')^2}{1+f^2} \right)^{\frac{3}{2}} \, .
        \end{split}
    \end{equation}
In particular, the cone $\mathbf{C}$ is minimal if and only if $f$ satisfies
\begin{equation}\label{eqn:ode-star}\tag{$\star$}
	(1-t^2) f'' + (f - tf') + \frac{4-g^2}{g^2} f + (n-2) \left( \frac{4}{g^2} + (1-t^2) \frac{(f')^2}{1+f^2} \right) \left( f - \frac{g}{2} A_{M}(t) f' \right) = 0.
\end{equation}
If $t_1 \neq 0$ and $t_2 \neq 1$, then $\Sigma$ has capillary free boundary with contact angle $\theta \in (0 , \frac{\pi}{2}]$ if and only if
\begin{equation}\label{eqn:capillary-angle-condition}
    \sqrt{1 - t_1^2} \, f'(t_1) = - \sqrt{1 - t_2^2} \, f'(t_2) = \tfrac{2}{g} \tan \theta
\end{equation}
where $f(t_1) = f(t_2) = 0$.
If $t_1 = 0$ or $t_2 = 1$, then $\Sigma$ extends smoothly across the focal submanifold $M_1$ (resp.~$M_2$) if and only if $f'(0) = 0$ (resp.~$f'(1) = 4 \frac{(n-1) f(1) + H_0}{g^2 (m_2 + 1)}$).
\end{proposition}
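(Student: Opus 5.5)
The plan is to derive \eqref{eqn:CMC-equation} directly from the graphical mean curvature equation for the one-homogeneous function $U(x)=\rho\,\phi(s(\omega))$ on $\bR^n$, then change variables $s\mapsto t=\sin\frac{gs}{2}$. The starting point is standard: the cone $\mathbf{C}$ over $\Sigma$ has mean curvature $H_{\mathbf{C}}(X)=H_0/|X|$, where $H_0$ is the mean curvature of the link $\Sigma\subset\bS^n$. At the point $X=(\rho\omega,\rho\phi)$ we have $|X|=\rho\sqrt{1+\phi^2}$. The graph of $U$ therefore satisfies
\[
\operatorname{div}_{\bR^n}\Bigl(\tfrac{\nabla U}{\sqrt{1+|\nabla U|^2}}\Bigr)=-\tfrac{H_0}{\rho\sqrt{1+\phi^2}},
\]
with the sign fixed by the orientation $(-\nabla U,1)$.

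\textbf{Step 1: reduce the divergence to an ODE in $s$.} Write the gradient in polar coordinates: $\nabla U=\phi\,\partial_\rho+\rho^{-1}\phi'(s)\,\nabla^{\bS^{n-1}}s$. Since $s$ is a distance function, $|\nabla^{\bS^{n-1}}s|=1$, so $|\nabla U|^2=\phi^2+\phi'^2$, which is independent of $\rho$. For the angular divergence of an $F$-invariant vector field, use \eqref{eqn:laplacian-of-f}, namely $\operatorname{div}(\psi(s)\nabla s)=\psi'+\tfrac{v'}{v}\psi$. For the radial part, use $\operatorname{div}(h\,\partial_\rho)=\rho^{-(n-1)}\partial_\rho(\rho^{n-1}h)$. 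Expanding the quotient derivatives produces a factor $(1+\phi^2+\phi'^2)^{-3/2}$. Multiplying through by $\rho(1+\phi^2+\phi'^2)^{3/2}$ gives an autonomous second-order ODE in $s$, whose right-hand side is $-H_0\,(1+\phi^2+\phi'^2)^{3/2}/\sqrt{1+\phi^2}$.

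\textbf{Step 2: change variables to $t$.} Substitute $\frac{d}{ds}=\frac g2\sqrt{1-t^2}\frac{d}{dt}$ and $\frac{v'}{v}=-\cH$, and rewrite $\cH$ using \eqref{eqn:h-sigma} and \eqref{eqn:A-gm1m2}. Use $g(m_1+m_2)=2(n-2)$ from \eqref{eqn:counting-multiplicities} to replace $m_1+m_2$ by $2(n-2)/g$. Finally divide by $\frac{g^2}{4}(1+f^2)$ and collect terms. This bookkeeping step is where I expect the main obstacle: the normalization must land exactly on the constants $\frac{4-g^2}{g^2}$, $(n-2)\frac4{g^2}$, and the $\frac32$-power on the right. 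A useful check is the case $g=1$ with $f$ linear in the height, where the link should be a totally geodesic sphere and the left-hand side should vanish.

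\textbf{Step 3: boundary behaviour.} For the capillary condition, at an interior endpoint $t_i$ with $f(t_i)=0$ we have $\nabla U=\phi'\nabla s$ on the boundary plane. The angle $\theta$ between the cone and $\Pi$ therefore satisfies $\tan\theta=|\phi'(s_i)|=\frac g2\sqrt{1-t_i^2}\,|f'(t_i)|$. The sign convention ($f>0$ inside $(t_1,t_2)$) then gives \eqref{eqn:capillary-angle-condition}.

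At the focal endpoints, smoothness across $M_1$, which has codimension $m_1+1$ and where $s$ is the distance to $M_1$, requires $\phi$ to extend evenly in $s$. Since $t\sim\frac g2 s$ near $0$, this is equivalent to $f'(0)=0$ for a solution smooth in $t$. Near $M_2$, evenness in $s_g-s$ corresponds to smoothness in $t$ near $1$, because $1-t\sim c(s_g-s)^2$. The ODE is regular singular at $t=1$ with exponents $0$ and $-\frac{m_2-1}{2}$, so smooth solutions are exactly those satisfying the limiting equation there. Setting $t=1$ in \eqref{eqn:CMC-equation} kills the $(1-t^2)$ terms, and with $A_M(1)=\frac{m_2}{m_1+m_2}$ and $(n-2)\frac{2}{g}\frac{m_2}{m_1+m_2}=m_2$ the equation becomes
\[
\tfrac{4(n-1)}{g^2}f(1)-(m_2+1)f'(1)=-\tfrac{4H_0}{g^2}.
\]
This is precisely the stated condition $f'(1)=4\frac{(n-1)f(1)+H_0}{g^2(m_2+1)}$.
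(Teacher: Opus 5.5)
Your proposal is correct and follows essentially the paper's route: the graphical mean curvature equation for $\rho\phi(s)$ in polar coordinates, the substitution $t=\sin\frac{gs}{2}$, the condition $|\nabla u|=\tan\theta$ on the free boundary, and evenness in the normal distance at the focal sets. Your divergence-form computation gives the same $s$-equation as the paper's non-divergence form $\Delta u - Q(u)/(1+|\nabla u|^2)$, and evaluating the $t$-equation directly at $t=1$ is just a shorter version of the paper's limit in $r=s_g-s$. The one step you should make explicit is that $f'(0)=0$ yields evenness, and hence smoothness across $M_1$, only because of the ODE itself, not because $f$ is smooth in $t$. This follows, e.g.\ from the invariance of the equation under $t\mapsto -t$ together with uniqueness of the regular branch at the singular point $t=0$; the paper's iterative even power series expresses the same fact.
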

\begin{proof}
The equation~\eqref{eqn:ode-star} is obtained using standard geometric properties for functions along an isoparametric foliation, as computed in~\cite{FTW-1}*{Proposition 3.2}.
We first compute the corresponding equation for the function $\phi(s)$ in~\eqref{eqn:spherical-link-of-Sigma}.
Let us express the points of $\bR^n \setminus \{ 0 \}$ as $x = \rho \omega$, where $\rho = |x|$ and $\omega \in \bS^{n-1}$.
Using the fact that the isoparametric parameter $s$ on $\bS^{n-1}$ is orthogonal to each leaf $M_s$, we can express the Euclidean metric as $g_{\bR^n} = d \rho^2 + \rho^2 ( ds^2 + g_s)$, for $g_s$ the induced metric on $M_s$.
We lift $s$ from $\bS^{n-1}$ to $\bR^n$ linearly by homogeneity.
For a function $u = u(\rho,s)$ that is constant on leaves, we take the orthonormal frame $\{ e_{\rho}, e_s \} = \{ \partial_{\rho}, \frac{1}{\rho} \partial_s \}$ to compute
\begin{equation}\label{eqn:grad-u-computation}
\nabla u = u_{\rho} e_{\rho} + \tfrac{1}{\rho} u_s e_s, \qquad \nabla_{e_{\rho}} e_j = 0, \qquad \nabla_{e_s} e_{\rho} = \tfrac{1}{\rho} e_s, \qquad \nabla_{e_s} e_s = - \tfrac{1}{\rho} e_{\rho}.
\end{equation}
We therefore obtain
\begin{align}
    |\nabla u|^2 &= u_{\rho}^2 + \tfrac{1}{\rho^2} u_s^2, \qquad \Delta_{\bR^n} u = u_{\rho \rho} + \tfrac{n-1}{\rho} u_{\rho} + \tfrac{1}{\rho^2} ( u_{ss} - \cH(s) u_s ), \label{eqn:modNabluaUsquare}\\
    D^2 u (e_{\rho}, e_{\rho}) &= u_{\rho \rho}, \quad D^2 u (e_{\rho}, e_s) = \partial_{\rho} \bigl( \tfrac{u_s}{\rho} \bigr) = \tfrac{1}{\rho} u_{\rho s} - \tfrac{1}{\rho^2} u_s, \quad D^2 u(e_s, e_s) = \tfrac{1}{\rho^2} u_{ss} + \tfrac{1}{\rho} u_{\rho}.
\end{align}
Combining these computations with~\eqref{eqn:grad-u-computation}, we arrive at
\begin{equation}\label{eqn:Q(u)-expression}
Q(u) := D^2 u(\nabla u, \nabla u) = u_{\rho}^2 u_{\rho \rho} + \tfrac{2}{\rho^2} u_{\rho} u_s u_{\rho s} - \tfrac{1}{\rho^3} u_{\rho} u_s^2 + \tfrac{1}{\rho^4} u_s^2 u_{ss}.
\end{equation}
When $u = \rho \phi(s)$ is a homogeneous function, the above computations specialize to
\[
\rho \, \Delta u = \phi'' - \cH \phi' + (n-1) \phi, \qquad |\nabla u|^2 = \phi^2 + (\phi')^2, \qquad \rho \, Q(u) = \sum u_{ij} u_i u_j = (\phi')^2 ( \phi'' + \phi).
\]
We denote by $\cM(u)$ the minimal surface operator for graphs over $\bR^n$, so that
\begin{equation}\label{eqn:MSE-for-graphs}
    \sqrt{ 1 + |\nabla u|^2 }  \cM(u) = \sqrt{ 1 + |\nabla u|^2 }  \on{div} \left( \frac{\nabla u}{\sqrt{1 + |\nabla u|^2}} \right) = \Delta u - \frac{Q(u)}{1 + |\nabla u|^2}.
\end{equation}
The cone $\mathbf{C}$ over the hypersurface $\Sigma$ has Euclidean mean curvature $H_{\mathbf{C}}(p) = |p|^{-1} H_{\Sigma}(\frac{p}{|p|})$; in our situation, $\cM(u) = - H[ \on{graph} u]$ is the negative of the scalar mean curvature with respect to the upward normal.
The graph of $\rho \phi$ consists of points $p = (\rho \omega, \rho \phi)$, so $|p| = \rho \sqrt{1+\phi^2}$ and $H_{\Sigma} = H_0$ becomes equivalent to the equation $\cM(\rho \phi) =- \frac{H_0}{\rho \sqrt{1+ \phi^2}}$.
Combining the above expressions shows that this condition is equivalent to the equation
\begin{equation}\label{eqn:equation-for-phi(s)}
    (1 + \phi^2)^{\frac{3}{2}} (\phi'' - \cH \phi' + (n-1) \phi ) + (\phi')^2 (1 + \phi^2)^{\frac{1}{2}}\bigl( (n-2) \phi - \cH \phi' \bigr) = -H_0  ( 1+\phi^2 + (\phi')^2)^{\frac{3}{2}}.
\end{equation}
Taking $t = \sin \frac{gs}{2}$ as above, we write $\phi(s) = f(t)$, so that
\begin{equation}\label{eqn:phi-phi''-H-of(t)}
    \phi' = \frac{g}{2} \sqrt{1-t^2} f', \qquad \phi'' = \frac{g^2}{4} \bigl( (1-t^2) f'' - tf' \bigr), \qquad \cH(t(s)) = \frac{g}{2} \frac{(m_1+ m_2) t - \frac{m_1}{t}}{\sqrt{1-t^2}} \, .
\end{equation}
Substituting these expressions into the equation~\eqref{eqn:equation-for-phi(s)} produces the claimed ODE~\eqref{eqn:CMC-equation}.
The $F$-invariant minimal surface equation~\eqref{eqn:ode-star} follows from this expression upon taking $H_0 = 0$.

To study the boundary conditions for $f$, let $\Omega = \{\rho \omega : \rho > 0, s \in (s_1, s_2), \omega \in M_s\}$ where $\phi(s_i) = 0$.
The graph of $u$ has upward-pointing normal vector given by $\nu = \frac{( - \nabla u, 1)}{\sqrt{1 + |\nabla u|^2}}$, so $\la \nu, e_z \rg = \frac{1}{\sqrt{1 + |\nabla u|^2}}$.
Along the free boundary $\partial \{ u > 0 \}$, this graph forms a contact angle $\theta$ if and only if $\la \nu, e_z \rg = \cos \theta$, so $|\nabla u| = \tan \theta$; for $u = \rho \phi(s)$, the computation~\eqref{eqn:modNabluaUsquare} makes $|\nabla u| = |\phi'(s_i)|$ on $\partial \{ u > 0\}$.
The inward pointing conormal at the boundary is $e_s$ at $s_1$ and $-e_s$ at $s_2$, so we require $\phi'(s_1) = \tan \theta$ and $\phi'(s_2) = - \tan\theta$.
Applying $\phi' = \tfrac{g}{2}\sqrt{1-t^2}f'(t)$, we obtain the capillary boundary condition~\eqref{eqn:capillary-angle-condition}.

Regarding the smooth extendability of $\Sigma$ across the focal submanifolds, when $t=0$, the singularity of the term $A_M(t)$ requires $f'(0) = 0$.
Conversely, using $f'(0) = 0$ in equation~\eqref{eqn:CMC-equation} implies that $f''(0) = - 4\frac{H_0 + (n-1) f(0)}{g^2 (m_1 +1)}$, and computing iteratively shows that $f(t) = \sum_{k=0}^{\infty} b_k t^{2k}$ has a series expansion in terms of even powers of $t$, so $f(t) = F(t^2)$ near $t=0$.
This implies that the equation~\eqref{eqn:CMC-equation} becomes regular in the variable $t^2$, so $\Sigma$ has a smooth extension across $M_1$.

For $M_2$ and $t_2 = 1$, we define $\tilde{f}(r) := f( \cos \frac{gr}{2})$ in terms of the geodesic normal distance $r := s_g - s \geq 0$ from $M_2$, where $s_g = \frac{\pi}{g}$.
Then, the smooth extension of $\Sigma$ across the focal manifold $M_2$ is equivalent to $\tilde{f}$ being smooth and even near $r=0$, so that $\tilde{f}_r(0) = 0$ and $\frac{\tilde{f}_r(r)}{r} \to \tilde{f}_{rr}(0)$ as $r \to 0$.
We can expand the functions in terms of the geodesic normal distance $r = s_g - s$ to $M_2$,  
\begin{equation}\label{eqn:t-expansion-r}
t = \sin \tfrac{gs}{2} = \sin ( \tfrac{\pi}{2} - \tfrac{gr}{2}) = 1 - \tfrac{g^2}{8} r^2 + O(r^4), \qquad 1 - t = \tfrac{g^2}{8} r^2 + O(r^4).
\end{equation}
In terms of $r$, the mean curvature $\cH$ of the leaves becomes
\[
\tilde{\cH}(r) := \cH(s_g - r) = \tfrac{g}{2} \bigl( m_2 \cot \tfrac{gr}{2} - m_1 \tan \tfrac{gr}{2} \bigr) = m_2 r^{-1} + O(r)
\]
due to $\cot \frac{gr}{2} = \frac{2}{gr} + O(r)$ and $\tan \frac{gr}{2} = O(r)$ near $r=0$.
We therefore find $\tilde{\cH}(r) \tilde{f}_r(r) = ( \frac{m_2}{r} + O(r)) \tilde{f}_r(r) \to m_2 \tilde{f}_{rr}(0)$ and $\tilde{f}_r(r), \tilde{\cH}(r) \tilde{f}_r(r)^2 \to 0$.
Therefore, using the change of variables~\eqref{eqn:t-expansion-r} in $r$ in equation~\eqref{eqn:CMC-equation} and taking limits as $r \to 0$ forces $(m_2 + 1) \tilde{f}_{rr}(0) + (n-1) \tilde{f}(0) = -H_0$ after canceling a factor of $1+ \tilde{f}(0)^2$.
Finally, the chain rule~\eqref{eqn:t-expansion-r} shows that
\[
\tilde{f}_r(r) = - \tfrac{g}{2} f' \bigl( \cos \tfrac{gr}{2} \bigr) \sin \tfrac{gr}{2}, \qquad \tfrac{\tilde{f}_r(r)}{r} = - \tfrac{g}{2} f' \bigl( \cos \tfrac{gr}{2} \bigr) \tfrac{\sin ( \frac{gr}{2})}{r} \to - \tfrac{g}{2} f'(1) \cdot \tfrac{g}{2} = - \tfrac{g^2}{4} f'(1).
\]
Consequently, $\tilde{f}_{rr}(0) = - \frac{g^2}{4} f'(1)$ and $\tilde{f}(0) = f(1)$, so the above condition on $\tilde{f}_{rr}(0)$ rearranges to $f'(1) = 4 \frac{(n-1) f(1) + H_0}{g^2(m_2 + 1)}$, proving the necessary condition.
To see that this condition is sufficient, recall that for hypersurfaces that are invariant along leaves of the isoparametric foliation, smooth extension across $M_2$ is equivalent to the profile being a smooth function of $r^2$.
The function $1-t$ is a smooth quadratic defining function for the focal submanifold $M_2$, so~\eqref{eqn:t-expansion-r} together with $f'(1)= 4 \frac{(n-1) f(1) + H_0}{g^2 (m_2 + 1)}$ produce a regular Taylor expansion in $1-t$, therefore in $r^2$, near $M_2$; this proves the smooth extension of $\Sigma$. 
\end{proof}

We will focus on the minimal surface equation~\eqref{eqn:ode-star} for the majority of the paper, where $H_0 = 0$.
In Section~\ref{section:CMC-in-the-sphere}, we discuss the adaptations needed to extend Theorem~\ref{thm:new-minimal-surfaces} to the CMC setting.

For the spherical suspension and Clifford hypertori examples, we can explicitly compute the volume forms and recover the equations in~\cites{FTW-1, new-minimal-surfaces}.
The volume of the suspension metric $dr^2 + \sin^2(r) g_{\bS^{n-2}}$ is $\sin(r)^{n-2}\omega_{n-2}$ which in this framework produces
\[ 
v(s) = C \bigl( \sin \tfrac{s}{2} \bigr)^{n-2} \bigl(\cos \tfrac{s}{2} \bigr)^{n-2} = 2^{2-n} \, C  (\sin s)^{n-2}.
\]
Notably, this expression recovers equation $\cL_{\bS^{n-2}} f =0$ (denoted $\cL_{n,n-2}$ in~\cite{FTW-1}), the ODE for the axisymmetric De Silva-Jerison cone~\cite{desilva-jerison-cones}.
More generally, the isoparametric leaves $M_s \subset \bS^{n-1}$ generated by the Clifford hypersurfaces have volume form
\[
v_{n,k}(s) = C (\sin s)^{n-k-1} ( \cos s)^{k-1},
\]
which recovers the Legendre operator $\cL_{\bS^{n-k-1}\times \bS^{k-1}}$ studied in~\cite{FTW-1}*{Definition 7} denoted $\cL_{n,k}$ there.
On the other hand, the nonlinear equation~\eqref{eqn:ode-star} does not admit explicit algebraic solutions when $g \geq 3$.
Unlike the cases $g \in \{ 1,2 \}$ with $m_1 = n-k-1 \in [1,n-2]$, where $f(t) = \sqrt{\frac{k - (n-1) t^2}{n-k-1}}$ produces a free boundary solution corresponding to the halved Clifford hypersurface $\bS^{n-k-1} \times \bS^k$ in $\bS^n$, no such description is possible when $g \geq 3$.

On the other hand, for $g=1$ the $F$-invariant surfaces of the spherical foliation are rotationally invariant, and therefore correspond to the $\textup{O}(n-1)$-invariant (axisymmetric) profile equation~\eqref{eqn:axisymmetric-equation-fake} obtained in the construction of area-minimizing capillary cones of~\cite{FTW-1}*{\S 3}, namely
\begin{equation}\label{eqn:axisymmetric-equation-fake}
    f'' + (f - tf') \Bigl( \frac{n-1}{1-t^2} + (n-2) \frac{(f')^2}{1 + f^2} \Bigr) = 0.
\end{equation}
The equation~\eqref{eqn:axisymmetric-equation-fake} is equivalent to~\eqref{eqn:ode-star} under a change of coordinates in $\bR^n$, expressing $x = (x', x_n) \in \bR^{n-1} \times \bR$ and setting $t := \frac{x_n}{\sqrt{|x'|^2 + x_n^2}}$ as in the notation of Theorem~\ref{thm:uniqueness-of-axisymmetric}.
In this case, the unique free-boundary cone is given by the half-Lawson cone $C(\bS^{n-2} \times \bS^1_+)$ with profile $\hat{f}_{n,1}(t) = \sqrt{\frac{1 - (n-1) t^2}{n-2}}$, see~\cite{FTW-1}*{\S 3.3}, while $f = 0$ produces the plane $\Pi \subset \bR^{n+1}$ with link the equatorial sphere $\bS^{n-1} \subset \bS^n$ and $f = t_+$ corresponds to the half-space solution $u(x) = (x_n)_+$.
Notably, there exists only one non-trivial solution to equation~\eqref{eqn:axisymmetric-equation-fake}, resulting in a topological torus $M \times \bS^1 = \bS^{n-2} \times \bS^1$.
Under the transformation between ambient and isoparametric coordinates, the function $\hat{f}_{n,1}(t)$ corresponds to the unique solution of equation~\eqref{eqn:ode-star} given by
\begin{equation}\label{eqn:clifford-solution}
\bar{f}_{\bS^{n-2}}(t) = \sqrt{\tfrac{4}{n-2} t^2 (1-t^2) - (1 - 2t^2)^2} \, . 
\end{equation}
We therefore see that the axisymmetric equation~\eqref{eqn:axisymmetric-equation-fake} has different properties from~\eqref{eqn:ode-star}, so we consider $g \geq 2$ in what follows and prove the uniqueness of axisymmetric cones in Section~\ref{section:axisymmetric}.

To examine the capillary cone equation~\eqref{eqn:ode-star} alongside the linear problem $\cL_{M} f = 0$ in a unified way, we consider the equation satisfied by the function $f_{\lambda} := \frac{f}{\sqrt{\lambda}}$.
This rescaling has the effect of changing the nonlinear term of~\eqref{eqn:ode-star} by a factor $\lambda$, producing the equation
\begin{equation}\label{eqn:rescaled-equation-lambda-ODE}\tag{$\star_{\lambda}$}
    \cL_{M} f +(n-2) (1-t^2) \frac{\lambda (f')^2}{1 + \lambda f^2} \left( f - \frac{g}{2} A_{M} (t) f' \right) = 0.
\end{equation}
The study of equation~\eqref{eqn:rescaled-equation-lambda-ODE} is motivated by the limiting behavior of the capillary and one-phase problems presented in Section~\ref{section:preliminaries}; indeed, the one-phase problem $\cL_M f = 0$ is recovered in the limit $\lambda \downarrow 0$.
We refer the reader to~\cites{desilva-jerison-cones , one-phase-simon-solomon , FTW-1 , FTW-stability-one-phase , six-way} for further analysis of the resulting homogeneous solutions.

\begin{lemma}\label{lemma:change-of-variables}
For given $(g, m_1, m_2)$, the function $f(t)$ solves equation~\eqref{eqn:rescaled-equation-lambda-ODE} if and only if the function $f (\sqrt{1-t^2})$ solves this equation for $(g, m_2, m_1)$.
Any capillary cones produced by solving this equation with the appropriate free boundary condition are isometric via the involution $t \leftrightsquigarrow \sqrt{1-t^2}$; in particular, they form the same contact angle.
The same property holds for solutions of equation~\eqref{eqn:CMC-equation} and for the capillary CMC surfaces produced by their graphs.
\end{lemma}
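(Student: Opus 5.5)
The cleanest route is geometric rather than computational. The involution $t \mapsto \sqrt{1-t^2}$ corresponds to $s \mapsto \frac{\pi}{g} - s$, which swaps the roles of the two focal manifolds. So I will show that the ODE for $\phi(s)$ in~\eqref{eqn:equation-for-phi(s)} is invariant under this reflection together with $m_1 \leftrightarrow m_2$. I will then transfer the statement to $f$ through the substitution $t = \sin\frac{gs}{2}$.

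\textbf{Step 1: invariance of the $\phi$-equation.} Set $r = \frac{\pi}{g} - s$ and $\tilde\phi(r) = \phi(s)$. Then $\tilde\phi' = -\phi'$ and $\tilde\phi'' = \phi''$. The mean curvature~\eqref{eqn:h-sigma} satisfies
\[
\cH_{(m_1,m_2)}\bigl(\tfrac{\pi}{g} - r\bigr) = \tfrac{g}{2}\bigl(m_2 \cot\tfrac{gr}{2} - m_1 \tan\tfrac{gr}{2}\bigr) = -\cH_{(m_2,m_1)}(r).
\]
Hence $\cH\phi'$ is invariant under the swap. Every other term in~\eqref{eqn:equation-for-phi(s)} involves $\phi'$ only through $(\phi')^2$. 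The number $n$ is unchanged, since $g(m_1+m_2) = 2(n-2)$ is symmetric in $(m_1,m_2)$. Therefore $\phi$ solves the CMC equation for $(g,m_1,m_2)$ if and only if $\tilde\phi$ solves it for $(g,m_2,m_1)$, with the same $H_0$.

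\textbf{Step 2: translation to $f$.} With $t = \sin\frac{gs}{2}$ we get $\sin\frac{gr}{2} = \cos\frac{gs}{2} = \sqrt{1-t^2}$. Thus $\tilde\phi(r) = \tilde f(\sin\frac{gr}{2})$ with $\tilde f(\tau) = f(\sqrt{1-\tau^2})$. Since~\eqref{eqn:CMC-equation} was derived from~\eqref{eqn:equation-for-phi(s)} by exactly this substitution (Proposition~\ref{prop:mean-curvature-equation}), the equivalence passes to~\eqref{eqn:CMC-equation} and, with $H_0 = 0$, to~\eqref{eqn:ode-star}.

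\textbf{Step 3: the rescaled equation.} For~\eqref{eqn:rescaled-equation-lambda-ODE}, note that $f$ solves it if and only if $\sqrt{\lambda}\, f$ solves~\eqref{eqn:ode-star}, and this rescaling commutes with composing by $\sqrt{1-t^2}$. The linear case $\lambda = 0$ follows the same way by linearizing. Alternatively, one can observe directly that $\cL_M$ is conjugated by the same reflection, because the density $v(s)$ in~\eqref{eqn:isoparametric-volume-density} is symmetric under $s \mapsto \frac{\pi}{g} - s$ with $m_1 \leftrightarrow m_2$.

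\textbf{Step 4: geometry and contact angle.} The reflected foliation has the same leaves $M_s$, relabeled by $r$, because the foliation viewed from $M_2$ is the isoparametric foliation with multiplicities swapped. The parametrization~\eqref{eqn:spherical-link-of-Sigma} therefore produces the identical point set: $\Sigma$ built from $\tilde f$ is $\Sigma$ built from $f$. So the cones coincide, and in particular they are isometric.

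The contact angle is intrinsic to $\Sigma$, so it is preserved. One can also check this directly: $|\phi'(s_i)| = |\tilde\phi'(r_i)| = \tan\theta$. In the variable $t$, with $\tilde f'(\tau) = -\frac{\tau}{\sqrt{1-\tau^2}}\, f'(\sqrt{1-\tau^2})$, this gives $\sqrt{1-\tau^2}\,\tilde f'(\tau) = -\tau f'(t)$, where $t = \sqrt{1-\tau^2}$ and $\tau = \sqrt{1-t^2}$. So the two conditions in~\eqref{eqn:capillary-angle-condition} exchange, and the endpoints $t_1, t_2$ swap roles.

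\textbf{Main obstacle.} The only delicate point is making sure that the derivation of~\eqref{eqn:CMC-equation} from~\eqref{eqn:equation-for-phi(s)} is a faithful equivalence on $(0,1)$. The substitution $t = \sin\frac{gs}{2}$ is a diffeomorphism there, so nothing is lost in either direction. Working in the $s$-variable avoids expanding $A_M$ under $t \mapsto \sqrt{1-t^2}$ by hand.
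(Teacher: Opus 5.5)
Your proposal is correct and follows essentially the same route as the paper: reflect $s \mapsto \frac{\pi}{g} - s$, use $\cH_{(m_1,m_2)}(\frac{\pi}{g}-r) = -\cH_{(m_2,m_1)}(r)$ to see that every term of the $\phi$-equation is preserved, then pass to $t = \sin\frac{gs}{2}$ and match the boundary conditions through $\sqrt{1-t^2}\, f'(t) = -\sqrt{1-\tilde t^2}\, \tilde f'(\tilde t)$. Your additions are correct refinements of points the paper leaves implicit: the $\sqrt{\lambda}$-rescaling argument for the rescaled equation, and the observation that the two cones coincide as point sets because the $(g,m_2,m_1)$ foliation is the same foliation measured from $M_2$.
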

\begin{proof}
Working with the function $\phi(s)$ as in Proposition~\ref{prop:mean-curvature-equation}, let $\tilde{s} := s_g - s = \frac{\pi}{g} - s$ and $\tilde{\phi}(\tilde{s}) := \phi(s_g - s)$, so that $\tilde{\phi}'(\tilde{s}) = - \phi'(s) , \tilde{\phi}''(\tilde{s}) = \phi''(s)$.
For the foliation with multiplicities interchanged, the mean curvature is $\tilde{\cH}(\tilde{s}) = \frac{g}{2} ( m_1 \tan \frac{g \tilde{s}}{2} - m_2 \cot \frac{g \tilde{s}}{2}) = - \cH(s)$ due to $\frac{g \tilde{s}}{2} = \frac{\pi}{2} - \frac{gs}{2}$, so $\tilde{\phi}'' - \tilde{\cH} \tilde{\phi}' = \phi'' - \cH \phi'$ and all the terms of equation~\eqref{eqn:equation-for-phi(s)} are preserved.
The variable $\tilde{s}$ corresponds to $\tilde{t} := \sqrt{1-t^2}$ for $t = \sin \frac{gs}{2}$, and the boundary conditions on $(t_1,t_2)$, $(0,t_2)$, and $(t_1,1)$ clearly correspond to analogous boundary conditions on $(\tilde{t}_2,\tilde{t}_1)$, $(\tilde{t}_2,1)$ and $(0,\tilde{t}_1)$, since $\sqrt{1-t^2} f'(t) = -\sqrt{1-\tilde{t}^2} \tilde{f}'(\tilde{t})$.
\end{proof}

When $g \in \{1,3,6 \}$, we always have $m_1=m_2$, see \cites{Cartan1939, Abresch}; when $g=2$, we have $m_1=m_2$ when the isoparametric hypersurfaces are the product of two spheres of the same dimension.
For Type-II solutions in these cases, the symmetry $t \leftrightsquigarrow \sqrt{1-t^2}$ implies that $f'(\frac{1}{\sqrt{2}})=0$ and that $f$ reaches its peak at $\frac{1}{\sqrt{2}}$.
On the other hand, the isoparametric hypersurface $M_s$ corresponding to $\sin \frac{gs}{2} =: t = \frac{1}{\sqrt{2}}$ is the distinguished \emph{minimal} hypersurface, due to \eqref{eqn:h-sigma} and the condition $m_1=m_2$.
These properties will be used in the proof of the smooth interpolation Theorem~\ref{thm:capillary-interpolation}.

\section{Topology of the minimal surfaces}\label{section:topology}

We now discuss the topological properties of the links $\Sigma_{M_i, \theta}, \bar{\Sigma}_{M,\theta}$ of the capillary cones constructed in Theorem~\ref{thm:capillary-interpolation}.
Applying this analysis to the new minimal surfaces $\mathbf{S}_{M_1}, \mathbf{S}_{M_2}, \bar{\mathbf{S}}_M$ of Theorem~\ref{thm:new-minimal-surfaces}, we obtain examples with novel, sophisticated topologies in the sphere.

As demonstrated in~\cites{FTW-1, new-minimal-surfaces}, there are two regimes of profile curves $f$ solving equation~\eqref{eqn:rescaled-equation-lambda-ODE} that produce one-phase free boundaries or capillary surfaces.
Their existence is given by finding a solution of equation~\eqref{eqn:ode-star} with zeroes at $t_i$ and prescribed value $(1-t_i^2)f'(t_i)^2$.
If $g \geq 2$, then the function $A_M(t)$ produces a singularity at $t=0$ in equation~\eqref{eqn:ode-star}, so smooth solutions existing up to $t=0$ require $f'(0) = 0$ and must be even.
The term $(1-t^2)$ also produces a singularity at $t=1$, resulting in the second condition of Proposition~\ref{prop:mean-curvature-equation}.
The solutions whose positive phase includes $t=0$ or $t=1$ correspond to hypersurfaces defined across a focal submanifold $M_1$ or $M_2$ of the isoparametric foliation.
We call this the Type I case, and refer to the alternative regime $0 < t_1 < t_2 < 1$ as Type II.
These solutions describe the following situations:
\begin{enumerate}[(I)]
    \item Type I: the domain of $f$ is a cap around a focal submanifold, meaning that $\Omega = [0,t_1)$ or $\Omega = (t_1, 1]$ for $t_1 \in (0,1)$.
    The symmetry of equation~\eqref{eqn:ode-star} under the involution $(m_1, m_2, t) \leftrightsquigarrow (m_2, m_1, \sqrt{1-t^2})$ described in Lemma~\ref{lemma:change-of-variables} shows that we can study solutions of Type I starting from $t=0$ with $f'(0) = 0$, hence extending smoothly across the focal submanifold.
    Moreover, Lemma~\ref{lemma:reach-zero-before-hypergeometric} shows that the positive phase must be a strict sub-interval of $[0,1)$, so the free boundary of the solution is precisely the regular leaf $\{ t = t_1 \}$ with capillary condition $\sqrt{1-t_1^2} \, f'(t_1) = -\frac{2}{g} \tan \theta$.
    
    \item Type II: The domain is a strip between two regular leaves, meaning that $\Omega = (t_1, t_2)$ for $t_1, t_2 \in (0,1)$.
    The capillary minimal cone problem for solutions of this type is equivalent to finding a solution $f$ of equation~\eqref{eqn:ode-star} satisfying $f(t_i) = 0$ and $(1-t_i^2) f'(t_i)^2 = \frac{4}{g^2} \tan^2 \theta$, namely the condition~\eqref{eqn:capillary-angle-condition}.
    We will approach the construction of such solutions as an ODE shooting problem to find the appropriate $t_1$ for each angle $\theta$. 
\end{enumerate}

\begin{figure}
    \centering
    \includegraphics[width=0.3\linewidth]{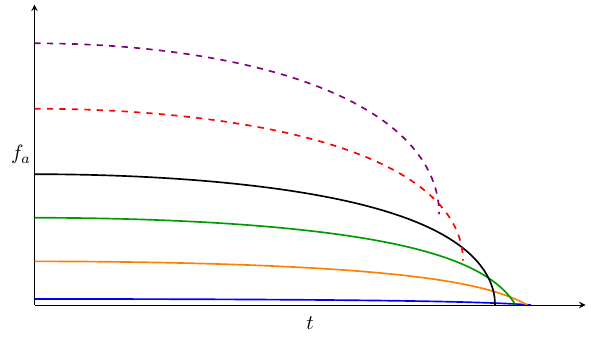}
    \includegraphics[width=0.3\linewidth]{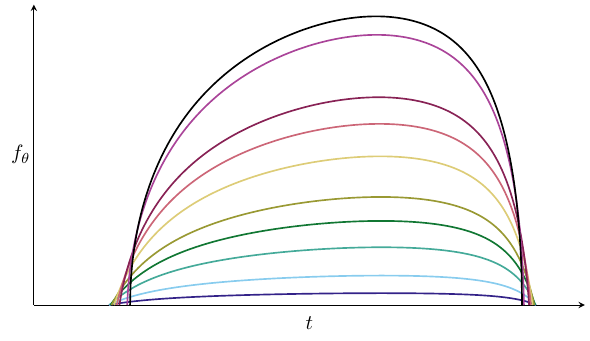}
    \includegraphics[width=0.3\linewidth]{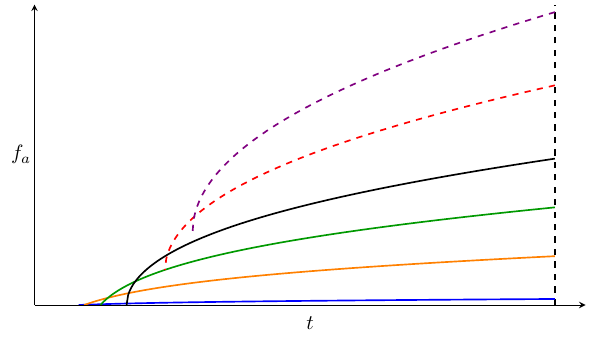}
    \caption{We display numerically computed profile curves for varying angle Type I and II cones for $(g,m_1, m_2) = (4,2,5)$ corresponding to $\Sigma_{M_1, \theta}, \bar{\Sigma}_{M,\theta}, \Sigma_{M_2,\theta}$ respectively.
    The dashed lines indicate non-geometric Type I solutions to~\eqref{eqn:ode-star} which blow up before attaining 0.
    The smoothness condition for $\Sigma_{M_2,\theta}$ requires the compatibility condition $f_{M_2,\theta}'(1) =  \frac{4(n-1)}{g^2(m_2+1)}f_{M_2,\theta}(1) > 0$ on the terminal data, which can be observed visually.}
    \label{fig:typeI&IIconeProfiles}
\end{figure}

In the Type II case, every leaf $M_s$ is a smooth parallel of the initial isoparametric surface.
In contrast, the Type I solutions contain the focal submanifolds $M_1$ or $M_2$, which is a collapsed degeneration of $M_s$.
If the involution $t \leftrightsquigarrow \sqrt{1 - t^2}$ and $m_1 \leftrightsquigarrow m_2$ is not an isometry, $M_1$ and $M_2$ are different surfaces, the two leaves produce genuinely different Type I examples.
This fact was explored in~\cite{FTW-1}, where the cones $\mathbf{C}_{n,k,\theta}$ and $\mathbf{C}_{n,n-k,\theta}$ or the one-phase solutions $U_{n,k}$ and $U_{n,n-k}$ have different topology and variational behavior.
Notably, $U_{7,1}, U_{7,2}$ are proved to be strictly minimizing, while $U_{7,5}, U_{7,4}$ do not appear to be; the same variational phenomena hold for capillary cones with sufficiently small angle. 
For large capillary angle near $\frac{\pi}{2}$, these distinct behaviors do not occur because the doubles of $\mathbf{C}_{n,k,\frac{\pi}{2}}$ or $\mathbf{C}_{n,n-k,\frac{\pi}{2}}$ are the same minimal hypercone $C(\bS^{k-1} \times \bS^{n-k-1})$, so the capillary cones with $|\tfrac{\pi}{2} - \theta| < \ve$ will have the same variational behavior with respect to stability and area-minimality. 

\subsection{Type I}\label{section:type-I-topology}
The Type I solutions have profile curves emanating from a focal leaf and ending at an interior zero on a regular leaf.
The topology of the resulting spherical hypersurface is determined by the corresponding focal submanifold $M_1 \subset \bS^{n-1}$ at $t=0$; the discussion for $t=1$ is analogous.
We recall from Section~\ref{subsec:isoparametric} that every regular isoparametric leaf is a tube around each focal submanifold, and the sphere decomposes as the union of the two disk bundles over the two focal manifolds.
Consequently, each link $\Sigma_{M_i, \theta}$ in Theorem~\ref{thm:capillary-interpolation} is topologically the disk bundle $D(\nu_{M_i}) \subset \bS^{n-1}$, where $\nu_{M_i}$ denotes the normal bundle of $M_i \subset \bS^{n-1}$.
The boundary of the link is the regular leaf at the zero $t_*$, namely $\partial \Sigma_{M_i, \theta} \cong M_{t_*} = S(\nu_{M_i})$, the unit sphere bundle of the normal bundle $\nu_{M_i} \to M_i$ of the embedding $M_i \subset \bS^{n-1}$.
The minimal surface $\mathbf{S}_{M_i} \subset \bS^n$ in Theorem~\ref{thm:new-minimal-surfaces} formed by doubling $\Sigma_{M_i,\frac{\pi}{2}}$ across the equatorial hypersphere is therefore diffeomorphic to
\begin{equation}\label{eqn:S-Mi-diffeomorphism-type}
\mathbf{S}_{M_i} \cong D(\nu_{M_i}) \cup_{M} D(\nu_{M_i}) \cong S(\nu_{M_i} \oplus \mathbf{1})
\end{equation}
where $\mathbf{1}$ is the trivial line bundle, and $S(L)$ is the unit sphere bundle of a vector bundle $L$.
Notably, the resulting surface is topologically an $\bS^{m_1+1}$-bundle over the focal submanifold $M_1$.
The sphere bundle structure comes from the fiberwise realization of $\bS^{m_1+1}$ as the doubling of the disk $\bD^{m_1+1}$.
The leaves $M_s$ are given by the points of distance $s$ away from $M_1$, so we can describe the topology of $\mathbf{S}_{M_1}$ through the following procedure. 
The profile curve $f_{M_1,\theta}$ vanishes at some $t_{M_1,\theta} < 1$, so the link $\Sigma_{M_1,\theta} \cong M_1 \cup \left(\bigcup_{s \leq s_{M_1,\theta}}M_s\right)$ is a closed tubular neighborhood of all points within a fixed distance of $M_1$ inside $\bS^n$, hence diffeomorphic to the disk bundle of the normal bundle $\nu_{M_1} \to M_1$.

The minimal and CMC surfaces $\mathbf{S}_{M_i},  \mathbf{S}^H_{M_i}$ are therefore realized as $\bS^{m_i+1}$ bundles over $M_i$, so identifying their topology requires a precise understanding of this bundle structure.
Several directions and techniques in topology have been created to answer these types of questions, and we employ methods from obstruction theory, characteristic classes, $K$-theory, and stable homotopy theory to distinguish the topology of the bundles $\mathbf{S}_{M_i}$.
Thus, we fully classify the resulting sphere bundles $\mathbf{S}_{M_i}$, showing that the $\mathbf{S}_{M_i}$ are not homotopy equivalent to products over $M_i$ outside certain exceptional cases, and realize a large range of topological behaviors.
In most cases, $\mathbf{S}_{M_i}$ is very different from a product $M_i \times \bS^r$ in the strongest sense: the two spaces are not even \textit{stably homotopy equivalent}.
This equivalence, denoted $X \simeq_s Y$, means that $\Sigma^{\infty} X_+ \simeq \Sigma^{\infty} Y_+$ are homotopy equivalent as spectra; see, for example,~\cite{hatcher-book-alg-top}*{\S 4.F}.
This notion is significantly broader than homotopy equivalence $X \simeq Y$.

The main theoretical tools in this classification are Lemmas~\ref{lemma:obstructions-to-triviality} and~\ref{lemma:nontrivial-J-map}, obstructing stable homotopy equivalence via the Adams $J$-map, and Lemmas~\ref{lemma:cohomology-ring} and~\ref{lemma:fiberwise-homotopy-equivalent}, using the cohomology ring structure of the sphere bundle and the Serre spectral sequence to upgrade homotopy equivalence to \textit{fiber} homotopy equivalence, discussed in Definition~\ref{def:fiber-homotopy-trivial}.
In Proposition~\ref{prop:all-nontrivial-isoparametric}, we combine these tools together with computations from the Atiyah-Hirzebruch spectral sequence in $K$-theory and the study of stable homotopy groups to obstruct stable homotopy equivalence.

Therefore, the novel surfaces of Theorems~\ref{thm:new-minimal-surfaces} and~\ref{thm:non-triviality-theorem} produce more sophisticated and identifiable topologies, coming from the list of Section~\ref{subsec:isoparametric}; see Table~\ref{table:diffeomorphism-types} for some examples.
We first present a summary of the topological properties of the surfaces $\mathbf{S}_{M_i} \to M_i$ in~\eqref{eqn:S-Mi-diffeomorphism-type}, which are obtained in Lemmas~\ref{lemma:obstructions-to-triviality} through~\ref{lemma:fiberwise-homotopy-equivalent} and Propositions~\ref{prop:g=3Topology} through~\ref{prop:surfaces-are-distinct}.
\begin{enumerate}[$\bullet$]
\item $g=1$: The spherical (axisymmetric) case produces only the $\textup{O}(n-1)$-invariant capillary cones of~\cite{FTW-1}.
Accordingly, the Clifford torus $\bS^{n-2}(\sqrt{\frac{n-2}{n-1}}) \times \bS^1( \sqrt{\frac{1}{n-1}})$ and the sphere $\bS^{n-1}$ are the only rotationally invariant minimal hypersurfaces of $\bS^n$, as proved in Theorem~\ref{thm:uniqueness-of-axisymmetric}. 
\item $g=2$: In the Clifford case, the focal submanifolds are $(M_1, M_2) = (\bS^{n-k-1}, \bS^{k-1})$ and $\mathbf{S}_{M_1} , \mathbf{S}_{M_2}$ are Clifford tori in one dimension higher.
The profiles $f_{M_1, \frac{\pi}{2}}(t), f_{M_2, \frac{\pi}{2}}(t)$ are explicit, with
\[
\mathbf{S}_{M_1} = \bS^{n-k-1} \Bigl( \sqrt{\tfrac{n-k-1}{n-1}} \Bigr) \times \bS^k \Bigl( \sqrt{\tfrac{k}{n-1}} \Bigr), \qquad \mathbf{S}_{M_2} = \bS^{n-k} \Bigl( \sqrt{\tfrac{n-k}{n-1}} \Bigr) \times \bS^{k-1} \Bigl( \sqrt{\tfrac{k-1}{n-1}} \Bigr).
\]
\item $g=3$: The focal submanifolds are Veronese embeddings $M_i \cong \bF \bP^2$, where $\bF \in \{ \mathbb{R} , \mathbb{C}, \mathbb{H}, \mathbb{O} \}$ with $\dim_{\bR}\bF=m\in\{1,2,4,8\}$, so $\mathbf{S}_{M_1} \cong \mathbf{S}_{M_2}$ are non-trivial $\bS^{m+1}$-bundles over $\bF \bP^2$ inside $\bS^{3m+2}$, which are in fact isometric due to Lemma~\ref{lemma:change-of-variables} and the isometry between the two Veronese embeddings $M_1 \cong M_2$ of $\bF \bP^2$ inside $\bS^{3m+2}$.
In Proposition~\ref{prop:g=3Topology}, we identify the surfaces $\mathbf{S}_{\bF} := \mathbf{S}_{M_i}$ as twisted products of Lie groups with dimension $3m+1 \in \{4, 7, 13, 25 \}$ in $\bS^{3m+2}$.
\item $g=4$: We examine the pairs $(M_1, M_2)$ of focal submanifolds in the OT-FKM family, as well as the two exceptional cases $(m_1, m_2) \in \{ (2,2), (4,5) \}$ which exhibit distinct behaviors.
\begin{enumerate}[$-$]
    \item OT-FKM $M_1$: Since the normal bundle of $M_1$ in the ambient sphere is trivial, we always have $\mathbf S_{M_1}\cong M_1\times \bS^{m_1+1}$.
Moreover, $M_1\cong S(\eta)\to \bS^{\ell-1}$, so $\mathbf S_{M_1}\cong S(\eta)\times \bS^{m_1+1}$.
The bundle $\eta \to \bS^{\ell-1}$ is trivial if and only if
\begin{equation}\label{eqn:pairs-trivial-bundle}
(m_1,m_2)\in
\{(1,2),(2,1),(1,6),(6,1),(2,5),(5,2),(3,4)\} \qquad \text{or} \qquad (m_1,m_2) = (4,3)^{\textup{ind}}
\end{equation}
where $(4,3)^{\textup{ind}}$ denotes the foliation coming from the indefinite Clifford representation with $(m_1,m_2) = (4,3)$ and index $q=0$, defined in~\eqref{eqn:index-representation} below.
Equivalently, $M_1$ is diffeomorphic to $\bS^{\ell-1}\times \bS^{\ell-m_1-1}$ exactly in those cases, where $\ell = k \, \delta(m_1)$.
In particular, for $m_1=1$,
\[
M_1\cong \bV(2, m_2 +2),
\qquad
\mathbf S_{M_1}\cong \bV(2, m_2 + 2) \times \bS^2.
\]
When $m_2=2,6$, the Stiefel manifold is a product of spheres, leading to
\[
\bV(2,4)\cong \bS^3\times \bS^2,
\quad
\bV(2,8)\cong \bS^7\times \bS^6 \quad 
 \implies \quad 
\mathbf S_{M_1}\cong \bS^3\times \bS^2\times \bS^2,
\quad
\mathbf S_{M_1}\cong \bS^7\times \bS^6\times \bS^2.
\]
For the other $m_1=1$ cases, $\bV(2,m_2+2)$ is not a product of spheres.
    \item OT-FKM $M_2$: Either $\mathbf{S}_{M_2}$ is not homotopy equivalent to any product $M_2 \times \bS^r$, or $M_2 \cong \bS^{m_1} \times \bS^{m_1 + m_2}$ and $\mathbf{S}_{M_2} \cong \bS^{m_1} \times \bS^{m_1+m_2} \times \bS^{m_2+1}$ as smooth sphere bundles, with
\[
(m_1,m_2)\in\{ (1, 2d) , (2d,1) , (2,5),(5,2),(3,4) , (4,3) \}, \qquad d \in \bN^*.
\]
    See Lemma~\ref{lemma:ot-fkm-family-I} and Proposition~\ref{prop:trivial-product-bundles-ot-fkm} for the proof of this classification.
    \item $(m_1,m_2) = (2,2)$: In this exceptional case, we have $M_1\cong \bC\bP^3$ and $M_2\cong \widetilde{\bG}(2,5) \cong Q^3$.
    In either case $\mathbf{S}_{M_i}$ is an $\bS^3$-bundle over the corresponding focal manifold.
    \item $(m_1,m_2) = (4,5)$: In this exceptional case, $M_1\cong \textup{U}(5)/(\textup{Sp}(2)\times \textup{U}(1))$ and $M_2\cong \textup{U}(5)/(\textup{SU}(2)\times \textup{U}(3))$, so $\mathbf S_{M_1}$ is an $\bS^5$-bundle over $M_1$, while $\mathbf S_{M_2}$ is an $\bS^6$-bundle over $M_2$.
\end{enumerate}

\item $g=6$:
If $m=1$, then $M_1, M_2$ are both diffeomorphic to $\bS^3\times \bR\bP^2$, but are not isometric, and the surfaces $\mathbf{S}_{M_i}$ are non-trivial $\bS^2$-bundles over $\bS^3\times \bR\bP^2$.
Similarly, the minimal surfaces $\mathbf{S}_{M_1}$ and $\mathbf{S}_{M_2}$ are not isometric.
If $m=2$, then $M_1 \cong \widetilde{\bG}(2,7) \cong Q^5$ while $M_2$ is diffeomorphic to the twistor space of $G_2 / \textup{SO}(4)$.
The sphere bundles $\mathbf{S}_{M_i}$ over the focal manifolds are non-trivial and not homotopy equivalent to each other.
\end{enumerate}

The isoparametric hypersurfaces $\{ M_s \}$ of OT-FKM type exhibit an infinite family with complicated non-product topologies.
While $\mathbf{S}_{M_1} \cong M_1 \times \bS^{m+1}$, the focal submanifold $M_1$ is homotopy equivalent to a product of spheres only if $(m_1, m_2)$ belong to the low-dimensional list~\eqref{eqn:pairs-trivial-bundle}.
Moreover, the regular leaves $M$ are not homotopy equivalent to products of spheres for $(m_1,m_2)$ outside this list; see, for example,~\cite{topology-g-4}*{Theorems 2.3 and 2.4}.

\begin{proposition}\label{prop:g=3Topology}
    When $g = 3$, the minimal surfaces $\mathbf{S}_{\mathbb{F}} :=\mathbf{S}_{M_i}$ are topologically
    \begin{align*}
    \mathbf{S}_{\bR} &\cong \textup{SO}(3) \times_{\textup{O}(2)} \bS^2,  & \mathbf{S}_{\bC} &\cong \textup{SU}(3) \times_{\textup{U}(2)}\bS^3, \\ 
    \mathbf{S}_{\bH} &\cong \textup{Sp}(3) \times_{\textup{Sp}(2)\textup{Sp}(1)} \bS^5,  & \mathbf{S}_{\bO} &\cong F_4 \times_{\textup{Spin}(9)}\bS^9,
    \end{align*}
    for $m = 1,2,4,8$ respectively.
    Here, $X \times_G Y$ denotes the twisted product given by the quotient relation $(x,y) \sim (xg, g^{-1}y)$, which is a $Y$-bundle over $X/G$. 
    We also have the description
    \[
\mathbf{S}_{\bR} \cong \bS^2 \times \bS^2/ \sigma, \qquad \sigma(u,v_1,v_2,v_3) = (-u,-v_1, v_2, v_3)
\]
\end{proposition}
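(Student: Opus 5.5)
The plan is to use the homogeneous description of the Cartan–Münzner foliation for $g=3$. Write $\mathfrak{h}_3(\bF)$ for the traceless Hermitian $3\times 3$ matrices over $\bF$, a real vector space of dimension $3m+2$, and let $G\in\{\textup{SO}(3),\textup{SU}(3),\textup{Sp}(3),F_4\}$ act on it by conjugation. This is the isotropy representation of a rank-$2$ symmetric space, so its orbits on the unit sphere $\bS^{3m+1}\subset \mathfrak{h}_3(\bF)$ are the leaves $M_s$.

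The focal manifold $M_1$ is the orbit of $\textup{diag}(2,-1,-1)/\sqrt6$, which is the Veronese $\bF\bP^2 = G/K$. Here the stabilizer is
\[
K\in\{\textup{S}(\textup{O}(1)\times \textup{O}(2))\cong \textup{O}(2),\ \textup{S}(\textup{U}(1)\times\textup{U}(2))\cong \textup{U}(2),\ \textup{Sp}(1)\textup{Sp}(2),\ \textup{Spin}(9)\}.
\]

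First, identify $\nu_{M_1}\subset T\bS^{3m+1}$ as a homogeneous bundle $G\times_K V$. At the base point, the normal space to the orbit inside the sphere is the subspace of $\mathfrak h_3(\bF)$ orthogonal both to the point and to $[\fg,\text{point}]$. This is the traceless Hermitian $2\times2$ block $\mathfrak h_2^0(\bF)\cong\bR^{m+1}$, on which $K$ acts through its projection to $\textup{SO}(m+1)$: $\textup{O}(2)\to\textup{O}(2)/\pm\cong \textup{SO}(2)$ acting on $\bR^2$, $\textup{U}(2)\to\textup{SO}(3)$, $\textup{Sp}(1)\textup{Sp}(2)\to \textup{Sp}(2)\to \textup{SO}(5)$, and $\textup{Spin}(9)\to\textup{SO}(9)$. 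Thus $\nu_{M_1}=G\times_K\bR^{m+1}$. Adding the trivial line bundle, on which $K$ acts trivially, gives
\[
S(\nu_{M_1}\oplus\mathbf 1)=G\times_K \bS^{m+1},
\]
where $K$ fixes a pole of $\bS^{m+1}$. Combining this with \eqref{eqn:S-Mi-diffeomorphism-type} yields the four stated descriptions. The surface $\mathbf S_{M_2}$ is handled by the antipodal map, which interchanges the two Veronese embeddings, together with Lemma~\ref{lemma:change-of-variables}.

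The main care point is checking that the normal representation is exactly the stated one (the slice representation at the singular orbit). I would verify this by dimension count: the normal space has dimension $(3m+1)-2m=m+1$. Then I would check that the $K$-action on the $2\times 2$ block is the standard conjugation, whose image is $\textup{SO}(m+1)$, using the spin/triality facts for $m=4,8$.

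For the real description, take $\bS^2\times\bS^2$ with coordinates $u\in\bS^2\subset\bR^3$ and $v=(v_1,v_2,v_3)\in\bS^2$. Consider the double cover $\textup{SO}(3)\times_{\textup{SO}(2)}\bS^2\to \textup{SO}(3)\times_{\textup{O}(2)}\bS^2$, where $\textup{SO}(2)\subset \textup{O}(2)$ has index $2$. Since $\textup{SO}(2)\cong \textup{SO}(2)$ acts on $\bS^2$ by rotation about the $v_1$-axis, which extends to an $\textup{SO}(3)$-action, the map $[A,v]\mapsto (Ae_1, Av)$ trivializes the cover as $\bS^2\times\bS^2$.

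The nontrivial coset of $\textup{O}(2)$ is represented by an element $r$ with $r e_1=-e_1$, for example $\textup{diag}(-1,-1,1)\cdot$(a reflection lift), acting on the fiber $\bS^2$ by a reflection. Computing its induced deck involution in these coordinates gives $(u,v)\mapsto(-u,\rho(v))$, where $\rho$ is a reflection. Conjugating coordinates puts this in the form $\sigma(u,v)=(-u,-v_1,v_2,v_3)$, which is free because $u\mapsto-u$ is free.

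The expected obstacle is bookkeeping the precise action of the component group on the fiber $\bS^{m+1}$, i.e. that $\textup{O}(2)$ acts on $\bR^2\oplus\bR$ through $\det$-twisted rotations. I would pin this down by direct matrix computation in $\mathfrak h_3(\bR)$.
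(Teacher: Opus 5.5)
Your treatment of the four twisted-product descriptions follows the paper. You write $\nu_{\bF\bP^2}\cong G\times_K\nu_p$, add a trivial summand, and take sphere bundles to get $G\times_K\bS^{m+1}$; identifying $\nu_p$ with $\mathfrak{h}_2^0(\bF)$ is a useful detail the paper omits.

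One correction there. For $m=1$, $K\cong\textup{O}(2)$ acts on $\nu_p=\mathfrak{h}_2^0(\bR)$ by $Y\mapsto BYB^{\top}$: rotation through $2\phi$ on the identity component, and a reflection when $\det B=-1$. So its image is all of $\textup{O}(2)$, not $\textup{SO}(2)$ (indeed $\textup{O}(2)/\{\pm I\}\cong\textup{O}(2)$). This is forced, since $w_1(\nu)=w_1(T\bR\bP^2)\neq 0$.

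This weight-two action is exactly where your derivation of $\mathbf{S}_{\bR}\cong\bS^2\times\bS^2/\sigma$ has a genuine gap.

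\begin{itemize}
\item The formula $[A,v]\mapsto(Ae_1,Av)$ is well defined on $\textup{SO}(3)\times_{\textup{SO}(2)}\bS^2$ only if $k\in\textup{SO}(2)$ acts on the fiber by the matrix $k$ itself. Here it acts with weight two on $\nu_p$.
\item No homomorphism $\textup{SO}(3)\to\textup{O}(3)$ restricts to a weight-two circle action: nontrivial endomorphisms of $\textup{SO}(3)$ are inner, so they restrict with weight $\pm 1$.
\item The double cover is still diffeomorphic to $\bS^2\times\bS^2$, since the $\nu_p$-summand of the associated rank-$3$ bundle over $\bS^2$ has Euler number $4$, so $w_2=0$. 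But this only gives a trivialization $[A,v]\mapsto(Ae_1,F(A)v)$ with $F$ not a homomorphism.
\item In that trivialization the deck involution is $(u,w)\mapsto(-u,\Phi(u)w)$, where $\Phi(u)\in\textup{O}(3)\setminus\textup{SO}(3)$ varies with $u$.
\item Changing the trivialization so that $\Phi$ becomes a constant reflection is equivalent to the bundle isomorphism $\nu\oplus\mathbf{1}\cong\eta\oplus\mathbf{1}^{\oplus 2}$. So the step ``computing its induced deck involution gives $(-u,\rho(v))$'' assumes precisely what has to be proved.
\end{itemize}

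The paper avoids coordinates entirely. It computes $w(\nu\oplus\mathbf{1})=w(T\bR\bP^2)^{-1}=1+a=w(\eta\oplus\mathbf{1}^{\oplus 2})$. It then uses that rank-$3$ bundles over the $2$-complex $\bR\bP^2$ are classified by $(w_1,w_2)$, giving $\nu\oplus\mathbf{1}\cong\eta\oplus\mathbf{1}^{\oplus 2}$. The sphere bundle of the latter is visibly $\bS^2\times\bS^2/\sigma$, with $-1$ acting on the fiber by $\textup{diag}(-1,1,1)$.

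To close the gap, either adopt that characteristic-class argument or write down an explicit bundle isomorphism $\textup{SO}(3)\times_K(\nu_p\oplus\bR)\cong\bS^2\times_{\bZ_2}\bR^3$. The extension trick does not supply one.
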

\begin{proof}
    For these isoparametric hypersurfaces, the leaves $M_i$ are
    \begin{equation}\label{eqn:symmetric-space-identifications}
    \textup{SO}(3)/(\bZ_2 \oplus \bZ_2),\qquad \textup{SU}(3) /\bT^2,\qquad \textup{Sp}(3)/\textup{Sp}(1)^3,\qquad F_4/\textup{Spin}(8)
    \end{equation}
    as established earlier in Section~\ref{subsec:isoparametric}.
    Fix a basepoint $p \in \bF\bP^2$ and let $G_\bF$ be the transitive symmetry group of $\bF\bP^2$ with $K$ the stabilizer of $p$, so $\bF\bP^2 = G/K$.
    Because $\bF\bP^2$ is $G$-equivariant inside $\bS^{3m+1}$, its normal bundle is $\nu_{\bF\bP^2}\cong G \times_K \nu_{p}$, so $\nu_{\bF\bP^2} \oplus \mathbf{1} \cong G \times_K (\nu_p \oplus \mathbf{1})$.
    Therefore, $\mathbf{S}_{\bF} \cong G \times_K S(\nu_p \oplus \mathbf{1})$, which we now identify.
    For isoparametric surfaces, the regular fibers $M_s$ are identified with $S(\nu_{M_i})$ of the focal manifolds, and~\eqref{eqn:S-Mi-diffeomorphism-type} expresses $S(\nu_p \oplus \mathbf{1}) \cong \Sigma S(\nu_p) \cong \bS^{m+1}$ for $\Sigma X$ the spherical suspension of $X$. 
    The proof concludes by the standard identification of each $\bF\bP^2$ with the corresponding symmetric space as in~\eqref{eqn:symmetric-space-identifications}.

    To see the final characterization of $\mathbf{S}_{\bR}$, we argue as in~\cite{CAGstablyparallel}.
    For $(g,m) = (3,1)$, the Veronese embedding $M_1 = \bR \bP^2 \hookrightarrow \bS^4$ with normal bundle $\nu \to M_1$ satisfies $T \bR \bP^2 \oplus \mathbf{1} = 3 \eta$, where $\eta$ is the Hopf line bundle over $\bR \bP^2$, hence $3 \eta \oplus \nu = \mathbf{1}^{\oplus 5}$ and $\nu \oplus \mathbf{1}^{\oplus 4} \cong \eta \oplus \mathbf{1}^{\oplus 5}$ due to $\eta^{\oplus 4} \cong \mathbf{1}^{\oplus 4}$.
    Recall from~\cite{milnor-stasheff} that rank-$3$ vector bundles over $\bR \bP^2$ are classified by the Stiefel-Whitney classes $w_1, w_2$; in our situation, $w(\eta) = 1+a$ where $H^{\bullet}(\bR \bP^2; \bZ_2) \cong \bZ_2[a]/(a^3)$.
    Moreover, $w(T \bR \bP^2) = (1+a)^3= (1+a)^{-1}$, so
    \[
    w(\nu \oplus \mathbf{1}) = w(\nu) = w( T \bR \bP^2)^{-1} = 1+a = w(\eta) =  w(\eta \oplus \mathbf{1}^{\oplus 2}).
    \]
    Therefore, $\nu \oplus \mathbf{1} \cong \eta \oplus \mathbf{1}^{\oplus 2}$ and $\mathbf{S}_{\bR} \cong S( \eta \oplus \mathbf{1}^{\oplus 2}) \cong \bS^2 \times \bS^2 / \sigma$  is the claimed involution quotient.
\end{proof}
We next show that the surfaces $\mathbf{S}_{M_i}$ in~\eqref{eqn:S-Mi-diffeomorphism-type} are not homotopy equivalent to products $M_i \times \bS^r$, further illustrating their topological complexity.
Given a sphere bundle $\bar{N} = S(\xi) \xrightarrow{\pi} N$ with fiber $\bS^{r-1}$ and underlying rank-$r$ vector bundle, we can express
\begin{equation}\label{eqn:sphere-bundle-stably-parallelizable}
    T \bar{N} \oplus \mathbf{1} \cong \pi^* ( TN \oplus \xi)
\end{equation}
A vector bundle $L \to B$ is \textit{stably parallelizable} if $L \oplus \mathbf{1}^{\oplus k} \cong \mathbf{1}^{\oplus k'}$ is trivial for some $k,k' \geq 1$.
We say that $B$ is stably parallelizable if its tangent bundle $L = TB$ is, the latter property being equivalent to the triviality of $L \oplus \mathbf{1}$.
Notably, all spheres are stably parallelizable, with $T \bS^{m-1} \oplus \mathbf{1} \cong \mathbf{1}^{\oplus m}$.

To study \textit{stable} homotopy equivalence, we consider the stable $J$-homomorphism introduced by Adams \cite{adams}.
We refer the reader to~\cite{infinite-loop-space}*{\S 14} and~\cite{ranicki}*{\S 9.3} for background results used in the following Proposition.
Let $BG$ denote the classifying space for stable spherical fibrations, which is an infinite loop space with $BG \simeq \Omega^{\infty}E$ for a spectrum $E$.
Let $BO := \varinjlim BO(n)$ denote the classifying space for stable real vector bundles, meaning that stable real vector bundles $E^s \to M$ are classified by maps $f_E : M \to BO$.
The \textit{Adams $J$-class} of a bundle $E \to M$ is defined as
\begin{equation}\label{eqn:adams-j-class}
    J(E) := [J \circ f_E] \in [M,BG],
\end{equation}
for $J: BO \to BG$ the canonical map from stable vector bundles to stable spherical fibrations.

We also use the properties of characteristic classes from~\cite{milnor-stasheff}.
Given a real vector bundle $E$, we denote by $c(E_{\bC}), p(E)$, and $w(E)$ the total Chern, Pontryagin, and Stiefel-Whitney classes.
\begin{lemma}\label{lemma:obstructions-to-triviality}
    The hypersurface $\mathbf{S}_{M_i} \subset \bS^n$ has the following properties.
    \begin{enumerate}[(i)]
        \item If there is a diffeomorphism $\mathbf{S}_{M_i} \cong M_i \times \bS^r$, then $M_i$ is stably parallelizable.
        \item If there is a homotopy equivalence $\mathbf{S}_{M_i} \simeq M_i \times \bS^r$, then $w(TM_i) = 1$.
        \item If there is a stable homotopy equivalence $\mathbf{S}_{M_i} \simeq_s M_i \times \bS^r$, then $J(TM_i) = 0$.
    \end{enumerate}
\end{lemma}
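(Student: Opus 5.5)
The common starting point for all three parts is the tangent bundle of $\mathbf{S}_{M_i}$. By \eqref{eqn:S-Mi-diffeomorphism-type}, $\mathbf{S}_{M_i} = S(\xi)$ with $\xi := \nu_{M_i} \oplus \mathbf{1}$, and \eqref{eqn:sphere-bundle-stably-parallelizable} gives
\[
T\mathbf{S}_{M_i} \oplus \mathbf{1} \cong \pi^*(TM_i \oplus \nu_{M_i} \oplus \mathbf{1}) \cong \pi^*\bigl(T\bS^{n-1}|_{M_i} \oplus \mathbf{1}\bigr) \cong \mathbf{1}^{\oplus(n+1)}.
\]
Here we used $TM_i \oplus \nu_{M_i} = T\bS^{n-1}|_{M_i}$ together with the stable parallelizability of spheres. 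So $\mathbf{S}_{M_i}$ is stably parallelizable. This is expected anyway, since it is a hypersurface in $\bS^n$, and its normal bundle is trivial because it is two-sided: it is the double of a disk bundle bounding a region. On the product side, $T(M_i \times \bS^r) \oplus \mathbf{1} \cong p_1^* TM_i \oplus \mathbf{1}^{\oplus(r+1)}$. The plan is to restrict to a section $M_i \times \{pt\}$, which retracts the product onto $M_i$. Stable triviality on the total space then pulls back to stable triviality of $TM_i$, once the invariant in question is transported along the equivalence.

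For (i), I would use the diffeomorphism $\Phi: M_i \times \bS^r \to \mathbf{S}_{M_i}$. Then $\Phi^* T\mathbf{S}_{M_i} \cong T(M_i \times \bS^r)$ is stably trivial. Restricting to $M_i \times \{pt\}$ gives $TM_i \oplus \mathbf{1}^{\oplus r}$ stably trivial, so $M_i$ is stably parallelizable.

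For (ii), a homotopy equivalence does not preserve tangent bundles, so I would pass to Stiefel–Whitney classes. These are homotopy invariants via Wu's formula, $w = \mathrm{Sq}(v)$, where the Wu classes $v$ are defined from the Steenrod squares and Poincaré duality on mod-$2$ cohomology of closed manifolds. Hence $w(M_i \times \bS^r)$ is the pullback of $w(\mathbf{S}_{M_i}) = 1$. Since $w(M_i \times \bS^r) = p_1^* w(TM_i)$ and $p_1^*$ is injective (split by a section), we get $w(TM_i) = 1$.

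For (iii), the key input is Atiyah duality: the Spivak normal fibration, equivalently the stable spherical fibration $J(-TX)$, is an invariant of the stable homotopy type of $\Sigma^\infty X_+$ for closed manifolds. The Thom spectrum of $-TX$ is the S-dual of $X_+$, and the spherical fibration is recovered from the duality data. I would show this carefully via the Spivak uniqueness theorem: a stable equivalence $\Sigma^\infty X_+ \simeq \Sigma^\infty Y_+$ of Poincaré complexes is compatible with the fundamental classes up to sign, and therefore identifies the Spivak fibrations. It then follows that $J(T(M_i \times \bS^r))$ is trivial, because it corresponds to $J(T\mathbf{S}_{M_i}) = 0$. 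Restricting along the section $M_i \to M_i \times \bS^r$ gives $J(TM_i) = 0$.

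I expect the main obstacle to be this transfer in (iii). A stable equivalence need not be induced by a map of spaces, so one cannot simply pull back along it. It must be argued either through S-duality as above, or by noting that $J(TX)$ is detected by Thom-spectrum data, namely reducibility of the Thom spectrum, which stable equivalences preserve. I would present the S-duality argument and cite~\cite{ranicki}*{\S 9.3}.
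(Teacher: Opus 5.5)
Your treatment of (i) and (ii) matches the paper's. In both, $\mathbf{S}_{M_i}$ is first shown to be stably parallelizable. The paper gets this from the trivial normal bundle of a closed hypersurface in $\bS^n$; you mainly use \eqref{eqn:sphere-bundle-stably-parallelizable}, and both work. Your last isomorphism should read $\mathbf{1}^{\oplus n}$, since $T\bS^{n-1}\oplus\mathbf{1}\cong\mathbf{1}^{\oplus n}$. The rest is also the same: restriction to a section $M_i\times\{p_0\}$ for (i), and Wu's theorem plus injectivity of $\textup{pr}_1^*$ for (ii).

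For (iii) you take a genuinely different route. The paper uses $BG\simeq\Omega^\infty E$, so that a stable equivalence induces a bijection $[Y,BG]\cong[\Sigma^\infty Y_+,E]\cong[Z,BG]$. It then passes directly from $J(\nu^s_Y)=0$ to $J(\nu^s_Z)=0$. That bijection is abstract, and nothing in the argument shows it carries $J(\nu^s_Z)$ to $J(\nu^s_Y)$; this is exactly the obstacle you flag. Your Atiyah-duality route supplies this missing naturality, at the cost of heavier input.

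Two points of precision are needed.

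\emph{Spivak uniqueness.} A stable equivalence does not ``identify the Spivak fibrations.'' They live over different bases, and Spivak uniqueness concerns maps of spaces. What transfers is only the \emph{property} of being trivial. The clean formulation is Atiyah's criterion. For a closed connected manifold $X$ of dimension $d$, $J(\nu_X)=0$ if and only if the Thom space $X^{\nu_X}$ is coreducible. By $S$-duality this holds if and only if the top cell of $\Sigma^\infty X_+$ splits off, i.e.\ some stable map $\bS^d\to X_+$ hits a generator of $H_d(X;\bZ)$. This last condition is visibly preserved by $\Sigma^\infty X_+\simeq\Sigma^\infty Y_+$, which matches top homology up to sign. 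It holds for the stably parallelizable $\mathbf{S}_{M_i}$, hence for $M_i\times\bS^r$. Restricting along the section then gives $J(TM_i)=0$, as you say.

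\emph{Reducibility.} Your phrase ``reducibility of the Thom spectrum'' should be coreducibility of $X^{\nu}$, equivalently reducibility of $X_+$. The normal Thom space is always reducible by the Pontryagin--Thom collapse, so reducibility of $X^{\nu}$ detects nothing.
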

\begin{proof}
    First, we recall that any smooth embedded hypersurface $\Sigma^{n-1} \subset\bS^{n}$ is stably parallelizable.
Indeed, we have $T \Sigma \oplus \eta \cong T \bS^{n}|_{\Sigma}$ for $\eta$ the normal bundle of $\Sigma^{n-1}$, and since the sphere is stably parallelizable, we can write $T \bS^n \oplus \mathbf{1} \cong \mathbf{1}^{\oplus (n+1)}$.
Restricting to $\Sigma$, we find
\begin{equation}\label{eqn:sphere-restriction}
T \bS^n|_{\Sigma} \oplus \mathbf{1} \cong \mathbf{1}^{\oplus (n+1)}|_{\Sigma}, \qquad \implies \qquad T \Sigma \oplus \eta \oplus \mathbf{1} \cong \mathbf{1}^{\oplus(n+1)}.
\end{equation}
Since $\Sigma^{n-1} \subset \bS^n$ is a closed embedded hypersurface its normal bundle $\eta$ is trivial, so the above property implies $T \Sigma \oplus \mathbf{1}^{\oplus 2} \cong \mathbf{1}^{\oplus (n+1)}$ and $\Sigma$ is stably parallelizable as claimed.
Applying this result in our situation, we find that $\mathbf{S}_{M_i}$ is stably parallelizable.

\smallskip \noindent \textbf{Part $(i)$.}
Given a diffeomorphism $\mathbf{S}_{M_i} \cong M_i \times \bS^r$, we consider the projection maps $\textup{pr}_1, \textup{pr}_2$ to write
\[
T\mathbf{S}_{M_i} = \textup{pr}_1^* TM_i + \textup{pr}_2^*T\bS^r \implies T\mathbf{S}_{M_i} \oplus \mathbf{1}  \cong \textup{pr}_1^* (TM_i \oplus \mathbf{1}) \oplus \mathbf{1}^{\oplus (r+1)}
\]
since $T \bS^r \oplus \mathbf{1} \cong \mathbf{1}^{\oplus (r+1)}$.
Pulling back by the section $s: M_i \to M_i \times \bS^r$ given by $s(x) = (x,p_0)$, we conclude that $TM_i \oplus \mathbf{1}$ is the trivial bundle and $M_i$ is stably parallelizable.

\smallskip \noindent \textbf{Part $(ii)$.}
We prove that $M \times \bS^r \simeq Y$ for $Y = \mathbf{S}_{M_i}$ a stably parallelizable manifold implies $w(TM) = 1$.
Stably parallelizable manifolds satisfy $TZ \oplus \mathbf{1}^{\oplus k} \cong \mathbf{1}^{\oplus (\dim Z + k)}$ for some $k$, so they have trivial total Stiefel-Whitney class, $w(TZ) = 1$. 
By Wu's theorem~\cite{milnor-stasheff}, Stiefel-Whitney classes are homotopy invariants of closed manifolds, so $w(T(M \times \bS^r)) = 1$.
Writing $T(M \times \bS^r) \cong \textup{pr}_1^* TM \oplus \textup{pr}_2^* T \bS^r$ and recalling that $w(T \bS^r) = 1$ by stable triviality, we find
\[
1 = w(T ( M \times \bS^r)) =\textup{pr}_1^* w(TM) \smile \pi^*_2 w(T \bS^r) =\textup{pr}_1^* w(TM)
\]
which forces $w(TM) =1$ as desired.

\smallskip \noindent \textbf{Part $(iii)$.}
We prove that $M \times \bS^r \simeq_s Y$ for $Y = \mathbf{S}_{M_i}$ a stably parallelizable manifold implies $J(TM) = 0$.
Recalling the infinite loop space structure of the classifying space $BG$, namely $BG \simeq \Omega^{\infty}E$ for a spectrum $E$, we find that any stable homotopy equivalence $\Sigma^{\infty} Y_+ \simeq \Sigma^{\infty} Z_+$ of finite CW complexes $Y,Z$ induces a bijection
\[
[Y, BG] \cong [\Sigma^{\infty}Y_+, E] \cong [\Sigma^{\infty} Z_+, E] \cong [Z, BG],
\]
hence the class in $[-,BG]$ associated to a stable spherical fibration is a stable homotopy invariant by~\cite{infinite-loop-space}.
For a smooth manifold $Z$, its stable normal bundle $\nu^s_Z$ is mapped to the class of the underlying stable spherical fibration by the stable $J$-homomorphism.
Since $Y$ is stably parallelizable, we have $\nu^s_Y = 0$ and $J(\nu^s_Y) = 0 \in [Y, BG]$, so $Y \simeq_s Z$ implies $J(\nu^s_Z) = 0 \in [Z,BG]$.
Moreover, $(TZ)^s \oplus \nu_Z^s$ is stable trivial, so
\[
J(TZ) = J ( (TZ)^s) = - J( \nu^s_Z) = 0 \in [Z,BG].
\]
Applying this fact to $Z = M \times \bS^r$, we write $T(M \times \bS^r) \cong \textup{pr}_1^* TM \oplus \textup{pr}_2^* T \bS^r$ where $T \bS^r \oplus \mathbf{1} \cong \mathbf{1}^{\oplus (r+1)}$ and $J(T \bS^r) = 0$.
Therefore, $0 = J (T(M \times \bS^r)) =\textup{pr}_1^*J (TM)$ implies $J(TM) = 0$ as claimed.
\end{proof}

Manifolds with trivial total Stiefel-Whitney class are orientable and spin due to $w_1(TM) = w_2(TM) = 0$.
Also, the top Stiefel-Whitney class $w_n$ of a closed manifold $M$ satisfies
\begin{equation}\label{eqn:top-stiefel-whitney}
    \la w_n(TM) , [M] \rg \equiv \chi(M) \quad (\on{mod} \; 2)
\end{equation}
when paired with the fundamental class $[M] \in H_n(M;\bZ)$.
Thus, a closed manifold $M$ cannot be stably parallelizable if $\chi(M)$ is odd.
Moreover, complex vector bundles $E$ satisfy
\begin{equation}\label{eqn:complex-vector-bundle}
    w_2(E_{\bR}) \equiv c_1(E) \quad (\on{mod} 2), \qquad \implies \qquad w_2(TX) = c_1(TX) \quad (\on{mod} 2)
\end{equation}
where $c_1$ denotes the first Chern class and $X$ is any complex manifold.

\begin{lemma}\label{lemma:nontrivial-J-map}
Consider a smooth manifold $M$.
\begin{enumerate}[(i)]
    \item If $w_1(TM) \neq 0$, then $J(TM) \neq 0$.
    \item If there exists a map $f: \bS^2 \to M$ with $f^* w_2(TM) \neq 0$, then $J(TM) \neq 0$.
    In particular, if $M$ admits an almost-complex structure and $\la c_1(TM), f_* [ \bS^2] \rg \equiv 1 \; (\on{mod} 2)$, then $J(TM) \neq 0$.
    \item If there exists a map $f: \bS^4 \to M$ with $\la p_1(TM), f_* [\bS^4] \rg \not\equiv 0 \; (\on{mod} 48)$, then $J(TM) \neq 0$.
    If $M$ admits an almost-complex structure and $\la c_2(TM), f_* [ \bS^4] \rg \not\equiv 0 \; (\on{mod} 24)$, then $J(TM) \neq 0$.
\end{enumerate}
\end{lemma}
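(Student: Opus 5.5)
The plan is to reduce everything to the statement that $J(TM)=0$ forces $TM$ to be \emph{fiber homotopy trivial} as a stable spherical fibration, and then to exhibit invariants of stable spherical fibrations that detect the given characteristic classes. Concretely, suppose $J(TM)=0\in[M,BG]$. Then the stable sphere bundle of $TM$ is stably fiber homotopy trivial.

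For parts $(i)$ and $(ii)$, we invoke Thom's observation, via Wu's formula, that Stiefel--Whitney classes are fiber homotopy invariants. Indeed, $w = \mathrm{Sq}(U)/U$ is computed from the Thom class $U$ and Steenrod squares in the Thom space, and the Thom space depends only on the spherical fibration. Hence $w$ factors through $BO\to BG$, that is, the universal $w_i\in H^i(BO;\bZ_2)$ are pulled back from $H^i(BG;\bZ_2)$. So $J(TM)=0$ implies $w(TM)=1$, giving $(i)$ immediately. For $(ii)$, naturality gives $J(f^*TM)=f^*J(TM)=0$, hence $f^*w_2(TM)=w_2(f^*TM)=0$, a contradiction. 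The almost-complex refinement follows from \eqref{eqn:complex-vector-bundle}: $f^*w_2 \equiv f^*c_1 \pmod 2$, and since $H^2(\bS^2;\bZ_2)\cong\bZ_2$, the pairing $\la c_1, f_*[\bS^2]\rg$ being odd means $f^*w_2\neq0$.

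For $(iii)$, pull back along $f:\bS^4\to M$ to reduce to the case of a stable bundle $E=f^*TM$ over $\bS^4$ with $J(E)=0$. Stable bundles on $\bS^4$ are classified by $\pi_4(BO)=\pi_3(O)\cong\bZ$, and $J:\pi_3(O)\to\pi_3^s\cong\bZ_{24}$ is surjective by Adams. The key input is that for the generator $E_0$ (for instance the quaternionic line bundle, realified) one has $p_1(E_0)=\pm 2$ times the generator of $H^4(\bS^4;\bZ)$. This follows from Bott integrality: $c_2$ of the generator of $\widetilde K(\bS^4)$ is $\pm1$, and $p_1=-c_2(E\otimes\bC)$. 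Thus if $E=kE_0$ with $J(E)=0$, then $24\mid k$, so $p_1(E)=\pm 2k \equiv 0 \pmod{48}$. This gives the first claim. For the almost-complex claim, apply the same analysis to the complex bundle $f^*TM$: on $\bS^4$ we have $c_1=0$, so $p_1=c_1^2-2c_2=-2c_2$. Then $\la p_1,[\bS^4]\rg \not\equiv 0 \pmod{48}$ is equivalent to $\la c_2,[\bS^4]\rg\not\equiv0 \pmod{24}$.

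The main obstacle is pinning down the exact normalization in $(iii)$: the image of $J$ in degree $3$ is $\bZ_{24}$, not merely a nontrivial quotient, and the generator of $\pi_3(O)$ has $p_1=\pm2$. I would cite Adams for $\mathrm{im}\,J\cong\bZ_{24}$ together with Bott integrality (or Milnor's computation for $\bS^3$-bundles over $\bS^4$), rather than reprove them. This detection on spheres through the $e$-invariant is the step that genuinely requires stable homotopy input; parts $(i)$ and $(ii)$ are formal consequences of the Thom-space description of $w$.
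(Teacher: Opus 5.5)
Your proposal is correct, and parts $(i)$ and $(iii)$ follow the paper's proof.

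For $(i)$, the paper also uses that Stiefel--Whitney classes are defined for stable spherical fibrations and vanish on the trivial one. For $(iii)$, it likewise pulls back to $\bS^4$, uses that $J$ maps $\pi_3(O)\cong\bZ$ onto a cyclic group of order $24$, and reads off the stable class from $p_1/2$. The paper phrases this through the Thomas isomorphism, you through Bott integrality for the realified quaternionic line bundle. The almost-complex reductions $w_2\equiv c_1 \pmod 2$ and $p_1=-2c_2$ on $\bS^4$ are the same in both.

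The real difference is in $(ii)$. The paper works on the sphere: $w_2$ detects the nonzero element of $\widetilde{KO}(\bS^2)\cong\pi_1(O)\cong\bZ_2$, and the stable $J$-homomorphism $\pi_1(O)\to\pi_1^S$ is injective (Adams). You instead reuse Thom's fiber-homotopy invariance of the total Stiefel--Whitney class.

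Your route is more elementary and proves more. Since $J(TM)=0$ already forces $w(TM)=1$, any nonzero $w_i(TM)$ gives $J(TM)\neq 0$. So in $(ii)$ the spherical class $f$ is unnecessary, and $w_2(TM)\neq 0$ suffices. The paper's route instead treats $(ii)$ and $(iii)$ uniformly, as detection of $J$ on spheres via Adams' computation of its image. That mechanism is genuinely needed in $(iii)$, where Stiefel--Whitney classes only see $p_1/2$ modulo $2$.

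One small remark on $(iii)$: for the implication as stated, you only need $p_1$ of the generator of $\widetilde{KO}(\bS^4)$ to be even. This is automatic, since every bundle over $\bS^4$ is spin. The exact normalization $p_1=\pm 2$, which you flag as the main obstacle, matters only for sharpness of the mod-$48$ condition.
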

\begin{proof}
For $(i)$, we recall that Stiefel-Whitney classes for vector bundles extend to spherical fibrations, and agree with the usual ones for spherical fibrations coming from a vector bundle by~\cite{milnor-spherical-fiber-bundles}.
In particular, the class $w_1$ is natural under pullback and vanishes on the trivial spherical fibration, so $J(TM) = 0 \implies w_1(TM) =0$.
Concretely, if $w_1(TM) \neq 0$ then there exists a loop $\alpha: S^1 \to M$ with $\alpha^* w_1(TM) \neq 0 \in H^1(\bS^1 ; \bZ_2)$, meaning that $\alpha^* TM$ is non-orientable over $\bS^1$.
Real line bundles over CW complexes are classified by $H^1(-;\bZ_2)$, and $\widetilde{KO}(\bS^1) \cong \bZ_2$ is generated by the M\"obius line bundle $\mu$, producing the stable isomorphism $\alpha^* TM \cong_s \mu \oplus \mathbf{1}^{(r-1)}$; see, for instance,~\cite{hatcher}.
The spherical fibration over $\bS^1$ associated to $\mu$ has monodromy $-1$, so $J(\mu) \neq 0$ is non-trivial and $\alpha^*J(TM) = J(\alpha^*TM) = J(\mu) \neq 0$ implies $J(TM) \neq 0$.

For $(ii)$, we recall from~\cites{milnor-spherical-fiber-bundles , hatcher ,  ranicki} that the $J$-class over a sphere $\bS^r$ is identified with the image of the stable $J$-homomorphism $\pi_{r-1}(O) \to \pi^S_{r-1}$, which is injective for $r \equiv 1,2 \; (\on{mod} 8)$ by Adams~\cite{adams}.
For $r=2$, we have $\pi_1(O) \cong \bZ_2$ and $\pi_1^S \cong \bZ_2$, generated by the Hopf element; therefore, the non-trivial stable bundle on $\bS^2$ has non-trivial $J$-class.
Since $w_2(E \oplus \mathbf{1}^{\oplus r}) = w_2(E)$ for oriented bundles, the property $f^* w_2(TM) \neq 0$ implies that $f^* TM \to \bS^2$ is the non-trivial stable real line bundle over $\bS^2$, so $f^* J(TM) = J(f^* TM) \neq 0 \in [ \bS^2, BG]$ implies $J(TM) \neq 0$.
Finally, recall that almost complex manifolds have $w_2(TM) \equiv c_1(TM) \; (\on{mod} 2)$, so the assumption automatically implies $f^* w_2(TM) \neq 0$, so $J(TM) \neq 0$ by the same argument.

The argument for $(iii)$ proceeds similarly: the pullback $f^* TM$ of the stable tangent class lies in $\widetilde{KO}(\bS^4) \cong \pi_3 (O)$, and $f^* J(TM)$ is its image under the stable $J$-homomorphism.
Adams' theorem~\cite{adams} shows that the image of $J$ in degree $4s-1$ is cyclic of order $m(2s)$, the denominator of $\frac{B_s}{4s}$.
Since $\pi_3 (O) \cong \bZ$ by~\cite{hatcher} and $m(2) = 24$, the stable $J$-map on $\bS^4$ sends a generator of $\widetilde{KO}(\bS^4) \cong \bZ$ to an order-$24$ element.
The Thomas isomorphism~\cite{thomas-class} identifies $\widetilde{KO}(\bS^4) \cong H^4(\bS^4;\bZ)$ via the spin characteristic class $\frac{p_1}{2}$, so the stable class of $f^* TM$ is encoded in the integer $\la \frac{p_1(f^* TM)}{2} , [\bS^4] \rg =\la \frac{p_1(TM)}{2} , f_* [\bS^4] \rg \; (\on{mod} 24)$.
For $M$ almost complex, the Pontryagin class has
\[
p_1(E_{\bR}) = c_1(E)^2 - 2 c_2(E) \implies p_1(f^* TM) = - 2 \, c_2(f^* TM)
\]
because $c_1$ vanishes on $\bS^4$ since $H^2(\bS^4;\bZ) = 0$.
This proves the claim.
\end{proof}

We now explain how to identify the topological nature of the sphere bundles, starting with obstructions from their cohomology ring and characteristic classes.

\begin{lemma}\label{lemma:cohomology-ring}
    Let $\nu \to M$ be a rank-$r$ real vector bundle over a closed manifold $M$ and consider the $\bS^r$-bundle $\pi : Y :=S( \nu \oplus \mathbf{1}) \to M$.
    Let $s : M \to Y$ be the north pole section
    \[
    s: M \to Y, \qquad s(x) = (0,1) \in \bS^r \cong S(\nu_x \oplus \mathbf{1})
    \]
    and let $w_r(\nu) \in H^r(M)$ be the top Stiefel-Whitney class of $\nu$.
    The cohomology ring of $Y$ is given by
    \begin{equation}\label{eqn:cohomology-ring-of-Y}
        H^{\bullet}(Y ; \bZ_2) = H^{\bullet}(M; \bZ_2)[u]/ (u^2 - w_r(\nu) u), \qquad |u|=r,
    \end{equation}
    for $u := \textup{PD}_Y(s(M)) \in H^r(Y;\bZ_2)$, the Poincar\'e dual of the codimension-$r$ submanifold $s(M) \subset Y$.
    The same result holds for cohomology with integer coefficients if $M, \nu$ are orientable.
\end{lemma}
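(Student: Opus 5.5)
The plan is to use the Leray--Hirsch theorem together with the geometry of two disjoint sections. The bundle $\pi: Y = S(\nu\oplus\mathbf{1}) \to M$ carries two sections: the north pole section $s$ and the south pole section $s'(x) = (0,-1)$. Their images are disjoint codimension-$r$ submanifolds. The normal bundle of $s(M)$ in $Y$ is identified with $\pi^*\nu|_{s(M)} \cong \nu$, since near the north pole the fiber $\bS^r$ is modeled on $\nu_x$ by stereographic projection.

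\textbf{Step 1: the Leray--Hirsch basis.} Set $u = \textup{PD}_Y(s(M))$. Because $s(M)$ meets each fiber $\bS^r$ transversally in exactly one point, the restriction of $u$ to a fiber generates $H^r(\bS^r;\bZ_2)$. The Leray--Hirsch theorem then makes $H^\bullet(Y;\bZ_2)$ a free $H^\bullet(M;\bZ_2)$-module on $1$ and $u$. With integer coefficients we need orientability of $\nu$ and $M$, so that $s(M)$ is cooriented and $u$ is an integral class restricting to a generator on the fibers.

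\textbf{Step 2: the relation $u^2 = w_r(\nu)u$.} This is the key step. First, by the self-intersection formula, $u^2 = \iota_*(e(N))$ where $\iota = s$ and $N$ is the normal bundle of $s(M)$. Since $N\cong\nu$, this gives
\[
u \smile u = s_*\bigl(w_r(\nu)\bigr) = s_*\bigl(s^*\pi^* w_r(\nu)\bigr) = \pi^*w_r(\nu)\smile u,
\]
where the last equality is the projection formula $s_*(s^*a) = a\smile s_*1$. With $\bZ$ coefficients, $w_r$ is replaced by the Euler class. Second, as a cross-check, let $u' = \textup{PD}(s'(M))$. Disjointness gives $uu' = 0$. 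The difference $u - u'$ restricts to zero on each fiber, so $u - u' = \pi^*c$ for some $c\in H^r(M)$. Pulling back along $s$ gives $s^*u = e(\nu)$ and $s^*u' = 0$, hence $c = w_r(\nu)$. Then $0 = u(u - \pi^*w_r) = u^2 - w_r u$, which is the same relation.

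\textbf{Step 3: the presentation.} The monic degree-two relation, combined with freeness of rank two, yields the presentation~\eqref{eqn:cohomology-ring-of-Y}. The map $H^\bullet(M)[u]/(u^2 - w_r u) \to H^\bullet(Y)$ is surjective by Step 1. It is injective because both sides are free of rank two on the basis $1, u$.

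The main obstacle is the identification $s^*u = w_r(\nu)$, that is, the normal bundle of the section. I would handle it via the tubular neighborhood: the complement of the south pole section deformation-retracts onto $s(M)$ and is diffeomorphic to the total space of $\nu$ via stereographic projection. The Thom class of $\nu$ then maps to $u$, and its restriction to the zero section is the Euler class, or $w_r$ mod $2$. Sign conventions for the integral Euler class should be checked, but they do not affect the ring presentation up to the choice of $u$.
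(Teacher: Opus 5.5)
Your proposal is correct, and for the relation $u^2 = w_r(\nu)u$ your first argument is the paper's. Both proofs identify the normal bundle of $s(M)$ with $\nu$, deduce $s^*u = w_r(\nu)$ (resp.\ $e(\nu)$), and then apply the projection formula: $u^2 = u\smile s_!(1) = s_!(s^*u) = \pi^*w_r(\nu)\smile u$.

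The difference is in the additive structure. The paper observes that $Y\setminus s(M)\cong \textup{Tot}(\nu)$ retracts onto $M$, so the restriction map is surjective and the long exact sequence of $(Y, Y\setminus s(M))$ splits. Excision and the Thom isomorphism $H^{k}(Y,Y\setminus s(M))\cong H^{k-r}(M)$ then give $H^\bullet(Y)\cong \pi^*H^\bullet(M)\oplus u\,\pi^*H^{\bullet-r}(M)$. You instead invoke Leray--Hirsch, which needs only that $u$ restricts to a generator on each fiber; that follows from transversality of $s(M)$ to the fibers. Leray--Hirsch is the quicker black box. The paper's route is more self-contained and exhibits $u$ directly as the image of the Thom class, which is exactly the input for $s^*u = w_r(\nu)$.

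Your south-pole cross-check is a genuinely independent proof of the relation. Disjoint supports give $uu'=0$ and $s^*u'=0$. Leray--Hirsch gives $u-u'=\pi^*c$, and pulling back by $s$ forces $c=s^*u$. So the relation follows without the Gysin projection formula. In the integral case this version also sidesteps your sign worry: you only need $u'$ cooriented so that it agrees with $u$ on fibers, and $s'^*u'$ never enters.
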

\begin{proof}
    We use some standard tools from algebraic topology, namely the long exact sequence of a pair, the Thom isomorphism, and the Gysin map; we refer the reader to~\cite{bott-tu}*{\S~11\thru14} for details on the properties used here.
    Moreover, we consider cohomology with $\bZ_2$-coefficients and suppress $H^{\bullet}(Y) := H^{\bullet}(Y;\bZ_2)$, which makes the discussion valid even when $M$ is not orientable; if $M$ is orientable, then the same arguments hold for cohomology with $\bZ$-coefficients.
    
    Consider the inclusion map $j : Y \setminus s(M) \hookrightarrow Y$, where $Y \setminus s(M) \cong \textup{Tot}(\nu)$ is diffeomorphic to the total space of $\nu$, hence deformation-retracts onto the zero section.
    Thus, $H^{\bullet}(Y \setminus s(M)) \cong H^{\bullet}(M)$ and the map $(\pi|_{Y \setminus s(M)})^*$ is an isomorphism, and so is the restriction $j^* \circ \pi^*$, and $j^*$ is surjective.
    Therefore, the long exact sequence of the pair $(Y, Y \setminus s(M))$ 
   \[
   \cdots \to H^k( Y, Y \setminus s(M)) \xrightarrow{i} H^k(Y) \xrightarrow{j^*} H^k( Y \setminus s(M)) \xrightarrow{ \delta} \cdots
   \]
   splits into short exact sequences for each $k$.
   Since $\nu$ is canonically the normal bundle of $s(M)$ in $Y$, a tubular neighborhood of $s(M) \subset Y$ is the disk bundle $D(\nu)$, we have $D(\nu) \setminus s(M) \cong S(\nu)$, so
   \[
   H^k (Y, Y \setminus s(M)) \cong H^k( D(\nu), D(\nu) \setminus s(M)) \cong H^k(D(\nu) , S(\nu))
   \]
   by excision.
   Denoting by $q : D(\nu) \to M$ the bundle projection, the Thom isomorphism amounts to the existence of a Thom class $\tau \in H^r(D(\nu), S(\nu)) $ such that the cup product map 
   \[
   H^{k-r}(M) \to H^k(D (\nu), S(\nu)) \cong H^k(Y, Y \setminus s(M)), \qquad a \mapsto q^* a \smile \tau
   \]
   is an isomorphism.
   The image of the Thom class of the normal bundle is the Poincar\'e dual of $s(M)$, namely $u = \text{PD}_Y(s(M))$ per our definition.

   Letting $\bar{u} := i(\tau) \in H^r(Y)$, so the image of $q^* a \smile \tau$ in $H^k(Y)$ is 
   \[
   i( q^* a \smile \tau) = i(q^* a) \smile i(\tau) = \pi^* a \smile \bar{u}
   \]
   because the projection to $M$ agrees with $\pi$ on $N \cong D(\nu)$.
   In particular, the image $q^* a$ on the tubular neighborhood agrees with the restriction of $\pi^* a$ from $Y$.
   Hence, the natural map $H^{\bullet}(Y, Y \setminus s(M)) \to H^{\bullet}(Y)$ under the above identifications is $a \mapsto \pi^* a \smile u$, and since the long exact sequence for the pair splits into direct sum decompositions for each $k$, we obtain
   \begin{equation}\label{eqn:additive-splitting}
       H^{\bullet}(Y) \cong \pi^* H^{\bullet}(M) \oplus u \, \pi^* H^{\bullet-r}(M) \, .
   \end{equation}
   To obtain the cohomology structure, consider the Gysin map $s_! : H^{\bullet}(M) \to H^{\bullet+r}(Y)$ of the embedding $s: M \hookrightarrow Y$.
   By the construction of the Thom class, we have $s_!(1) = u$ and $s_!(a) = \pi^* a \smile u$ for all $a \in H^{\bullet}(M)$; moreover, we have the self-intersection formula
   \[
   s^* s_!(a) = w_r(\nu) \smile a \qquad \text{for any } \; a \in H^{\bullet}(M)
   \]
   since the normal bundle of $s(M) \subset Y$ is canonically $\nu$.
   In particular, taking $a=1$ with $s_!(1) = u$ shows that $s^* u = w_r(\nu)$, so the projection formula for the Gysin map produces
   \[
   u^2 = u \smile s_!(1) = s_!(s^* u) = s_!( w_r(\nu)) = \pi^* w_r(\nu) \smile u,
   \]
   so $u^2 - w_r(\nu) u = 0$ after suppressing $\pi^* w_r(\nu) \smile u$ to $w_r(\nu) u$.
   The additive splitting~\eqref{eqn:additive-splitting} uniquely expresses every class in $H^{\bullet}(Y)$ as $\pi^* a + \pi^* b u$ for $a,b \in H^{\bullet}(M)$, and combining this with above relation gives the claimed ring structure of $H^{\bullet}(Y;\bZ_2)$.
   If $M, \nu$ are orientable, the same argument applies to the integral cohomology, replacing $w_r(\nu)$ by the Euler class $e(\nu)$.
\end{proof}

\begin{lemma}\label{lemma:cohomology-complex-quadric}
    The complex quadric $Q^n \subset \bC \bP^{n+1}$ has total Chern class $c (TQ^n) = \frac{(1+h)^{n+2}}{1+2h}$.
\end{lemma}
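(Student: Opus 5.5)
The plan is to use the normal bundle sequence of $Q^n$ as a smooth hypersurface of $\bC\bP^{n+1}$, together with the Euler sequence for projective space and the Whitney product formula. Throughout, $h \in H^2(Q^n;\bZ)$ denotes the restriction of the hyperplane class of $\bC\bP^{n+1}$, i.e.\ $h = c_1(\cO(1)|_{Q^n})$.

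First, recall from the Euler sequence $0 \to \cO \to \cO(1)^{\oplus(n+2)} \to T\bC\bP^{n+1} \to 0$ that $c(T\bC\bP^{n+1}) = (1+H)^{n+2}$, where $H$ is the hyperplane class. Next, $Q^n = \{ \la z,z\rg = 0\}$ is the zero locus of the section of $\cO(2)$ given by the quadratic form $\sum z_j^2$. This section vanishes transversally, since its differential $2z$ is nonzero on the zero set. Hence $Q^n$ is a smooth complex hypersurface whose normal bundle is $\nu_{Q^n} \cong \cO(2)|_{Q^n}$, with $c(\nu_{Q^n}) = 1 + 2h$.

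The short exact sequence of complex bundles $0 \to TQ^n \to T\bC\bP^{n+1}|_{Q^n} \to \nu_{Q^n} \to 0$ and the Whitney formula give
\[
c(TQ^n)\,(1+2h) = (1+h)^{n+2}.
\]
Since $1+2h$ is a unit in the truncated cohomology ring (its inverse is $\sum_k (-2h)^k$), we can divide to get $c(TQ^n) = \frac{(1+h)^{n+2}}{1+2h}$.

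The only point requiring care is the transversality that identifies the normal bundle with $\cO(2)|_{Q^n}$, which follows from nondegeneracy of the quadratic form. Two remarks on interpretation: the expression is read as a formal power series truncated in degrees above $2n$, and when $Q^n$ is instead viewed through~\eqref{eqn:oriented-grassmannian-quadric}, the complex structure used is the one induced from $\bC\bP^{n+1}$.
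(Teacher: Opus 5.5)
Your proposal is correct and follows essentially the same route as the paper: the Euler sequence gives $c(T\bC\bP^{n+1}) = (1+\bar h)^{n+2}$, the normal bundle of $Q^n$ is $\cO_{Q^n}(2)$ with total Chern class $1+2h$, and the Whitney formula applied to the normal bundle sequence yields the claim. Your extra justification that the defining section of $\cO(2)$ vanishes transversally (by nondegeneracy of the quadratic form), and your remark that $1+2h$ is invertible in the truncated cohomology ring, fill in details the paper leaves implicit.
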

\begin{proof}
Consider the first Chern class $\bar{h} := c_1 (\cO_{\bC \bP^{n+1}}(1)) \in H^2( \bC \bP^{n+1} ; \bZ)$ of the hyperplane bundle.
Under the inclusion map $i: Q^n \hookrightarrow \bC \bP^{n+1}$, its pullback satisfies $h := i^* \bar{h} = c_1(\cO_{Q^n}(1)) \in H^2( Q^n; \bZ)$, since $\cO_{Q^n}(1) = i^* \cO_{\bC \bP^{n+1}}(1)$.
The Euler sequence on $\bC \bP^{n+1}$ yields 
\[
0 \to \cO_{\bC \bP^{n+1}} \to \cO_{\bC \bP^{n+1}}(1)^{\oplus (n+2)} \to T \bC \bP^{n+1} \to 0, \qquad \implies \qquad c (T \bC \bP^{n+1}) = ( 1 + \bar{h})^{n+2}
\]
for the total Chern class.
Since $Q^n$ is defined by a homogeneous degree-$2$ polynomial, its normal bundle in $\bC \bP^{n+1}$ is $N_{Q^n/ \bC \bP^{n+1}} \cong \cO_{Q^n}(2)$, so the normal bundle exact sequence yields
\[
0 \to TQ^n \to T (\bC \bP^{n+1})|_{Q^n} \to \cO_{Q^n}(2) \to 0.
\]
Since $c(\cO_{Q^n}(2)) = 1 + 2h$, the claim follows from taking Chern classes in the exact sequence.
\end{proof}

\begin{proposition}\label{prop:all-nontrivial-isoparametric}
    For every isoparametric foliation with $g \geq 3$ principal curvatures not in the OT-FKM family, the hypersurfaces $\mathbf{S}_{M_i}$ are not stably homotopy equivalent to products $M_i \times \bS^r$. 
\end{proposition}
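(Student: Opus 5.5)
By Lemma~\ref{lemma:obstructions-to-triviality}$(iii)$, it is enough to show $J(TM_i) \neq 0$ for every focal manifold $M_i$ of a foliation with $g \geq 3$ that is not of OT-FKM type. The non-vanishing will come from Lemma~\ref{lemma:nontrivial-J-map}, case by case through the list in Section~\ref{subsec:isoparametric}: $g=3$ with $\bF\bP^2$, $m\in\{1,2,4,8\}$; the $g=4$ homogeneous cases $(2,2)$ and $(4,5)$; and $g=6$ with $m\in\{1,2\}$. The guiding principle is to find a low-dimensional detector: a loop with $w_1\neq 0$, a $2$-sphere with odd $c_1$ (or $w_2\neq 0$), or a $4$-sphere on which $p_1/2$ or $c_2$ is nonzero mod $24$.

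The cases in which $w_1$ or $w_2$ detects are handled first.
\begin{enumerate}[$\bullet$]
\item For $\bR\bP^2$ and $\bS^3\times\bR\bP^2$ ($g=3,6$ with $m=1$), $w_1(T\bR\bP^2)=a\neq 0$. Part $(i)$ of Lemma~\ref{lemma:nontrivial-J-map}, applied to the generating loop, gives $J(TM_i)\neq 0$.
\item For $\bC\bP^2$, $c_1=3h$. On a line $\bC\bP^1$ this pairs to $3$, which is odd, so part $(ii)$ applies.
\item For $\bC\bP^3$ in the $(4,2,2)$ case, $c_1=4h$ is even, so $w_2=0$ and another detector is needed. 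Here $p_1=4h^2$ pairs to $4$ with the generator of $H_4$. However, $\bC\bP^3$ has no spherical class generating $H_4$ (we have $\pi_4(\bC\bP^3)=\pi_4(\bS^7)=0$), so part $(iii)$ does not apply directly. In that case I would replace the sphere by the full stable class over the $4$-skeleton $\bC\bP^2$: I would argue that $J$ restricted to $\bC\bP^2$ is nonzero via the $e$-invariant/Adams operations, or via a stable-homotopy invariant such as $Sq^4$ on the Thom space (Wu classes). A cleaner route, which I would try first, is to apply the Wu formula to the Thom spectrum, since the Stiefel--Whitney classes of a spherical fibration are $J$-invariants. For $\bC\bP^3$ one has $w(T\bC\bP^3)=(1+h)^4=1+h^4=1$, so this also fails. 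I therefore fall back on the mod-$3$ analogue: the Pontryagin class mod $3$ is a spherical-fibration invariant through Wu-type $\mathcal P^1$ classes. Since $p_1=4h^2\not\equiv 0 \pmod 3$, this gives $J\neq 0$.
\item For $\bH\bP^2$ and $\bO\bP^2$, $\chi=3$ is odd, so $w_{\mathrm{top}}\neq 0$. Since Stiefel--Whitney classes are invariants of the stable spherical fibration (as used in the proof of part $(i)$), $w(TM)\neq 1$ forces $J(TM)\neq 0$. The same Euler-characteristic argument applies to $\bC\bP^2$ and $\bR\bP^2$, and it is the argument I would actually use uniformly for $g=3$.
\end{enumerate}

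The quadrics $Q^3$ ($g=4$, $(2,2)$) and $Q^5$ ($g=6$, $m=2$) are treated with Lemma~\ref{lemma:cohomology-complex-quadric}, which gives $c_1=nh$. Thus $c_1(TQ^5)=5h$, which pairs oddly with a line (a spherical class, since $Q^n$ is simply connected with $\pi_2\cong H_2$), and part $(ii)$ applies. For $Q^3$, $c_1=3h$ is again odd on a line. The twistor space $G_2/\textup{U}(2)^+$ and the $(4,5)$ manifolds $\textup{U}(5)/(\textup{Sp}(2)\times\textup{U}(1))$ and $\textup{U}(5)/(\textup{SU}(2)\times\textup{U}(3))$ are homogeneous complex (flag-type) manifolds. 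For them I would compute $c_1$ from the root data, as the sum of the roots complementary to the parabolic, and pair it with a rational curve through $\pi_2$, checking for an odd value. The Fano index of $G_2/P$ for the two maximal parabolics is $3$ and $5$, so $c_1$ evaluates oddly on the generating line and part $(ii)$ applies. For the $\textup{U}(5)$ quotients, which are not quite Kähler because of the $\textup{U}(1)$ factors, I would fibre them over Grassmannian or isotropic-Grassmannian bases and compute $w_2$ through the fibration, with $w_1$ vanishing by orientability.

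The main obstacle is the $(4,4,5)$ case together with any case where $w_2=0$, such as $\bC\bP^3$. There the mod-$2$ classes vanish and one must find a $4$-dimensional detector (part $(iii)$ on a genuinely spherical class), or else use odd-primary Wu classes. Before committing, I would first check whether the Euler characteristic parity argument or $w_2$ already settles these cases.
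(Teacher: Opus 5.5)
Your reduction to $J(TM_i)\neq 0$ via Lemma~\ref{lemma:obstructions-to-triviality}$(iii)$ is the paper's, and most of your case analysis is sound. For every $\bF\bP^2$, the odd Euler characteristic forces $w(TM_i)\neq 1$. For $\bC\bP^3$, the mod-$3$ Wu class gives $q_1\equiv p_1(T\bC\bP^3)=4h^2\not\equiv 0 \pmod 3$. Both are valid and more elementary substitutes for the paper's restriction to the Hopf cell $\bF\bP^1$ and its appeal to Atiyah--Todd. Your treatment of $\bS^3\times\bR\bP^2$, $Q^3$, $Q^5$ and the $G_2$ twistor space (index $3$) matches the paper.

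The genuine gap is the exceptional family $(g,m_1,m_2)=(4,4,5)$, which you never handle. These focal manifolds are not complex manifolds. $M_2=\textup{U}(5)/(\textup{SU}(2)\times\textup{U}(3))$ is $13$-dimensional. $M_1=\textup{U}(5)/(\textup{Sp}(2)\times\textup{U}(1))\cong\textup{SU}(5)/\textup{Sp}(2)$ is an $\bS^5$-bundle over $\bS^9$ with the integral cohomology of $\bS^5\times\bS^9$. Both have $H^2=0$: $M_2$ is a principal circle bundle over $\bG(2,\bC^5)$ with primitive Euler class, hence $3$-connected. Both also have Euler characteristic $0$. So $c_1$, $w_2$ and the parity argument carry no information, and computing $w_2$ through a fibration can only return $0$.

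For $M_2$, the repair is the four-dimensional detector you mention but do not carry out. By Hurewicz, $\pi_4(M_2)\cong H_4(M_2)\cong\bZ$. From $TM_2\cong\psi^*T\bG(2,\bC^5)\oplus\mathbf{1}$ one gets $p_1(TM_2)=\psi^*(3s_1^2-2s_2)=-2\psi^*s_2$. Hence $\tfrac{p_1}{2}$ evaluates to $\pm1$ on the spherical generator, and Lemma~\ref{lemma:nontrivial-J-map}$(iii)$ applies. Alternatively, $w_4(TM_2)$, the mod-$2$ reduction of $\tfrac{p_1}{2}$ for this spin bundle, is nonzero, and your own Stiefel--Whitney principle then suffices.

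For $M_1$, however, no characteristic-class argument of any kind can work:
\begin{enumerate}[$\bullet$]
\item By the Wu formula $w(TM_1)=1$: the only candidate $Sq^5x_9=Sq^1Sq^4x_9$ vanishes because $H^{13}(M_1)=0$.
\item The Pontryagin classes and all odd-primary Wu classes live in positive degrees divisible by $4$, where $H^\bullet(M_1)=0$.
\end{enumerate}
The missing idea is $K$-theoretic, as in the paper. The Atiyah--Hirzebruch spectral sequence gives $\widetilde{KO}(M_1)\cong\bZ_2$, concentrated in filtration $9$. The non-stable-parallelizability of $M_1\cong\textup{SU}(6)/\textup{Sp}(3)$ (Singhof--Wemmer) makes $[TM_1]-14$ its nonzero element. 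Adams' injectivity of $J$ on $\pi_8(O)\cong\bZ_2$ then gives $J(TM_1)\neq 0$. Without this step, the proposition remains unproved for the $(4,5)$ family.
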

\begin{proof}
    We treat the cases $g \in \{3,4,6\}$ in steps, following the description of Section~\ref{subsec:isoparametric}.
    Applying Lemma~\ref{lemma:obstructions-to-triviality}, we seek to prove that $J(TM_i) \neq 0$ for the focal manifolds in question.

\smallskip \noindent \textbf{Step 1: $g=3$.}
For the projective planes $M_i = \bF \bP^2$ with $\bF \in \{ \bR, \bC, \bH, \bO \}$ and $m = \dim_{\bR} \bF \in \{1,2,4,8\}$, we have the stable identity $T \bF \bP^2 \oplus \mathbf{1}^{\oplus m} \cong 3 \gamma_{\bF}$ by~\cite{milnor-stasheff}.
    Here, $\gamma_{\bF}$ denotes the tautological $\bF$-line bundle viewed as a real rank-$m$ bundle.
    Therefore,
    \[
    J (T \bF \bP^2) = 3 \, J(\gamma_{\bF}) = 3 \, J( \gamma_{\bF} - \mathbf{1}^{\oplus m}) \in [ \bF \bP^2, BG]. 
    \]
    We observe that $J (T \bF \bP^2) \neq 0$ is a non-trivial class, whose restriction to the bottom cell $\bF \bP^1 = \bS^m$ is the stable class of the corresponding Hopf attaching map, since $i : \bF \bP^1 \hookrightarrow \bF \bP^2$ induces
    \[
    i^* J(T \bF \bP^2) = J( T \bF \bP^1 \oplus \nu_i) = J (T \bS^d) + J(\nu_i) = J(\nu_i)
    \]
    by stable triviality.
    At a line $L \subset \bF^2 \subset \bF^3$, the normal space of $\bF \bP^1 \subset \bF \bP^2$ is $\textup{Hom}_{\bF}(L, \bF^3/\bF^2) \cong \textup{Hom}_{\bF}(L,\bF) \cong L^{\vee}$, so $\nu_i \cong \gamma_{\bF}^{\vee}$ with $S(\nu_i)$ the corresponding Hopf sphere bundle over $\bS^m = \bF \bP^1$.
    Therefore, $J(\nu_i)$ is the attaching map of the top cell of $\bF \bP^2$, namely the stable class of the degree-$2$ attaching map $\bS^1 \to \bS^1$ for $\bR \bP^2$, so $J(T \bR \bP^2) \neq 0$ is detected by the Stiefel-Whitney class $w_1$. 
    For $m \geq 2$, the restriction of the class $J ( T \bF \bP^2) \neq 0$ to $\bF \bP^1 = \bS^m$ is the generator of $\textup{Im}(J) \subset \pi^S_{m-1}$ inside the corresponding stable homotopy group of spheres coming from the $\bF$-Hopf map $\eta_{\bF}: \bS^{2m-1} \to \bS^m$, with $\{ \pi_1^S, \pi^S_3, \pi^S_7 \} = \{ \bZ_2, \bZ_{24}, \bZ_{240} \}$.
    Thus, $J(T \bF \bP^2) \neq 0$ in all cases.

\smallskip \noindent \textbf{Step 2: $g=4$.}
We consider the foliations with $(m_1 , m_2) \in \{ (2,2) , (4,5) \}$ and focal manifolds
\[
M_i \in \{ \bC \bP^3 , Q^3 , \textup{SU}(5)/\textup{Sp}(2) , \textup{U}(5) / ( \textup{SU}(2) \times \textup{U}(3)) \}.
\]
For $\bC \bP^3$, the Euler sequence gives $T \bC \bP^3 \oplus \mathbf{1} \cong 4 \eta$ with $\eta$ the Hopf line bundle.
The computation of Atiyah-Todd~\cite{atiyah-todd} shows that the class of $J(\eta)$ over $\bC \bP^3$ has order $24$, so $J(T \bC \bP^3) = 4 \, J(\eta) \neq 0$.
For $Q^{2n+1}$, the Grassmannian identification~\eqref{eqn:oriented-grassmannian-quadric} shows that $\pi_1(Q^{2n+1}) = 0$ is simply connected and Lemma~\ref{lemma:cohomology-complex-quadric} shows that $c_1(TQ^{2n+1}) = (2n+1)h$, for $h$ a primitive generator of $H^2(Q^{2n+1};\bZ)$.
Therefore, given a map $f: \bS^2 \to Q^{2n+1}$ representing a generator of $H_2(Q^{2n+1}; \bZ)$, we have $\la c_1(TQ^{2n+1}) , f_*[\bS^2] \rg \equiv 1\;(\on{mod} 2)$ and Lemma~\hyperref[lemma:nontrivial-J-map]{\ref{lemma:nontrivial-J-map}$(ii)$} implies $J(TQ^{2n+1}) \neq 0$.

From~\cite{Agricola}, we have the exceptional isomorphism $M_1 =\textup{SU}(5)/\textup{Sp}(2) \cong \textup{SU}(6)/\textup{Sp}(3)$ as manifolds, which is not stably parallelizable from the erratum of Singhof-Wemmer~\cite{singhof-wemmer}.
The classification of compact homogeneous spaces with the integral cohomology of two spheres gives $H^{\bullet}( \textup{SU}(6) / \textup{Sp}(3)) = H^{\bullet}(\bS^5 \times \bS^9) \cong \Lambda(x_5, x_9)$, cf.~\cite{kramer}.
To compute the Grothendieck group $\widetilde{KO}^0(M_1)$, we use the reduced Atiyah-Hirzebruch spectral sequence as in~\cite{atiyah--hirzebruch},
\[
\widetilde{E}^{p,q}_2 = \widetilde{H}^p (M ; KO^q(\ast)) \implies \widetilde{KO}^{p+q}(M),
\]
where $KO^q(\ast)$ is periodic $\on{mod} 8$ and non-zero only for $q \equiv 0,-1,-2,-4 \; (\on{mod} 8)$.
Since $H^{\bullet}(M_1;\bZ) \cong H^{\bullet}(\bS^5 \times \bS^9 ; \bZ)$, the only non-zero reduced cohomology groups occur in degrees $5,9,14$; in particular, the only possible $E_2$-term in the total-degree $0$ diagonal $p+q=0$ is $\widetilde{E}_2^{9,-9} = H^9(M;\bZ_2) \cong \bZ_2$, since $KO^{-5}(\ast) = KO^{-14}(\ast) = 0$.
There are no differentials entering or leaving $(9,-9)$: for $r \neq 9$, $d_r = 0$ for degree reasons, and the only potential source of $d_9$ would be the $p=0$ column, which does not appear in the reduced spectral sequence.
Therefore, $\widetilde{KO}^0(M) \cong \bZ_2$ occurs entirely in filtration $9$, and the stable class $[TM] - 14 \in \widetilde{KO}^0(M) \cong \bZ_2$ is the unique non-zero element because $M$ is not stably parallelizable.
The Adams map $j_M : \widetilde{KO}^0(M) \to J(M)$ induced by $BO \to BG$ acts on the graded piece in filtration $9$ via the group $\pi_9(BO) = \pi_8(O) \cong \bZ_2$, on which it agrees with the stable $J$-homomorphism $J: \pi_8(O) \to \pi^S_8$.
By Adams' theorem~\cite{adams}, this map is injective, so $J(TM) = j_M([TM]-14) \neq 0$ is non-trivial.

Finally, $M_2 = \textup{U}(5) / ( \textup{SU}(2) \times \textup{U}(3))$ has $w_2(TM_2) = 0$ and $w_4(TM_2) \neq 0$ by~\cite{topology-g-4}*{Theorem 2.9}, so it is not stably parallelizable.
Let $B := \bG(2,\bC^5) = \textup{U}(5)/ ( \textup{U}(2) \times \textup{U}(3))$, so the quotient $\textup{U}(2) / \textup{SU}(2) \cong \bS^1$ induces a principal circle bundle
\begin{equation}\label{eqn:principal-circle-bundle}
    \bS^1 \to M_2 \xrightarrow{\psi} B,
\end{equation} 
which is the unit circle bundle of $\det S$, coming from the tautological rank-$2$ bundle $S \to B$.
Moreover, $B$ has a Schubert cell decomposition by even-dimensional cells, with $H^{\bullet}(B)$ generated by the degree-$2$ and degree-$4$ Schubert classes; letting $s_i = c_i(S)$, we have that $s_1^2$ and $s_2$ are linearly independent in degree $4$.
The Euler class $e(\psi) = c_1(\det S) = s_1$ is a primitive generator of $H^2(B;\bZ)$, so $\pi_1(M_2) = \pi_2(M_2) = 0$ by the homotopy exact sequence of the circle bundle~\eqref{eqn:principal-circle-bundle}.
Since $B$ consists of even-dimensional Schubert cells, it has $\pi_3(B) = 0$, so also $\pi_3(M_2) = 0$, meaning that $M_2$ is $3$-connected.
Applying the Gysin sequence to~\eqref{eqn:principal-circle-bundle} as in Lemma~\ref{lemma:cohomology-ring}, we find $H^4(M_2;\bZ) \cong H^4(B;\bZ) / \la s_1^2 \rg \cong \bZ$, so $H^4(M_2;\bZ) = \bZ \cdot \psi^* s_2$ is torsion-free.
Since $M_2$ is $3$-connected and $H_4(M_2) \cong \bZ$, Hurewicz gives $\pi_4(M_2) \cong H_4(M_2) \cong \bZ$, so we can choose $f: \bS^4 \to M_2$ representing a generator to find $\la \frac{p_1(f^*TM_2)}{2} , [\bS^4] \rg = \la \frac{p_1(TM_2)}{2} , f_*[\bS^4] \rg$.
To compute the Pontryagin class, recall that a principal $\bS^1$-bundle has trivial vertical tangent line, so $TM_2 \cong \psi^* TB \oplus \mathbf{1}$, where $TB \cong S^{\vee} \otimes Q$ and $Q$ is the quotient rank-$3$ bundle.
Moreover, 
\begin{equation}\label{pontryagin-class-computation}
p_1(TB_{\bR}) = 3 s_1^2 - 2 s_2 \implies p_1(TM_2) = \psi^* p_1(TB_{\bR}) = \pi^* (3s_1^2 - 2s_2) = - 2 \psi^* s_2
\end{equation}
because $\psi^* s_1 = 0$.
Therefore, $\frac{p_1(TM_2)}{2} = - \psi^*s_2$ is a primitive generator of $H^4(M_2;\bZ)$, so $\la \frac{p_1(TM_2)}{2} , f_*[\bS^4] \rg = \pm 1$ is non-zero $\on{mod} 24$ in the above computation, so $J(TM_2) \neq 0$ by Lemma~\hyperref[lemma:nontrivial-J-map]{\ref{lemma:nontrivial-J-map}$(iii)$}.
The Pontryagin class computation~\eqref{pontryagin-class-computation} for $B = \bG(2, \bC^5)$ follows from Borel-Hirzebruch~\cite{borel-hirzebruch}: pulling back to a splitting space $\tilde{\pi}: \tilde{B} \to B$ where $\tilde{\pi}^* S = L_1 \oplus L_2$ and $\tilde{\pi}^* Q = Q_1 \oplus Q_2 \oplus Q_3$, we write $x_i = c_1(L_i)$ and $y_j = c_1(Q_j)$ to compute
\[
c(S) = (1+x_1)(1+x_2), \qquad c(Q) = (1 + y_1)(1+y_2)(1+y_3), \qquad c(S)c(Q) = 1,
\]
the latter due to $S \oplus Q \cong \mathbf{1}_{\bC}^{\oplus 5} \otimes \cO_B$.
Since $s_1 := c_1(S) = x_1 + x_2$ and $s_2 := c_2(S) = x_1x_2$, we find $c_1(Q) = - s_1$ and $c_2(Q) = s_1^2 - s_2$.
Writing $\tilde{\pi}^* TB \cong \bigoplus_{i,j} L_i^{\vee} \otimes Q_j$, we can apply~\cite{borel-hirzebruch}*{Theorems 10.3 and 10.7} to compute the Pontryagin class as
\allowdisplaybreaks{
\begin{align*}
p_1(TB_{\bR}) &= \prod_{i,j} \bigl( 1 + (y_j - x_i)^2 \bigr) = \sum_{i,j} (y_j - x_i)^2 = 2 \sum_j y_j^2 + 3 \sum_i x_i^2 - 2 \sum_i x_i \sum_j y_j \\
&= 2 \bigl( c_1(Q)^2 - 2 c_2(Q) \bigr) + 3 \bigl( (x_1+x_2)^2 - 2 x_1 x_2 \bigr) - 2 s_1 c_1(Q) \\
&= 2(-s_1^2 + 2s_2) + 3(s_1^2 - 2s_2) - 2(-s_1)(s_1) = 3 s_1^2 - 2 s_2.
\end{align*}}
This proves the expression~\eqref{pontryagin-class-computation}, so $J(TM_2) \neq 0$ by the above argument.

\smallskip \noindent \textbf{Step 3: $g=6$.}
For $m=1$, the focal manifolds are topologically $\bS^3 \times \bR \bP^2$.
Using the stable triviality of $\bS^3$ as in Lemma~\ref{lemma:obstructions-to-triviality}, we find $J( T(\bS^3 \times \bR \bP^2)) = J( T\bR \bP^2) \neq 0$ using Step 1.

For $m=2$, the known focal manifolds are the complex quadric $Q^5$ and $Z = G_2 / \textup{U}(2)^+$, the twistor space of $G_2/ \textup{SO}(4)$.
Using Lemma~\hyperref[lemma:nontrivial-J-map]{\ref{lemma:nontrivial-J-map}$(ii)$}, we proved above that $J(TQ^{2n+1}) \neq 0$.
Likewise, $Z$ is a simply connected K\"ahler manifold with $H^k(Z) = H^k(\bC \bP^5)$ and $c_1(TZ) = 3L$, where $L$ is the positive generator of $H^2(Z;\bZ)$ by~\cite{kotschick-thung}*{Proposition 3 and Lemma 4}.
Letting $f: \bS^2 \to Z$ represent a generator of $H_2(Z;\bZ)$, we find $\la c_1(TZ), f_*[\bS^2] \rg = 3 \equiv 1 \; (\on{mod} 2)$, so $J(TZ) \neq 0$.
\end{proof}

We now recall the construction of the OT-FKM family of isoparametric foliations, with $g = 4$, as recorded in~\cite{wang-topology-clifford}.
Let $P_0, \dots, P_m$ be elements in $O( 2 \ell)$ satisfying $P_i P_j + P_j P_i = 2 \delta_{ij} I$, which generate an orthogonal representation on $\bR^{2 \ell}$ of the Clifford algebra $C_{0,m+1}$ of $(\bR^{m+1},g)$, for $g$ a positive-definite metric.
The $(+1)$-eigenspace of $P_0$ satisfies $E_+(P_0) \simeq \bR^{\ell}$, and is invariant under the elements $E_k = P_1 P_{k+1}$, which belong to $O(\ell)$ and satisfy $E_i E_j + E_j E_i = - 2 \delta_{ij} I$.
Thus, they define an orthogonal $C_{m-1}$-module structure on $E_+(P_0)$, for $C_{m-1}$ the Clifford algebra on $\bR^m$ equipped with a negative-definite metric.
This construction gives a bijective correspondence between orthogonal representations of $C_{0,m+1}$ and $C_{m-1}$, with $\ell = k \delta(m)$ for $k \in \bN^*$, via
\begin{equation}\label{eqn:P0P1}
    P_0 = \begin{pmatrix}
    I & 0 \\ 0 & - I
    \end{pmatrix}, \qquad P_1 = \begin{pmatrix}
        0 & I \\ I & 0
    \end{pmatrix}, \qquad P_{k+1} = \begin{pmatrix}
        0 & E_k \\ - E_k & 0 
    \end{pmatrix}, \qquad E_+(P_0) \cap \bS^{2\ell-1} \cong \bS^{\ell-1}. 
\end{equation}
For $m \equiv 0 \; (\on{mod} \; 4)$, there are two irreducible $C_{m-1}$-modules $\Delta^{\pm}_m$, corresponding to $E_1 E_2 \cdots E_{m-1} = \pm \textup{Id}$ and producing the decomposition $E_+(P_0) = a \Delta^+_m \oplus b \Delta^-_m$ as $C_{m-1}$-modules.
We define the \textit{index} $q$ of the $C_{m-1}$-representation on $\bR^{\ell}$ as
\begin{equation}\label{eqn:index-representation}
    q = a-b, \qquad 2 q\delta(m) = \textup{Tr} (P_0 P_1 \cdots P_m),
\end{equation}
with $q = 0$ if $m\not\equiv 0 \; ( \on{mod} \; 4)$.
The isoparametric foliation $\{ M_s \}$ consists of level sets of the function 
\begin{equation}\label{eqn:F-level-sets}
    F_P: \bS^{2 \ell - 1} \to [-1,1], \qquad F_P(x) := \la x, x \rg^2 - 2 \sum_{i=0}^m \la P_i x, x \rg^2 .
\end{equation}
The functions $F_q$ are equivalent for Clifford systems $\{ P_i \}$ of the same index $q$; for $m \equiv 0 \; (\on{mod} 4)$, and $\ell = k \delta(m)$, there are $(k+1)$ distinct inequivalent values of $q$, hence functions $F_q$ giving rise to regular level sets $M = M (m,\ell,q)$ and focal submanifolds $M_1(m,\ell,q) = F_q^{-1}(1) , M_2(m,\ell,q) = F_q^{-1}(-1)$.

Consider the sphere $B := E_+(P_0) \cap \bS^{2\ell-1} \cong \bS^{\ell-1}$.
For $x \in B$, the curves
\[
x_{\alpha}(\tau) = \cos (\tfrac{\tau}{2}) \, x + \sin ( \tfrac{\tau}{2}) \, P_{\alpha} x, \qquad Q_{\alpha}(\tau) = \cos \tau\, P_0 + \sin \tau \, P_{\alpha} 
\]
satisfy $Q_{\alpha}(\tau) x_{\alpha}(\tau) = x_{\alpha}(\tau)$, so $x_{\alpha}(\tau) \in M_2$ and $P_{\alpha} x \in T_x M_2$ for $\alpha = 1, \dots, m$.
Thus, inside the ambient normal space $E_-(P_0)$, the tangent directions of $M_2$ transverse to the fiber are spanned by $\{ P_{\alpha} \}_{\alpha=1}^m$, which become $\{ x, E_1x, \dots, E_{m-1} x \}$ under the identification $E_-(P_0) \xrightarrow{P_1} E_+(P_0) \cong \bR^{\ell}$.
Let $\eta \to B$ denote the bundle defined by the fiberwise orthogonal complement of these vectors,
\begin{equation}\label{eqn:eta-span-general}
\eta := \textup{span} \{ x, E_1 x, \dots, E_{m-1} x \}^{\perp} \subset \bR^{\ell}.
\end{equation}
The FKM construction shows that $M_1$ is the total space of the sphere bundle $S(\eta) \to B$ and has trivial normal bundle inside $\bS^{n-1}$, so $\mathbf{S}_{M_1} \cong M_1 \times \bS^{m_1+1}$ always~\cite{ferus-karcher-munzner}.
Moreover, the bundle $\eta$ is trivial for the pairs $(m_1, m_2)$ of~\eqref{eqn:pairs-trivial-bundle}, whereby $M_1 \cong \bS^{\ell-1} \times \bS^{\ell-m_1-1}$ in those cases.

Regarding $M_2$, the above identifications via $E_-(P_0) \xrightarrow{P_1} E_+(P_0)$ yield $\nu_{M_2}|_B \cong \eta$.
Moreover, the fields $E_1 x, \dots, E_{m-1} x$ are tangent vector fields spanning a global orthonormal frame; in particular, they span the trivial rank-$(m-1)$ subbundle $\mathbf{1}^{\oplus (m-1)}$.
Since $T_x B = T_x \bS^{\ell-1} = \la x \rg^{\perp}$, the definition~\eqref{eqn:eta-span-general} produces the bundle isomorphism
\begin{equation}\label{eqn:tangent-space}
    T \bS^{\ell-1} \cong \eta \oplus \mathbf{1}^{\oplus (m-1)}.
\end{equation}
We now use these tools to address the possible homotopy equivalences $\mathbf{S}_{M_i} \simeq M_i \times \bS^r$.
It would be interesting to classify the possible hypersurfaces $\mathbf{S}_{M_i}$ arising from OT-FKM foliations only under stable homotopy equivalence $\mathbf{S}_{M_2} \simeq_s M_2 \times \bS^r$, using the techniques employed in Proposition~\ref{prop:all-nontrivial-isoparametric}.

\begin{lemma}\label{lemma:ot-fkm-family-I}
    For the OT-FKM family with $(g,m_1,m_2) = (4,m, \ell - m-1)$, the surface $\mathbf{S}_{M_1} \cong M_1 \times \bS^{m+1}$ is a product bundle.
    If $m \not\equiv 0 \; (\on{mod} 4)$ and the bundle $\mathbf{S}_{M_2} = S(\nu_{M_2} \oplus \mathbf{1})$ is homeomorphic to a product $M_2 \times \bS^r$, then $M_2 \cong S(\xi)$ has
\begin{equation}\label{eqn:xi-trivial-bundle}
    \xi \; \textup{is the trivial bundle}, \quad \Longleftrightarrow \quad
\begin{cases}
m\equiv 3,5,6,7 \pmod 8,\\
m\equiv 1,2 \pmod 8 \; \textup{ and } \; k \textup{ even},\\
m\equiv 0 \pmod 4 \; \textup{ and } \; q=0,
\end{cases}
\end{equation}
for $q$ the index~\eqref{eqn:index-representation}.
For $m \equiv 0 \; (\on{mod} 4)$, this conclusion holds for $\mathbf{S}_{M_2} \simeq M_2 \times \bS^r$ a homeomorphism.
Conversely, if $\xi$ is the trivial bundle, then $\mathbf{S}_{M_2} \simeq_s M_2 \times \bS^{\ell-m}$ are stably homotopy equivalent.
Moreover, $\mathbf S_{M_2} \cong M_2 \times \bS^{\ell-m}$ is a product bundle when $\nu_{M_2} \to M_2$ is trivial, namely
\[
(m_1,m_2)\in\{(1,2),(2,1),(1,6),(6,1),(2,5),(5,2),(3,4) , (4,3)^{\textup{ind}}\}.
\]
\end{lemma}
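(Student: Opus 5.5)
The plan is to treat $M_1$ and $M_2$ separately, reducing every statement about $\mathbf{S}_{M_2}$ to a statement about the rank-$\ell$ bundle $\xi \to \bS^m$ with $M_2 \cong S(\xi)$. These are the only two ingredients I will use beyond the earlier lemmas.

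\textbf{The focal manifold $M_1$.} The FKM construction recalled above shows that $M_1 \subset \bS^{n-1}$ has trivial normal bundle $\nu_{M_1} \cong \mathbf{1}^{\oplus(m+1)}$. Then $\nu_{M_1}\oplus\mathbf{1}$ is trivial, so~\eqref{eqn:S-Mi-diffeomorphism-type} gives $\mathbf{S}_{M_1}\cong M_1\times\bS^{m+1}$ directly.

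\textbf{Reducing the $M_2$ statement to $\xi$.} Suppose $\mathbf{S}_{M_2}\cong M_2\times\bS^r$.
\begin{enumerate}[$\bullet$]
\item Lemma~\ref{lemma:obstructions-to-triviality}$(i)$, run verbatim with topological stable tangent bundles (microbundles), makes $M_2$ stably parallelizable in the topological sense.
\item By~\eqref{eqn:sphere-bundle-stably-parallelizable} applied to $\pi: S(\xi)\to\bS^m$, together with the stable triviality of $T\bS^m$, we get $TM_2\oplus\mathbf{1}\cong_s\pi^*\xi$.
\item Since $\ell>m$, the bundle $\pi$ admits a section $\sigma$. Pulling back along $\sigma$ shows that $\xi$ is stably (topologically) trivial over $\bS^m$.
\item To pass from topological to honest vector bundle information I use obstructions that are homotopy or topological invariants. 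The class $[\xi]\in\widetilde{KO}(\bS^m)=\pi_{m-1}(O)$ vanishes for $m\equiv3,5,6,7 \pmod 8$. For $m\equiv1,2\pmod8$ it lies in $\bZ_2$ and is detected by $J$, which is injective there by Adams, exactly as in Lemma~\ref{lemma:nontrivial-J-map}$(i)$--$(ii)$; equivalently it is detected by $w_m$, a homotopy invariant by Wu. For $m\equiv0\pmod4$ the group is $\bZ$, detected rationally by $p_{m/4}$. Rational Pontryagin classes are topological invariants by Novikov, which is why that case needs the homeomorphism hypothesis.
\item Once $\xi$ is stably trivial as a vector bundle, it is actually trivial, because $\operatorname{rank}\xi=\ell>m$ lies in the stable range.
\end{enumerate}

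\textbf{Computing the clutching class of $\xi$.} It remains to identify when $[\xi]=0$. The clutching function of $\xi$ is the map $\bS^{m-1}\to O(\ell)$ given by Clifford multiplication by unit vectors in $\operatorname{span}\{P_1,\dots,P_m\}$. By the Atiyah--Bott--Shapiro identification of Clifford modules with $KO$ of spheres, an irreducible module gives a generator of $\widetilde{KO}(\bS^m)$. Hence $[\xi]$ equals $k$ times a generator for $m\not\equiv0\pmod4$, and equals $q$ times a generator (the difference $a-b$ of $\Delta^\pm_m$ multiplicities) when $m\equiv0\pmod4$. Reading this off against $\widetilde{KO}(\bS^m)\in\{\bZ_2,\bZ_2,0,\bZ,0,0,0,\bZ\}$ for $m\equiv1,\dots,8\pmod 8$ yields exactly the case list~\eqref{eqn:xi-trivial-bundle}.

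\textbf{Converse.} Assume $\xi$ is trivial, so $M_2\cong\bS^m\times\bS^{\ell-1}$ is stably parallelizable. The north pole section of $\mathbf{S}_{M_2}=S(\nu\oplus\mathbf{1})\to M_2$ gives a stable splitting
\[
\Sigma^\infty(\mathbf{S}_{M_2})_+\simeq\Sigma^\infty M_{2+}\vee\operatorname{Th}(\nu),
\]
using the cofiber sequence from the proof of Lemma~\ref{lemma:cohomology-ring}, which splits after one suspension because of the section. Since $\nu\oplus TM_2$ is trivial, $\nu$ is stably trivial. Therefore $\operatorname{Th}(\nu)\simeq\Sigma^{\ell-m}\Sigma^\infty M_{2+}$, which is the same decomposition as for $M_2\times\bS^{\ell-m}$.

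\textbf{Honest products.} When $\nu_{M_2}$ itself is trivial, $\nu_{M_2}\oplus\mathbf{1}$ is trivial and~\eqref{eqn:S-Mi-diffeomorphism-type} again gives a product. The listed pairs are those where the normal bundle of $M_2$ is trivial, which I would import from the known topology of OT-FKM focal manifolds~\cites{wang-topology-clifford,topology-g-4}, checking via $\nu_{M_2}|_B\cong\eta$ and~\eqref{eqn:tangent-space}.

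\textbf{Main obstacle.} I expect two steps to be delicate. The first is the topological-versus-smooth passage in the forward direction, especially justifying the Novikov/$J$ detection of $[\xi]$. The second is the precise ABS computation, in particular that the index $q$ (rather than $k$) governs the case $m\equiv0\pmod4$.
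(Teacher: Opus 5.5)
Your proposal is correct. For $M_1$, for the stable splitting in the converse, and for the final clause on trivial $\nu_{M_2}$ (which both you and the paper import from the literature), it matches the paper.

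The forward direction for $M_2$ is organized differently.
\begin{itemize}
\item The paper cites \cite{topology-g-4} for ``$M_2$ stably parallelizable $\Leftrightarrow$ $\xi$ trivial'' and \cite{wang-topology-clifford} for the case list. It then uses Stiefel--Whitney classes when $m\equiv 1,2\pmod 8$, and Novikov plus a quoted formula for $p_{m/4}(M_2)$ in terms of $q$ when $m\equiv 0\pmod 4$.
\item You instead pull back along a section of $S(\xi)\to\bS^m$ to reduce directly to $[\xi]\in\widetilde{KO}(\bS^m)$. You compute that class from the Atiyah--Bott--Shapiro clutching description and detect it by $J$ or by rational $p_{m/4}$.
\end{itemize}
The step you flagged as delicate does go through. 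With $P_0,P_1$ as in~\eqref{eqn:P0P1}, the clutching map over the equator is $Q\mapsto Q|_{E_+(P_0)}$. Identifying $E_-(P_0)$ with $E_+(P_0)$ via $P_1$ turns it into the Clifford clutching map $c\mapsto c_1 I+\sum_k c_{k+1}E_k$ of the $C_{m-1}$-module $\bR^\ell$, up to the sign convention for the $E_k$. This gives $[\xi]$ equal to $k$ times a generator, or $q$ times a generator when $m\equiv 0\pmod 4$. The paper's route is shorter; yours is self-contained.

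More importantly, your $J$ step is what actually covers all $m\equiv 1,2\pmod 8$. You should therefore delete the aside ``equivalently it is detected by $w_m$''. For $m\geq 9$ in these residue classes, $m$ is not a power of $2$. Hence $Sq^m$ is decomposable and annihilates the Thom class of any bundle over $\bS^m$, which forces $w_m(\xi)=0$. The paper's Stiefel--Whitney argument rests on the same claim, so as written it only handles $m\in\{1,2\}$.

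As a bonus, $J$ only sees the underlying stable spherical fibration. So Lemma~\ref{lemma:obstructions-to-triviality}$(iii)$ together with your section argument gives the $m\equiv 1,2\pmod 8$ conclusion under a mere stable homotopy equivalence $\mathbf{S}_{M_2}\simeq_s M_2\times\bS^r$, not just a homeomorphism.
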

\begin{proof}
For $M_1$,~\cite{ferus-karcher-munzner} proved that $M_1 \hookrightarrow \bS^{2\ell-1}$ has trivial normal bundle, so $\mathbf{S}_{M_1} = S( \nu_{M_1} \oplus \mathbf{1}) \cong M_1 \times \bS^{m + 1}$ is a product bundle.
Next, $M_2$ is expressible as an $\bS^{\ell-1}$-bundle over $\bS^m$, namely the sphere bundle $M_2\cong S(\xi)\to \bS^m$ of a rank-$\ell$ bundle $\xi$.
By \cite{topology-g-4}*{Theorem 2.6(i)}, we have
\[
M_2 \; \text{ is parallelizable } \Longleftrightarrow \; 
M_2 \; \text{ is stably parallelizable } \Longleftrightarrow \;
\xi\text{ is trivial}.
\]
By~\cite{wang-topology-clifford}*{Corollary 1}, the bundle $\xi$ is trivial precisely in the cases~\eqref{eqn:xi-trivial-bundle}, whereby $M_2 \cong \bS^{\ell-1} \times \bS^m$.
For $m \equiv 1,2 \; (\on{mod} 8)$ and $\mathbf{S}_{M_2} \simeq M_2 \times \bS^r$ a homotopy equivalence, Lemma~\hyperref[lemma:obstructions-to-triviality]{\ref{lemma:obstructions-to-triviality}$(ii)$} implies that $w(TM_2) = 1$, and writing $TM_2 \oplus \mathbf{1} \cong \pi^*(T \bS^m \oplus \xi)$ with $M_2 = S(\xi)$ forces $w(\xi) = 1$, since $w(T \bS^m) = 1$.
For $m \equiv 1,2 \; (\on{mod} 8)$ we have $\widetilde{KO}(\bS^m) \cong \bZ_2$, so there are only two stable classes and the non-zero one is detected by the top Stiefel-Whitney class.
Thus, the nonzero class in $\widetilde{KO}(\bS^m)\cong \bZ_2$ is detected by the top Stiefel-Whitney class, so $w(\xi)=1$ forces $[ \xi ]=0$, so in the OT-FKM case where $\ell \geq m+2$, we have $M_2 = S(\xi)$ stably parallelizable.
The above chain of equivalences implies that $\xi$ is trivial and $k \equiv 0 \; (\on{mod} 2)$.

For $m \equiv 0 \; (\on{mod} 4)$, the homeomorphism $f: \mathbf{S}_{M_2} \xrightarrow{\cong} M_2 \times \bS^r$  implies that $T(M_2 \times \bS^r) \cong f^* T \mathbf{S}_{M_2}$, and arguing as in Lemma~\hyperref[lemma:nontrivial-J-map]{\ref{lemma:nontrivial-J-map}$(i)$} shows that $p(M_2)_{\bQ} = p(M_2 \times \bS^r)_{\bQ} = 0$ because the rational Pontryagin class is invariant under homeomorphism, by Novikov's theorem, and $p(\mathbf{S}_{M_2})_{\bQ} = 0$ because $\mathbf{S}_{M_2}$ is stably parallelizable.
Moreover,~\cite{CAGstablyparallel}*{Lemma 3.1} gives $p_{\frac{m}{4}}(M_2) = q \beta(m)( \frac{m}{2} -1 )! \, \pi_1^* \gamma$ for $\pi^*_1 : H^m(\bS^m;\bZ) \to H^m(M_2;\bZ)$ an isomorphism and $\beta(m) \in \{1,2\}$, so $p(M_2)_{\bQ} = 0$ forces $q=0$.

When the conditions~\eqref{eqn:xi-trivial-bundle} are satisfied and $\xi$ is the trivial bundle, then $M_2 \cong \bS^m \times \bS^{\ell-1}$ and the sphere bundle $\mathbf{S}_{M_2} \xrightarrow{\pi} M_2$ admits the canonical north pole section $s: M_2 \to \mathbf{S}_{M_2}$ coming from the north pole map $x \mapsto (0,1) \in \nu_x \oplus \bR$.
The quotient by this section is the Thom space $S(\nu_{M_2} \oplus \mathbf{1}) / s(M_2) \cong \textup{Th}(\nu_{M_2})$ described in~\cite{hatcher-book-alg-top}*{\S 4.D}.
The space $\textup{Th}(\nu_{M_2})$ fits into the cofiber sequence that splits into a wedge sum after one suspension $\Sigma$, namely 
\[
(M_2)_+ \to S(\nu_{M_2} \oplus \mathbf{1})_+ \to \textup{Th}( \nu_{M_2}), \qquad \Sigma S(\nu_{M_2} \oplus \mathbf{1})_+ \simeq \Sigma (M_2)_+ \vee \Sigma \, \textup{Th}(\nu_{M_2}) \, .
\]
Since $M_2 \cong \bS^m \times \bS^{\ell-1} \subset \bS^{2 \ell-1}$ is parallelizable, we write $TM_2 \oplus \nu_{M_2} \oplus \mathbf{1} \cong T \bS^{2 \ell-1} \oplus \mathbf{1} \cong \mathbf{1}^{\oplus 2 \ell}$ to see that $\nu_{M_2}$ is stably trivial, hence $\textup{Th}(\nu_{M_2}) \simeq_s \Sigma^{\ell-m} (M_2)_+$ are stably equivalent.
Consequently,
\[
\Sigma^{\infty} \mathbf{S}_{M_2} \simeq \Sigma^{\infty} (M_2)_+ \vee \Sigma^{\infty} \Sigma^{\ell-m} (M_2)_+ \simeq \Sigma^{\infty}(M_2 \times \bS^{\ell-m})_+ 
\]
where we used $(M_2 \times \bS^r)_+ \cong (M_2)_+ \wedge \bS^r_+$ and $\bS^r_+ \simeq \bS^0 \vee \bS^r$.
This proves that $\mathbf{S}_{M_2} \simeq_s M_2 \times \bS^{\ell-m}$.

Finally, the focal submanifolds of trivial normal bundle $\nu_{M_2} \to M_2$ adhere to the classification given above, in which case $S( \nu_{M_2} \oplus \mathbf{1}) \cong M_2 \times \bS^{\ell-m}$ splits as a product. 
\end{proof}

\begin{remark}\label{rmk:m=2(p-1)-index}
    If $m = 2(p-1)$ and $q \not\equiv 0 \; (\on{mod} p)$, where $p$ is an odd prime, then $\mathbf{S}_{M_2} \not\simeq M_2 \times \bS^r$ cannot be homotopy equivalent to a product.
The computation~\cite{CAGstablyparallel}*{(3.17)} shows that in this case, the Wu operation $\cP^1$ on mod-$p$ cohomology induces a non-zero map $\cP^1: H^{\ell-m}(\mathbf{S}_{M_2}; \bZ_p) \to H^{\ell}(\mathbf{S}_{M_2} ; \bZ_p)$.
On the other hand, $\mathbf{S}_{M_2} \simeq M_2 \times \bS^r$ requires $r = \ell-m$ by the cohomology ring computation of Lemma~\ref{lemma:cohomology-ring}, hence the degree $(\ell-m)$-class would come from the $\bS^{\ell-m}$-factor.
By naturality, the map $\cP^1$ must vanish because $H^{\ell}(\bS^{\ell-m};\bZ_p) = 0$, producing a contradiction.
\end{remark}
Finally, we study the sphere bundles $\mathbf{S}_{M_2}$ over focal manifolds with $M_2 = S(\xi) \cong \bS^m \times \bS^{\ell-1}$, in which case $M_2$ is stably parallelizable and $\mathbf{S}_{M_2} \simeq_s M_2 \times \bS^r$ is stably homotopy equivalent to a product.
To classify the homotopy type of $\mathbf{S}_{M_2}$, we first introduce an auxiliary notion.
\begin{definition}\label{def:fiber-homotopy-trivial}
Given two bundles $N \xrightarrow{\pi} B$ and $N' \xrightarrow{\pi'} B$, a map $f: N \to N'$ is called \textit{fiber-preserving} if it fibers over $\textup{id}_B$, meaning that $\pi' \circ f = \pi$.
The bundles $N \xrightarrow{\pi} B$ and $N' \xrightarrow{\pi'} B$ are called \textit{fiber homotopy equivalent} if there exist fiber-preserving maps $f: N \to N'$ and $f': N' \to N$ with
\[
f' \circ f \simeq_B \textup{id}_N, \qquad f \circ f' \simeq_B \textup{id}_{N'} \, .
\]
Here, $\simeq_B$ denotes homotopy equivalence through fiber-preserving maps.
A bundle $N \xrightarrow{\pi} B$ with fiber $F$ is \textit{fiber homotopically trivial} if it is fiber homotopy equivalent to the product bundle $B \times F \to B$.
\end{definition}

We refer the reader to the works of Dold, Dold-Lashof, and Milnor-Spanier~\cites{dold , dold-lashof , milnor-spanier } for properties of fiber homotopy equivalences that we will utilize below.
\begin{lemma}\label{lemma:fiberwise-homotopy-equivalent}
    Let $B$ be a connected manifold and $Y \xrightarrow{\pi} B$ be an $\bS^r$-sphere bundle that is homotopy equivalent to $B \times \bS^r$, with $r \geq 1$.
    Suppose that $H^r(B;\bZ) = 0$ and either 
    \begin{enumerate}[(i)]
        \item $H^{r+1}(B;\bZ)$ is torsion-free; or
        \item $H^{r+1}(B;\bZ)$ is a finitely generated abelian group and $H^1(B;\bZ) = 0$.
    \end{enumerate}
    Then, the bundle $Y \xrightarrow{ \pi}B$ is fiber homotopically trivial, i.e., fiber homotopy equivalent to $B \times \bS^r$.
\end{lemma}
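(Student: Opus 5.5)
The plan is to build a fiber-preserving map $Y \to B \times \bS^r$ over $\textup{id}_B$ that is a homotopy equivalence on each fiber, and then conclude by Dold's theorem~\cite{dold}. For bundles over a connected CW base (a manifold qualifies), a fiber-preserving map that is a homotopy equivalence on one fiber is a fiber homotopy equivalence. We take the first coordinate to be $\pi$. The second coordinate should be a map $\phi: Y \to \bS^r$ whose restriction to a fiber $\bS^r_b$ has degree $\pm 1$. Such a fiberwise map is exactly what is needed, since $(\pi,\phi)$ is then fiber-preserving and a degree $\pm1$ self-map of $\bS^r$ is a homotopy equivalence.

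\textbf{Step 1: a class restricting to a generator.} Let $h: B\times \bS^r \to Y$ be the given homotopy equivalence. Since $H^r(B;\bZ)=0$, the Künneth formula gives $H^r(B\times\bS^r;\bZ)\cong \bZ\,\sigma \oplus \bigl(H^{0}(B)\otimes H^r(\bS^r)\bigr)$, modulo lower mixed terms $H^j(B)\otimes H^{r-j}(\bS^r)$ with $0<j<r$. These vanish because $\bS^r$ has no cohomology in degrees strictly between $0$ and $r$. The Tor terms also vanish, since the cohomology of $\bS^r$ is free. Hence $H^r(Y;\bZ)\cong \bZ$.

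Next we relate this to the Serre spectral sequence (or Gysin sequence) of $\pi$. In low degrees it gives an exact sequence
\[
0\to H^r(B)\to H^r(Y)\xrightarrow{i_b^*} H^0(B;\mathcal H^r(\bS^r)) \xrightarrow{d_{r+1}} H^{r+1}(B).
\]
The transgression $d_{r+1}$ sends the fiber generator to the Euler class $e$, with local coefficients if $\pi$ is nonorientable. In case (ii), $H^1(B;\bZ)=0$ means $B$ has no nontrivial $\bZ_2$-monodromy, so the bundle is orientable. In case (i), orientability follows from comparing $H^r(Y)\cong\bZ$: a twisted local system would make $H^0(B;\mathcal H^r)=0$, forcing $H^r(Y)=0$, a contradiction.

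With orientability in hand, $H^r(B)=0$ gives $H^r(Y)=\ker(d_{r+1})\subset\bZ$, and this is all of $\bZ$ precisely when $e$ is torsion of order dividing the index. We also have $H^r(Y)\cong \bZ$ abstractly. The \textbf{main obstacle} is to show that $i_b^*$ is onto, i.e.\ $e=0$, and not merely that $\ker$ has finite index $d$, with $d\,\iota\mapsto$ a generator.

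In case (i), $e$ is a torsion element of the torsion-free group $H^{r+1}(B)$ (it is torsion because $\ker d_{r+1}\neq0$), so $e=0$. In case (ii), we compare total ranks and torsion of $H^{r+1}$. By Künneth, $H^{r+1}(B\times\bS^r)\cong H^{r+1}(B)\oplus H^1(B)=H^{r+1}(B)$. The Gysin sequence gives $H^{r+1}(Y)\supset H^{r+1}(B)/\langle e\rangle$, which is followed by the kernel of the next cup with $e$ on $H^1(B)=0$. So $H^{r+1}(Y)\cong H^{r+1}(B)/\langle e\rangle$. Since a finitely generated abelian group is not isomorphic to a proper quotient of itself, we get $e=0$.

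\textbf{Step 2: realize the class by a map.} Pick $u\in H^r(Y;\bZ)$ with $i_b^*u$ a generator. We want $\phi:Y\to\bS^r$ with $\phi^*[\bS^r]=u$. Rather than using Hopf's theorem, which requires $\dim Y\le r$, we use obstruction theory fiberwise: construct a section of the associated bundle of degree-one self-maps, or equivalently a fiberwise retraction. Concretely, I would instead build a map $Y\to B\times \bS^r$ by composing $h^{-1}$ with the projection, $\phi:=\textup{pr}_2\circ h^{-1}$. This gives $\phi^*[\bS^r]$ equal to the generator $h^{-1\,*}(1\otimes[\bS^r])$ of $H^r(Y)$, modulo $H^r(B)=0$. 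By Step 1, this generator restricts to $\pm$ the fiber generator, so $\phi|_{\bS^r_b}$ has degree $\pm1$.

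Then $(\pi,\phi):Y\to B\times\bS^r$ is fiber-preserving and a fiberwise homotopy equivalence, and Dold's theorem upgrades it to a fiber homotopy equivalence.
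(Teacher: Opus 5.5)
Your proposal is correct and follows the paper's proof essentially step for step. It uses the same ingredients: orientability from $H^r(Y;\bZ)\cong\bZ$ against $H^0(B;\mathcal{H}^r)=0$ for a twisted system; vanishing of the Euler class by torsion-freeness in case (i) and by the integral Gysin sequence plus the Hopfian property of finitely generated abelian groups in case (ii); degree $\pm1$ on fibers for $\textup{pr}_{\bS^r}$ composed with the homotopy equivalence, via the edge isomorphism; and Dold's theorem.

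One slip needs fixing. In case (ii), $H^1(B;\bZ)=0$ does not exclude $\bZ_2$-monodromy: for example, $B=\bR\bP^2$ has $H^1(B;\bZ)=0$ yet carries the non-orientable $\bS^r$-bundle $S(\eta\oplus\mathbf{1}^{\oplus r})$. However, the argument you give under case (i) uses only $H^r(B;\bZ)=0$ and $H^r(Y;\bZ)\cong\bZ$. It therefore proves orientability in both cases, which is exactly how the paper proceeds, and your case (ii) Gysin argument then goes through.
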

\begin{proof}
    Given a homotopy equivalence $h: Y \to B \times \bS^r$, we define $f := \textup{pr}_{\bS^r} \circ h : Y \to \bS^r$ and will prove that the map $F: (\pi, f): Y \to B \times \bS^r$ over $B$ is a fiber homotopy equivalence.

    First, we observe that the bundle $Y \xrightarrow{\pi} B$ is orientable.
    Indeed, the Serre cohomology spectral sequence of the bundle with $\bZ$ coefficients has $q=r$ row given by $E^{p,r}_2 = H^p(B;\cL)$, where $\cL$ denotes the local system $H^r( \bS^r;\bZ)$.
    Since $Y \simeq B \times \bS^r$ and $H^r(B) = 0$, the K\"unneth formula gives $H^r(Y;\bZ) \cong H^r(B \times \bS^r;\bZ) \cong \bZ$, see for example~\cite{hatcher-book-alg-top}*{Ch. 5, \S 5.1}.
    If $\cL$ was non-trivial, then $H^0(B;\cL) = 0$, because $B$ is connected, and $E^{r,0}_2 = H^r(B) = 0$.
    Thus, there would be no cells contributing to total degree $r$ in the $E_2$-page, hence $H^r(Y) = 0$ would lead to a contradiction.
    Therefore, $\cL$ is trivial, and the sphere bundle is orientable.
    Consequently, the cohomology ring computation of Lemma~\ref{lemma:cohomology-ring} holds with $\bZ$-coefficients, producing a well-defined Euler class $e \in H^{r+1}(B;\bZ)$ that satisfies $d_{r+1}(1) = e$ in the Serre spectral sequence.

    Next, we show that the Euler class $e$ vanishes if condition $(i)$ or $(ii)$ holds.
    The assumption $H^r(B) = 0$ implies that the cells of total degree $r$ come only from $E^{0,r}_{\infty}$, so
    \[
    \bZ \cong H^r(Y;\bZ) \cong E^{0,r}_{\infty} = \ker \bigl( d_{r+1} : \bZ \to H^{r+1}(B; \bZ) \bigr).
    \]
    If $d_{r+1}(1) = e \neq 0$ and $H^{r+1}(B;\bZ)$ is torsion-free, then $d_{r+1}$ is injective and the kernel is trivial, which is impossible; thus, $e=0$ in case $(i)$.
    In case $(ii)$, the sphere bundle $Y$ is orientable, so we can use the Gysin sequence as in Lemma~\ref{lemma:cohomology-ring}, with integer coefficients.
    This produces
    \[
    0 \to H^r(Y) \xrightarrow{\pi_!} H^0(B;\bZ) \cong \bZ \xrightarrow{\cup \, e} H^{r+1}(B;\bZ) \xrightarrow{\pi^*} H^{r+1}(Y;\bZ) \to H^1(B;\bZ).
    \]
    Since $H^1(B)= 0$, the above sequence implies that $H^{r+1}(Y;\bZ) \cong A/ \la e \rg$, where $A := H^{r+1}(B;\bZ)$.
    On the other hand, using $Y \simeq B \times \bS^r$ and $H^1(B) =0$, the K\"unneth formula yields $H^{r+1}(Y;\bZ) \cong H^{r+1}(B;\bZ) = A$.
    Combining these two properties, we find $A \cong A/\la e \rg$, which is impossible for a finitely generated abelian group $A$ unless $e=0$.
    Thus, $e =0$ in both cases, so the differential $d_{r+1}$ from $E_{r+1}^{0,r} \cong \bZ$ vanishes because $Y$ is orientable and $e=0$.
    Moreover, $H^r(B) = 0$, whereby the only nonzero $E^{p,q}_{\infty}$ with $p+q=r$ is $E^{0,r}_{\infty} \cong \bZ$, and the total degree-$r$ filtration has
    \[
    F^1H^r(Y) = 0, \qquad F^0H^r(Y) = H^r(Y), \qquad H^r(Y) \cong E^{0,r}_{\infty} \cong \bZ.
    \]
    We will now prove that on each fiber $Y_b = \pi^{-1}(b) \cong \bS^r$, the restriction map $ f|_{Y_b}: Y_b \to \bS^r$ has degree $\pm 1$.
    Let $\iota_r \in H^r(\bS^r;\bZ)$ denote the fundamental class and $i_b: Y_b \hookrightarrow Y$ be the inclusion map, inducing the edge homomorphism $i^*_b : H^r(Y;\bZ) \to H^r(Y_b;\bZ) \cong \bZ$ in the Serre spectral sequence.
    The above description of the  degree-$r$ filtration shows that there is no contribution in total degree $r$ from $E^{p,0}_2$, so the map $i^*_b$ is an isomorphism for every $Y_b$.
    Moreover, the homotopy equivalence $h$ induces an isomorphism $h^*$ on cohomology, hence $\alpha := h^* (\textup{pr}_{\bS^r}^* \iota_r) = f^* \iota_r$ is a generator of $H^r(Y;\bZ)$.
    Restricting to any fiber $Y_b$, we find
    \[
    i^*_b \alpha = i^*_b(f^* \iota_r) = ( f|_{Y_b})^* (\iota_r).
    \]
    Since $i^*_b$ is an isomorphism, $i^*_b(\alpha)$ generates $H^r(Y_b) \cong \bZ$, hence $(f|_{Y_b})^* (\iota_r) = \pm \iota_r$.
    This implies that $\deg (f|_{Y_b}) = \pm 1$, so each fiber map $f|_{Y_b}: Y_b \to \bS^r$ is a sphere map of degree $\pm 1$, hence a homotopy equivalence.
    Finally, $F = (\pi,f): Y \to B \times \bS^r$ is a map over $B$ inducing a homotopy equivalence $f|_{Y_b}$ on every fiber, so it is admissible over $B$ in the sense of Dold-Lashof~\cite{dold-lashof}.
    Thus, Dold's theorem~\cite{dold}*{Theorem 6.3} implies that $F$ is a fiber homotopy equivalence as claimed.
\end{proof}

\begin{proposition}\label{prop:trivial-product-bundles-ot-fkm}
    For the OT-FKM family with $M_2 \cong \bS^m \times \bS^{\ell-1}$, the sphere bundle $\mathbf{S}_{M_2}$ is homotopy equivalent to a product $M_2 \times \bS^r$ if and only if 
    \begin{equation}\label{eqn:(m1,m2)-Sm2-product-list}
(m_1,m_2)\in\{ (1, 2d) , (2d,1) , (2,5),(5,2),(3,4) , (4,3) \}, \qquad d \in \bN^* ,
    \end{equation}
    in which case $M_2 \cong \bS^{m_1} \times \bS^{m_1+m_2}$ and $\mathbf{S}_{M_2} \cong \bS^{m_1} \times \bS^{m_1+m_2} \times \bS^{m_2+1}$ as smooth sphere bundles.
\end{proposition}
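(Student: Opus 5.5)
The plan is to reduce the homotopy-theoretic question to fiber homotopy triviality via Lemma~\ref{lemma:fiberwise-homotopy-equivalent}, and then to an unstable $J$-homomorphism computation over the two sphere factors of $M_2$.

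\textbf{Setup and reduction.} Write $B := M_2 \cong \bS^m \times \bS^{\ell-1}$ with $m=m_1$, $\ell = m_1+m_2+1$, and $Y := \mathbf{S}_{M_2} = S(\nu_{M_2}\oplus\mathbf{1})$, an $\bS^r$-bundle with $r = \ell - m = m_2+1$. By Lemma~\ref{lemma:cohomology-ring}, any homotopy equivalence $Y \simeq B\times \bS^{r'}$ forces $r'=r$, by comparing the degrees in which $H^\bullet$ grows. The integral cohomology of $B$ is torsion-free and concentrated in degrees $0,m,\ell-1,\ell+m-1$. Hence $H^r(B)=0$ unless $r\in\{m,\ell-1\}$, that is, unless $\ell=2m$ or $m=1$.

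\textbf{Generic case: $m\ge 2$ and $\ell\neq 2m$.} Here hypothesis $(i)$ of Lemma~\ref{lemma:fiberwise-homotopy-equivalent} holds, so a homotopy equivalence upgrades to a fiber homotopy trivialization of $S(\nu\oplus\mathbf{1})$. I would then restrict to the two axes $\bS^m\times\{\ast\}$ and $\{\ast\}\times \bS^{\ell-1}$. By the discussion before~\eqref{eqn:tangent-space}, $\nu_{M_2}|_{\bS^{\ell-1}}\cong\eta$ with $T\bS^{\ell-1}\cong \eta\oplus\mathbf{1}^{\oplus(m-1)}$. Consequently the clutching class of $\eta\oplus\mathbf{1}$, of rank $r+1$, lies in $\pi_{\ell-2}(\textup{SO}(r+1))$ and is the destabilization of the tangent class $\tau_{\ell-1}$ to rank $\ell-m+1$.

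Fiber homotopy triviality says the image of this class in $\pi_{\ell-2}(G_{r+1})$ vanishes, where $G_{r+1}$ is the monoid of self-equivalences of $\bS^r$. Under the evaluation fibration, this is the statement that the unstable $J$-image of $\tau_{\ell-1}$ restricted to rank $\ell-m+1$ is trivial. This is the classical James--Whitehead/Adams condition that $\bS^{\ell-1}$ admit $m-1$ fields up to homotopy, i.e.\ $m \le \rho(\ell)$, and it is sharp in the fibre-homotopy sense by Adams' vector field theorem. Intersecting $m\le\rho(\ell)$ with the OT-FKM constraint $\ell = k\delta(m)$ and the triviality of $\xi$ from~\eqref{eqn:xi-trivial-bundle} should leave only $\ell = 8$ with $m\in\{2,3,5\}$, together with $(m,\ell)=(4,8)$. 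I would do a similar but easier check on the $\bS^m$ axis, and handle the top cell by noting that the remaining obstruction lies in $\pi_{\ell+m-2}$ of $G_{r+1}$, which must also be shown to vanish or be controlled.

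\textbf{Exceptional cases $m=1$ and $\ell=2m$.} When $m=1$ we have $\ell=2d+2$, and when $\ell = 2m$ the relevant pairs are $(2d,1)$-type after applying Lemma~\ref{lemma:change-of-variables}. In these cases Lemma~\ref{lemma:fiberwise-homotopy-equivalent} does not apply, so I would prove the product structure directly rather than obstruct. For $m=1$, the Clifford frame $P_1x$ and the position vector should give an explicit trivialization of $\nu\oplus\mathbf{1}$ over $\bS^1\times\bS^{\ell-1}$, using that $\ell$ is even, so that $\bS^{\ell-1}$ carries the nowhere-vanishing field $E$ coming from a complex structure. For the list members with $\nu$ trivial, Lemma~\ref{lemma:ot-fkm-family-I} already gives $\mathbf{S}_{M_2}\cong M_2\times\bS^{m_2+1}$. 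In the remaining listed cases I would exhibit $\nu\oplus\mathbf{1}$ as trivial using the octonionic parallelism of $\bS^7$ together with~\eqref{eqn:tangent-space}. Conversely, any homotopy equivalence outside the list would be ruled out by the generic-case obstruction, supplemented by Remark~\ref{rmk:m=2(p-1)-index} and Lemma~\ref{lemma:ot-fkm-family-I} whenever $\xi$ is nontrivial.

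\textbf{Main obstacle.} I expect the hardest step to be the unstable computation in the generic case: identifying the fibre-homotopy obstruction precisely with the vector-field problem on $\bS^{\ell-1}$, including the contribution of the top cell of $\bS^m\times\bS^{\ell-1}$. My first attempt would be to use the cohomology ring~\eqref{eqn:cohomology-ring-of-Y} to show that the Euler class vanishes. I would then apply the James--Whitehead classification of $\bS^r$-bundles over $\bS^{\ell-1}$, and finally use Adams' theorem to pin down exactly when the destabilized tangent class dies in $G_{r+1}$.
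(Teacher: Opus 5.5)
Your reduction to fiber homotopy triviality via Lemma~\ref{lemma:fiberwise-homotopy-equivalent}, and the restriction to the $\bS^{\ell-1}$ axis where $\nu_{M_2}|_{\bS^{\ell-1}}\cong\eta$, follow the paper. The obstruction you then extract, however, is the wrong one, and it eliminates nothing.

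The condition that $\bS^{\ell-1}$ carry $m-1$ independent fields, even up to homotopy (i.e.\ $m\le\rho(\ell)$), holds automatically in the OT-FKM setting. The fields $E_1x,\dots,E_{m-1}x$ exist by construction; that is exactly \eqref{eqn:tangent-space}. Hence $\rho(k\delta(m))\ge m$ always. For example, $(m,\ell)=(3,12)$ has $\xi$ trivial and $\rho(12)=4\ge 3$, yet $(m_1,m_2)=(3,8)$ is not on the list. So your claimed intersection does not cut the family down to $\ell=8$. Your survivor list is also inconsistent: it drops $(6,8)$, which is the listed pair $(6,1)$, and it includes $(4,8)$, which has $\ell=2m$.

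The missing idea is that vanishing of the $J$-image of the destabilized class forces vanishing for the full tangent class. Suspending $S(\eta\oplus\mathbf 1)$ fiberwise $m-2$ times gives a fiber homotopy trivialization of $S(\eta\oplus\mathbf 1^{\oplus(m-1)})\cong S(T\bS^{\ell-1})$. By Adams' Hopf invariant one theorem in the Milnor--Spanier form, this forces $\ell\in\{2,4,8\}$. This is the step that makes the problem finite.

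Your treatment of $\ell=2m$ is also wrong. With $\ell=k\delta(m)=2m$ one gets exactly $(m,\ell)\in\{(2,4),(4,8),(8,16)\}$, i.e.\ $(m_1,m_2)\in\{(2,1),(4,3),(8,7)\}$, not pairs of ``$(2d,1)$-type''. The pair $(8,7)$ with index $q=0$ satisfies $M_2\cong\bS^8\times\bS^{15}$ but is not on the list. It must therefore be obstructed, and your plan to establish the product structure directly cannot succeed there.

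Because $H^8(M_2;\bZ)\neq 0$, Lemma~\ref{lemma:fiberwise-homotopy-equivalent} does not apply as stated, so an extra argument is needed. The paper proceeds as follows:
\begin{itemize}
\item Since $\nu_{M_2}$ is stably trivial, $e(\nu_{M_2})$ is even, so the Thom class can be replaced by $v=u-d\,\pi^*x$ with $v^2=0$.
\item The only primitive square-zero classes in $H^8(Y;\bZ)$ are $\pm\pi^*x$ and $\pm v$.
\item Hence the extra $\bS^8$-factor pulls back to $\pm v$, and $f$ has degree $\pm1$ on each fiber.
\item Dold's theorem then gives fiber homotopy triviality, leading to the same contradiction for $S(T\bS^{15})$.
\end{itemize}

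The $\bS^m$-axis and top-cell obstructions you mention are unnecessary. Restriction to $\bS^{\ell-1}$ suffices for the ``only if'' direction. The ``if'' direction follows from the smooth triviality of $\nu_{M_2}$ in Lemma~\ref{lemma:ot-fkm-family-I}, together with the explicit $m=1$ computation $\nu_{M_2}\cong\textup{pr}_2^*T\bS^{\ell-1}$.
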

\begin{proof}
    We prove the result in three steps: first, we obtain a general restriction on the possible values of $(m,\ell)$ for which a homotopy equivalence $\mathbf{S}_{M_2} \simeq M_2 \times \bS^r$ is possible; next, we examine this finite number of remaining pairs $(m,\ell)$, producing the list of~\eqref{eqn:(m1,m2)-Sm2-product-list} together with the exceptional pair $(m_1, m_2) = (8,7)$.
    Finally, we rule out the pair $(8,7)$ by an adaptation of Lemma~\ref{lemma:fiberwise-homotopy-equivalent}.
    Note that $\mathbf{S}_{M_2} \simeq M_2 \times \bS^r$ forces $r = \ell-m$ by the cohomology ring computation of Lemma~\ref{lemma:cohomology-ring}.

\smallskip \noindent \textbf{Step 1:} First, we prove a homotopy equivalence $\mathbf{S}_{M_2} \simeq M_2 \times \bS^r$ is impossible for $m \geq 2$ and $\ell \neq 2m$, unless $\ell \in \{1,2,4,8\}$.
For closed connected manifolds, since $\mathbf{S}_{M_2} \subset \bS^n$ is a hypersurface, the homotopy equivalence would force $M_2$ to be orientable, hence the cohomology ring and Serre spectral sequence computations of Lemmas~\ref{lemma:cohomology-ring} and~\ref{lemma:fiberwise-homotopy-equivalent} are valid in $\bZ$-coefficients without local systems.
Expressing $M_2 \cong S(\xi) \to \bS^m$ with fiber $\bS^{\ell-1}$, the Serre spectral sequence has the $E_2$-terms
\[
E_2^{p,q} = H^p(\bS^m ; H^q(\bS^{\ell-1};\bZ)), \qquad
E_2^{0,0} = \bZ , \qquad E^{m,0}_2 = \bZ, \qquad E_2^{0,\ell-1} = \bZ, \qquad E^{m ,\ell-1}_2 = \bZ.
\]
The only possible differential is $d_{\ell} : E^{0,\ell-1}_{\ell} \to E^{\ell,0}_{\ell} = H^{\ell}(\bS^m;\bZ) = 0$ for $\ell \geq m+1$, so
\[
H^j(M_2;\bZ) \cong \bZ \quad \text{for } \; j \in \{0,m,\ell-1,m+\ell-1\}, \qquad H^j(M_2;\bZ) = 0 \quad \text{otherwise}.
\]
Notably, $M_2$ has the additive cohomology of $\bS^m \times \bS^{\ell-1}$.
Thus, any $m \geq 2$ and $\ell \neq 2m$ make $H^1(M_2) = H^{\ell-m}(M_2) = 0$, so Lemma~\ref{lemma:fiberwise-homotopy-equivalent} shows that $\mathbf{S}_{M_2} \simeq M_2 \times \bS^{\ell-m}$ would force $S(\nu_{M_2} \oplus \mathbf{1})$ to be fiber homotopy trivial as a sphere bundle over $M_2$.

For sphere bundles over a base $B$, let $\Sigma_B$ denote the fiberwise suspension operator.
If $\xi \to B$ is a vector bundle, then $\Sigma_B$ satisfies the canonical fiberwise homeomorphism $\Sigma_B S(\xi) \cong S(\xi \oplus \mathbf{1})$ via $\Sigma \bS^{m-1} \cong \bS^m$.
The two bundles are identified by the map $[v,t] \mapsto ( \sqrt{1-t^2} \, v , t)$ with inverse sending $S(\xi_b \oplus \bR) \ni (v,s)$
to $\bigl[ \frac{v}{\sqrt{1-s^2}} , s \bigr]$ when $|s|<1$, and to the north or south pole when $s = \pm 1$.
This construction is continuous in $b$ and fibers over the base $B$, so it defines a bundle homeomorphism over $B$.
In our situation, we can iterate this operation to express $S( \nu_{M_2} \oplus \mathbf{1}^{\oplus (m-1)}) \cong \Sigma_B^{m-2} S( \nu_{M_2} \oplus \mathbf{1})$ for $m \geq 2$.
Moreover, applying fiberwise suspension to the fiber homotopy equivalences of Definition~\ref{def:fiber-homotopy-trivial} shows that $S(\nu_{M_2} \oplus \mathbf{1}^{\oplus (m-1)})$ is fiber homotopy trivial whenever $S(\nu_{M_2} \oplus \mathbf{1})$ is.

On the other hand, our earlier discussion on OT-FKM isoparametric hypersurfaces shows that $\nu_{M_2}|_{\bS^{\ell-1}} \cong \eta$ for an embedding $i: \bS^{\ell-1} \hookrightarrow M_2$, where $\eta \oplus \mathbf{1}^{\oplus (m-1)} \cong T \bS^{\ell-1}$ by~\eqref{eqn:tangent-space}.
Therefore,
\[
\eta = i^* \nu_{M_2} \implies S( T \bS^{\ell-1}) \cong S( \eta \oplus \mathbf{1}^{\oplus (m-1)} ) \cong i^* S( \nu_{M_2} \oplus \mathbf{1}^{(m-1)})
\]
is fiber homotopically trivial as a bundle over $\bS^{\ell-1}$, by functoriality.
We conclude that the sphere bundle $S(T \bS^{\ell-1}) \to \bS^{\ell-1}$ is fiber homotopically trivial; by the version of Adams' theorem due to Milnor-Spanier~\cites{adams-theorem , milnor-spanier }, this property requires $\ell-1 = 0$ or $\ell-1 \in \{ 1,3,7\}$.

\smallskip \noindent \textbf{Step 2:}
We now examine the remaining pairs $(m,\ell)$.
For $m=1$, the OT-FKM construction discussed above only uses the matrices $P_0, P_1$ and the points of $M_2$ are expressible as $x = (\cos t \, z, \sin t \, z)$ for $z \in \bS^{\ell-1}$, with the identification $(t,z) \sim (t+\pi , -z)$.
The triviality of $\xi$ from~\eqref{eqn:xi-trivial-bundle} requires $\ell = k \delta(m)=k$ with $k=2d$ even, so $M_2$ is the mapping torus of the antipodal map $A(z) = - z$ on $\bS^{\ell-1}$ with $-I \in \textup{SO}(\ell)$ by parity.
In particular, $A$ is isotopic with the identity and the mapping torus $M_2 \cong \bS^1 \times \bS^{\ell-1}$ is trivial, with tangent and normal spaces given by
\[
T_z M_2 = \textup{span} \{ (-\sin t \, z , \cos t \, z), (\cos t \, v, \sin t \, v) : v \perp z\}, \qquad N_z M_2 = \{ (-\sin t \, v , \cos t \, v) : v \perp z \}.
\]
Consequently, $\nu_{M_2} \cong \textup{pr}_2^* (T \bS^{\ell-1})$ and $\nu_{M_2} \oplus \mathbf{1} \cong \textup{pr}_2^* ( T \bS^{\ell-1} \oplus \mathbf{1}) \cong \mathbf{1}^{\oplus \ell}$ is the trivial rank-$\ell$ bundle, hence $S(\nu_{M_2} \oplus \mathbf{1}) \cong \bS^1 \times \bS^{\ell-1} \times \bS^{\ell-1}$ is the smoothly trivial product bundle.

For $m \geq 2$, we have $\ell \geq m+2 \geq 4$, so either $\ell = 2m$ or $\ell \in \{ 4,8\}$.
Using $\delta(8r+j) = 2^{4r} \delta(j)$ from~\eqref{eqn:delta(m)-values}, we find $\ell \geq \delta(m) > 2m > 8$ for $m \geq 10$, so we are reduced to the finitely many cases
\[
(m , \ell) \in \{ (2,4), (2,8), (3,8), (4 , 8) , (5,8), (6,8) , (8,16) \}.
\]
Expressing $(m_1, m_2) = (m,\ell-m-1)$ precisely recovers the list~\eqref{eqn:(m1,m2)-Sm2-product-list} together with the exceptional pair $(m\ell) = (8,16)$ subject to $q=0$, in which case $(m_1, m_2) = (8,7)$ and $M_2 \cong \bS^8 \times \bS^{15}$.

\smallskip \noindent \textbf{Step 3:}
To rule out the final possibility $(m,\ell) = (8,16)$, we adapt Lemma~\ref{lemma:fiberwise-homotopy-equivalent} to show that $\mathbf{S}_{M_2} \simeq M_2 \times \bS^r$ would again force $Y := S(\nu_{M_2} \oplus \mathbf{1})$ to be fiber homotopically trivial.
Since $Y,M_2, \nu$ are orientable, Lemma~\ref{lemma:cohomology-ring} holds with integer coefficients, producing the decomposition
\[
H^{\bullet}(Y;\bZ) \cong H^{\bullet}(M_2;\bZ) \oplus u \cdot H^{\bullet-8}(M_2;\bZ), \qquad u \in H^r(Y;\bZ) \, .
\]
Here, $u$ satisfies $\pi_!(u) =1$ under the Gysin map and $i^*_b(u) = \pm \iota_8$ under the edge homomorphism on each fiber $Y_b \cong \bS^8$.
As in Lemma~\ref{lemma:cohomology-ring}, let $s: M_2 \to Y$ be the canonical north pole section $b \mapsto (0,1) \in \nu_x \oplus \bR$ coming from the $\mathbf{1}$-summand.
Following the notation of Lemma~\ref{lemma:fiberwise-homotopy-equivalent}, the bundle $Y = S(\nu_{M_2} \oplus \mathbf{1})$ has a section, so the Euler class of the rank-$9$ bundle $\nu_{M_2} \oplus \mathbf{1}$ vanishes.
Replacing $u$ by $u - \pi^* s^* u$, we may also assume that $s^*u=0$.
We consider the generators
\[
x \in H^8 (M_2;\bZ), \qquad y \in H^{15}(M_2;\bZ), \qquad M_2 \cong \bS^8 \times \bS^{15}
\]
of the cohomology ring $H^{\bullet}(M_2;\bZ)$.
Then, $x^2=y^2 = 0$ and $H^{16}(M_2) = 0$, so $u^2 = c \, \pi^*(x) \, u$ for some integer $c$.
Since $M_2 \subset \bS^{31}$ is parallelizable, we write $TM_2 \oplus \nu_{M_2} \oplus \mathbf{1} \cong T \bS^{31} \oplus \mathbf{1} \cong \mathbf{1}^{\oplus 32}$ to see that $\nu_{M_2}$ is stably trivial, so all its Stiefel-Whitney classes vanish.
In particular, $w_8(\nu) = e(\nu) \; (\on{mod} 2)$ implies that $e(\nu) \in H^8(M_2;\bZ) \cong \bZ x$ is even, thus $c = 2d$ for $d \in \bZ$.
Therefore,
\begin{equation}\label{eqn:v=u-dpi}
v := u - d \, \pi^* x \implies v^2 = 0, \quad \pi_!(v) = 1, \quad i^*_b(v) = \pm \iota_8, \quad \implies \quad H^8(Y;\bZ) = \bZ \pi^* x \oplus \bZ v. 
\end{equation}
The degree-$8$ cohomology of $Y$ is thus $H^8(Y;\bZ) = \bZ \pi^* x \oplus \bZ v$, with both generators squaring to zero.
As in Lemma~\ref{lemma:fiberwise-homotopy-equivalent}, let $h: Y \xrightarrow{ \simeq} M_2 \times \bS^8$ be a homotopy equivalence and define
$f := \textup{pr}_{\bS^8} \circ h : Y \to \bS^8$ and $F := (\pi, f): Y \to M_2 \times \bS^8$.
Let $\bar{w}_1 \in H^8( M_2 \times \bS^8; \bZ)$ and $\bar{w}_2 \in H^8(M_2 \times \bS^8 ; \bZ)$ be the generators coming from the $\bS^8$-factor of $M_2$ and from the additional $\bS^8$-factor, respectively, so $\bar{w}_i^2 = 0$ and the pullbacks $h^* \bar{w}_i$ are primitive square-zero classes in $H^8(Y)$, since $h^*$ is a ring isomorphism.
Using the basis $\{ \pi^* x, v \}$ of $H^8(Y)$ from~\eqref{eqn:v=u-dpi}, any class $w$ is expressible as $w = a_1 \, \pi^* x + a_2 \, v$, so
\[
w^2 = (a_1 \, \pi^* x + a_2 \, v)^2 = a_1^2 \pi^*(x^2) + a_2^2 v^2 + 2 a_1 a_2 \, (\pi^* x)v = 2 a_1 a_2 \, (\pi^*x ) v
\]
due to $x^2=v^2=0$.
Since $H^{16}(Y) \cong \bZ \cdot (\pi^* x)v$ is torsion-free, we conclude that $w^2 = 0$ if and only if $a_1 a_2=0$.
Thus, the only primitive zero-square classes are $\{ \pm \pi^* x, \pm v \}$, so the $\{ h^* \bar{w}_i \}_{i=1,2}$ must coincide with those classes in some order.
Thus, up to swapping the two $\bS^8$-factors of $M_2 \times \bS^8$ and the signs, we may write $h^* \bar{w}_1 = \pi^* x$ and $h^* \bar{w}_2 = v$.
Consequently,
\[
f^* (\iota_8) =v, \qquad (f|_{Y_b})^* (\iota_8) = i_b^* (v) = \pm \iota_8, \qquad \text{for every fiber } \; Y_b \cong \bS^8,
\]
so each fiber map $f|_{Y_b}: Y_b \to \bS^8$ has degree $\pm 1$, thus it is a homotopy equivalence.
Continuing as in Lemma~\ref{lemma:fiberwise-homotopy-equivalent}, we deduce that $F := (\pi, f) :Y \to M_2 \times \bS^8$ is a fiberwise homotopy equivalence, and arguing as in Step 1 shows that $S( T \bS^{15}) \to  \bS^{15}$ must be fiber homotopically trivial, contradicting Milnor-Spanier~\cite{milnor-spanier}.
Thus, $\mathbf{S}_{M_2} \not\simeq M_2 \times \bS^r$ and our classification is complete.
\end{proof}
In the discussion following Theorem~\ref{thm:new-minimal-surfaces}, we noted that the two Type I surfaces $\mathbf{S}_{M_1}$ and $\mathbf{S}_{M_2}$ constructed over the focal submanifolds $M_1, M_2$ are isometric when $M_1 \cong M_2$; this applies to the Cartan-M\"unzner isoparametric hypersurfaces for $g=3$ and the symmetric Clifford tori $\textup{O}(k) \times \textup{O}(k)$ for $g=2$.
In all other cases, we prove that the two resulting surfaces $\mathbf{S}_{M_1}$ and $\mathbf{S}_{M_2}$ have distinct topological types by studying their cohomology rings.
\begin{proposition}\label{prop:surfaces-are-distinct}
    For all known isoparametric foliations with $g \in \{ 4,6 \}$ except for the one coming from the isotropy representation of $G_2 / \textup{SO}(4)$, the minimal surfaces $\mathbf{S}_{M_1}$ and $\mathbf{S}_{M_2}$ are not homotopy equivalent.
    In the latter case, the surfaces $\mathbf{S}_{M_1}$ and $\mathbf{S}_{M_2}$ are diffeomorphic but not isometric.
\end{proposition}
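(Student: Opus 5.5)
Our plan is to distinguish $\mathbf{S}_{M_1}$ and $\mathbf{S}_{M_2}$ by comparing homotopy invariants read off from Lemma~\ref{lemma:cohomology-ring}. There $H^{\bullet}(\mathbf{S}_{M_i})$ is a free $H^{\bullet}(M_i)$-module on $1$ and $u_i$, with $|u_i| = m_i+1$. Hence the Betti numbers (or $\bZ_2$-Poincaré polynomials) satisfy
\[
P_{\mathbf{S}_{M_i}}(t) = P_{M_i}(t)\,(1+t^{m_i+1}).
\]
We also record the dimensions: $\dim \mathbf{S}_{M_i} = n-1$ in both cases, while $\dim M_i = n-2-m_i$.

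We then go through the known families of Section~\ref{subsec:isoparametric} case by case.

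\begin{itemize}
\item \emph{OT-FKM, $g=4$, $m_1\neq m_2$.} $M_1 = S(\eta)\to \bS^{\ell-1}$ and $M_2 = S(\xi)\to\bS^{m}$ have the additive cohomology of $\bS^{\ell-1}\times\bS^{\ell-m-1}$ and $\bS^m\times\bS^{\ell-1}$ respectively (Serre spectral sequence; for $M_1$ use $\mathbb Z_2$ coefficients if a differential can occur). Multiplying by $1+t^{m+1}$, resp. $1+t^{\ell-m}$, gives polynomials whose lowest positive degrees differ. For instance, the lowest positive degree in $P_{\mathbf{S}_{M_2}}$ is $\min(m,\ell-m)$, while in $P_{\mathbf{S}_{M_1}}$ it is $\min(m+1,\ell-m-1)$. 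We would compare the multisets of exponents directly and check that they agree only when $m_1=m_2$, which is excluded for OT-FKM except the $(2,2)$-type overlap handled separately.
\item \emph{Homogeneous $(2,2)$.} $\mathbf{S}_{M_1}$ is an $\bS^3$-bundle over $\bC\bP^3$ and $\mathbf{S}_{M_2}$ one over $Q^3$. Then $b_4(\bC\bP^3)=1$ while $b_4(Q^3)=1$ as well, so Betti numbers may coincide. Instead we compare cup products. In $H^\bullet(\bC\bP^3;\bZ)$ the generator $h$ satisfies $h^2$ primitive, whereas in $Q^3$ one has $h^2=2\sigma$. Since $H^2(\mathbf{S}_{M_i})=\pi^*H^2(M_i)$ by degree reasons ($|u|=3$), the divisibility of the square of a generator of $H^2$ is a homotopy invariant that separates the two.
\item \emph{Homogeneous $(4,5)$.} The fibers have different dimensions, $\bS^5$ versus $\bS^6$. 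Using the cohomology of $M_1\simeq\Lambda(x_5,x_9)$ and the $3$-connected $M_2$ with $H^4=\bZ$, we find $H^4(\mathbf{S}_{M_2})\neq0=H^4(\mathbf{S}_{M_1})$.
\item \emph{$g=6$, $m=2$.} Both bases have the Betti numbers of $\bC\bP^5$. We again use $H^2$ and cup powers. For $Q^5$ one has $h^3=2\sigma$, whereas for the twistor space $Z$ we would use the known ring from~\cite{kotschick-thung}, where the divisibility pattern differs. Alternatively, $c_1$ is detected mod $3$ by the Wu/Steenrod structure pulled back to the bundle. We expect this to be the main obstacle, since it needs the precise integral cohomology ring of $Z$ and a check that the extra generator $u$ in degree $3$ cannot interfere. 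Because $u$ sits in odd degree, the even-degree subring up to degree $10$ equals $\pi^*H^\bullet(M_i)$, and this settles it.
\end{itemize}

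For the exceptional $G_2/\textup{SO}(4)$ case ($g=6$, $m=1$), both focal manifolds are diffeomorphic to $\bS^3\times\bR\bP^2$, and their normal bundles are the pullbacks of the normal bundle of the Veronese $\bR\bP^2\subset\bS^4$ under the Hopf-type projection. We would show $\nu_{M_1}\oplus\mathbf 1\cong\nu_{M_2}\oplus\mathbf 1$, both stably $\eta\oplus\mathbf 1^{2}$ as in Proposition~\ref{prop:g=3Topology}; rank-$3$ bundles here are determined by $w_1,w_2$ together with the trivial $\bS^3$ factor. This yields a diffeomorphism of $\mathbf{S}_{M_1}$ and $\mathbf{S}_{M_2}$. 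Non-isometry would follow because any isometry between them preserves the $\bZ_2$-reflection-symmetric structure and the singular set of the isoparametric action, hence would map $M_1$ to $M_2$ isometrically. That contradicts the non-congruence and distinct Ricci spectra of the focal manifolds cited in Section~\ref{subsec:isoparametric}. Alternatively, we could compare intrinsic curvature invariants along the fixed focal leaves.
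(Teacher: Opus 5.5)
Your overall strategy is the paper's. You read off $H^\bullet(\mathbf{S}_{M_i})$ from Lemma~\ref{lemma:cohomology-ring}, separate the cases $m_1\neq m_2$ by the lowest degree of nonzero reduced $\bZ_2$-cohomology ($\min\{m_2,m_1+1\}$ versus $\min\{m_1,m_2+1\}$), and separate the two $m=2$ cases by divisibility of powers of a generator of $H^2$. The differences are minor:
\begin{enumerate}[$\bullet$]
\item The paper treats all $m_1\neq m_2$ at once, using the connectivity of the focal manifolds. You go through OT-FKM with the Serre spectral sequence and handle $(4,5)$ via $H^4$.
\item For $(4,2,2)$ you use squares where the paper uses cubes. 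Both work, since $\pi_i^*$ is an isomorphism in degrees $2,4,6$ there.
\item For $(6,2)$ you should state the invariant. Divisibility by $2$ does not separate the two surfaces: $x^3=2y$ in $Q^5$, while $L^3$ is $6$ times a generator in $Z$. What separates them is divisibility by $3$, already visible in degree $4$: $x^2$ is primitive in $H^4(Q^5)$, while $L^2$ is $3$ times a generator of $H^4(Z)$. The paper uses $x^5=2x^2y$ versus $L^5=18\eta_Z$.
\item For $(6,1)$ your Hopf-pullback argument is more complete than the paper's, which only asserts the diffeomorphism. Since $\nu_{M_i}$ is pulled back from the Veronese normal bundle over $\bR\bP^2$, where rank $3$ is in the stable range, you get $\mathbf{S}_{M_i}\cong \bS^3\times\mathbf{S}_{\bR}$ for both $i$. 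The pullback structure is essential: over $\bR\bP^2\times\bS^3$ itself, $w_1,w_2$ do not classify rank-$3$ bundles.
\end{enumerate}

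The genuine gap is your claim that among OT-FKM foliations $m_1=m_2$ occurs only in a ``$(2,2)$-type overlap''.
\begin{enumerate}[$\bullet$]
\item The $(2,2)$ family is not OT-FKM at all: for $m=2$ one has $m_2=2k-3$, which is odd.
\item The unique OT-FKM case with $m_1=m_2$ is $(m,\ell)=(1,3)$, that is, $(g,m_1,m_2)=(4,1,1)$ in $\bS^5$ (the isotropy representation of $\widetilde{\bG}(2,5)$).
\item Your invariant fails there. $M_1=\bV(2,3)\cong\bR\bP^3$ and $M_2$ is the mapping torus of the antipodal map of $\bS^2$. Both $\mathbf{S}_{M_i}$ therefore have $\bZ_2$-Poincar\'e polynomial $(1+t+t^2+t^3)(1+t^2)$, and the lowest degrees coincide.
\item The case is closed by $\pi_1$. $\mathbf{S}_{M_1}\cong\bV(2,3)\times\bS^2$ has $\pi_1\cong\bZ_2$, while $\pi_1(\mathbf{S}_{M_2})\cong\pi_1(M_2)\cong\bZ$.
\end{enumerate}
The paper's proof has the same omission, since it claims only $m_1=m_2=2$ remains after the lowest-degree comparison. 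This case should be added to either argument.

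Separately, your non-isometry step for $G_2/\textup{SO}(4)$ is asserted rather than proved, exactly as in the paper. An isometry $\mathbf{S}_{M_1}\to\mathbf{S}_{M_2}$ need not a priori carry the singular $\textup{SO}(4)$-orbits (the copies of $M_i$) to each other. You would need, for example, that the identity component of the isometry group of $\mathbf{S}_{M_i}$ is exactly the cohomogeneity-one $\textup{SO}(4)$-action, so that these orbits are intrinsic. Only then do the distinct Ricci spectra of $M_1$ and $M_2$ give a contradiction.
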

\begin{proof}
The surfaces $\mathbf{S}_{M_i}$ are sphere bundles $S(\nu \oplus \mathbf{1})$ over $M_i$, so their cohomology ring follows the description of Lemma~\ref{lemma:cohomology-ring}.
Recall from Section~\ref{subsec:isoparametric} that the first non-zero reduced cohomology $\widetilde{H}^{\bullet}$ for the focal submanifolds occurs in degree $m_2$ for $M_1$ and degree $m_1$ for $M_2$.
Using Lemma~\ref{lemma:cohomology-ring}, we find $\widetilde{H}^{\bullet}(\mathbf{S}_{M_i}) \cong \widetilde{H}^{\bullet}(M_i) \oplus u_i {H}^{\bullet-m_i - 1}(M_i)$, so
\[
\min \{ q > 0 : \widetilde{H}^q(\mathbf{S}_{M_1}) \neq 0 \} = \min \{ m_2, m_1 + 1 \}, \quad \min \{ q > 0 : \widetilde{H}^q(\mathbf{S}_{M_2}) \neq 0 \} = \min \{ m_1, m_2 + 1 \}.
\]
If $m_1 \neq m_2$, the two expressions produce $\min \{ m_1, m_2 \}$ and $\min \{ m_1, m_2 \} + 1$, respectively, so the surfaces $\mathbf{S}_{M_i}$ have different cohomology rings, thus $\mathbf{S}_{M_1} \not\simeq \mathbf{S}_{M_2}$.
Comparing with the admissible triples $(g, m_1, m_2)$ of Section~\ref{subsec:isoparametric}, this proves the claim in all cases except $m := m_1 = m_2 = 2$.

If $(g,m) = (4,2)$, then $\{ M_s \}$ is the homogeneous isoparametric foliation with $M_1 \cong \bC \bP^3$ and $M_2 \cong \widetilde{\bG}(2,5)$.
As in Lemma~\ref{lemma:cohomology-complex-quadric}, we have the cohomology ring $H^{\bullet}( \bC \bP^3 ; \bZ) \cong \bZ[h]/(h^4)$ for a primitive generator $h \in H^2$.
On the other hand, $H^{\bullet}( \widetilde{\bG}(2,5); \bZ) \cong H^{\bullet}(Q^3; \bZ) \cong \bZ[x,y]/ (x^2 - 2y,y^2)$ where $|x|=2$ and $|y|=4$, by a standard computation~\cite{borel-hirzebruch}.
Passing to the sphere bundles $\pi_i : \mathbf{S}_{M_i} \to M_i$, the pullback $\pi^*_i$ is an isomorphism in degrees $2$ and $6$ by the cohomology ring computation in Lemma~\ref{lemma:cohomology-ring}, so $\mathbf{S}_{M_1}$ has a primitive class in $H^2$ whose cube is primitive in $H^6$, coming from the generator $h^3 \in H^6(\bC \bP^3)$.
On the other hand, we have $x^3 = x(x^2) = 2xy \in H^6(\widetilde{\bG}(2,5))$, so $\mathbf{S}_{M_2}$ has a primitive class in $H^2$ whose cube is divisible by $2$ in $H^6$.
Combining these considerations, we deduce that the two sphere bundles $\mathbf{S}_{M_1} \not\simeq \mathbf{S}_{M_2}$ are not homotopy equivalent.

If $(g,m) = (6,2)$, the only known isoparametric foliation $\{ M_s \}$ has $M_1 \cong Q^5$ and $M_2$ is diffeomorphic to the twistor space of $G_2 / \textup{SO}(4)$, as discussed earlier.
As above, the cohomology ring description in Lemma~\ref{lemma:cohomology-ring} again shows that the maps $\pi^*_i$ induce isomorphisms $H^k (M_i ) \cong H^k( \mathbf{S}_{M_i})$ in degrees $k \in \{2,10\}$.
We again find $H^{\bullet}(Q^5;\bZ) \cong \bZ[x,y] / (x^3 - 2y  , y^2)$ with $|x| = 2$ and $|y| = 6$, by~\cite{borel-hirzebruch}, so $x^2y$ generates $H^{10}(Q^5;\bZ)$ and $x^5 = x^2 x^3 = 2x^2 y$ exhibits a fifth power of a degree-$2$ class that is divisible by $2$, and not by $3$.
On the other hand, the topology of the space $Z := G_2/ \textup{U}(2)^+$ is studied in~\cite{kotschick-thung}*{Proposition 3}.
It is known that $H^k(Z) = H^k(\bC \bP^5)$ and for $L \in H^2(Z)$ a primitive class, the elements
\[
\tfrac{1}{3} L^2 \in H^4(Z), \quad \tfrac{1}{6} L^3 \in H^6(Z), \quad \tfrac{1}{18} L^4 \in H^8(Z), \quad \tfrac{1}{18} L^5 \in H^{10}(Z)
\]
are integral generators of the higher-degree cohomology groups.
Notably, $L^5 = 18 \eta_Z$ for $\eta_Z \in H^{10}(Z)$ exhibits a fifth power of a degree-$2$ class divisible by $3$, hence the $H^{\bullet}(\mathbf{S}_{M_i})$ are not isomorphic and $\mathbf{S}_{M_1} \not\simeq \mathbf{S}_{M_2}$ are not homotopy equivalent.

For $(g,m) = (6,1)$, the unique isoparametric foliation comes from the isotropy representation of $G_2 / \textup{SO}(4)$.
    Recall the discussion of Section~\ref{subsec:isoparametric}, whereby the two focal manifolds of the foliation are diffeomorphic, but not isometric~\cites{ cecil , miyaoka }.
In this case, the interchanging of $m_1$ and $m_2$ leaves equation~\eqref{eqn:ode-star} unchanged, therefore the profile curves satisfy $f_{M_1,\theta} = f_{M_2,\theta}$.
However, since $M_1$ and $M_2$ are not isometric, this means that the constructed $\Sigma_{M_1,\theta}$ and $\Sigma_{M_2,\theta}$ are geometrically distinct, so in particular $\mathbf{S}_{M_1}$ and $\mathbf{S}_{M_2}$ are diffeomorphic, but not isometric, minimal surfaces inside $\bS^8$.
\end{proof}

\begin{proof}[Proof of Theorem~\ref{thm:non-triviality-theorem}]
    The first part of the theorem, on the (non-)product structure of the sphere bundles $\mathbf{S}_{M_i}$, follows using Proposition~\ref{prop:all-nontrivial-isoparametric} to find $\mathbf{S}_{M_i} \not\simeq_s M_i \times \bS^r$.
    For the OT-FKM family, this characterization comes from Lemma~\ref{lemma:ot-fkm-family-I} and Proposition~\ref{prop:trivial-product-bundles-ot-fkm}.
    The description of the surfaces $\mathbf{S}_{\bF} := \mathbf{S}_{M_i}$ for $g = 3$ is given in Proposition~\ref{prop:g=3Topology}.
    Finally, Proposition~\ref{prop:surfaces-are-distinct} shows that the surfaces $\mathbf{S}_{M_i}$ have distinct homotopy types except for the foliation coming from the isotropy representation $G_2 / \textup{SO}(4)$; in that case, they are diffeomorphic but not isometric.
\end{proof}

\subsection{Type II}
For cones of Type II, the positive phase $(t_1, t_2) \subset (0,1)$ corresponds to graphs over a conical annulus in the base, having the form
\[
\Gamma := \bigl\{ (\rho, \omega) : t_1 < \tfrac{\sin gs(\omega)}{2} < t_2 \bigr\} = \bigl\{ (\rho, \omega) : \tfrac{2}{g} \arcsin t_1 < s(\omega) < \tfrac{2}{g} \arcsin t_2 \bigr\}.
\]
The free boundary has two components in $\Pi = \{ z \geq 0 \}$, namely the cones $\{ s(\omega) = \frac{2}{g} \arcsin t_i \}_{i=1,2}$.
Topologically, the link of such a cone is diffeomorphic to $\bar{\Sigma}_{M,\theta} \cong M \times I$, where $M \subset \bS^{n-1}$ is a regular leaf on the base.
Consequently, the resulting free-boundary minimal cones $\bar{\mathbf{C}}_{M,\theta}$ can be doubled to a minimal hypercone in $\bR^{n+1}$ whose link $\bar{\mathbf{S}}_{M}$ is topologically $M \times \bS^1$; the $\bS^1$-factor comes from closing the interval $[t_1, t_2]$ into a $\bZ_2$-invariant loop by doubling across the equator.

Following the discussion of Section~\ref{section:type-I-topology}, the topology of $\bar{\mathbf{S}}_{M}$ can also be understood as a sphere bundle $\bar{\mathbf{S}}_M \cong S(\nu_M \oplus \mathbf{1})$, this time over a regular leaf $M$.
Since $M \subset \bS^{n-1}$ is a closed embedded oriented hypersurface, the normal bundle $\nu_M$ is trivial and $S(\nu_M \oplus \mathbf{1}) \cong M \times \bS^1$ is smoothly isomorphic to the product $\bS^1$-bundle.

\section{Construction of capillary minimal surfaces}

We now prove the existence of the hypersurfaces $\Sigma_{M_i, \theta}, \bar{\Sigma}_{M, \theta}$ and capillary minimal cones $\mathbf{C}_{M,\theta} , \bar{\mathbf{C}}_{M,\theta}$ of both types I and II, as described in Theorem~\ref{thm:capillary-interpolation}.
We employ a shooting-continuity argument to construct solutions of the capillary equation~\eqref{eqn:ode-star}.
It will be essential to establish a bound for our shooting method, namely  that shots with large initial height cannot reach zero; this will be proved in Proposition~\ref{prop:dont-shoot-too-high}.
First, we analyze equation~\eqref{eqn:ode-star} and its rescaled counterpart~\eqref{eqn:rescaled-equation-lambda-ODE}.

\subsection{ODE analysis}\label{section:analysis-of-capillary-equation}

Following the structure in~\cite{new-minimal-surfaces}, we examine equation~\eqref{eqn:rescaled-equation-lambda-ODE} by defining the quantity $h_f(t) := f(t) - \frac{g}{2}A_M f'(t)$, which satisfies the relation
\begin{equation}\label{eqn:ODE-for-h}
    (1-t^2) h_f' = A S_\lambda h_f - T f' , \qquad A(t) := A_M(t) = t - \tfrac{m_1}{m_1+m_2} t^{-1}.
\end{equation}
We denote $\alpha := \frac{g m_1}{2(n-2)} =  \frac{m_1}{m_1 + m_2}$ (so $A_M(\sqrt{\alpha}) = 0$) and $q_{\lambda} := (1-t^2) \frac{\lambda (f')^2}{1 + \lambda f^2}$, with which we define
\[
S_{\lambda}(t) := \tfrac{2(n-1)}{g} + \tfrac{g(n-2)}{2} q_\lambda(t) , 
\qquad T(t) := \alpha\bigl(\tfrac{g}{2}-\alpha\bigr) t^{-2} + \tfrac{1}{2} (g-2)(1-2\alpha).
\]
Define $\psi(t) = |t^2 - \alpha|^\frac{1}{g}$, so $\frac{\psi'}{\psi} = \frac{2}{g}A^{-1}_M$.
Moreover, 
\begin{equation}\label{eqn:2.6NMS}
h_f = f - \frac{\psi}{\psi'} f', \qquad 
\left(\frac{f}{\psi}\right)' = \frac{f'}{\psi}- \frac{\psi'}{\psi}\frac{f}{\psi} = -\frac{2}{g}\frac{h_f}{A \psi}.
\end{equation}
We observe that $S_{\lambda} > 0$, while $T \geq 0$ follows from $\alpha \in (0,1)$ and $\frac{g}{2} + \alpha = \frac{g}{2} ( 1 + \frac{m_1}{n-2}) > 1$, so
\begin{align*}
    T &\geq \alpha \bigl( \tfrac{g}{2} - \alpha \bigr) + \tfrac{1}{2} (g-2)(1-2 \alpha) = (1-\alpha) \bigl( \tfrac{g}{2} + \alpha-1) > 0.
\end{align*}
We now prove some coercive behaviors of the solutions based on these properties.
\begin{lemma}\label{lemma:general-properties-of-solutions}
    Let $f$ be a positive solution of~\eqref{eqn:rescaled-equation-lambda-ODE} on $(t_1, b) \subset (0,1)$ with $\{f(t_1)\geq 0, f'(t_1) > 0\}$.
    Then, if $\alpha = \frac{m_1}{m_1 + m_2}$ and $t_1 < \sqrt{\alpha}$, the following properties hold:
    \begin{enumerate}[$(i)$]
        \item Either $f$ is strictly increasing for all time or has a unique critical point $t_c$. 
        Furthermore, $h_f > 0$ on $[t_1,b]$ and $h_f' < 0$ for $t < \min \{t_c, \sqrt{\alpha}\}$, while $h_f' > 0$ for $t > \max \{ t_c ,\sqrt{\alpha}\}$.

        \item If $f(t_2) = 0$, then $t_2 \geq \sqrt{\alpha}$.
        If $f(t_1) = f(t_2) = 0$, then $t_1 < \sqrt{\alpha} < t_2$.

        \item The function $f$ is strictly increasing if and only if $h_f$ has a zero at $t_h$ and $h_f, h_f' < 0$ for $t > t_h$. 
    \end{enumerate}
\end{lemma}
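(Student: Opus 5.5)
The argument will run entirely through the first-order system~\eqref{eqn:ODE-for-h} for $h_f = f - \frac{g}{2}A f'$, together with~\eqref{eqn:2.6NMS}. The facts used are $S_\lambda>0$ and $T>0$, and that $A<0$ on $(0,\sqrt{\alpha})$ and $A>0$ on $(\sqrt{\alpha},1)$.

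\textbf{Positivity of $h_f$ near $t_1$.} Since $t_1<\sqrt{\alpha}$, we have $A(t_1)<0$, so
\[
h_f(t_1) = f(t_1) - \tfrac{g}{2}A(t_1)f'(t_1) > 0 .
\]
On the region where $f'>0$ and $A<0$, we get $h_f \geq f>0$ directly from the definition.

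\textbf{Critical points of $f$.} At any critical point $t_c$ we have $h_f(t_c)=f(t_c)>0$. Differentiating $f'$ through the ODE, or equivalently reading~\eqref{eqn:rescaled-equation-lambda-ODE} at $f'=0$, gives
\[
(1-t^2)f''(t_c) = -\tfrac{4(n-1)}{g^2}f(t_c)<0 .
\]
So every critical point is a strict local maximum, and a positive $f$ therefore has at most one critical point.

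\textbf{Sign of $h_f'$ by region.}
\begin{itemize}
\item For $t<\min\{t_c,\sqrt{\alpha}\}$: here $A<0$, $h_f>0$ and $f'>0$, so $(1-t^2)h_f' = AS_\lambda h_f - Tf' <0$.
\item For $t>\max\{t_c,\sqrt{\alpha}\}$: here $A>0$ and $f'<0$, so $h_f'>0$ provided $h_f>0$.
\item In the intermediate region (between $t_c$ and $\sqrt{\alpha}$, in whichever order they come): either $f'\le 0$ with $A<0$, which gives $h_f \geq f>0$; or $f'\ge0$ with $A\ge 0$.
\end{itemize}

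\textbf{Ruling out a first zero of $h_f$ when $t_c$ exists.} Suppose $t_0$ is a first zero of $h_f$, so $h_f'(t_0)\le 0$. By~\eqref{eqn:ODE-for-h}, $(1-t_0^2)h_f'(t_0)=-Tf'(t_0)$. If $f'(t_0)<0$ this is $>0$, a contradiction. If $f'(t_0)>0$, then $t_0<t_c$, and $h_f=0$ together with $f>0$ forces $A(t_0)>0$. This is the delicate configuration. Here I use~\eqref{eqn:2.6NMS}: on $(\sqrt\alpha,t_c)$ we have $(f/\psi)'=-\frac{2}{g}h_f/(A\psi)$. While $h_f>0$ this is negative, and at $t_0$ it vanishes. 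Since $h_f'(t_0)=-Tf'(t_0)/(1-t_0^2)<0$, the function $h_f$ becomes negative just after $t_0$. But $h_f(t_c)=f(t_c)>0$, so $h_f$ must return to zero at some $t_0'<t_c$ with $h_f'(t_0')\ge0$. At $t_0'$ we still have $f'>0$, so $h_f'(t_0')<0$, a contradiction. Hence $h_f>0$ throughout. The same sign argument shows that once $h_f<0$ with $f'>0$, it stays negative and decreasing. This gives (iii): if $h_f$ has a zero, then $f$ has no critical point, hence is strictly increasing. Conversely, if $f$ is increasing on all of $(t_1,1)$, I would show that $h_f$ must cross zero by examining~\eqref{eqn:2.6NMS}: if $h_f>0$ for $t>\sqrt\alpha$, then $f/\psi$ is decreasing with $\psi$ bounded, which I expect to contradict $f'>0$ near the singular endpoint. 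This converse is the step I expect to be the main obstacle, and it may need the behaviour at $t=1$ or Lemma~\ref{lemma:reach-zero-before-hypergeometric}.

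\textbf{Part (ii).} If $f(t_2)=0$ with $f>0$ before, then $f'(t_2)\le0$, so $t_2>t_c$. Also
\[
h_f(t_2) = -\tfrac{g}{2}A(t_2)f'(t_2) >0
\]
by (i) (the value $0$ is excluded since $f'(t_2)<0$ by uniqueness), so $A(t_2)>0$, i.e.\ $t_2\ge\sqrt\alpha$. For the second claim, a zero at $t_1$ with $f'(t_1)>0$ requires $h_f(t_1)>0$, hence $A(t_1)<0$, i.e.\ $t_1<\sqrt\alpha$. The strict inequality $t_2>\sqrt\alpha$ follows since $A(t_2)=0$ would give $h_f(t_2)=0$.
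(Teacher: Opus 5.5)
Your arguments for (i), (ii) and for one half of (iii) are correct, and they use the same mechanism the paper invokes. That half is the implication ``$h_f$ has a zero $\Rightarrow$ $f$ is strictly increasing, with $h_f,h_f'<0$ afterwards.'' The shared mechanism is: $h_f(t_1)>0$; every critical point is a strict maximum; at a zero of $h_f$ one has $(1-t^2)h_f'=-Tf'$, so $\on{sgn} h_f'=-\on{sgn} f'$; and $h_f(t_2)=-\tfrac g2 A(t_2)f'(t_2)>0$ at a zero of $f$. Your ``must return to zero before $t_c$'' argument, using $h_f(t_c)=f(t_c)>0$, is a clean substitute for the paper's forward invariance of $R$. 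The appeal to $(f/\psi)'$ in that paragraph is never used and can be deleted.

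\textbf{The gap: the converse in (iii).} Your sketch cannot close it. Since $\psi$ is increasing on $(\sqrt\alpha,1)$, ``$f/\psi$ decreasing'' is perfectly compatible with $f'>0$. Worse, read literally (with the maximal interval allowed to reach $t=1$), this direction is false.

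\emph{Counterexample.} Let $\hat f$ be a Type~I profile from $M_2$: a solution of the $(g,m_2,m_1)$ equation with $\hat f(0)=a$, $\hat f'(0)=0$, vanishing at $\hat t_0$ with finite slope (these exist, e.g.\ by Theorem~\ref{thm:capillary-interpolation}). Put $f(t):=\hat f(\sqrt{1-t^2})$ as in Lemma~\ref{lemma:change-of-variables}.
\begin{itemize}
\item A direct check gives $A_{\hat M}(\hat t)\hat f'(\hat t)=A_M(t)f'(t)$, so $h$ is invariant under $t\leftrightsquigarrow\sqrt{1-t^2}$.
\item $h_{\hat f}(0)=\tfrac{a}{m_2+1}\bigl(1-\tfrac{2m_2}{g(m_1+m_2)}\bigr)>0$, and $\hat f'<0$ afterwards, so by your own sign argument $h_{\hat f}$ never vanishes.
\end{itemize}
Hence $f$ satisfies all the hypotheses: $f(t_1)=0<f'(t_1)$ with $t_1=\sqrt{1-\hat t_0^2}<\sqrt\alpha$, because $h_{\hat f}(\hat t_0)>0$ forces $A_{\hat M}(\hat t_0)>0$. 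It is strictly increasing on $(t_1,1)$, yet $h_f>0$ throughout.

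\textbf{What does hold.} The converse is true when the maximal interval ends at some $b<1$, and your $\psi$ idea proves it.
\begin{itemize}
\item On $(t_1,\sqrt\alpha]$ we have $h_f>0$, so \eqref{eqn:rescaled-equation-lambda-ODE} gives $(1-t^2)f''\le Cf'$, which rules out blow-up there.
\item Suppose $h_f>0$ on $(\sqrt\alpha,b)$. Then for any $t_*\in(\sqrt\alpha,b)$ we get $f\le \tfrac{f(t_*)}{\psi(t_*)}\psi$ and $0<f'<\tfrac{2f}{gA}$, both bounded on $(t_*,b)$. So the solution would extend past $b$, a contradiction.
\end{itemize}
Note that only the direction you did prove is used later: Proposition~\ref{prop:small-breather} needs only ``$h_i$ turns negative $\Rightarrow$ no second zero.'' So (iii) should be stated one-way, or with the extra assumption $b<1$.
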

\begin{proof}
The proof is a direct adaptation of~\cite{new-minimal-surfaces}*{Proposition 2.2}; the only necessary ingredients are equations~\eqref{eqn:ODE-for-h}\thru\eqref{eqn:2.6NMS}, where~\eqref{eqn:ODE-for-h} requires no specific information about $S_{\lambda}, T$ beyond their positivity.
Indeed, $h(t_1) > 0$ and~\eqref{eqn:ODE-for-h} shows that $\on{sgn} h'(t_*) = - \on{sgn} f'(t_*)$ at a zero of $h$, while~\eqref{eqn:2.6NMS} shows that the region
\[
R := \{ t \geq \sqrt{\alpha} \; : \; h(t) > 0, \; f'(t) < 0 \}
\]
is forward-invariant in $t$, meaning that $t_1 \in R \implies [t_1, b] \subseteq R$.
Moreover, $h$ is positive while $f$ is positive, so $0 < h(t_2) = - \tfrac{g}{2}A(t_2) f'(t_2)$ at a zero $t_2$ of $f$ implies that $t_2 > \sqrt{\alpha}$.
We refer the reader to~\cite{new-minimal-surfaces}*{Proposition 2.2} for the details of these arguments.
\end{proof}

\begin{lemma}\label{lemma:reach-zero-before-hypergeometric}
Consider the hypergeometric function
\begin{equation}\label{eqn:hypergeometric-isoparametric}
    f_M(t) := {}_2 F_1 \Bigl( \frac{n-1}{g} , - \frac{1}{g} ; \frac{m_1 + 1}{2} ; t^2 \Bigr) = {}_2F_1 \Bigl( \frac{m_1 + m_2}{2} + \frac{1}{g} , - \frac{1}{g} ; \frac{m_1 + 1}{2} ; t^2 \Bigr)
\end{equation}
which is the solution of $\cL_M f = 0$ with $\{ f(0) = 1, f'(0) = 0\}$.
This function has a zero $t_M \in (0,1)$, and any solution of equation~\eqref{eqn:rescaled-equation-lambda-ODE} with initial data $\{ f(0) > 0, f'(0) = 0 \}$ either reaches zero or blows up while positive at a point $b \leq t_M$.
\end{lemma}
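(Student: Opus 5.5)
The plan has two parts: first show that $f_M$ has a zero in $(0,1)$, then compare a general solution with $f_M$ through a Wronskian identity built from the self-adjoint form~\eqref{eqn:self-adjoint-legendre}.

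\textbf{Step 1: $f_M$ has a zero.} I would use the hypergeometric series directly. Write $a=\frac{n-1}{g}>0$, $b=-\frac1g\in(-1,0)$ and $c=\frac{m_1+1}{2}$. For every $k\ge1$ the coefficient $\frac{(a)_k(b)_k}{(c)_k\,k!}$ is negative, because $(b)_k<0$ while all the other factors are positive. Hence $f_M(t)=1-\sum_{k\ge1}|c_k|t^{2k}$ is strictly decreasing on $[0,1)$. Using~\eqref{eqn:counting-multiplicities} we get $c-a-b=\frac{m_1+1}{2}-\frac{m_1+m_2}{2}=\frac{1-m_2}{2}\le 0$. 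Gauss's asymptotics for ${}_2F_1$ at $t^2\to1$ then apply: there is power-law divergence if $m_2>1$ and logarithmic divergence if $m_2=1$. In both cases the divergence comes with the sign of the tail coefficients, so $f_M(t)\to-\infty$ as $t\uparrow1$. This gives a unique zero $t_M\in(0,1)$.

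\textbf{Step 2: sign of the nonlinear term.} Write~\eqref{eqn:rescaled-equation-lambda-ODE} as $\cL_M f=-N$, with $N:=(n-2)\,q_\lambda\,h_f$. Since $q_\lambda\ge0$, the whole issue is the sign of $h_f$.
\begin{itemize}
\item At $t=0$, the equation with $f'(0)=0$ gives $f''(0)=-\frac{4(n-1)f(0)}{g^2(m_1+1)}$. Since $A\sim-\alpha/t$, this yields
\[
h_f(0)=f(0)\Bigl(1-\tfrac{2(n-1)\alpha}{g(m_1+1)}\Bigr)=f(0)\,\tfrac{1-2\alpha/g}{m_1+1}>0,
\]
using $\frac{2(n-1)}{g}\alpha=m_1+\frac{2\alpha}{g}$.
\item On $(0,\sqrt\alpha)$, consider a first zero $t_*$ of $h_f$. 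There $f=\frac g2 A f'$ with $A<0$ and $f>0$, which forces $f'(t_*)<0$. Then~\eqref{eqn:ODE-for-h} gives $(1-t^2)h_f'(t_*)=-Tf'(t_*)>0$, so $h_f$ cannot cross zero downward.
\item Beyond $\sqrt\alpha$, I would invoke the forward-invariant region $R$ from the proof of Lemma~\ref{lemma:general-properties-of-solutions}, or repeat its argument, to keep $h_f>0$ as long as $f>0$.
\end{itemize}
This step is where I expect the main difficulty. The lemma is stated for $f'(t_1)>0$, while here $f$ starts at a maximum at $t=0$. So the behaviour near $\sqrt\alpha$, in particular whether $f'$ can change sign there, must be checked by hand from~\eqref{eqn:ODE-for-h} and~\eqref{eqn:2.6NMS}.

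\textbf{Step 3: Wronskian comparison.} Set $w:=\frac{4(n-1)}{g^2}\frac{p_M}{1-t^2}$ and $W:=p_M\,(f'f_M-f f_M')$. From the self-adjoint form~\eqref{eqn:self-adjoint-legendre},
\[
W'=f_M(p_Mf')'-f(p_Mf_M')'=-\frac{p_M}{1-t^2}\,f_M\,N .
\]
We have $W(0)=0$ because $p_M(0)=0$ and $f'(0)=f_M'(0)=0$. On any interval where $f,f_M>0$, Step 2 gives $N\ge0$, so $W'\le0$ and hence $W\le0$. This means $(f/f_M)'=W/(p_M f_M^2)\le 0$, so $0<f\le f(0)f_M$ there. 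Therefore $f$ cannot remain positive and regular past $t_M$, since $f_M(t_M)=0$. Either $f$ vanishes at some point $b\le t_M$, or the solution ceases to exist (derivative blow-up, as $f$ itself is bounded above) at some earlier $b$ while still positive. This is exactly the claimed dichotomy.
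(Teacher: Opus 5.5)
Your proposal is correct and follows the paper's proof. The paper also gets $\cL_M f<0$ from $h_f>0$, citing Lemma~\ref{lemma:general-properties-of-solutions}, and then runs the same Sturm/Wronskian comparison with $f_M$ through the self-adjoint form; the only differences are that it cites the companion paper for the zero of $f_M$, where you give a direct series and Gauss-asymptotics argument, and your direct proof of $h_f>0$ on $(0,\sqrt{\alpha})$ in place of its citation.

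The point you flag beyond $\sqrt{\alpha}$ closes in one line. At any critical point with $f>0$, the equation gives $(1-t^2)f''=-\frac{4(n-1)}{g^2}f<0$, so $f'<0$ on the whole positivity interval. Then $h_f=f-\frac g2 A_M f'\ge f>0$ for $t\ge\sqrt{\alpha}$, since $A_M\ge 0$ there.
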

\begin{proof}
    The function $f_M$ is strictly decreasing and strictly concave with a zero $t_M \in (0,1)$ by~\cite{one-phase-isoparametric}*{Lemma 3.1}.
    Let $f$ be a solution of equation~\eqref{eqn:rescaled-equation-lambda-ODE} with $\{ f(0) > 0, f'(0) = 0\}$, and suppose that it is well-defined for all $t \in [0,t_M]$.
    Then, $f$ satisfies equation~\eqref{eqn:rescaled-equation-lambda-ODE} with $\lambda = 1$, where $f - \frac{g}{2} A_M f'> 0$ on $[0,t_M]$ by Lemma~\ref{lemma:general-properties-of-solutions}.
    Consequently, $\cL_M f < 0$ for $t \in (0,t_M]$ and the expression~\eqref{eqn:self-adjoint-legendre} implies that
    \[
    (p_M f')' + \tfrac{4(n-1)}{g^2} \tfrac{p_M}{1-t^2} f < 0, \qquad (p_M f_M')' + \tfrac{4(n-1)}{g^2} \tfrac{p_M}{1-t^2} f_M = 0.
    \]
    On the interval where $f,f_M$ are both non-negative, we may subtract the two expressions to obtain
    \[
    \bigl(p_M f_M^2 \bigl(\tfrac{f}{f_M} \bigr)' \bigr)' = f_M (p_M f')' - f(p_M f'_M)' < 0 \, ,
    \]
    so $f$ cannot remain positive up to $t = t_M$.
\end{proof}

We have the following compactness results for solutions of the equation~\eqref{eqn:rescaled-equation-lambda-ODE}.

\begin{lemma}\label{lemma:slope-infinity-solution}
    Fix some $t_1 < \sqrt{\frac{m_1}{m_1 + m_2}}$ and let $f^{(N)}$ denote the solution of equation~\eqref{eqn:rescaled-equation-lambda-ODE}, for $\lambda>0$, with initial data $\{ f^{(N)}(t_1) =0, f^{(N)}{}'(t_1) = N \}$.
    Then, there exists a solution $f^{(\infty)}$ of equation~\eqref{eqn:rescaled-equation-lambda-ODE} defined on an interval $[t_1,b]$ such that $f^{(\infty)}(t_1) = 0$ and $f^{(N)} \to f^{(\infty)}$ in $C^{\infty}_{\textup{loc}}(t_1,b)$ as $N \to \infty$.
    We call $f^{(\infty)}$ the solution of equation~\eqref{eqn:rescaled-equation-lambda-ODE} with initial data $\{ f^{(\infty)}(t_1) =0 , f^{(\infty)}{}'(t_1) = \infty\}$.
\end{lemma}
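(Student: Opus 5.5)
The idea is to exploit the scaling structure of \eqref{eqn:rescaled-equation-lambda-ODE} and pass to a limit in the regime where the slope blows up. The initial condition $f'(t_1) = \infty$ is singular, so I would not try to work with it directly. Instead, I would change the unknown to one for which the equation is regular at $t_1$ and the limiting data are finite.

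\textbf{First choice of normalization: inverse function.} While $f' > 0$, which holds near $t_1$ by Lemma~\ref{lemma:general-properties-of-solutions}$(i)$, I view $t$ as a function of $y = f$. Write $t = \tau(y)$, so that $f' = 1/\tau_y$ and $f'' = -\tau_{yy}/\tau_y^3$. Multiply \eqref{eqn:rescaled-equation-lambda-ODE} by $\tau_y^3$. The term $(1-t^2)f''$ becomes $-(1-\tau^2)\tau_{yy}$. The nonlinear term
\[
(n-2)(1-t^2)\,\frac{\lambda (f')^2}{1+\lambda f^2}\Bigl(f - \tfrac g2 A_M f'\Bigr)
\]
becomes $(n-2)(1-\tau^2)\,\frac{\lambda}{1+\lambda y^2}\bigl(y\,\tau_y - \tfrac g2 A_M(\tau)\bigr)$ after multiplying by $\tau_y^3$. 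The remaining linear terms contribute quantities of order $\tau_y^2$ and $\tau_y^3$.

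The result is a second-order ODE for $\tau(y)$ that is smooth near $(y,\tau,\tau_y) = (0,t_1,0)$. Three facts make this work: $t_1 > 0$, $t_1 < 1$, and $\lambda > 0$. The coefficient of $\tau_{yy}$ is $-(1-\tau^2) \neq 0$, so the equation is nondegenerate there.

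The data $f^{(N)}(t_1)=0$, $f^{(N)}{}'(t_1)=N$ translate into $\tau(0)=t_1$, $\tau_y(0)=1/N$. Smooth dependence on initial data then gives convergence $\tau^{(N)} \to \tau^{(\infty)}$ in $C^\infty$ on some $[0,y_0]$, where $\tau^{(\infty)}$ solves the regular ODE with $\tau(0)=t_1$, $\tau_y(0)=0$.

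\textbf{Key step: $\tau^{(\infty)}$ is strictly increasing.} At $y=0$ the equation reduces to
\[
-(1-t_1^2)\,\tau_{yy} - (n-2)(1-t_1^2)\,\lambda\,\tfrac g2 A_M(t_1) = 0 ,
\]
so $\tau_{yy}(0) = -(n-2)\lambda\,\tfrac g2 A_M(t_1)$. This is strictly positive because $A_M(t_1) < 0$ for $t_1 < \sqrt\alpha$, and it is exactly where the hypothesis $t_1<\sqrt{\alpha}$ enters. Hence $\tau^{(\infty)}(y) \approx t_1 + c\,y^2$ with $c>0$, and $\tau^{(\infty)}$ is strictly increasing on $(0,y_0]$.

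Inverting gives a function $f^{(\infty)}(t) \sim \sqrt{(t-t_1)/c}$ on $[t_1, b_0]$ with infinite slope at $t_1$. It solves \eqref{eqn:rescaled-equation-lambda-ODE} on $(t_1,b_0)$, since $\tau_y^{(\infty)} > 0$ for $y>0$ allows the change of variables to be undone.

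Local convergence of the inverses on compact subsets of $(t_1,b_0]$ then follows from uniform convergence $\tau^{(N)}\to\tau^{(\infty)}$ in $C^\infty$ together with the uniform lower bound $\tau_y \ge \delta > 0$ on $[y_1,y_0]$. Higher derivatives transfer through the inverse function formulas.

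\textbf{Extension to a longer interval.} Once a point $t_* \in (t_1,b_0)$ is fixed where $f^{(\infty)}$ is finite with finite slope, I extend $f^{(\infty)}$ as the maximal solution of \eqref{eqn:rescaled-equation-lambda-ODE} from the data at $t_*$, up to some $b$ before blowup or $t=1$. The $f^{(N)}$ have data at $t_*$ converging to those of $f^{(\infty)}$, because $\tau^{(N)}(y)=t_*$ at $y$ close to $f^{(\infty)}(t_*)$. Continuous dependence for the regular ODE on $(0,1)$ then yields $C^\infty_{\mathrm{loc}}(t_1,b)$ convergence.

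\textbf{Main obstacle.} The step needing care is justifying that the $\tau$-equation is genuinely smooth, and in particular that the $f$-dependent factor $\frac{\lambda}{1+\lambda y^2}$ does not degenerate. It does not, since it is a smooth positive function of $y$. One must also check that the $\tau^{(N)}$ exist on a uniform $y$-interval. This holds because the data $(t_1,1/N)$ lie in a compact set where the vector field is smooth.
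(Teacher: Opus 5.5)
Your proposal is correct. It is also more self-contained than the paper's own treatment. The paper writes out no argument here: it only defers to the compactness lemmas of \cite{new-minimal-surfaces} (Lemmas 2.5--2.6). The content of those lemmas resurfaces in the asymptotics $f''\sim C(f')^3$ and the expansion in $\sqrt{t-t_1}$ used in Lemma~\ref{lemma:psi-detect-blowup} and Proposition~\ref{prop:small-breather}.

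Your hodograph substitution $t=\tau(y)$ packages exactly that singular behaviour into a regular initial value problem, with data $(\tau,\tau_y)(0)=(t_1,1/N)\to(t_1,0)$. As a result, the existence of $f^{(\infty)}$, the $C^\infty_{\mathrm{loc}}$ convergence, and the $\sqrt{t-t_1}$ expansion all follow from smooth dependence on data. It also shows cleanly why both hypotheses are needed, through $\tau_{yy}(0)=-(n-2)\lambda\tfrac{g}{2}A_M(t_1)>0$. Your leading coefficient, $f^{(\infty)}(t)\approx 2\bigl((t-t_1)/((n-2)\lambda g|A_M(t_1)|)\bigr)^{1/2}$, agrees with \eqref{eqn:f-prime-t,f-t}.

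Two details should be tightened.
\begin{enumerate}
\item You implicitly need $t_1>0$. The coefficient $A_M$ is singular at $t=0$, so the $\tau$-equation is regular only for $\tau\in(0,1)$.
\item To identify $\tau^{(N)}$ with the inverse of $f^{(N)}$, you need $\tau^{(N)}_y>0$ on all of $[0,y_0]$, not only on $[y_1,y_0]$. This follows by shrinking $y_0$ so that $\tau^{(\infty)}_{yy}>0$ on $[0,y_0]$. Then $C^2$ convergence gives $\tau^{(N)}_{yy}>0$ there for large $N$, and hence $\tau^{(N)}_y\ge 1/N$. After that, uniqueness for the regular $f$-equation at $t_1\in(0,1)$ completes the identification.
\end{enumerate}
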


\begin{lemma}\label{lemma:convergence-of-solutions}
    Consider an interval $[d_1,d_2] \subset (0, \sqrt{\frac{m_1}{m_1 + m_2}})$ and fix $a \in (0,\infty]$.
    Suppose that the solution $f_{t_1}$ of equation~\eqref{eqn:rescaled-equation-lambda-ODE} with initial data $\{ f(t_1) = 0, f'(t_1) = a \}$ reaches a second zero $\tau(t_1) > \sqrt{\frac{m_1}{m_1+m_2}}$ with finite derivative when $t_1 = d_1$.
    Then, either this property holds for all $t_1 \in [d_1, d_2]$, or there exists a $t_* \in (d_1,d_2]$ so that the solution $f_{t_*}$ has a zero $\tau(t_*) > \sqrt{\frac{m_1}{m_1 + m_2}}$ with $f'_{t_*}(t) \to - \infty$ as $t \uparrow \tau(t_*)$.
\end{lemma}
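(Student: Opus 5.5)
The idea is a continuity-plus-compactness argument in the shooting parameter $t_1$. Let $\mathcal{G}\subset[d_1,d_2]$ be the set of $t_1$ for which $f_{t_1}$ reaches a second zero $\tau(t_1)>\sqrt{\alpha}$ with finite derivative. We first show that $\mathcal{G}$ is relatively open in $[d_1,d_2]$. By Lemma~\ref{lemma:general-properties-of-solutions}, at such a zero we have $h_f(\tau)=-\tfrac{g}{2}A(\tau)f'(\tau)>0$, so $f'(\tau)<0$ strictly. Hence $f$ crosses zero transversally at a point where the equation~\eqref{eqn:rescaled-equation-lambda-ODE} is regular: $\tau<1$, $A$ is smooth there, and $f,f'$ are bounded.

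Continuous dependence on initial data then gives the openness. For finite $a$ this is immediate. For $a=\infty$ we use Lemma~\ref{lemma:slope-infinity-solution} together with local $C^\infty$ convergence on compact subintervals away from $t_1$. Either way, nearby $t_1$ produce solutions that stay positive up to a point near $\tau(t_1)$ and then cross zero transversally. Thus $\tau$ is continuous on $\mathcal{G}$, and $d_1\in\mathcal{G}$ by hypothesis.

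Suppose $\mathcal{G}\neq[d_1,d_2]$, and set $t_*:=\inf\bigl([d_1,d_2]\setminus\mathcal{G}\bigr)$. Openness gives $t_*>d_1$, $t_*\notin\mathcal{G}$, and $[d_1,t_*)\subset\mathcal{G}$. Take $t_1^{(j)}\uparrow t_*$. The zeros $\tau_j:=\tau(t_1^{(j)})$ lie in $(\sqrt{\alpha},1)$. We also want a uniform bound $\tau_j\le 1-\delta$. To get it, we argue as in Lemma~\ref{lemma:reach-zero-before-hypergeometric}, comparing with a suitable solution of $\mathcal{L}_M$ via the Sturm identity, or else we use the fact that $h_f>0$ and the region $R$ is forward invariant. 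Passing to a subsequence, $\tau_j\to\tau_*\in[\sqrt{\alpha},1)$.

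On any compact subinterval of $(t_*,\tau_*)$ the solutions $f_{t_1^{(j)}}$ converge to $f_{t_*}$, so $f_{t_*}\ge0$ there and is positive by the maximum principle for the ODE. Since $t_*\notin\mathcal{G}$, the solution $f_{t_*}$ cannot reach a transversal zero. The remaining alternatives are that it blows up while positive, that it hits zero tangentially, or that it reaches $\tau_*$ with $f'\to-\infty$.

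\textbf{Tangential zero.} This is excluded: $f'(\tau)=0$ at a zero contradicts $h_f(\tau)>0$.

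\textbf{Blow-up while positive.} This is excluded because blow-up cannot happen near a point where the approximating solutions are uniformly small. Specifically, the $f_{t_1^{(j)}}$ are uniformly bounded: by Lemma~\ref{lemma:general-properties-of-solutions} each has a unique maximum, and that maximum is bounded through $f\le h_f$ and monotonicity of $h_f$ (which bounds $f$ by $h_f(t_1)$, controlled by the initial data). So the limit is bounded and cannot blow up upward.

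\textbf{The only remaining case.} Therefore $f_{t_*}$ stays positive and bounded on $(t_*,\tau_*)$, satisfies $f_{t_*}(t)\to0$ as $t\uparrow\tau_*$, and cannot have a finite negative derivative at $\tau_*$. Hence $f'_{t_*}\to-\infty$. The limit $\tau(t_*)=\tau_*>\sqrt{\alpha}$ is strict because $f_{t_*}$ remains positive past the unique critical point, which lies after $\sqrt{\alpha}$ when $t_*<\sqrt{\alpha}$.

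The main obstacle is the uniform control near the endpoint. We must rule out $\tau_j\to1$, and we must make sure the derivative blow-up happens exactly at a zero rather than at an interior point. For the former, I would first try the hypergeometric comparison of Lemma~\ref{lemma:reach-zero-before-hypergeometric} applied on $[\sqrt{\alpha},1)$. For the latter, I would use the identity $(f/\psi)'=-\tfrac{2}{g}h_f/(A\psi)$ from~\eqref{eqn:2.6NMS} together with monotonicity of $h_f$ to bound $|f'|$ away from the zeros of $f$.
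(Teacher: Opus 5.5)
Your continuity framework (openness of $\mathcal{G}$, taking $t_*$ as the first bad parameter, excluding a tangential zero via $h_f>0$) is sound. The gap is in how you dispose of ``blow-up while positive.''

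In this problem the blow-up that competes with reaching zero is a vertical tangent at positive height, not $f\to\infty$. The local analysis~\eqref{eqn:f-prime-t,f-t} holds for any value $f(t_b)\ge 0$. So a solution can have $f'(t)\to-\infty$ as $t\uparrow t_b$ with $f(t_b)>0$ while $f$ stays bounded; this is exactly what large shots do in Proposition~\ref{prop:dont-shoot-too-high}. A uniform bound on $\sup f_{t_1^{(j)}}$ therefore says nothing about this behavior for $f_{t_*}$.

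The bound is also not justified as stated. For $a=\infty$ one has $h_f(t_1)=\tfrac{g}{2}|A(t_1)|\,a=\infty$. Even for finite $a$, the inequality $f\le h_f$ fails wherever $Af'>0$, for instance on $(\sqrt{\alpha},t_c)$ when the maximum lies past $\sqrt{\alpha}$, and there Lemma~\ref{lemma:general-properties-of-solutions}$(i)$ gives no monotonicity of $h_f$.

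For the same reason, two later steps are circular. Your claim that $f_{t_1^{(j)}}\to f_{t_*}$ on compact subsets of $(t_*,\tau_*)$ presupposes that $f_{t_*}$ exists there; continuous dependence only gives convergence on its existence interval. Likewise $f_{t_*}(t)\to 0$ as $t\uparrow\tau_*$ is not established: a positive limit with $f'_{t_*}\to-\infty$ is precisely the case left open.

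The missing idea is that vertical-tangent blow-up at positive height is, like a transversal zero, an open condition in the shooting parameter. This is what the local expansion in powers of $\sqrt{t_b-t}$ (cf.~\eqref{eqn:f-prime-t,f-t} and~\eqref{eqn:cmc-f(t0)-signs}) supplies in the argument the paper adapts from Lemmas~2.5--2.6 of~\cite{new-minimal-surfaces}.

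Concretely, writing $t=t(f)$ near $t_b$ turns~\eqref{eqn:rescaled-equation-lambda-ODE} into a regular ODE. At the vertical tangent one finds $t_{ff}=-\frac{(n-2)g\lambda A(t_b)}{2(1+\lambda f(t_b)^2)}<0$, so $t_b$ is a nondegenerate maximum of $t(f)$. A nondegenerate maximum persists for nearby data; for $a=\infty$, use the same inversion near $t_1$ to get continuity in $t_1$. Hence nearby solutions also turn vertical at positive height and never reach zero.

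It follows that a boundary point $t_*$ of $\mathcal{G}$ lies in neither open set. Once survival up to the singular endpoint $t=1$ is excluded, a zero with $f'\to-\infty$ is the only remaining option.

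That last exclusion still needs its own argument. The hypergeometric comparison of Lemma~\ref{lemma:reach-zero-before-hypergeometric} is set up for shots from $t=0$ with $f'(0)=0$ and does not transfer. Instead, in the forward-invariant region $\{t\ge\sqrt{\alpha},\ h_f>0,\ f'<0\}$, the equation itself gives $(\log|f'|)'\ge\frac{m_2+1}{2(1-t)}\bigl(1+O(1-t)\bigr)$ near $t=1$. This makes $|f'|$ non-integrable at $t=1$, which is incompatible with $f$ staying positive.
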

\begin{proof}
    These results are direct adaptations of the corresponding compactness properties obtained in~\cite{new-minimal-surfaces}*{Lemmas 2.5 \& 2.6} mutatis mutandis upon substituting the appropriate quantities in equations~\eqref{eqn:ODE-for-h}\thru\eqref{eqn:2.6NMS} and ODE~\eqref{eqn:rescaled-equation-lambda-ODE} for arbitrary $(g,m_1,m_2)$.
\end{proof}

\subsection{Auxiliary quantities}\label{section:auxiliary-quantities}
To prepare for the main result, we introduce the Lyapunov function
\begin{equation}\label{eqn:Psi}
\Psi := f h - \tfrac{1}{n-2} = f(t) \bigl[ f(t) - \tfrac{g}{2} A_M(t) f'(t) \bigr] - \tfrac{1}{n-2} \, .
\end{equation}
The function $\Psi$ is the adaptation to our setting of the key quantity in~\cite{FTW-1}*{\S 3.3}, which was used to prove the uniqueness of the Clifford torus among minimal surfaces with a bi-orthogonal symmetry and establish a ``blowup-or-zero'' dichotomy for solutions of equation~\eqref{eqn:ode-star}.
In our general situation, the quantity $\Psi$ does not vanish on free-boundary solutions, unlike the case of the Lawson cone; however, $\Psi$ is constructed to satisfy $\Psi(t_*) = 0$ at a point where $\{ f(t_*) = 0, f'(t_*) = - \infty \}$, which is sufficient for the desired uniform bound.
\begin{proposition}\label{prop:Psi-one-sided}
    Suppose that $g \geq 3$ and consider a point $t_* \geq \sqrt{\frac{m_1}{m_1 + m_2}}$ and a solution $f$ of equation~\eqref{eqn:ode-star} with $\{ f(t_*) > 0, f'(t_*) \leq 0 \}$.
    \begin{enumerate}[(i)]
        \item If $\Psi'(t_*) = 0$, then $A_M(t_*) = f'(t_*) = 0$ or $\Psi(t_*) < 0$.
        \item If $\Psi(t_*) = 0$, then $A_M(t_*) = f'(t_*) = 0$ or $\Psi'(t_*) > 0$.
    \end{enumerate}
\end{proposition}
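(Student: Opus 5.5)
The plan is to compute $\Psi'$ in closed form along solutions of \eqref{eqn:ode-star} and then read off its sign at points where $\Psi$ or $\Psi'$ vanishes. Write $h = f - \frac{g}{2}A f'$ with $A = A_M$. Then
\[
\Psi' = f' h + f h', \qquad (1-t^2) h' = A S_1 h - T f',
\]
by \eqref{eqn:ODE-for-h} with $\lambda=1$. Multiplying by $(1-t^2)$ gives
\begin{equation*}
(1-t^2)\Psi' = (1-t^2) f' h + f\bigl( A S_1 h - T f'\bigr),
\end{equation*}
where $S_1 = \frac{2(n-1)}{g} + \frac{g(n-2)}{2}\frac{(1-t^2)(f')^2}{1+f^2}$. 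The hypotheses $t_* \ge \sqrt{\alpha}$ and $f'(t_*)\le 0$ give $A(t_*) \ge 0$ and $-f'(t_*) \ge 0$. The two boundary cases in the statement, $A(t_*) = f'(t_*)=0$, occur exactly when every term degenerates, and I will exclude them throughout.

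For part (ii), suppose $\Psi(t_*)=0$, so that $fh = \frac{1}{n-2}$ and $h = \frac{1}{(n-2)f} > 0$. Substituting $h = f - \frac g2 A f'$ makes the sign of $(1-t^2)\Psi'$ depend only on $(f, f', A, t)$. The dominant positive contribution is $f A S_1 h = \frac{A S_1}{n-2}$, and the term $-T f f'$ is also $\ge 0$. The potentially negative term is $(1-t^2) f' h = \frac{(1-t^2)f'}{(n-2)f}$. To control it I will use the $\frac{g(n-2)}{2}q_1$ part of $S_1$: the product $\frac{A}{n-2}\cdot\frac{g(n-2)}{2}\frac{(1-t^2)(f')^2}{1+f^2}$ is quadratic in $f'$, so the negative linear term can be absorbed via $f = h + \frac g2 A f'$, i.e.\ $-f'\,\tfrac g2 A = h - f$. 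Concretely, I expect the identity
\[
(1-t^2)f'h\cdot\tfrac{1}{f}\cdot\ldots = (1-t^2)\tfrac{2}{gA}(f-h)h,
\]
and then, using $fh = \frac{1}{n-2}$, to write everything as a positive combination, for example by proving $\frac{g^2}{4}A^2 S_1 f^2 \ge (1-t^2)\frac{2}{g}(h-f)\cdots$. Where the term $T$ is needed, I will use the lower bound $T \ge (1-\alpha)(\frac g2+\alpha-1)$. The hypothesis $g\ge3$ enters here: it makes $\frac g2+\alpha-1 > \frac12$ and makes the coefficient $\frac{4-g^2}{g^2}$ negative, which is presumably what makes the inequality hold.

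For part (i), suppose $\Psi'(t_*)=0$. Solve the identity for $h$: it is linear in $h$,
\[
h\bigl((1-t^2)f' + A S_1 f\bigr) = T f f'.
\]
This expresses $\Psi = f h - \frac{1}{n-2}$ explicitly, and I will show $\Psi<0$. If $(1-t^2)f' + A S_1 f \le 0$, then $h \ge 0$ is forced by the right-hand side $Tff'\le0$, and a direct bound gives $fh$ small. If $(1-t^2)f' + A S_1 f > 0$, then $h \le 0$ and $\Psi < 0$ immediately. The first case requires the quantitative inequality $\frac{T f^2 f'}{(1-t^2)f' + AS_1 f} < \frac{1}{n-2}$. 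I will try to derive it by again using $S_1 \ge \frac{g(n-2)}{2}\frac{(1-t^2)(f')^2}{1+f^2}$ together with $h=f-\frac g2Af'$ to eliminate one variable.

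The main obstacle is the algebraic inequality in (ii) (and its twin in (i)). It is a polynomial inequality in $(f,f',A,t)$ on the region $A\ge0$, $f'\le0$, and it must use $g\ge 3$ and the precise constants in $S_1, T$. I would first test it in the limit $f'\to-\infty$, $f\to0$, where $\Psi\to0$ is designed to hold, to pin down which terms balance. Then I would organize the expression as a quadratic in $f'$ whose discriminant is negative.
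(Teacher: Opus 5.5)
\paragraph{Verdict: genuine gap.} Your starting identity $(1-t^2)\Psi'=(1-t^2)f'h+f\bigl(AS_1h-Tf'\bigr)$ is correct. It follows from $\Psi'=f'h+fh'$ and \eqref{eqn:ODE-for-h}, and it is the paper's formula for $\Psi'/(Af^2)$ written in the variables $(h,f')$ instead of $q=f'/f$. Your sign bookkeeping of the three terms is also right.

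\paragraph{What is missing.} The proposal stops where the proof starts. The whole content of the proposition is the inequality you call ``the main obstacle'', and you neither write it in closed form nor prove it; the displayed ``identities'' still contain ellipses. The missing step is an exact elimination, followed by an explicit inequality in $t$.

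In (i), $h\bigl((1-t^2)f'+AS_1f\bigr)=Tff'$ is an equality constraint. For $f'<0$ the quotient $fh=\frac{Tf^2f'}{(1-t^2)f'+AS_1f}$ increases with $S_1$. So inserting a lower bound for $S_1$ only bounds $fh$ from below, which is the wrong direction for $\Psi<0$. Your case ``$h\le 0$'' is vacuous, since $h=f+\tfrac{g}{2}A|f'|>0$ under the hypotheses.

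One must instead solve $\Psi'=0$ exactly for $\frac{f^2}{1+f^2}$, which is the only way $f$ enters $S_1$; the paper does this. Then $\Psi<0$ becomes positivity of a polynomial in $f'/f$. Its coefficients are $(n-1)^2A^2$ and, up to positive factors, two explicit quadratics $Q_1(x),Q_2(x)$ in $x=\mu t^2-m_1$, where $\mu=m_1+m_2$. The paper proves $Q_1,Q_2>0$ by discriminant bounds in $x$, not in $f'$. These bounds use $g\ge 3$ and the integrality constraints $m_1m_2\ge\mu-1$ and $(m_2-m_1)^2\le(\mu-2)^2$. None of this is in your plan.

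\paragraph{Your proposed bound on $T$ fails.} The one concrete estimate you propose, $T\ge(1-\alpha)(\tfrac{g}{2}+\alpha-1)$, cannot work. That constant is $T(1)$, the infimum of $T$, and it is too small.

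To see this, work in (ii) at $t_*>\sqrt{\alpha}$. Set $p=-f'(t_*)>0$ and $s:=\tfrac{g(n-2)}{2}Afp$. Then $\Psi(t_*)=0$ reads $(n-2)f^2=1-s$, and every $s\in(0,1)$ occurs. Your identity becomes
\[
(n-2)(1-t_*^2)\,f\,\Psi'(t_*)=\frac{p(1-s)}{s(n-1-s)}\,G(s),\qquad G(s):=(n-1)A^2(n-1-s)+Ts(n-1-s)-(n-1)(1-t_*^2)\,s .
\]
$G$ is concave and $G(0)=(n-1)^2A^2>0$. Hence (ii) holds at $t_*$ if and only if
\[
G(1)=(n-2)\bigl(T+(n-1)A^2\bigr)-(n-1)(1-t_*^2)\ge 0 ,
\]
and one checks $Q_2(\mu t_*^2-m_1)=4\mu^2t_*^2\,G(1)$. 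The critical regime $s\uparrow 1$ is exactly $f\downarrow 0$, $f'\to-\infty$, so your instinct to test that limit was right.

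Now replace $T$ by $T(1)$. For $g=3$ one has $\alpha=\tfrac12$ and $T(1)=\tfrac12$. As $t_*\downarrow\sqrt{\alpha}$ this gives $G(1)\to\tfrac{n-2}{2}-\tfrac{n-1}{2}=-\tfrac12$. So the inequality you would be trying to prove is false.

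You must keep the $t$-dependence of $T$. At $t_*=\sqrt{\alpha}$ one has $T=(g-1)(1-\alpha)$, so $G(1)=(1-\alpha)\bigl((g-2)(n-2)-1\bigr)$. This is where $g\ge 3$ genuinely enters: for $g=2$ the quantity is negative, and indeed $\Psi\equiv 0$ on the minimal Clifford profile. The sign of $\tfrac{4-g^2}{g^2}$ plays no separate role; it is already absorbed into $S_1$ and $T$. Proving $G(1)>0$ on all of $[\sqrt{\alpha},1)$ is then exactly the paper's discriminant estimate for $Q_2$.
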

\begin{proof}
Let $\mu := m_1 + m_2$ for brevity.
Recall that $A'(t) = 1 + \frac{m_1}{\mu} t^{-2}$, so $A'(t) + t^{-1} A(t) = 2$ and
\begin{align*}
    \Psi' = - (g-2) ff' + \tfrac{g}{2} A \bigl( t^{-1} ff' - (f')^2 - f f'' \bigr).
\end{align*}
Let us denote $q := \frac{f'}{f}$ in what follows, so $\Psi = f^2 (1 - \frac{g}{2} Aq) - \frac{1}{n-2}$.
If $A(t_*) = 0$, then $\Psi'(t_*) = 0$ forces $f'(t_*) = 0$ above; thus, we may consider $t_* > \sqrt{\frac{m_1}{\mu}}$, so $A(t_*) > 0$.
Using equation~\eqref{eqn:ode-star} and the relation $n-2 = \frac{g \mu}{2}$, we substitute $f''$ in the expression for $\Psi'$ to obtain
\begin{align*}
    \frac{\Psi'}{Af^2} &= - \frac{g-2}{A} q + \frac{g}{2} \left( \frac{1-2t^2}{t(1-t^2)} - q \right) q + \! \frac{2}{g(1-t^2)} \! + \! \frac{\mu}{1-t^2} \left( 1 +  \frac{g^2}{4} (1-t^2) \frac{q^2 f^2}{1+f^2} \right) \left( 1 - \frac{g}{2} A q \right) .
\end{align*}
We therefore define the expression
\begin{equation}\label{eqn:F(t,q)-expression}
\begin{split}
F(t,q) = P_1(t) q + P_0(t) - \tfrac{g}{2} q^2 , \quad P_0(t) := \tfrac{g \mu + 2}{g(1-t^2)}, \quad P_1(t) := - \tfrac{g-2}{A} + \tfrac{g}{2} \tfrac{1- 2t^2}{t(1-t^2)} - \tfrac{g \mu}{2(1-t^2)} A,
\end{split}
\end{equation}
which leads us to
\begin{equation}\label{eqn:Psi-prime-expression}
    \frac{\Psi'}{Af^2} = F(t,q) + \frac{g^2 \mu}{4} q^2 \left( 1 - \frac{g}{2} A q \right) \frac{f^2}{1+f^2}  \, .
\end{equation}
We now examine parts $(i)$ and $(ii)$ of the claim separately.

\smallskip \noindent \textbf{Proof of $(i)$.}
If $\Psi'(t_*) = 0$, we solve the equality~\eqref{eqn:Psi-prime-expression} in terms of $f^2$ to write
\begin{equation}\label{eqn:f2-expression}
f^2 = - \frac{F(t,q)}{F(t,q) + \frac{g^2 \mu}{4} q^2 ( 1 - \frac{g}{2} Aq)} \, .
\end{equation}
Writing $\frac{1}{n-2} = \frac{2}{g \mu}$, we therefore obtain $\Psi(t_*) = f^2 ( 1 - \frac{g}{2} Aq) - \frac{1}{n-2}$ as
\[
\Psi(t_*) = - \frac{(1 - \frac{g}{2} Aq) F(t,q) + \frac{2}{g\mu} \bigl[ F(t,q) + \frac{g^2 \mu}{4} q^2 (1 - \frac{g}{2} Aq) \bigr]}{F(t,q) + \frac{g^2 \mu}{4} q^2( 1 - \frac{g}{2} A q)}.
\]
Since $1 - \frac{g}{2} A q > 0$, the expression $\Psi'(t_*) = 0$ in terms of $F(t,q)$ shows that $F(t,q) < 0$ at $t_*$, so $F(t,q) + \frac{g^2 \mu}{4} q^2(1 - \frac{g}{2} Aq) > 0$ due to the relation~\eqref{eqn:f2-expression}. 
In particular, since $P_0(t) = \frac{g \mu+2}{g(1-t^2)}$ makes $F(t,0) = P_0(t) > 0$ in~\eqref{eqn:F(t,q)-expression}, the assumption $f'(t_*) \leq 0$ forces $q<0$ strictly.
Moreover, $\Psi(t_*) < 0$ amounts to the positivity of the numerator expression $N(t,q)$, given by
\begin{align*}
N(t,q) &:= \tfrac{2}{g \mu} F(t,q) + \bigl( \tfrac{g}{2} q^2 + F(t,q) \bigr) (1 - \tfrac{g}{2} Aq) \\
&\;= \tfrac{2}{g \mu} \bigl( P_1(t) q + P_0(t) - \tfrac{g}{2} q^2 \bigr) + \bigl( P_1(t) q + P_0(t) \bigr) \bigl( 1 - \tfrac{g}{2} A q \bigr) \\
&\;= \tfrac{(g \mu + 2)^2}{g^2 \mu (1-t^2)} + \left[ \bigl( 1 + \tfrac{2}{g \mu} \bigr) P_1 - \tfrac{g}{2} A P_0 \right] q - \bigl[ \tfrac{1}{\mu} + \tfrac{g}{2} A P_1 \bigr] q^2 \, .
\end{align*}
Since $q < 0$ and the first term is strictly positive, the property $N(t,q) > 0$ will follow from showing that both bracketed terms are non-positive.
Expressing $A = \frac{\mu t^2 - m_1}{\mu t}$, we obtain
\begin{align*}
    \bigl( 1 + \tfrac{2}{g \mu} \bigr) P_1 - \tfrac{g}{2} A P_0 &= - \tfrac{g \mu+2}{2 g \mu t (1-t^2) (\mu t^2- m_1)} Q_1(x), \qquad
    \tfrac{1}{\mu} + \tfrac{g}{2} A P_1 = - \tfrac{1}{4 \mu t^2(1-t^2)} Q_2(x),
\end{align*}
where $x := \mu t^2 - m_1$ and $Q_1(x), Q_2(x)$ are quadratics given by
\begin{align*}
    Q_1(x) &:= (2 g \mu + 4) x^2 + (g-4) (m_2 - m_1) x + 2 (g-2) m_1 m_2, \\
    Q_2(x) &:= (g \mu+2)^2 x^2 + \bigl( g \mu(g-4) - 4 \bigr) (m_2 - m_1) x + 2 \bigl( g \mu (g-2) - 2 \bigr) m_1 m_2 \, .
\end{align*}
The claim for $N(t,q)$ is reduced to proving that $Q_1, Q_2 \geq 0$ are non-negative; equivalently, we show that their discriminants satisfy $\Delta_1, \Delta_2 \leq 0$.
Since $m_1, m_2$ are positive integers with $m_1 + m_2 = \mu$, we have $(m_2 - m_1)^2 \leq (\mu - 2)^2$ and $m_1 m_2 \geq \mu-1$, so
\begin{align*}
    \Delta_1 &\;= (g-4)^2 (m_2 - m_1)^2 - 8 (2 g \mu + 4) (g-2) m_1 m_2 \\
    &\;\leq (g-4)^2 (\mu-2)^2 - 8 (2 g\mu + 4) (g-2) (\mu-1) =: - R_1(\mu,g), \\
R_1(\mu,g) &:= 15 \mu^2 g^2 - 24 \mu^2 g - 16 \mu^2 - 12 \mu g^2 + 32 \mu g - 4 g^2 \, .
\end{align*}
For $\mu = m_1 + m_2 \geq 2$ and $g \geq 3$, we find
\[
\partial_{\mu} R_1 = 2 ( 15 g^2 - 24 g - 16) \mu - 12 g^2 + 32 g \geq 16 (3g^2 - 4g - 4) > 0,
\]
so $R_1$ is increasing in $\mu$.
Since $R_1(2,g) = 32(g-2) (g+1) > 0$, we conclude that $\Delta_1 \leq - R_1(\mu,g) < 0$, whereby $Q_1(\mu t^2 - m_1) > 0$ as claimed.
For $Q_2$, we likewise bound
\allowdisplaybreaks{
\begin{align*}
   \Delta_2 &\;= \bigl( g \mu(g-4) - 4 \bigr)^2 (m_2 - m_1)^2 - 8 (g \mu+2)^2 \bigl( g \mu(g-2) - 2 \bigr) m_1 m_2 \\
   &\;\leq \bigl( g \mu(g-4) - 4 \bigr)^2 (\mu - 2)^2 - 8 (g \mu+2)^2 \bigl( g \mu(g-2) - 2 \bigr) (\mu-1) =: - \mu^2 R_2(\mu,g), \\
   R_2(\mu,g) &:= 7 \mu^2 g^4 - 8 \mu^2 g^3 - 16 \mu^2 g^2 - 4 \mu g^4 + 16 \mu g^3 - 8 \mu g^2 - 32 \mu g - 4 g^4 + 16 g^2 - 16 .
\end{align*}}
For $\mu = m_1 + m_2 \geq 2$ and $g \geq 3$, we again compute
\begin{align*}
    \partial_{\mu} R_2 &= 2 \mu (7 g^4 - 8 g^3 - 16 g^2) - 4 g^4 + 16 g^3 - 8 g^2 - 32 g \\
    &\geq 4 ( 7 g^4 - 8 g^3 - 16 g^2) - 4g^4 + 16 g^3 - 8 g^2- 32g = 8 g \bigl( 3g^3 - 2g^2 - 9g - 4) ,
\end{align*}
which is a positive quantity since the resulting cubic equals $32$ for $g=3$ and is strictly increasing for $g \geq 3$.
Hence, $\partial_{\mu} R_2 > 0$ for $\mu \geq 2, g \geq 3$, and $R_2(\mu,g) \geq R_2(2,g) = 16 (g+1)^2 (g^2 - 2g-1) > 0$; this implies that $\Delta_2 \leq 0$ and $Q_2(\mu t^2 - m_1) > 0$ as claimed.
We conclude that $N(t,q) > 0$, so $\Psi(t_*) < 0$ at any critical point of $\Psi$; this proves the first part.

\smallskip \noindent \textbf{Proof of $(ii)$.}
If $\Psi(t_*) = 0$, then we substitute $f^2( 1 - \frac{g}{2} Aq) = \frac{2}{g \mu}$ in the expression~\eqref{eqn:Psi-prime-expression} to obtain
\[
\Psi'(t_*) = \frac{A f^2}{n-1- \frac{g^2 \mu}{4} Aq} \tilde{L}(t,q), \qquad \tilde{L}(t,q) := (P_1 q + P_0) \bigl( n-1- \tfrac{g^2 \mu}{4} Aq \bigr) - \tfrac{g}{2} q^2.
\]
First, $A(t_*) = 0$ results in $\Psi'(t_*) = - (g-2) (ff')(t_*)$ as above, so $\Psi'(t_*) > 0$ unless $f'(t_*) = 0$.
For $t > t_*$, the denominator is $n- 1 - \frac{g^2 \mu}{4} Aq = \frac{g \mu}{2} (1 - \frac{g}{2} Aq) + 1 > 0$ and $A(t_*) > 0$, so $\on{sgn} \Psi'(t_*) = \on{sgn} \tilde{L}(t,q)|_{t=t_*}$.
We set $x := \mu t^2_* - m_1$ and $y := - \frac{g}{2} A q$, so $q = - \frac{2 \mu t}{g(\mu t^2 - m_1)}y$.
    Expanding $P_0, P_1$ as in part $(i)$, we obtain the quadratic expression
    \begin{equation}\label{eqn:N-N-tilde-equation}
    \begin{split}
    \tilde{L}(t,q) &= \frac{L(x,y)}{2g ( \mu t_*^2 - m_1)^2 (1-t_*^2)}, \qquad
    L(x,y) = Q_2(x) y^2 + (g \mu + 2) Q_1(x) y + (g \mu+2)^2 x^2,
    \end{split}
    \end{equation}
    for $Q_2, Q_1$ the polynomials computed in part $(i)$.
    These satisfy $Q_i \geq 0$ by the computation of part $(i)$, so $L(x,y) > 0$ and $\Psi'(t_*) > 0$ as claimed.
\end{proof}

\begin{lemma}\label{lemma:psi-detect-blowup}
Let $f$ be a solution of equation~\eqref{eqn:ode-star} with $g \geq 3$. 
If there exists a point $t_0 \geq \sqrt{\frac{m_1}{m_1+m_2}}$ where $\Psi(t_0) \geq 0$ and $\Psi'(t_0) \geq 0$, then $f$ does not reach zero in its forward interval of definition.
\end{lemma}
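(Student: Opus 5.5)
The plan is a barrier argument built on Proposition~\ref{prop:Psi-one-sided}. The goal is to show that $\Psi$ stays strictly positive on the rest of the forward interval. Since $\Psi = fh - \frac{1}{n-2}$, a zero of $f$ with finite derivative would force $\Psi = -\frac{1}{n-2} < 0$ there. So it suffices to show $\Psi > 0$ wherever $f > 0$ for $t > t_0$.

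\emph{Step 1: sign of $f'$.} First I arrange that $f'(t)\le 0$ on the relevant range, so that Proposition~\ref{prop:Psi-one-sided} applies. If $f$ is still increasing at $t_0$, Lemma~\ref{lemma:general-properties-of-solutions} gives two possibilities. Either $f$ is strictly increasing forever, in which case it never reaches zero and we are done. Or $f$ has a unique critical point $t_c$ and is decreasing afterwards. On an increasing stretch $f$ cannot vanish anyway, so only $t \ge \max\{t_0,t_c\}$ matters. There $f'\le0$ and $t\ge\sqrt{\alpha}$, and Lemma~\ref{lemma:general-properties-of-solutions} also gives $h_f>0$ while $f>0$.

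\emph{Step 2: $\Psi$ cannot return to zero.} Suppose $\Psi(t_0)\ge0$ and $\Psi'(t_0)\ge0$. I first upgrade this to strict positivity just after $t_0$:
\begin{enumerate}[$\bullet$]
\item If $\Psi(t_0)=0$, part $(ii)$ of Proposition~\ref{prop:Psi-one-sided} gives $\Psi'(t_0)>0$, apart from the degenerate case $A_M(t_0)=f'(t_0)=0$.
\item If $\Psi(t_0)>0$, positivity near $t_0$ is immediate.
\end{enumerate}
Now let $t_1>t_0$ be the first point with $\Psi(t_1)=0$, if one exists. Then $\Psi'(t_1)\le0$. But part $(ii)$ forces $\Psi'(t_1)>0$, since $t_1>\sqrt\alpha$ makes $A_M(t_1)>0$. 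This is a contradiction, so $\Psi>0$ on the whole forward interval where $f>0$ and $f'\le0$.

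Part $(i)$ is not strictly needed, but it supports the picture: every interior critical point of $\Psi$ has $\Psi<0$. So once $\Psi$ is positive and nondecreasing, it can never turn around while staying positive. In particular $\Psi$ is increasing on this range.

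\emph{Step 3: conclusion.} If $f$ reached zero at some $\tau$ with finite slope, continuity would give $\Psi(\tau) = -\frac{1}{n-2}<0$, contradicting Step 2. If instead $f\to0$ with $f'\to-\infty$, I use that $\Psi$ is increasing and $\Psi>\Psi(t_0')>0$ for some $t_0'$, so $\Psi$ stays bounded below by a positive constant. Meanwhile $\Psi\to 0$ in that blowup limit, which is exactly the normalization the Lyapunov function was designed for. Hence $f$ does not reach zero.

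\emph{Main obstacle.} The delicate points are the degenerate case $A_M=f'=0$ at $t_0=\sqrt\alpha$ and the blow-up endpoint. For the degenerate case I would perturb $t_0$ slightly forward, using the ODE to compute the next nonvanishing derivative of $\Psi$. For the blow-up endpoint I would verify $\lim\Psi=0$ from the asymptotics of \eqref{eqn:ode-star} near a point with $f\to 0$, $f'\to-\infty$, as in~\cite{FTW-1}.
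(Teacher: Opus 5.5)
Your route is the paper's. Proposition~\ref{prop:Psi-one-sided} keeps $\Psi,\Psi'>0$ past $t_0$. The degenerate point $t_0=\sqrt{\alpha}$, $f'(t_0)=0$, which you defer, is settled by $\Psi''(t_0)=-2(g-1)ff''(t_0)>0$. A finite-slope zero is then excluded by $\Psi=-\frac{1}{n-2}$, and an infinite-slope zero by $\Psi\to0$.

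\textbf{The gap is the endpoint $t_2=1$.} Your claim that $\Psi\to0$ at a zero with $f'\to-\infty$ rests on the local balance $f''\sim C(f')^3$. That balance comes from dividing~\eqref{eqn:ode-star} by $1-t_2^2$ and discarding the $O(|f'|)$ terms. It, and the square-root asymptotics giving $\lim\Psi=0$, hold only for $t_2<1$. Since~\eqref{eqn:ode-star} is singular at $t=1$, a solution could a priori stay positive on $[t_0,1)$ with $f\to0$ and $f'\to-\infty$ as $t\uparrow1$. There the coefficients of $f''$ and of the cubic term both vanish, the linear terms compete with them, and nothing in your argument controls $\Psi$.

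The paper closes this case separately by proving that no solution has $f(1)=0$, $f'(1)=-\infty$. Near $t=1$, the terms of~\eqref{eqn:ode-star} other than $(1-t^2)f''$ are bounded below by $c_1|f'|+c_2(1-t)|f'|^3$ with $c_1\ge\frac32$, $c_2>0$. For $p:=(f')^{-2}$ this gives $\bigl[(1-t)^{-3/2}p\bigr]'\le -c_2(1-t)^{-3/2}$. Integrating up to $t=1$ with $p(1)=0$ contradicts $p\ge0$, because $\int^1(1-\tau)^{-3/2}\,d\tau=\infty$. A finite-slope zero at $t=1$ falls under your first case.

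Two smaller points.

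First, your remark that part $(i)$ is not needed is wrong. Part $(ii)$ only yields $\Psi>0$, which is compatible with $\Psi\downarrow0$ at an infinite-slope zero. The positive lower bound comes from the monotonicity supplied by part $(i)$, which is exactly how the paper argues: a first critical point of $\Psi$ past $t_0$ would have $\Psi>0$. You do use that monotonicity in Step 3, so your logic survives, but the remark should go.

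Second, when $f'(t_0)>0$ your Step 2 would apply part $(ii)$ at a zero of $\Psi$ in $(t_0,t_c)$, where $f'>0$ violates its hypotheses. Also, Lemma~\ref{lemma:general-properties-of-solutions} is stated only for starting points $t_1<\sqrt{\alpha}$. The repair is short:
\begin{enumerate}[$\bullet$]
\item At a critical point in $\{f>0\}$ one has $(1-t^2)f''=-\frac{4(n-1)}{g^2}f<0$, so $t_c$ is unique.
\item $\Psi(t_0)\ge0$ with $A_M(t_0)\ge0$ and $f'(t_0)>0$ forces $f(t_0)^2\ge\frac{1}{n-2}$.
\item Hence $\Psi(t_c)=f(t_c)^2-\frac{1}{n-2}>0$ and $\Psi'(t_c)=-\frac{g}{2}A_M(t_c)f(t_c)f''(t_c)>0$, and you restart the argument at $t_c$.
\end{enumerate}
The paper's proof leaves this case implicit, so with this transfer your treatment is more complete than the source.
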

\begin{proof}
We show that there exists a $\delta \geq 0$ such that $\Psi, \Psi'>0$ on $(t_0 + \delta, t_2)$.
First, we obtain
\begin{equation}\label{eqn:condition-on-Psi}
    \Psi(t_0 + \delta) > 0, \qquad \Psi'(t_0 + \delta) > 0, \qquad \text{for some } \; \delta \geq 0.
\end{equation}
If this property is not already satisfied for $\delta = 0$, then either $\Psi(t_0) = 0$ or $\Psi'(t_0) = 0$.
Proposition~\ref{prop:Psi-one-sided} shows that if $t_0 > \sqrt{\frac{m_1}{m_1 + m_2}}$, then 
\[
\Psi(t_0) =0\implies \Psi'(t_0) >0, \qquad \Psi'(t_0) = 0 \implies \Psi(t_0) < 0,
\]
contradicting the above assumptions. 
Therefore, $t_0 = \sqrt{\frac{m_1}{m_1 + m_2}}$ and the computation of $\Psi'$ from Proposition~\ref{prop:Psi-one-sided} gives $0 = \Psi'(t_0) = - (g-2) (ff')(t_0)$, so $f'(t_0) = 0$.
Moreover, equation~\eqref{eqn:ode-star} implies $f''(t_0) = - \frac{4 (n-1) (m_1+m_2)}{m_2 g^2} f(t_0)$, so
\begin{align*}
    \Psi'' (t_0) &= - (g-2) f f'' + \tfrac{g}{2} A' (t_0) (- f f'') = - 2(g-1) (ff'') (t_0) \\
    &= \tfrac{8 (m_1 + m_2) (g-1) (n-1)}{m_2 g^2} f (t_0)^2 > 0.
\end{align*}
Since $\{ \Psi(t_0) \geq 0, \Psi'(t_0) = 0, \Psi''(t_0) > 0 \}$, we again obtain~\eqref{eqn:condition-on-Psi} for $\delta>0$ small.
At a first critical point $t_* > \sqrt{\frac{m_1}{m_1 + m_2}}$ of $\Psi$, we would have $\Psi' > 0$ on $[t_0, t_*)$, so $\Psi(t_*) > 0$, contradicting Proposition~\ref{prop:Psi-one-sided}; thus, we must have $\Psi, \Psi'>0$ on $(t_0 + \delta,t_2)$.

Suppose for contradiction that the function $f$ reaches a zero at a point $t_2$, so $h = f - \frac{g}{2}Af' > 0$ on $[t_0, t_2]$ by Lemma~\ref{lemma:general-properties-of-solutions}.
If $f$ crossed zero with finite derivative $|f'(t_2)| < \infty$, then~\eqref{eqn:Psi} yields $\Psi(t_2) = - \frac{1}{n-2} < 0$, a contradiction. 
If $f$ has derivative blowup at $t_2$, we study the asymptotic behavior of the solution of~\eqref{eqn:rescaled-equation-lambda-ODE} with $f'(t_2) = - \infty$ near $t_2$ and match dominant terms to obtain 
\[
(1-t_2^2) f'' \sim - (1-t_2^2) \tfrac{\lambda (n-2)}{1 + \lambda f(t_2)^2} \bigl( - \tfrac{g}{2} \bigr) A_M(t_2) (f')^3 \implies f'' \sim C (f')^3,
\]
for $C = \frac{(n-2) \lambda g A_M(t_2)}{2 ( 1 + \lambda f(t_2)^2)}$.
Writing this relation as $\bigl( - \frac{1}{2} (f')^{-2} \bigr)' \sim C + o(1)$, we find that
\begin{equation}\label{eqn:f-prime-t,f-t}
 \begin{split}
 f(t) &= f(t_2) + 2 \sqrt{ \tfrac{1 + \lambda f(t_2)^2}{(n-2) \lambda g \, |A_M(t_2)| }} \, |t_2 - t|^{\frac{1}{2}} + O ( |t_2 -t|), \\
 f'(t)& = - \on{sgn} A_M(t_2) \, \sqrt{\tfrac{1 + \lambda f(t_2)^2}{(n-2) \lambda g \, |A_M(t_2)|}} \, |t - t_2|^{- \frac{1}{2}} + O (1),
\end{split}
\end{equation}
and the function $f(t)$ admits a power series expansion in $\sqrt{t_2 - t}$, cf.~\cite{new-minimal-surfaces}*{Lemma 2.6}.
In our situation with $(\lambda,f(t_2)) = (1,0)$, for $t$ near $t_2$ this computation leads to
\[
\Psi = f \bigl(f - \tfrac{g}{2} A_M f' \bigr) - \tfrac{1}{n-2} = \tfrac{g}{2} A_M(t_2) \cdot 2 \Bigl( \sqrt{\tfrac{1}{(n-2) g A_M(t_2)}} \Bigr)^2 - \tfrac{1}{n-2} + O ( \sqrt{t_2 - t}) = O (\sqrt{t_2 - t}),
\]
so $\lim_{t \uparrow t_2} \Psi(t) = 0$.
However, this is also impossible due to $\Psi, \Psi' > 0$ on $(t_0 + \delta, t_2)$.

It remains to consider the case $t_2=1$, where the above computation for $\Psi$ applies verbatim if $|f'(1)| < \infty$; we show that equation~\eqref{eqn:ode-star} does not admit a solution with $\{ f(1) = 0, f'(1) = - \infty\}$.
For $t$ close to $1$, we use $f(1) = 0$ and $A_M(1) = \frac{m_2}{m_1 + m_2}$ to find $f - \frac{g}{2} A_M(t) f' > - \frac{g m_2}{4(m_1 + m_2)} f'$, as well as $f - tf' + \frac{4-g^2}{g^2} f > - \frac{1}{2} f'$ and $\frac{1}{1+f^2} > \frac{1}{2}$.
Using $\frac{4(n-2)}{g^2} \cdot \frac{g m_2}{4 (m_1 + m_2)} = \frac{m_2}{2}$, we obtain
\[
f'' - \tfrac{m_2 + \frac{1}{2}}{2(1-t)} (f')  + (n-2) (f')^3 < 0.
\]
We now consider the function $p := \frac{1}{(f')^2}$, which has $p' = - 2 \frac{f''}{(f')^3}$ and
\[
- \tfrac{1}{2} p' - \tfrac{3}{4(1-t)} p + (n-2) > 0, \qquad \text{whereby } \qquad \bigl[ (1-t)^{- \frac{3}{2}} p(t) \bigr]' \leq -(1-t)^{- \frac{3}{2}},
\]
for $n \geq 3$.
Moreover, $p(1) = 0$ due to $f'(1)  = - \infty$.
Integrating the above bound on $(t_1,t)$, we find
\[
0 \leq (1-t)^{- \frac{3}{2}} p(t) \leq (1-t_1)^{- \frac{3}{2}} p(t_1) - \int_{t_1}^t (1-\tau)^{- \frac{3}{2}} \, d \tau.
\]
Taking $t \uparrow 1$ here produces a contradiction, since $\int_{t_1}^t (1-\tau)^{- \frac{3}{2}} \, d \tau \to \infty$.
Therefore, $\{ f(1) = 0 , f'(1) = - \infty \}$ cannot occur, so $\lim_{t \uparrow t_2} \Psi(t) \leq 0$ in all cases if $f$ reaches zero, which we have ruled out whenever~\eqref{eqn:condition-on-Psi} holds.
This proves our claim.
\end{proof}

The other key ingredient will be an analysis of the local behavior of solutions of equation~\eqref{eqn:ode-star} near the point $t = \sqrt{\frac{m_1}{m_1+m_2}}$, which has the significance of being the turning point where the dominant cubic term $(f')^3$ on the right-hand side of~\eqref{eqn:ode-star} changes sign.
Geometrically, this point satisfies $A ( \sqrt{\frac{m_1}{m_1+m_2}} ) = 0$, so it corresponds via~\eqref{eqn:A-gm1m2} to the distinguished minimal leaf along the isoparametric foliation. 
Moreover, $t > \sqrt{\frac{m_1}{m_1+m_2}}$ corresponds to mean-convex leaves, which must contain the free boundary due to the Hopf lemma. 

\begin{proposition}\label{prop:i-will-take-a-limit}
Let $f_i$ be a sequence of positive solutions to equation~\eqref{eqn:ode-star} defined over a sub-interval of $\bigl[ 0, \sqrt{\frac{m_1}{m_1 + m_2}} \bigr]$, with $f_i - \frac{g}{2} A_M f'_i > 0$ and $\limsup_i f_i( \sqrt{\frac{m_1}{m_1 + m_2}}) < + \infty$ as $i \to \infty$.
Given a sequence of $\epsilon_i \downarrow 0$, we define the rescaled variables $r$ and functions $y_i(r)$ by
\[
r := \epsilon_i^{-1} \bigl( t - \sqrt{\tfrac{m_1}{m_1 + m_2}} \bigr), \qquad y_i(r) := f_i(t).
\]
Then, a subsequence of the functions $y_i$ converges in $C^{\infty}_{\textup{loc}}$ to a function $y_{\infty}$ solving the equation
\begin{equation}\label{eqn:phi-limit-equation} \tag{$\ddagger$}
        y''(r) + c\frac{y'(r)^2}{1 + y(r)^2} \bigl[ y(r) - gry' (r) \bigr] = 0, \qquad \text{where } \; c = n-2,
\end{equation}
satisfying $y_{\infty}(0) = L$.

Any non-constant solution of equation~\eqref{eqn:phi-limit-equation} is strictly monotone, with inverse $r(y)$ given by
\begin{equation}\label{eqn:r(y)-inverse}
\begin{split}
    r(y) &\;= C_0 \, \cdot {}_2F_1 \bigl( a_+ , a_- ; \tfrac{1}{2} ; - y^2 \bigr) + C_1 y \cdot {}_2F_1 \bigl( a_+ + \tfrac{1}{2}, a_- + \tfrac{1}{2}; \tfrac{3}{2} ; - y^2), \\
    a_{\pm} &:= \tfrac{1}{4} \bigl( - (c+1) \pm \sqrt{(c+1)^2 - 4cg} \bigr) \, .
\end{split}
\end{equation}
\end{proposition}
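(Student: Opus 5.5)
The plan is a blow-up analysis of \eqref{eqn:ode-star} around the turning point $t_0 := \sqrt{\alpha} = \sqrt{\frac{m_1}{m_1+m_2}}$, followed by reducing the limit equation to a linear ODE for the inverse function. The starting observation is that $A_M(t_0) = 0$ and $A_M'(t_0) = 1 + \frac{m_1}{\mu t_0^2} = 2$, so
\[
A_M(t_0 + \epsilon_i r) = 2\epsilon_i r + O(\epsilon_i^2 r^2).
\]

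\textbf{Step 1 (formal limit).} I substitute $f_i(t) = y_i(r)$, so that $f_i' = \epsilon_i^{-1} y_i'$ and $f_i'' = \epsilon_i^{-2} y_i''$. I then multiply \eqref{eqn:ode-star} by $\epsilon_i^2/(1-t^2)$. The term $(1-t^2)f''$ becomes $y_i''$. The linear terms $(f - tf') + \frac{4-g^2}{g^2}f + \frac{4(n-2)}{g^2}(f - \frac g2 A_M f')$ are $O(\epsilon_i(|y_i| + |y_i'|))$ on compact $r$-sets, because $\frac{g}{2}A_M f' = g r y_i' + O(\epsilon_i r^2 |y_i'|)$. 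The nonlinear term has the factor $(1-t^2)$, which cancels exactly, and it becomes
\[
(n-2)\frac{(y_i')^2}{1+y_i^2}\Bigl(y_i - \tfrac{g}{2}\epsilon_i^{-1}A_M y_i'\Bigr) \;\to\; (n-2)\frac{(y')^2}{1+y^2}\bigl(y - g r y'\bigr).
\]
This gives \eqref{eqn:phi-limit-equation} with $c = n-2$, provided I have local $C^1$ bounds on $y_i$ that are uniform in $i$.

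\textbf{Step 2 (compactness; the main obstacle).} I need uniform bounds on $y_i$ and $y_i'$ on compact subsets of the rescaled domain, which sits in $r \le 0$. For $y_i$, I use $\limsup_i f_i(t_0) < \infty$ together with Lemma~\ref{lemma:general-properties-of-solutions}: $f_i$ has at most one critical point and $h_{f_i} > 0$, which bounds $f_i$ on $[t_0 - \epsilon_i R, t_0]$. This also gives $y_i(0) \to L$ along a subsequence. For $y_i'$, I use $h_{f_i} > 0$ with $A_M < 0$ on this side, which gives $y_i' > -\frac{2 y_i \epsilon_i}{g|A_M|} \approx -\frac{y_i}{g|r|}$. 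This is a uniform lower bound away from $r = 0$.

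The upper bound and the bound near $r = 0$ are the delicate part. There I would compare against the Riccati structure of the equation: for $r < 0$ with $y' > 0$, the cubic term $-c\,g|r|(y')^3/(1+y^2)$ has a damping sign, so $\bigl((y_i')^{-2}\bigr)'$ is bounded below. This prevents blow-up of $y_i'$ on compact sets, in the same way as the asymptotic computation in Lemma~\ref{lemma:psi-detect-blowup}. I would also use the bound on $h_{f_i}(t_0) = f_i(t_0)$ to control $y_i'$ up to $r = 0$. Once $C^1$ bounds hold, the equation bounds $y_i''$, and differentiating it bounds all higher derivatives. Arzel\`a--Ascoli and a diagonal argument then give $C^\infty_{\mathrm{loc}}$ convergence to a solution $y_\infty$ with $y_\infty(0) = L$.

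\textbf{Step 3 (structure of solutions of \eqref{eqn:phi-limit-equation}).} If $y'(r_0) = 0$ at some point, then $y \equiv y(r_0)$ is the solution with the same data, so by ODE uniqueness $y$ is constant. Hence any non-constant solution is strictly monotone and has a smooth inverse $r(y)$. Using $r' = 1/y'$ and $r'' = -y''/(y')^3$, the equation becomes
\[
r'' = \frac{c\,(y - g r y')}{y'(1+y^2)} = \frac{c\,(y\,r' - g r)}{1+y^2},
\]
that is, the linear equation $(1+y^2)r'' - c\,y\,r' + c\,g\,r = 0$.

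The equation is even in $y$. With $z = -y^2$, I seek an even solution $r = F(z)$ and an odd solution $r = yG(z)$. A direct substitution yields Gauss hypergeometric equations with $c_{\mathrm{hyp}} = \frac12$ and $\frac32$ respectively. The exponents satisfy $a_+ + a_- = -\frac{c+1}{2}$ and $a_+ a_- = \frac{cg}{4}$, which matches $a_\pm$ as defined in \eqref{eqn:r(y)-inverse}; for the odd solution they are shifted by $\frac12$. This yields \eqref{eqn:r(y)-inverse}, and the two solutions span the solution space because they are the even and odd solutions.
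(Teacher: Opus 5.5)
Your Steps 1 and 3 match the paper's proof: the expansion $A_M(t_0+\epsilon_i r)=2\epsilon_i r+O(\epsilon_i^2)$, the exact cancellation of $(1-t^2)$ in the nonlinear term, the ODE-uniqueness argument for monotonicity, and the reduction to $(1+y^2)r''-cyr'+cgr=0$ and then to Gauss's equation via $z=-y^2$. The gap is Step 2, where none of the mechanisms you list gives local $C^1$ bounds.

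\begin{itemize}
\item Since $A_M(t_0)=0$, the identity $h_{f_i}(t_0)=f_i(t_0)$ says nothing about $f_i'(t_0)$. So $y_i'(0)=\epsilon_i f_i'(t_0)$ is not controlled anywhere in your argument. If $\epsilon_i|f_i'(t_0)|$ is unbounded, there is no $C^1$ limit near $r=0$ at all.
\item A bound on $f_i(t_0)$ plus monotonicity does not bound $y_i$ on $[-R,0]$. For decreasing $f_i$ it only gives $y_i(r)\ge y_i(0)$, which is the wrong direction.
\item The lower bound $y_i'>-y_i/(g|r|)$ degenerates at $r=0$, and it presupposes a bound on $y_i$.
\item The damping sign of the cubic term (your lower bound on $\bigl((y_i')^{-2}\bigr)'$) only prevents $y_i'$ from blowing up \emph{forward} in $r$. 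Backward blow-up $y_i'\uparrow+\infty$ is allowed, and it is exactly what happens at $r=-1$ in Proposition~\ref{prop:small-breather}.
\end{itemize}

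What is missing is a normalization of $\epsilon_i$ relative to $f_i$. The paper's written proof is thin at the same point: it records only the one-sided inequality $(1-t^2)y_i''\le C\epsilon_i|y_i'|+C\epsilon_i^2|y_i|$ coming from $h_i>0$. Where the proposition is actually applied, however, the normalization is supplied.

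In Proposition~\ref{prop:dont-shoot-too-high} one takes $\epsilon_j=1/|f_j'(t_0)|$, so $y_j'(0)=-1$. The equation can be rewritten as $(1-t^2)f_j''=tf_j'-\tfrac{4}{g^2}f_j-(n-2)\bigl(\tfrac{4}{g^2}+(1-t^2)\tfrac{(f_j')^2}{1+f_j^2}\bigr)h_j$. The right-hand side is negative when $f_j'<0<h_j$, so $f_j$ is concave. Hence $|y_j'|\le 1$ and $y_j(r)\le y_j(0)+|r|$ for $r\le 0$. Only with these bounds does the rescaled equation control $y_j''$ from both sides; differentiating it then bootstraps to all orders, and Arzel\`a--Ascoli applies.

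In Proposition~\ref{prop:small-breather}, $\epsilon_i$ is the distance to the starting zero, where the slope is vertical. The bounds hold only on compact subsets of $(-1,\infty)$. They come from the square-root expansion at $r=-1$ together with Lemmas~\ref{lemma:slope-infinity-solution} and~\ref{lemma:convergence-of-solutions}.

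For Step 2 to go through, you need to build such a normalization into the hypotheses, e.g.\ $|y_i'(0)|=1$ together with concavity on $r\le 0$.
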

\begin{proof}
We denote $t_{\alpha} := \sqrt{\frac{m_1}{m_1 + m_2}}$, for brevity, and follow the steps of~\cite{new-minimal-surfaces}*{Proposition 2.8}.
Consider the rescaled variable by $t_i(r) := t_{\alpha} + \epsilon_i r$ and denote $h_i := f_i - \frac{g}{2}A_M f'_i$, so that
    \[
    y_i(r) := f_i(t_i(r)), \quad \text{so that} \quad y'_i(r) = \epsilon_i f'_i(t_i(r)), \qquad y''_i(r) = \epsilon_i^2 f''_i(t_i(r)).
    \]
    Moreover, $A_M(t_{\alpha}) = 0$ and $A'_M(t_{\alpha}) = 2$ expresses $A_M(t_i(r)) = 2 \epsilon_i r + O(\epsilon_i^2)$, and
    \begin{equation}\label{eqn:hi-term-to-h-infty}
h_i = y_i(r) - \epsilon_i^{-1} \tfrac{g}{2} A_M(t_i(r)) y'_i(r) = y_i(r) - g r y'_i(r) - O(\epsilon_i) y'_i.
\end{equation}
Applying these transformations in~\eqref{eqn:ode-star}, we obtain the rescaled equation
\begin{equation}\label{eqn:Ri-phii(r)-bounds}
\begin{split}
(1- t_i(r)^2) y''_i &+ (n-2)(1-t_i(r)^2) \frac{y'_i(r)^2}{1 + y_i(r)^2} \bigl( y_i(r)  - g r y'_i(r) \bigr) = \epsilon_i E_i(r), \\
& E_i(r) := t_i(r) y'_i(r) - \tfrac{4}{g^2} \epsilon_i y_i(r) - \tfrac{2(m_1+m_2)}{g} \epsilon_i h_i(r).
\end{split}
\end{equation}
Using our assumption that $h_i > 0$ in the above equation, we deduce that
\[
(1 - t^2) y''_i \leq - (n-2) (1-t^2) \tfrac{(y'_i)^2}{1 + y_i^2} h_i + \epsilon_i E_i \leq C \epsilon_i |y'_i| + C \epsilon_i^2 |y_i|,
\]
which implies that $\limsup_{\epsilon_i \downarrow 0} \| y''_i \|_{C^0(K)}$ is uniformly bounded in $i$ on any compact interval $K$ by a constant depending only on $K$.
Therefore, the functions $\{ y_i\}, \{ y'_i \}$ are also uniformly bounded, so we can apply the Arzel\`a-Ascoli theorem and a diagonal argument to extract a subsequential limit $y_{i_k} \to y_{\infty}$ in $C^2$, so also in $C^{\infty}$, as solutions of elliptic equations with right-hand sides converging uniformly.
The uniform bounds on $\| y_i \|_{C^{2,\alpha}}$ imply that the remainder terms $E_i$ of~\eqref{eqn:Ri-phii(r)-bounds} are bounded uniformly in $i$, so $\epsilon_i E_i \to 0$ uniformly as $\epsilon_i \to 0$ and the limit $y_{\infty}$ satisfies the equation~\eqref{eqn:Ri-phii(r)-bounds} with zero right-hand side.
We may thus cancel the factor $1-t^2$, which is uniformly positive due to $t_i(r) \to t_{\alpha}$ as $\epsilon_i \downarrow 0$, to see that $y_{\infty}$ satisfies the limit equation~\eqref{eqn:phi-limit-equation} with $c = n-2$ and 
\[
y_{\infty}(0) = \lim_{i \to \infty} y_i(0) = \lim_{i \to \infty} f_i (t_i(0)) = \lim_{i \to \infty} f_i(t_{\alpha}) = L
\]
proving the first part.
For the second part, we use equation~\eqref{eqn:phi-limit-equation} to see that $y'(r_*) = 0 \implies y''(r_*) = 0$ at any point $r_*$, hence ODE uniqueness would force $y \equiv y(r_*)$ identically since the constant function $\{ y = y(r_*) \}$ is a solution of~\eqref{eqn:phi-limit-equation}.
Consequently, any non-constant solution of~\eqref{eqn:phi-limit-equation} is strictly monotone, so it has a well-defined monotone inverse function $r(y)$, which satisfies $y'(r) = \frac{1}{r'(y)}$ and $y''(r) = - \frac{r''(y)}{r'(y)^3}$.
The left-hand side of~\eqref{eqn:phi-limit-equation} becomes $- r''(y) + c \frac{y r'(y) - gr}{1+y^2}$, so
\begin{equation}\label{eqn:r-ode-in-terms-of-y}
    (1+y^2) r'' - c y r' + cg r = 0
\end{equation}
produces a linear ODE for $r = r(y)$.
The substitution $z := -y^2$ transforms~\eqref{eqn:r-ode-in-terms-of-y} into
\[
z (1-z) \tilde{r}'' + \bigl( \tfrac{1}{2} + \tfrac{c-1}{2} z \bigr) \tilde{r}' - \tfrac{cg}{4} \tilde{r} = 0, \qquad \tilde{r}(z) := r(y),
\]
which is the Gauss hypergeometric ODE
\[
z (1-z) \tilde{r}'' + \bigl[ \gamma - (a_+ + a_- + 1) z \bigr] \,\tilde{r}' - a_+ a_- \tilde{r} = 0, \qquad (\gamma, a_+ + a_-,a_+ a_-) = (\tfrac{1}{2}, - \tfrac{c+1}{2}, \tfrac{cg}{4}).
\]
Solving these relations for $a_{\pm}$ produces the first solution in~\eqref{eqn:r(y)-inverse}.
The second, odd solution is obtained analogously, upon considering $z := -y^2$ and $\hat{r}(z) := y^{-1} r(y)$ and computing the even hypergeometric solution for the resulting Gauss ODE satisfied by $\hat{r}(z)$.
\end{proof}

\begin{lemma}\label{lemma:f-infty-analysis}
Let $f_a$ denote the solution of equation~\eqref{eqn:ode-star} with initial data $\{ f_a(0) = a, f'_a(0) = 0 \}$.
If $(n-1)^2 < 4g(n-2)$, then there is a constant $c_0 > 0$ such that $f_a \bigl( \sqrt{\frac{m_1}{m_1 + m_2}} \bigr) > c_0 a$.
In particular, $f_a \bigl( \sqrt{\frac{m_1}{m_1 + m_2}} \bigr) \to \infty$ as $a \uparrow \infty$.
\end{lemma}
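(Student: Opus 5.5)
We argue by contradiction and compactness. Write $t_\alpha := \sqrt{\frac{m_1}{m_1+m_2}}$. Suppose the conclusion fails. Then there are $a_i \uparrow \infty$ with $f_{a_i}(t_\alpha)/a_i \to 0$; the case of bounded $a_i$ is handled by continuous dependence and positivity of $f_a(t_\alpha)$. Positivity of $f_a$ on $[0,t_\alpha]$ is part of what must be established: by Lemma~\ref{lemma:general-properties-of-solutions}$(ii)$ any zero occurs at $t\ge t_\alpha$, but blowup before $t_\alpha$ must also be excluded.

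Consider the normalized functions $g_i := f_{a_i}/a_i$. These solve the rescaled equation~\eqref{eqn:rescaled-equation-lambda-ODE} with $\lambda_i = a_i^2 \to \infty$ and data $\{g_i(0)=1,\, g_i'(0)=0\}$. In this limit the nonlinear coefficient behaves like $\lambda (g')^2/(1+\lambda g^2) \to (g')^2/g^2$ wherever $g>0$. Thus $g_i$ should converge to a solution $g_\infty$ of the scale-invariant equation
\[
\cL_M g + (n-2)(1-t^2)\tfrac{(g')^2}{g^2}\bigl(g - \tfrac{g}{2}A_M g'\bigr) = 0,
\]
which is the minimal cone equation in the regime where the profile is large. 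Here the $g$ multiplying $A_M$ is the number of principal curvatures, not the function.

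Uniform $C^2$ bounds on compact subsets of $[0,t_\alpha)$ come from $h_{g_i}>0$ (Lemma~\ref{lemma:general-properties-of-solutions}) together with the sign structure: for $t<t_\alpha$ we have $A_M<0$, so the cubic term $-\tfrac{g}{2}A_M (g')^3 (n-2)/g^2$ has a favorable sign for decreasing solutions. From these bounds I expect $g_\infty$ to be positive and decreasing on $[0,t_\alpha)$.

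The main obstacle is the behavior at $t_\alpha$, where $A_M$ vanishes. Here we apply the blowup analysis of Proposition~\ref{prop:i-will-take-a-limit}, suitably adapted to the $\lambda\to\infty$ scaling. Near $t_\alpha$ the profile is governed by~\eqref{eqn:phi-limit-equation}, whose inverse $r(y)$ is the hypergeometric combination~\eqref{eqn:r(y)-inverse}. The hypothesis $(n-1)^2<4g(n-2)$, i.e.\ $(c+1)^2<4cg$ with $c=n-2$, makes $a_\pm$ complex conjugates. Consequently, as $y\to\infty$ the function $r(y)$ behaves like $y^{(c+1)/2}$ times an oscillating factor $\cos(\omega\log y+\varphi)$, and so $r(y)$ is not monotone for large $y$.

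This oscillation is exactly what excludes $g_\infty(t_\alpha)=0$, equivalently $f_{a_i}(t_\alpha)=o(a_i)$. To see this, rescale around $t_\alpha$ at a scale $\epsilon_i$ chosen so that the value $y_i(0)=f_{a_i}(t_\alpha)$ stays bounded while $f_{a_i}$ remains large at $r\to -\infty$. The limit $y_\infty$ is then a monotone solution of~\eqref{eqn:phi-limit-equation} tending to $+\infty$ as $r\to-\infty$. Its inverse $r(y)$ would be monotone on $[y_\infty(0),\infty)$, which contradicts the complex exponents.

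I expect the hardest step to be justifying the choice of $\epsilon_i$ and the matching between the outer limit $g_\infty$ and the inner profile $y_\infty$. The natural choice is to pick $\epsilon_i$ so that $f_{a_i}(t_\alpha-\epsilon_i)$ equals a fixed large constant. One then has to use the $C^2$ bounds from $h_i>0$ exactly as in the proof of Proposition~\ref{prop:i-will-take-a-limit} to pass to the limit. The contradiction yields $f_a(t_\alpha)\ge c_0 a$ for large $a$. Combined with positivity for bounded $a$, this gives the lemma, and $f_a(t_\alpha)\to\infty$ follows immediately.
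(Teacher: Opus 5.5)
\textbf{There is a genuine gap.} Your key mechanism is sound, and in substance it is the paper's. Under $(n-1)^2<4g(n-2)$ the indicial polynomial $k^2-(n-1)k+g(n-2)$ has no real roots. Equivalently, $1+(n-1)\beta+g(n-2)\beta^2\neq 0$ for real $\beta$, which is what the paper's Step~3 exploits. So no monotone profile can escape to infinity near $t_\alpha$. The compactness route you build around this, however, has several holes.

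\emph{The linear bound is out of reach as designed.} The contradiction hypothesis only gives $b_i:=f_{a_i}(t_\alpha)=o(a_i)$. You cannot ``choose $\epsilon_i$ so that $y_i(0)=f_{a_i}(t_\alpha)$ stays bounded'': $y_i(0)$ does not depend on $\epsilon_i$. If $b_i\to\infty$, the limit equation is not \eqref{eqn:phi-limit-equation}, whose factor $1+y^2$ is not scale invariant. So your argument can at best exclude bounded subsequences, i.e.\ prove $f_a(t_\alpha)\to\infty$, not $f_a(t_\alpha)\ge c_0a$. For the linear bound you would also have to normalize values, $Y_i=f_{a_i}(t_\alpha+\epsilon_i r)/b_i$, and use the homogeneous limit $Y''+c\frac{(Y')^2}{Y^2}(Y-grY')=0$. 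Its inverse solves the Euler equation $Y^2r''-cYr'+cgr=0$ with the same complex exponents.

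\emph{The outer limit does no work.} Your claimed equivalence ``$g_\infty(t_\alpha)=0$ iff $f_{a_i}(t_\alpha)=o(a_i)$'' is unjustified. Convergence on compact subsets of $[0,t_\alpha)$ says nothing at $t_\alpha$, and $g_\infty(t_\alpha)>0$ is compatible with $g_i(t_\alpha)\to 0$ through a boundary layer.

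\emph{The inner limit may be constant.} The normalization $f_{a_i}(t_\alpha-\epsilon_i)=K$ does not prevent this. Your only $C^1$ control, $|y_i'|\lesssim y_i/(g|r|)$ from $h_i>0$, degenerates at $r=0$, and \eqref{eqn:phi-limit-equation} admits constant solutions.

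\emph{Positivity is assumed.} Positivity of $f_a(t_\alpha)$ for bounded $a$ is taken for granted rather than proved.

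The missing idea is a comparison that gives uniformity all the way up to $t_\alpha$. The paper introduces the $\lambda=\infty$ profile $f_\infty$, with $f_\infty(0)=1$, $f_\infty'(0)=0$, and proves $f_a\ge a f_\infty$ on $[0,t_\alpha]$ as follows.
\begin{enumerate}[$\bullet$]
\item At a critical point of $w=f_a/f_\infty$ one has $f_a'/f_a=f_\infty'/f_\infty$. Hence $\frac{(f_a')^2}{1+f_a^2}<\frac{(f_\infty')^2}{f_\infty^2}$, and together with $f_\infty-\frac{g}{2}A_Mf_\infty'>0$ this forces $w''>0$.
\item Combined with $w'(0)=w''(0)=w'''(0)=0<w^{(4)}(0)$, this makes $w$ increasing.
\end{enumerate}
This reduces the whole lemma, including positivity at $t_\alpha$, to the single statement $c_0:=f_\infty(t_\alpha)>0$. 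The paper proves that by a local analysis of $\tilde q=(t_\alpha-t)f_\infty'/f_\infty$, where the absence of real roots of $1+(n-1)\beta+g(n-2)\beta^2$ plays the role of your oscillation of $r(y)$.

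If you prefer to keep the blow-up approach, you must do two things. First, carry out the value-normalized inner limit with a derivative normalization at $r=0$, as in the proof of Proposition~\ref{prop:dont-shoot-too-high}. Second, justify convergence up to $r=0$. As written, that is exactly the step left open.
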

\begin{proof}
Let us denote $t_{\alpha} := \sqrt{\frac{m_1}{m_1 + m_2}}$ for brevity.
We define $f_{\infty}$ to be the solution of equation~\eqref{eqn:rescaled-equation-lambda-ODE} upon formally taking $\lambda \to \infty$, namely
    \begin{equation}\label{eqn:f-infty-solution}
        (1-t^2) f'' + (f-tf') + \frac{4-g^2}{g^2} f + (n-2) \left( \frac{4}{g^2} + (1-t^2) \frac{(f')^2}{f^2} \right) \left( f - \frac{g}{2} A_M(t) f' \right) = 0 
    \end{equation}
    with initial data $\{ f_{\infty}(0) = 1, f'_{\infty}(0) = 0 \}$.   
    Applying the results of the ODE Lemma~\ref{lemma:general-properties-of-solutions} for equation~\eqref{eqn:rescaled-equation-lambda-ODE} and letting $\lambda \to \infty$, we see that $f_{\infty}$ satisfies $f_{\infty} - \frac{g}{2} A_M f'_{\infty} > 0$, it is strictly decreasing and strictly concave, and is strictly positive on its maximal interval of existence.

    Our proof will proceed in three steps: first, we show that $f_{\infty}$ remains smooth on $[0, t_\alpha )$; next, we prove that $f_a \geq a f_\infty$; finally, we show that $f_\infty (t_\alpha) \geq c_0 > 0$ if $(n-1)^2<4g(n-2)$.

\smallskip \noindent \textbf{Step 1:}
The quantity $q := \frac{f'_{\infty}}{f_{\infty}}$ satisfies the equation
\begin{equation}\label{eqn:q-ODE-f-infty}
    (1-t^2) (q' + q^2) + (1 - tq) + \tfrac{4-g^2}{g^2} + (n-2) \bigl( \tfrac{4}{g^2} + (1-t^2) q^2 \bigr) \bigl( 1 - \tfrac{g}{2} A_M(t) q \bigr) = 0
\end{equation}
with initial condition $q(0) = 0$ and $q < 0$ for $t>0$.
At a point $t_*$ where $\limsup_{t \to t_*} |q(t)| = + \infty$, we have $q,q' \to - \infty$ as well, and the dominant terms of equation~\eqref{eqn:q-ODE-f-infty} produce
\[
(1-t^2)(q' + q^2) \sim - (n-2)(1-t^2) q^2 \bigl(1 - \tfrac{g}{2} A_M(t) q \bigr).
\]
If $t_* < t_{\alpha}$, this would lead to $q' \sim \tfrac{(n-2) g}{2} A_M(t_*) q^3$, producing a contradiction because $A_M(t_*) < 0$ so the right-hand term tends to $+ \infty$.
Thus, $f_{\infty}$ has no blow-up for $t < t_{\alpha}$.
Moreover, $f_{\infty} ( t_{\alpha} ) = 0$ if and only if $f_{\infty}$ becomes singular there, as seen by equation~\eqref{eqn:f-infty-solution}.

\smallskip \noindent \textbf{Step 2.}
Next, we claim that
\begin{equation}\label{eqn:lower-bound-at-ta}
    f_a(t) \geq a \cdot f_\infty(t), \qquad \text{for } \; t \in [0,t_\alpha].
\end{equation}
To this end we consider the quantity $w := \frac{f_a}{f_{\infty}}$, which has $\{ w(0) = a, w'(0) = 0 \}$ and satisfies a differential equation derived from~\eqref{eqn:ode-star} and~\eqref{eqn:f-infty-solution}.
At a critical point $t_*$ of the function $w$, we compute
\[
f_a = w f_{\infty}, \qquad f'_a = w  f'_{\infty}, \qquad f''_a = w'' f_{\infty} + w f''_{\infty}, \qquad f_a - \tfrac{g}{2} A_M f'_a = w \bigl( f_{\infty} - \tfrac{g}{2} A_M f'_{\infty} \bigr).
\]
Expressing equation~\eqref{eqn:ode-star} in terms of $(w, f_{\infty})$ at $t_*$ and subtracting off the equation for $f_{\infty}$, we find
\[
(1-t_*^2) (f_{\infty} w'')(t_*) - (n-2) w(t_*) (1 - t_*^2) \frac{f'_{\infty}(t_*)^2}{f_{\infty}(t_*)^2 (1 + f_a(t_*)^2)} \Bigl( f_{\infty} - \tfrac{g}{2} A_M f'_{\infty} \Bigr) = 0.
\]
To obtain the second step, we simplified the difference of the nonlinear factors
\[
\bigl( \tfrac{4}{g^2} + (1-t_*^2) \tfrac{(f'_a)^2}{1 + f_a^2} \bigr) - \bigl( \tfrac{4}{g^2} + (1-t_*^2) \tfrac{(f'_{\infty})^2}{f_{\infty}^2} \bigr) = - (1-t_*^2) \tfrac{f'_{\infty}(t_*)^2}{f_{\infty}(t_*)^2 ( 1 + f_a(t_*)^2) }
\]
due to $f'_a = w f'_{\infty}$ at the critical point $t_*$.
Using the property $f_{\infty} - \frac{g}{2} A_M f'_{\infty} > 0$, we conclude that $w''(t_*) > 0$ holds at any critical point of $w$, so it must be a strict local minimum.

On the other hand, we can prove that $w'(0) = w''(0) = w'''(0)=0$ and $w^{(4)}(0) > 0$.
To this end, we note that the functions $f_a, f_{\infty}$ are real-analytic near $t=0$, as solutions of elliptic equations with
\[
(1-t^2) f'' + (f-tf') + \tfrac{4-g^2}{g^2} f + (n-2) \bigl( \tfrac{4}{g^2} + (1-t^2) \tfrac{(f')^2}{\kappa + f^2} \bigr) \bigl( f - \tfrac{g}{2} A_M f' \bigr) = 0,
\]
where $\kappa = 0$ for $f_{\infty}$ and $\kappa = 1$ for $f_a$.
Consequently, they admit even power series expansions $f = b_{\kappa,0} + b_{\kappa,2} t^2 + b_{\kappa,4} t^4 + O(t^6)$ for $\kappa \in \{0,1\}$ as above.
Then, we may expand
\begin{align*}
f'(t) &= 2 b_{\kappa,2}t + 4 b_{\kappa,4} t^3 + O(t^5), \qquad f''(t) = 2 b_{\kappa,2} + 12 b_{\kappa,4} t^2 + O(t^4), \\
(A_M f')(t) &= \bigl( t - \tfrac{m_1}{m_1 + m_2} t^{-1} \bigr) \bigl( 2 b_{\kappa,2} t + 4 b_{\kappa,4} t^3 + O(t^5) \bigr) = - 2 \tfrac{m_1}{m_1 + m_2} b_{\kappa,2} + 2 \bigl( b_{\kappa,2} - \tfrac{2 b_{\kappa,4}m_1}{m_1 + m_2} \bigr) t^2 + O(t^4).
\end{align*}
Examining the constant term of equations~\eqref{eqn:ode-star} and~\eqref{eqn:f-infty-solution}, we therefore find
\[
2 (1 + m_1) b_{\kappa,2} + \tfrac{4(n-1)}{g^2} b_{\kappa,0}=0, \qquad \implies \qquad b_{\kappa,2} = - \tfrac{2(n-1)}{(m_1+ 1) g^2} b_{\kappa,0},
\]
leading to the expansions $1 - \frac{2(n-1)}{(m_1 + 1) g^2} t^2 + O(t^4)$ for both $\frac{f_a}{a}$ and $f_{\infty}$ near $t=0$.
This shows that $w''(0) = 0$, and since $f_a, f_{\infty}$ are both even functions, $w = \frac{f_a}{f_{\infty}}$ is as well, so $w'(0) = w'''(0) = 0$.

We write the above expansions as
\[
f_a(t) = a \bigl( 1 - \tfrac{2(n-1)}{(m_1 + 1) g^2} t^2 + \hat{b}_1 t^4 + O(t^6) \bigr), \qquad f_{\infty}(t) = 1 - \tfrac{2(n-1)}{(m_1+1) g^2} t^2 + \hat{b}_0 t^4 + O(t^6),
\]
to write $w(t) = a \bigl( 1 + (\hat{b}_1 - \hat{b}_0) t^4 + O(t^6) \bigr)$.
To compute $\hat{b}_{\kappa}$ for $\kappa \in \{0,1\}$, we use the coefficients $b_{\kappa,0}, b_{\kappa,2}$ computed above to match the coefficient of $t^2$ in equations~\eqref{eqn:ode-star} and~\eqref{eqn:f-infty-solution}, which produces
\[
\hat{b}_{\kappa} = - \frac{\bigl( g(m_1 + m_2) + 2 \bigr) \bigl[ b_{\kappa,0}^2 R + \kappa (g-1) (m_1 + 1)^2 (g (m_1 + m_2) + 2g + 2) \bigr] }{2 g^4 (m_1 + 1)^3 (m_1 + 3) (b_{\kappa,0}^2 + \kappa)},
\]
where $b_{1,0} = a$ for $\kappa=1$, and $R$ is the coefficient
\begin{align*}
R
&=  g^2 \bigl(
m_1^3 + m_1^2 m_2 + 5 m_1^2 + 4 m_1 m_2 + 5 m_1 + m_2^2 + m_2 + 2
\bigr) \\
&\quad - g \bigl(
m_1^3 + m_1^2 m_2 + 4 m_1^2 + 4 m_1 m_2 - m_1 - m_2
\bigr) - 2(m_1^2 + 4m_1 + 1).
\end{align*}
Subtracting the two coefficients $\hat{b}_1 - \hat{b}_0$, we arrive at the factorization
\[
\hat{b}_1 - \hat{b}_0 = \tfrac{2 (n-1)^2 (g m_2 + (g-2) m_1) }{g^4(1+a^2) (m_1 + 1)^3 (m_1 + 3)}, \qquad
w^{(4)}(0) = 24 ( \hat{b}_1 - \hat{b}_0) > 0,
\]
since $g m_2 + (g-2) m_1 \geq g m_2 > 0$ for $g \geq 2$.
Using $w'(0) = w''(0) = w'''(0) = 0$, we conclude that $w^{(i)}(t) > 0$ for $1 \leq i \leq 4$ and small $t>0$, so $w$ is initially strictly increasing.
If $w$ had a critical point, this would force $w''(t_*) \leq 0$, contradicting our earlier computation $w''(t_*) > 0$; thus, $w' > 0$ for all $t \in [0, t_{\alpha})$.
Then, $\frac{f_a}{f_{\infty}}= w(t) > w(0) = a$, so $f_a \geq a f_{\infty}$ for all $t \in [0,t_{\alpha})$ proves~\eqref{eqn:lower-bound-at-ta}.

\smallskip \noindent \textbf{Step 3.}
Finally, we show that $f_\infty(t_\alpha) > 0$ if $(n-1)^2 < 4g(n-2)$.
If $f_{\infty} ( t_{\alpha} ) = 0$, then $f'_{\infty} \to - \infty$ as $t \uparrow t_{\alpha}$ by equation~\eqref{eqn:f-infty-solution}.
We let $r := t_{\alpha} - t$, so $A'_M(t_{\alpha}) = 2$ implies $A_M(t) = - 2r + O(r^2)$, and denote $\tilde{q}(r) := r q(t_{\alpha} - r)$, which satisfies the equation
\begin{equation}\label{eqn:q-tilde-equation}
    r \tilde{q}' = \tilde{q} \bigl( 1  + (n-1) \tilde{q} + g(n-2) \tilde{q}^2 \bigr) + O(r) \bigl( 1 + |\tilde{q}| + \tilde{q}^2 + |\tilde{q}|^3 \bigr).
\end{equation}
coming from the relation~\eqref{eqn:q-ODE-f-infty} for $q$.
Since $q < 0$, we have $\tilde{q} < 0$ as well; moreover, for $|\tilde{q}| > C(M)$ and $r \in (0,\ve)$, the cubic term $g(n-2) \tilde{q}^3 = - g(n-2) |\tilde{q}|^3$ dominates in the right-hand side, making $r \tilde{q}' < 0$ for $r \in (0,\ve)$.
Since $r=0$ corresponds to $t \uparrow t_{\alpha}$, this would make $|\tilde{q}(r)| = | (t_{\alpha} - t) q(t) |$ decreasing on approach to $t \uparrow t_{\alpha}$, so the limit $\beta := - \lim_{t \uparrow t_{\alpha}} |(t_{\alpha} - t) q(t)| =  \lim_{r \downarrow 0} \tilde{q}(r)$ exists.
Taking $r \downarrow 0$ sufficiently small in~\eqref{eqn:q-tilde-equation}, we find $\tilde{q}' = \beta(1 + (n-1) \beta + g(n-2) \beta^2 ) r^{-1} + O(1)$ for small $r>0$.
Since the limit $\lim_{r \downarrow 0} \tilde{q}(r)$ exists, the coefficient of $r^{-1}$ must be zero, so
\[
1 + (n-1) \beta + g(n-2) \beta^2 = 0 \implies \beta = - \tfrac{(n-1) \pm \sqrt{(n-1)^2 - 4g(n-2)}}{2g(n-2)} \qquad \text{or} \qquad \beta = 0.
\]
For $(n-1)^2 < 4g(n-2)$, the discriminant of the quadratic is negative, so the first two possibilities cannot occur; we conclude that $\beta = 0$, so $(t_{\alpha} - t) q \to 0$ and $A_M(t) q \to 0$ as $t \uparrow t_{\alpha}$.
If the limit $\lim_{t \uparrow t_{\alpha}} q$ was finite, then writing $\log f_{\infty}(t_{\alpha}) = \log f_{\infty}(t_1) + \int_{t_1}^{t_{\alpha}} q(s) \, ds$ would imply that $| \log f_{\infty} (t_{\alpha})|$ is finite, contradicting $f_{\infty}(t_{\alpha}) = 0$.
Therefore, it suffices to consider $|q| \to \infty$ and take $|q|$ sufficiently large due to~\eqref{eqn:q-ODE-f-infty}.
Applying these considerations upon division by $q^2$, we arrive at
\[
(1-t^2) ( q^{-2} q' + 1) + (n-2) (1-t^2) + O(q^{-1}) = 0, \quad \implies \quad (q^{-1})' \to  (n-1) \quad \text{as } \; |q| \to + \infty.
\]
Integrating this relation on $[ t , t_\alpha]$, we find $q^{-1}(t) \geq \frac{n-1}{4}(t_\alpha - t)$, contradicting $\lim_{t \uparrow t_\alpha} (t_\alpha - t) q(t) = 0$. 
We conclude that $c_0 := f_\infty(t_\alpha) > 0$, so $f_a (t_\alpha) \geq c_0 a$ due to~\eqref{eqn:lower-bound-at-ta} as desired.
\end{proof}

We now combine the above ingredients to prove that large initial heights lead to a blow-up.

\begin{proposition}\label{prop:dont-shoot-too-high}
    There exists a constant $C(M)$, depending only on the hypersurface $M \subset \bS^{n-1}$, such that any solution of equation~\eqref{eqn:ode-star} with initial data
    \[
    \{ f(0) = a, \; f'(0) = 0 \} \qquad \text{or} \qquad \Bigl\{ f \bigl( \sqrt{\tfrac{m_1}{m_1+ m_2}} \bigr) = a, \; f' \bigl( \sqrt{\tfrac{m_1}{m_1+ m_2}} \bigr) \leq 0 \Bigr\} ,
    \]
    where $a > C(M)$, has finite-time derivative blowup while positive.
\end{proposition}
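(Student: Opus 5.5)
The plan is to reduce both initial-value problems to the Lyapunov criterion of Lemma~\ref{lemma:psi-detect-blowup}. Once $\Psi$ and $\Psi'$ are both non-negative at some point $t_0 \geq t_\alpha := \sqrt{\tfrac{m_1}{m_1+m_2}}$, that lemma shows $f$ cannot reach zero. Lemma~\ref{lemma:reach-zero-before-hypergeometric} says a Type I shot either reaches zero or blows up before $t_M$. Moreover, a solution with $h_f>0$ which is decreasing past $t_\alpha$ cannot exist for all $t<1$ while staying positive and bounded: the forward-invariant region $R$ of Lemma~\ref{lemma:general-properties-of-solutions}, together with the bound $f'(1)\to$ finite forcing the smoothness condition, is incompatible with large values of $f$. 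Hence "does not reach zero" upgrades to "finite-time derivative blowup while positive." The task therefore splits into producing a large value $f(t_\alpha)$ and then checking the sign conditions on $\Psi$ there.

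For the first initial condition $\{f(0)=a,\,f'(0)=0\}$, I would use Lemma~\ref{lemma:f-infty-analysis} to get $f_a(t_\alpha) > c_0 a$. If the solution blew up before $t_\alpha$ we are already done, since by Lemma~\ref{lemma:general-properties-of-solutions} it is decreasing and positive there. Its hypothesis $(n-1)^2<4g(n-2)$ holds for $g\geq 3$, since $n-1 = \tfrac{g(m_1+m_2)}{2}+1$; this is a routine check, and $g=2$ is the explicit Clifford case. At $t_\alpha$ we have $A_M=0$, hence $\Psi(t_\alpha) = f^2 - \tfrac{1}{n-2} > 0$ once $a$ is large. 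Also $\Psi'(t_\alpha) = -(g-2)ff' \geq 0$ because $f'\leq 0$. This reduces the first case to the second initial condition with $f(t_\alpha) = f_a(t_\alpha) > c_0 a$.

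For the second initial condition $\{f(t_\alpha)=a,\, f'(t_\alpha)\leq 0\}$, the same computation gives $\Psi(t_\alpha)=a^2-\tfrac{1}{n-2}>0$ and $\Psi'(t_\alpha) = -(g-2)af'(t_\alpha)\geq 0$. If $f'(t_\alpha)=0$, the second-derivative computation in the proof of Lemma~\ref{lemma:psi-detect-blowup} applies. Lemma~\ref{lemma:psi-detect-blowup} then shows $f$ never vanishes, and the dichotomy above forces derivative blowup while positive. To make this airtight, I would rule out global positive existence up to $t=1$ as follows. Since $f'<0$ and $h>0$ in $R$, and Proposition~\ref{prop:Psi-one-sided} keeps $\Psi$ increasing, we get $f(f-\tfrac g2 A f')$ increasing. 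Near $t=1$ the smoothness condition $f'(1)=\tfrac{4(n-1)f(1)}{g^2(m_2+1)}>0$ would contradict $f'<0$. A solution that stops earlier can only do so by $|f'|\to\infty$, because $f$ stays bounded between $0$ and $a$.

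The main obstacle is this last exclusion: a positive solution on all of $[t_\alpha,1)$ meeting the regularity condition at $t=1$, or failing to extend to $t=1$ by some mechanism other than derivative blowup. I would first try the sign argument: $f'<0$ is preserved in $R$, so the required $f'(1)>0$ is impossible, and $f$ and $f'$ bounded on compact subsets gives extendability. If the limit at $t=1$ with bounded $f'$ turns out to be a genuine smooth cap, that would be a Type I solution from $M_2$ with $f'(1)>0$, contradicting monotonicity. The constant $C(M)$ is then $\max\{(n-2)^{-1/2}, (c_0\sqrt{n-2})^{-1}\}$ up to harmless adjustments.
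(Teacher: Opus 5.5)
\textbf{There is a genuine gap: your reduction of the first initial condition to the second fails for infinitely many foliations.} That reduction rests on Lemma~\ref{lemma:f-infty-analysis}, and your claim that its hypothesis $(n-1)^2<4g(n-2)$ holds for all $g\geq 3$ is false. With $n-2=\tfrac{g\mu}{2}$ and $\mu=m_1+m_2$, the inequality reads $\tfrac{g^2}{4}\mu^2-(2g^2-g)\mu+1<0$. Among the admissible triples with $g\geq 3$, it holds exactly when $m_1+m_2\leq 6$. It fails, for instance, in the following cases:
\begin{itemize}
\item $(g,m_1,m_2)=(3,8,8)$, where $n=26$ and $625>288$;
\item $(3,4,4)$, where $n=14$ and $169>144$;
\item $(4,4,5)$, where $n=20$ and $361>288$;
\item every OT-FKM family with $m_1+m_2\geq 7$.
\end{itemize}
In this regime Lemma~\ref{lemma:f-infty-analysis} does not just lack a hypothesis; its argument breaks. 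The quadratic $1+(n-1)\beta+g(n-2)\beta^2$ now has real negative roots, so $f_\infty$ may vanish at $t_\alpha$ like $(t_\alpha-t)^{|\beta|}$. Then $f_a\geq a f_\infty$ gives no lower bound on $f_a(t_\alpha)$. You are left with no control of $\Psi(t_\alpha)$ for shots $\{f(0)=a,\,f'(0)=0\}$, and this is the generic high-multiplicity situation.

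\textbf{How the paper handles this regime.} It uses Lemma~\ref{lemma:f-infty-analysis} only to dispose of the complex-root case, and treats the real-root case by contradiction through a blow-up argument.
\begin{itemize}
\item Suppose $a_j\uparrow\infty$ and the $f_{a_j}$ reach zero. Lemma~\ref{lemma:psi-detect-blowup} forces $\limsup_j f_j(t_\alpha)\leq (n-2)^{-1/2}$, so along a subsequence $f_j(t_\alpha)\to L\leq(n-2)^{-1/2}$.
\item Concavity gives $|f_j'(t_\alpha)|\geq\sqrt{\mu/m_1}\,(a_j-L)\to\infty$.
\item Rescaling at scale $\epsilon_j=|f_j'(t_\alpha)|^{-1}$ via Proposition~\ref{prop:i-will-take-a-limit} yields a limit $y$ on $(-\infty,0]$. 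It solves~\eqref{eqn:phi-limit-equation} with $y(0)=L$ and $y'(0)=-1$, and is unbounded as $r\to-\infty$.
\item Set $\bar\Psi:=y(y-gry')-\tfrac{1}{n-2}$. Using $g\geq 3$ and $y>0>y'$, one gets $\bar\Psi'>0$ at every zero of $\bar\Psi$ in $r\leq 0$.
\item The explicit hypergeometric inverse $r(y)$ has non-resonant real exponents, giving $y\sim C(-r)^{\beta}$ with $0<\beta<\tfrac1g$. Hence $\bar\Psi\to+\infty$ as $r\to-\infty$.
\item Therefore $\bar\Psi(0)=L^2-\tfrac{1}{n-2}>0$, a contradiction.
\end{itemize}
Note that this only produces $f_a(t_\alpha)>(n-2)^{-1/2}$ for large $a$, not a linear bound $c_0a$. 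That weaker bound is all Lemma~\ref{lemma:psi-detect-blowup} needs.

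Your treatment of the second initial condition is fine and agrees with the paper, including $\Psi(t_\alpha)=a^2-\tfrac{1}{n-2}$ and $\Psi'(t_\alpha)=-(g-2)ff'\geq 0$. Your extra discussion near $t=1$ goes beyond what the paper writes, which simply takes the conclusion of Lemma~\ref{lemma:psi-detect-blowup} as sufficient. That is not where the difficulty lies.
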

\begin{proof}
If $g \in \{ 1, 2\}$, then $M \subset \bS^{n-1}$ is a sphere or a Clifford hypersurface and the property is proved in~\cite{FTW-1}*{Proposition 3.16} (for shots starting from $t=0$) and~\cite{new-minimal-surfaces}*{Lemma 2.4} (for shots starting from $t = \sqrt{\frac{m_1}{m_1 + m_2}}$); these results show that we can take $C(M) = \frac{2}{\sqrt{n}}$ in this case.
We therefore focus on $g \geq 3$ in what follows and we denote $\mu := m_1 + m_2$ as in Proposition~\ref{prop:Psi-one-sided} in what follows.
Recall that $\Psi'( \sqrt{\frac{m_1}{\mu}}) = - (g-2) (ff') ( \sqrt{\frac{m_1}{\mu}})$ by the computation therein, and $f'( \sqrt{\frac{m_1}{\mu}}) \leq 0$ holds in both cases under consideration.
For solutions starting from $\{ f(0) = a, f'(0) = 0\}$, this property is ensured by Lemma~\ref{lemma:general-properties-of-solutions}; thus, $\Psi'( \sqrt{\frac{m_1}{\mu}}) \geq 0$ in all cases.
Moreover, $\Psi \bigl(\sqrt{\frac{m_1}{\mu}} \bigr) = f \bigl( \sqrt{\frac{m_1}{\mu}} \bigr)^2 - \frac{1}{n-2}$, so the blowup Lemma~\ref{lemma:psi-detect-blowup} will yield the desired result upon ensuring that $f \bigl(\sqrt{\frac{m_1}{\mu}} \bigr) > \frac{1}{\sqrt{n-2}}$ for $a$ sufficiently large; this property is automatic in the second case, i.e., for solutions starting from $\sqrt{\frac{m_1}{\mu}}$.
To prove that $f( \sqrt{\frac{m_1}{\mu}}) > \frac{1}{\sqrt{n-2}}$ when $\{ f(0) = a, f'(0) = 0 \}$ and $a$ is large, we proceed in two steps.
First, we argue by contradiction and study this inequality by a blowup argument, as $a \uparrow \infty$; then, we examine the asymptotic behavior of the limit quantity and obtain a contradiction.

\smallskip \noindent \textbf{Step 1:}
To prove the uniform height bound on solutions with $\{ f(0) = a, f'(0) = 0\}$, we argue by contradiction, supposing that there exist $a_j \uparrow \infty$ for which the corresponding solutions $f_j$ with $f_j(0) = a_j$ reach a zero.
Then, $f'_j < 0$ on $[0, \sqrt{\frac{m_1}{\mu}}]$ and $h_j := f_j - \frac{g}{2} A_M f'_j > 0$ by Lemma~\ref{lemma:general-properties-of-solutions}, while $\limsup_j f_j ( \sqrt{\frac{m_1}{\mu}}) \leq \frac{1}{\sqrt{n-2}}$ by the above argument, else $\Psi_j ( \sqrt{\frac{m_1}{\mu}}) \geq 0$ and $\Psi'_j( \sqrt{\frac{m_1}{\mu}}) > 0$ would force $\Psi_j$ to remain positive for all time, prohibiting $f_j$ from reaching a zero.
We may therefore extract a subsequence $f_{j'}$ with $f_{j'} ( \sqrt{\frac{m_1}{\mu}}) \to L \in [0, \frac{1}{\sqrt{n-2}}]$.
Since the functions $f_j$ are concave on $[0, \sqrt{\frac{m_1}{\mu}}]$, the secant line bound implies that $f'_j( \sqrt{\frac{m_1}{\mu}}) \leq - \sqrt{\frac{\mu}{m_1}} (a_j - L)$, so in particular $\epsilon_j := \frac{1}{|f'_j(\sqrt{m_1/\mu})|} \to 0$.
Consequently, the conditions of Proposition~\ref{prop:i-will-take-a-limit} are satisfied, whereby letting $y_j(r) := f_j \bigl( \sqrt{\frac{m_1}{\mu}} + \epsilon_j r \bigr)$ allows us to extract a limit $y_{\infty}$ of the $\{ y_j \}$, which is a solution of equation~\eqref{eqn:phi-limit-equation}.
The argument of $f_j$ is defined for all $\sqrt{\frac{m_1}{\mu}} + \epsilon_j r \in [0 , \sqrt{\frac{m_1}{\mu}}]$, so for all $r \geq - \epsilon_j^{-1} \sqrt{\frac{m_1}{\mu}}$, and thus $y_{\infty}$ is defined for all backwards time $r \in (-\infty,0]$ and $y_j \xrightarrow{C^{\infty}} y_{\infty}$ on compact sub-intervals.
By construction, 
\[
y_j(0) = f_j \bigl( \sqrt{\tfrac{m_1}{\mu}} \bigr) \to L \quad \text{as } \; j \to \infty, \qquad y'_j(0) = \epsilon_j f'_j \bigl( \sqrt{\tfrac{m_1}{\mu}} \bigr) = - \frac{1}{f'_j( \sqrt{m_1/\mu})} f'_j \bigl( \sqrt{\tfrac{m_1}{\mu}} \bigr) = -1,
\]
which implies that $y_{\infty}(0) = L$ and $y'_{\infty}(0) = -1$.
In what follows, we denote $y := y_{\infty}$ for brevity.
Since $h_j \to h := y - gr y'$ by the computation~\eqref{eqn:hi-term-to-h-infty} of Proposition~\ref{prop:i-will-take-a-limit}, we form the quantity 
\begin{equation}\label{eqn:Psi-bar-quantity}
\bar{\Psi} := y(y- gr y') - \tfrac{1}{n-2}.
\end{equation}
Computing similarly to Proposition~\ref{prop:Psi-one-sided}, we find
\begin{align*}
\bar{\Psi}' &= (2-g) y y' - gr \bigl( (y')^2 + yy'' \bigr) = y^{-1} y' \cdot \left[ (2-g) y^2  - gr yy' \Bigl( 1 - (n-2) \frac{y ( y - gry')}{1+y^2} \Bigr) \right]
\end{align*}
by substituting $y''$ from~\eqref{eqn:phi-limit-equation}.
At a zero $r_* \leq 0$ of $\bar{\Psi}$, we have $(n-2) (y - gry') = \frac{1}{y}$, so
\[
\bar{\Psi}'(r_*) = (2-g) y y' - gr_* (y')^2 \tfrac{y^2}{1+y^2}.
\]
By Proposition~\ref{prop:i-will-take-a-limit}, the function $y$ is strictly decreasing due to $y'(0) = -1$, and $y>0$ for $r < 0$ due to $y(0) = L$.
Therefore, $g \geq 3$ together with $\{ y>0, y'<0, r_* \leq 0 \}$ makes both summands above non-negative, so $\bar{\Psi}'(r_*) > 0$ at any zero of $\bar{\Psi}$.
This shows that $\bar{\Psi}$ can have at most one sign change $r_* \in (-\infty,0]$, with $\bar{\Psi} > 0$ on $(r_*, 0]$.

\smallskip \noindent \textbf{Step 2:}
We now prove that $\bar{\Psi}(r) > 0$ as $r \to - \infty$, so $\bar{\Psi} > 0$ on $(-\infty,0]$ will follow from the above argument.
By the second part of Proposition~\ref{prop:i-will-take-a-limit}, the function $y$ is strictly decreasing, and therefore invertible, with inverse $r(y) = C_0 F_0 + C_1 F_1$ for $F_i$ the functions of~\eqref{eqn:r(y)-inverse} in order of appearance, with $F_0$ even and $F_1$ odd.
The conditions for $y_{\infty}$ yield $r(L) = 0$ and $r'(L) = \frac{1}{y'_{\infty}(0)} = -1$, so
\[
C_0 F_0(L) + C_1 F_1(L) = 0 , \quad C_0 F'_0(L) + C_1 F'_1(L) = - 1, \quad \implies C_i = (-1)^i \tfrac{F_i(L)}{(F_0 F'_1 - F_1 F'_0)(L)} ,
\]
by solving the $2 \times 2$ system for $C_0, C_1$.
The functions $F_0, F_1$ solve equation~\eqref{eqn:r-ode-in-terms-of-y}, so their Wronskian satisfies $(F_0 F'_1 - F_1 F'_0)' = \frac{cy}{1+y^2} (F_0 F'_1 - F_1 F'_0)$.
This implies that $F_0 F'_1 - F_1 F'_0 = C (1+y^2)^{\frac{c}{2}}$, where we evaluate $C=1$ by computing at $y=0$.
We therefore obtain
\begin{equation}\label{eqn:r(y)-expression-final-inverse}
    r(y) = (1+L^2)^{- \frac{c}{2}} \bigl[ F_1(L) F_0(y) - F_0(L) F_1(y) \bigr] \, .
\end{equation}
Note that $(n-1)^2 - 4g(n-2) > 0$ in our situation, meaning that the roots $a_{\pm}$ from~\eqref{eqn:r(y)-inverse} are real.
Indeed, if $(n-1)^2 < 4g(n-2)$, then Lemma~\ref{lemma:f-infty-analysis} would imply that $f_j (\sqrt{\frac{m_1}{\mu}}) \to \infty$ as $a_j \to \infty$, contradicting our assumption; thus, $(n-1)^2 \geq 4g(n-2)$.
Examining the list of isoparametric triples $(g,m_1,m_2)$ with $g(m_1 + m_2) = 2(n-2)$ and $g \in \{3,4, 6\}$ shows that $(n-1)^2 - 4g(n-2)$ is never zero, and it cannot be a square unless $(g, m_1 + m_2) \in \{ (3,8) , (4,7) \}$.
In all such cases, $n = \frac{g(m_1+m_2)}{2} + 2$ is even, so the resulting square is odd and $a_+ - a_- = \frac{\sqrt{(n-1)^2 - 4g(n-2)}}{2} \not\in \bZ$, meaning that the growth rates of the hypergeometric functions $F_i$ are non-resonant and correspond to $r(y) \sim C_{\pm} y^{- 2 a_{\pm}}$ as $y \to \infty$.
This behavior comes from the hypergeometric functions~\eqref{eqn:r(y)-inverse} solving equation~\eqref{eqn:r-ode-in-terms-of-y} with $c = n-2$, which in the large-$y$ regime behaves as the ODE
\[
y^2 r'' - (n-2) y r' + g(n-2)  r = 0.
\]
This is an Euler equation with the homogeneous solutions $r(y) \sim C_{\pm} y^{- 2a_\pm}$ obtained above, where $a_{\pm} = \frac{- (n-1) \pm \sqrt{(n-1)^2 - 4 g(n-2)}}{4}$.
The functions $y_j(r)$ considered in the limit procedure attain arbitrarily large values at arbitrarily far backwards times, since $y_j( - \epsilon_j^{-1} \sqrt{\frac{m_1}{\mu}}) = f_j(0) = a_j \to \infty$.
Therefore, $y$ is unbounded as $r \to - \infty$, and the above asymptotic analysis applies to yield 
\[
y(r) \sim C(-r)^{\beta}, \qquad \beta \in \{ \beta_+, \beta_-\}, \quad \beta_{\pm} = - \tfrac{1}{2 a_{\pm}} = \tfrac{(n-1) \pm \sqrt{(n-1)^2 - 4g(n-2)}}{2g (n-2)}, \qquad C \neq 0.
\]
Returning to the quantity $\bar{\Psi}$ considered in~\eqref{eqn:Psi-bar-quantity}, we find that $y'(r) \sim - C \beta (-r)^{\beta-1}$, so
\[
y - gr y' \sim C(1-g\beta) (-r)^{\beta} \qquad \text{and} \qquad \bar{\Psi} \sim C^2(1-g \beta)(-r)^{2 \beta} - \tfrac{1}{n-2},
\]
as $r \to - \infty$.
Crucially, we observe that $1 - g \beta \geq 1 - g \beta_+ > 0$, since
\[
(n-1) + \sqrt{(n-1)^2- 4g(n-2)} < (n-1) + (n-3) = 2(n-2),
\]
which means that $\beta_- < \beta_+ = \frac{(n-1) + \sqrt{(n-1)^2- 4g(n-2)}}{2g(n-2)} < \frac{1}{g}$.
Moreover, $\beta \geq \beta_- > 0$, whereby
\[
\bar{\Psi} \sim C^2 (1 - g \beta) (-r)^{2 \beta} - \tfrac{1}{n-2} \geq C^2(1 - g \beta_+) (-r)^{2 \beta_-} - \tfrac{1}{n-2} \to \infty
\]
as $r \to - \infty$.
Therefore, $\bar{\Psi} > 0$ for $r \to - \infty$, so $\bar{\Psi} > 0$ on $(-\infty,0]$ by the second step of our proof.
In particular, $\bar{\Psi}(0) = y(0)^2 - \frac{1}{n-2} = L^2 - \frac{1}{n-2} > 0$, contradicting the assumption that $L \in [ 0, \frac{1}{\sqrt{n-2}}]$.

The contradiction to the above assumption shows that the solutions $\{ f(0) = a, f'(0)= 0 \}$ have derivative blowup while positive if $a \geq \bar{C}(M)$, for some constant $\bar{C}(M)$.
Finally, we can take $C(M) = \bar{C}(M) + \frac{1}{\sqrt{n-2}}$ to cover shots from both $t=0$ and $t = \sqrt{\frac{m_1}{\mu}}$, completing the proof.
\end{proof}

\subsection{Type I}\label{section:type-I}

We prove the first part of Theorem~\ref{thm:capillary-interpolation}, exhibiting a smooth family of capillary minimal cones $\mathbf{C}_{M, \theta} \subset \bR^{n+1}_+$ for every angle $\theta \in (0 , \frac{\pi}{2}]$, whose links $\Sigma_{M, \theta}$ have the sphere bundle topology described in Section~\ref{section:topology}.
Following the discussion of Section~\ref{sec:minimal-surface-equation}, this result will be equivalent to proving the existence of a smooth family of solutions $\{ f_a \}$ to equation~\eqref{eqn:ode-star} for which the map $a \mapsto (1 - t_a^2) \, f'_a(t_a)^2$ is surjective onto $(0, + \infty]$, where $t_a$ is the zero of $f_a$ and $(1-t_a^2) f'_a(t_a)^2 = \tfrac{4}{g^2} \tan^2 \theta$ encodes the capillary angle $\theta$.
\begin{proof}[Proof of Theorem~\ref{thm:smooth-interpolation}]
    For $a > 0$, let $f_a$ denote the solution of equation~\eqref{eqn:ode-star} with initial data $\{ f_a(0) = a, f'_a(0) = 0 \}$.
    For a solution $f_a$ with a zero at $t_a$, Proposition~\ref{prop:mean-curvature-equation} shows that $\mathbf{C}_{M_i,a} := \textup{graph} \bigl( \rho f_a( \sin \frac{g s(\omega)}{2}) \bigr)$ is a capillary minimal cone in $\bR^{n+1}_+$ with angle $\theta(a) = \arctan ( \frac{g}{2} \sqrt{1-t_a^2} \, |f'_a(t_a)| )$.
    The smooth dependence theorem for ODE solutions implies that the map $a \mapsto \sqrt{1 - t_a^2} \, |f'_a(t_a)|$ is smooth while finite, so $a \mapsto \theta(a)$ is smooth while $\theta(a) \neq \frac{\pi}{2}$.

    For small $a > 0$, the rescaling $\frac{1}{a} f_a$ produces a solution of equation~\eqref{eqn:rescaled-equation-lambda-ODE} with parameter $\lambda = a^2 \downarrow 0$.
    As in the discussion of~\cites{FTW-1 , new-minimal-surfaces }, the rescaled functions converge uniformly in $C^{\infty}_{\textup{loc}}$ to the solution $f_0$ of $\cL_M f = 0$ with initial data $\{ f_0(0) = 1, f'_0(0) = 0 \}$, given by the function $f_M(t)$ of~\eqref{eqn:hypergeometric-isoparametric}; this proves Theorem~\ref{thm:smooth-interpolation}.
    On the other hand, Proposition~\ref{prop:dont-shoot-too-high} shows that if $a > C(M)$, then the solution $f_a$ has finite-time blowup before reaching zero.
    Therefore, we can obtain some
    \[
    a_{M_i}^* := \sup \{ a > 0 : f_a \; \text{ reaches a zero} \} < C(M) < \infty.
    \]
    A standard continuity argument as in the proof of Lemma~\ref{lemma:convergence-of-solutions} shows that the solution $f_{a_{M_i}^*}$ reaches zero at a point $t_{a_{M_i}^*}$ where $f_{a_{M_i}^*}(t_{a_{M_i}^*}) = 0$ and $f'_{a_{M_i}^*}(t_{a_{M_i}^*}) = - \infty$.
    Consequently, each $\mathbf{C}_{M,a}$ for $a \leq a_{M_i}^*$ is a capillary minimal cone, with the property that the rescaled cones $\frac{1}{a} \mathbf{C}_{M,a}$ converge to $\textup{graph} ( \rho f_0(t))$ and their contact angles satisfy $\theta(a) \asymp a$ for small $a$.
    The association $a \mapsto f_a$ is $C^{\infty}_{\textup{loc}}(0,a_{M_i}^*)$, so the map $a \mapsto \mathbf{C}_{M,a}$ produces a smooth family of cones.
    Since $\theta(a_{M_i}^*) = \frac{\pi}{2}$, we conclude that the map $a \mapsto \theta(a)$ is surjective on $(0,\frac{\pi}{2}]$, so the cones $\mathbf{C}_{M,a}$ produce all capillary angles. 
    Moreover, in an interval $\theta \in ( \frac{\pi}{2} - \ve, \frac{\pi}{2}]$, the cones can be labelled $\mathbf{C}_{M,\theta}$ by their contact angle $\theta$, in such a way that $\theta \mapsto \mathbf{C}_{M,\theta}$ is a smooth map and $\mathbf{C}_{M,\theta} \to \mathbf{C}_{M, \frac{\pi}{2}}$ in $C^{\infty}_{\textup{loc}}$, as desired.
\end{proof}

\subsection{Type II}
Finally, we complete the proof of Theorem~\ref{thm:capillary-interpolation} by constructing the capillary cones $\bar{\mathbf{C}}_{M,\theta}$ of type II.
As in~\cite{new-minimal-surfaces}, the key step is to produce a small window near $\sqrt{\frac{m_1}{m_1 + m_2}}$, from which no solutions, \textit{shots}, reach a second zero.

\begin{proposition}\label{prop:small-breather}
There exists an $\bar{\ve}_M>0$ such that for any $t_1 \geq \sqrt{\frac{m_1}{m_1 + m_2}} - \bar{\ve}_M$ and $a>0$, the solution of equation~\eqref{eqn:rescaled-equation-lambda-ODE} with initial data $\{ f(t_1) = 0, f'(t_1) = a \}$ does not reach a second zero.    
\end{proposition}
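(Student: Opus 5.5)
We argue by contradiction and pass to the blow-up limit at the turning point $t_\alpha := \sqrt{\frac{m_1}{m_1+m_2}}$, following~\cite{new-minimal-surfaces}*{Proposition 2.9}. Suppose there exist $t_{1,i} \to t_\alpha$, with $t_{1,i} \geq t_\alpha - 1/i$, and slopes $a_i \in (0,\infty]$ such that the solutions $f_i$ of~\eqref{eqn:rescaled-equation-lambda-ODE} with data $\{f_i(t_{1,i}) = 0,\ f_i'(t_{1,i}) = a_i\}$ reach a second zero $t_{2,i}$; the case $a_i = \infty$ is allowed through Lemma~\ref{lemma:slope-infinity-solution}.

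\textbf{Case $t_{1,i} \geq t_\alpha$.} Here no second zero is possible. Indeed, Lemma~\ref{lemma:general-properties-of-solutions} shows that $h_f$ stays positive while $f$ is positive, and at a zero $t_2$ we have $0 < h(t_2) = -\frac{g}{2}A_M(t_2) f'(t_2)$. Combined with the forward-invariance of the region $R$ and the monotonicity of $f/\psi$ from~\eqref{eqn:2.6NMS}, this prevents $f$ from rising and then returning to zero when it starts beyond the minimal leaf. So we may assume $t_{1,i} < t_\alpha$, and then Lemma~\ref{lemma:general-properties-of-solutions}$(ii)$ gives $t_{2,i} > t_\alpha$.

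\textbf{Rescaling.} We rescale about $t_\alpha$ as in Proposition~\ref{prop:i-will-take-a-limit}. The natural scale is $\epsilon_i := t_\alpha - t_{1,i}$, with $r = \epsilon_i^{-1}(t - t_\alpha)$ and $y_i(r) := \mu_i f_i(t)$, where $\mu_i$ is a height normalization chosen so that the maximum of $y_i$ is $1$. The equation~\eqref{eqn:rescaled-equation-lambda-ODE} is not scale-invariant in $f$ because of the factor $\frac{\lambda (f')^2}{1+\lambda f^2}$. Depending on whether $\sup f_i$ stays bounded, tends to $0$, or tends to $\infty$, the limit equation is therefore~\eqref{eqn:phi-limit-equation} with $\frac{(y')^2}{1+y^2}$, its degenerate version, or the $\lambda \to \infty$ analogue with $\frac{(y')^2}{y^2}$. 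In each case the lower-order terms are $O(\epsilon_i)$ after rescaling. The bound on $y_i''$ from $h_i > 0$, exactly as in Proposition~\ref{prop:i-will-take-a-limit}, gives $C^\infty_{\textup{loc}}$ compactness. The limit $y_\infty$ solves the limit equation on $[-1,\infty)$ with $y_\infty(-1) = 0$ and $y_\infty' (-1) \in (0,\infty]$.

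\textbf{Contradiction in the limit.} We use that every non-constant solution of~\eqref{eqn:phi-limit-equation}, and of its variants, is strictly monotone: this is the second part of Proposition~\ref{prop:i-will-take-a-limit}, since $y' = 0 \Rightarrow y'' = 0$. Hence $y_\infty$ is increasing on its whole domain and cannot come back down. This must be upgraded to a contradiction with the existence of $t_{2,i}$, and there are two sub-cases.

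\emph{The second zero stays at bounded rescaled distance.} Then $r_{2,i} := \epsilon_i^{-1}(t_{2,i} - t_\alpha)$ stays bounded and converges to some $r_2$. In the limit $y_\infty(r_2) = 0$, contradicting monotonicity. If the approach to $t_{2,i}$ has $f_i' \to -\infty$, the local expansion~\eqref{eqn:f-prime-t,f-t} still lets us pass to the limit.

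\emph{The second zero escapes to infinity.} If $r_{2,i} \to \infty$, we need a quantitative statement. We plan to show that the increasing limit profile $y_\infty$ reaches, in finite $r$, a regime where $h = y - g r y'$ becomes negative. This happens because the inverse $r(y)$ from~\eqref{eqn:r(y)-inverse} grows like $y^{-2a_\pm}$, with exponent $-2a_\pm \geq 0$ satisfying $g\beta_\pm < 1$ in the notation of Proposition~\ref{prop:dont-shoot-too-high}. By Lemma~\ref{lemma:general-properties-of-solutions}$(iii)$, negativity of $h$ forces $f_i$ to be increasing forever for large $i$, so $f_i$ has no second zero.

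\textbf{Main obstacle.} I expect the hardest step to be this last one. It requires controlling $y - g r y'$ at large $r$ uniformly in the three height regimes, and it requires justifying the transfer of the sign of $h$ from the limit back to the $f_i$ at a fixed large $r$. That transfer should follow from $C^1$ convergence on compacts, together with the forward invariance in Lemma~\ref{lemma:general-properties-of-solutions}$(iii)$.
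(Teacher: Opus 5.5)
\textbf{The gap is in your final step, which is the heart of the proposition.} You propose to show that $h_\infty = y_\infty - g r y_\infty'$ turns negative using the large-$y$ asymptotics $r(y)\sim C y^{-2a_\pm}$ from~\eqref{eqn:r(y)-inverse} together with $g\beta_\pm<1$. This gives the wrong sign. If $y\sim C r^{\beta}$, then $r y'\sim \beta y$, so $h\sim(1-g\beta)\,y$, which is \emph{positive} when $g\beta<1$. This is exactly how Proposition~\ref{prop:dont-shoot-too-high} uses $1-g\beta_+>0$ to get $\bar{\Psi}\to+\infty$. In addition, the exponents $a_\pm$ are real only when $(n-1)^2>4g(n-2)$, and nothing in this setting guarantees that.

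So the contradiction cannot come from the asymptotic regime. It has to come from the initial condition $y_\infty(-1)=0$, $y_\infty'(-1)=\infty$. The paper argues as follows.
\begin{itemize}
\item Assume $h_\infty\ge 0$. The relation $h'=cg\,r\frac{(y')^2}{1+y^2}h-(g-1)y'$ forces $h>0$ everywhere.
\item Hence $y''<0$, the solution exists globally, $y'\to 0$ and $y\to\infty$.
\item Set $\tau=\on{arcsinh} y$ and $L(\tau)=\sqrt{1+y^2}/h$. Then $L'=(g-1)L^2-(c-1)\tanh\tau\,L+c$ with $L(0)=0$.
\item Comparing with the subsolution $c\tanh\tau$ gives $L'\ge c^{-1}L^2+c$. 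So $L\ge c\tan\tau$ blows up by $\tau=\frac{\pi}{2}$, i.e.\ $h$ vanishes at a finite $r$, a contradiction.
\end{itemize}
Your proposal is missing an argument of this kind, one that exploits $L(0)=0$.

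\textbf{Your compactness set-up also needs repair.} Normalizing so that $\max y_i=1$ can produce a degenerate limit. If the maximum of $f_i$ sits at rescaled distance $r\to\infty$, the limit on compacts can be trivial. For example, in your small-height regime the limit equation is $y''=0$, so $y_\infty=s(r+1)$ with possibly $s=0$. Then no sign information on $h_\infty$ survives.

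The paper avoids your three height regimes in two ways.
\begin{itemize}
\item It first reduces, via Lemmas~\ref{lemma:slope-infinity-solution} and~\ref{lemma:convergence-of-solutions}, to infinite-slope shots from $t_\alpha-\epsilon_i$. There~\eqref{eqn:f-prime-t,f-t} gives $y_i(r)\approx\sqrt{2(r+1)/(g(n-2))}$ with no height normalization.
\item It invokes Proposition~\ref{prop:dont-shoot-too-high} for the bound $\limsup_i f_i(t_\alpha)\le C(M)$, which Proposition~\ref{prop:i-will-take-a-limit} requires.
\end{itemize}

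\textbf{Minor point: the case $t_{1,i}\ge t_\alpha$.} You cite Lemma~\ref{lemma:general-properties-of-solutions} outside its hypotheses there. In that case $h(t_1)=-\frac{g}{2}A_M(t_1)a\le 0$, and $h$ becomes negative immediately by~\eqref{eqn:ODE-for-h}. It is this negativity of $h$, not positivity, that keeps $f$ increasing.
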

\begin{proof}
    We know from Lemma~\ref{lemma:general-properties-of-solutions} that for any $\lambda, a \in [0,\infty]$, the solution of equation~\eqref{eqn:rescaled-equation-lambda-ODE} with initial data $\{ f(t_1) =0, f'(t_1) = a \}$ starting from $t_1 = \sqrt{\frac{m_1}{m_1+m_2}}$ remains strictly increasing while defined.
    The reduction strategy of~\cite{new-minimal-surfaces}*{Proposition 2.8} therefore transfers directly, upon applying the compactness results of Lemmas~\ref{lemma:slope-infinity-solution} and~\ref{lemma:convergence-of-solutions}; this shows that we have such a uniform $\bar{\ve}_M > 0$ if and only if there does not exist a sequence of $\epsilon_i \downarrow 0$ such that the solutions $f_i := f_{t_{\alpha} - \epsilon_i}$ with initial data $\{ f(t_{\alpha} - \epsilon_i) = 0, f'(t_{\alpha} - \epsilon_i) = + \infty\}$ reach a second zero, where $t_{\alpha} := \sqrt{\frac{m_1}{m_1 + m_2}}$ for brevity.
    
    Suppose, for contradiction, that such a sequence $\epsilon_i \downarrow 0$ exists, so Lemma~\hyperref[lemma:general-properties-of-solutions]{\ref{lemma:general-properties-of-solutions}$(i)$} implies that $h_i = f_i - \frac{g}{2} A_M f'_i > 0$, while Proposition~\ref{prop:dont-shoot-too-high} forces $\limsup_i f_i(t_{\alpha}) \leq C(M) < + \infty$.
    Note that Proposition~\ref{prop:dont-shoot-too-high} is applicable to this setting because we may assume, without loss of generality, that a subsequence of the $f_i$ has $f'_i( t_{\alpha}) \leq 0$: if we had $f'_i(t_{\alpha}) > 0$ for all but finitely many $i$, we would work with the functions $\hat{f}(t) := f(\sqrt{1-t^2})$ near the point $\hat{t}_{\alpha} := \sqrt{1 -t_{\alpha}^2} = \sqrt{\frac{m_2}{m_1 + m_2}}$, which satisfy equation~\eqref{eqn:ode-star} for the triple $(g,m_2, m_1)$ due to Lemma~\ref{lemma:change-of-variables}.
    Moreover, $\on{sgn} \hat{f}'_i(\hat{t}_{\alpha}) = - \on{sgn} f'_i(t_{\alpha})$, so applying Proposition~\ref{prop:dont-shoot-too-high} to the functions $\hat{f}_i$ proves $\limsup_i \hat{f}_i(\hat{t}_{\alpha}) \leq C(M)$ as needed.

    Therefore, the functions $f_i$ and scales $\epsilon_i$ satisfy the conditions of Proposition~\ref{prop:i-will-take-a-limit}, whereby letting $y_i(r) := f_i(t_{\alpha} + \epsilon_i r)$ allows us to extract a limit $y_{\infty}$ of the $\{ y_i \}$, which is a solution of equation~\eqref{eqn:phi-limit-equation}.
    Moreover, we have uniform bounds $\| y_i\|_{C^{k,\alpha}(-1+\delta,S)} \leq C(k,\alpha,\delta,S)$ and $y_i \to y_{\infty}$ in $C^{\infty}_{\textup{loc}}(-1+\delta,S)$ for any $\delta, S>0$, by the convergence and compactness arguments of~\cite{new-minimal-surfaces}*{Lemmas 2.5 \& 2.6}, which correspond to Lemmas~\ref{lemma:slope-infinity-solution} and~\ref{lemma:convergence-of-solutions}.
    At $r=-1$, the initial conditions for $f_i$ yield $\{ y_i(-1) = 0, y'_i(-1) = \infty \}$; indeed, expressing $A(t_{\alpha} + \epsilon_i r) = 2 \epsilon_i r + O (\epsilon_i^2 r)$ in the computations~\eqref{eqn:f-prime-t,f-t}, we obtain the expansions for $f_i, f'_i$ as
    \[
    f_i(t) = \sqrt{ \tfrac{2}{(n-2) g \epsilon_i }} \, |t - t_i(-1)|^{\frac{1}{2}} + O ( |t - t_i(-1)|), \qquad f'_i(t) = \sqrt{\tfrac{1}{2(n-2) g \epsilon_i}} \, |t - t_i(-1)|^{- \frac{1}{2}} + O(1),
    \]
    for $0 < t - (t_{\alpha} - \epsilon_i) \ll \epsilon_i$.
For $y_i(r) = f_i(t_i(r))$ with $t = t_i(r)$ and $r \in (-1,-1+\delta]$, this transformation therefore produces $y_i(r) = \sqrt{\frac{2(r+1)}{g(n-2)}} + O (r+1)$ and $y'_i(r) = \sqrt{\frac{1}{2(n-2) g (r+1)}}  + O(1)$.
We deduce that the limit solution $y_i \xrightarrow{C^{\infty}_{\textup{loc}}} y_{\infty}$ has vertical initial slope at $r=-1$, and can be obtained via the local series expansion $y_{\infty}(r) = \sqrt{\frac{2(r+1)}{g(n-2)}} + O(r+1)$ near $r = -1$, as in~\cite{new-minimal-surfaces}*{Lemma 2.6}.
By the considerations of Lemmas~\ref{lemma:slope-infinity-solution} and~\ref{lemma:convergence-of-solutions}, the function $y_{\infty}$ agrees with the $C^{\infty}_{\textup{loc}}$ subsequential limit of solutions $y^{(N)}$ to~\eqref{eqn:phi-limit-equation} with initial data $\{ y^{(N)}(-1) = 0, y^{(N)}{}'(-1) = N \}$ as $N \to \infty$.
We conclude that $y_{\infty}$ is the maximally extended solution of the initial value problem for~\eqref{eqn:phi-limit-equation} with data $\{ y(-1) = 0, y'(-1) = \infty\}$, understood in the limit sense of Lemma~\ref{lemma:slope-infinity-solution}.
Moreover, the functions $h_i(r)$ are computed in terms of $y_i(r)$ in~\eqref{eqn:hi-term-to-h-infty} and satisfy $h_i \to h_{\infty} := y_{\infty} - gr y'_{\infty}$ in $C^{\infty}_{\textup{loc}}$; the assumption that $h_i > 0$ implies that $h_{\infty} \geq 0$ is everywhere non-negative.

Finally, we obtain a contradiction by showing that $h_{\infty}$ must become negative at some $r_* > -1$ for the limit solution $y_{\infty}$.
We suppose that this is not the case and argue as in~\cite{new-minimal-surfaces}*{Lemma 2.7}; for notational simplicity, we write $(y,h,c) := (y_{\infty}, h_{\infty},n-2)$ in what follows.
By the second part of Proposition~\ref{prop:i-will-take-a-limit}, the solution $y' > 0$ is strictly increasing while defined, and we see that $h$ satisfies
\[
h'(r) = cg  \,r \, \tfrac{y'(r)^2}{1 + y(r)^2} h(r) - (g-1) y'(r)
\]
by analogy with the relation~\eqref{eqn:2.6NMS}.
At a point $r_*$ where $h(r_*) = 0$, this relation would imply $h'(r_*) = - (g-1) y'(r_*) < 0$, so $h(r_* + \delta) < 0$ for small $\delta > 0$, contradicting $h \geq 0$; therefore, $h > 0$ for all $r \geq -1$.
The equation~\eqref{eqn:phi-limit-equation} becomes $y'' = - c \frac{(y')^2}{1+y^2} h < 0$, so $y'$ is strictly decreasing and positive for all time $r \in (-1,S_b)$, where $S_b \leq \infty$ denotes the maximal existence time of the solution $y$. 
This implies that $y$ cannot have finite-time derivative blowup at any $S_b < \infty$, so $S_b = \infty$ and $y \in C^{\infty}(-1,\infty) \cap C^{\frac{1}{2}} ([-1,\infty))$ is defined for all forward time.
Moreover, $\hat{\ell} := \lim_{r \to \infty} y'(r)$ exists and is finite, with $\hat{\ell} \geq 0$.
In fact, $\hat{\ell} > 0$ is impossible, else $y'' < 0$ would produce some $S_0$ with $\hat{\ell} < y' < \frac{3}{2} \hat{\ell}$ for $r \geq S_0$, whereby
\[
h(r) = y(r) - gr y'(r) \leq \tfrac{3}{2} \hat{\ell}(r - S_0) + y(S_0) - g \hat{\ell} r < y(S_0) - \tfrac{2g-3}{2} \hat{\ell} r \,,
\]
which becomes negative for $g \geq 2$ and $r > S_0 + 4 \hat{\ell}^{-1} y(S_0)$, contradicting $h(r)>0$.
Thus, $\lim_{r \to \infty} y'(r) = 0$.
Next, using $y' >0$ and $\frac{y}{1+y^2} \leq \frac{1}{2}$ in equation~\eqref{eqn:phi-limit-equation}, we find $y'' \geq - \frac{c}{2} (y')^2$, so
\[
\bigl( \tfrac{1}{y'} \bigr)' \leq \tfrac{c}{2} \implies \tfrac{1}{y'(r)} \leq \tfrac{c}{2} (r + B_1) \implies y(r) \geq \tfrac{2}{c} \log(r+B_1) + B_0.
\]
In particular, $y \to \infty$ is unbounded as $r \to \infty$.
Since $y(-1) = 0$, we see that $y$ defines a monotone bijection $[-1,\infty) \to [0,\infty)$, so $\tau(r) := \on{arcsinh} y(r)$ is a strictly increasing map with image $[0,\infty)$ and $\tau'(r) = \frac{y'}{\sqrt{1 +y^2}} = \frac{y'}{\cosh \tau(r)}$.
We define the quantity
\begin{equation}\label{eqn:L(tau)-function}
    L(\tau) := \frac{\sqrt{1 + y(r)^2}}{h(r)} = \frac{\cosh \tau}{y - gr y'} = \frac{1}{\tanh \tau - g r \tau'(r)}, \qquad \text{for } \; r(\tau) = y^{-1}( \sinh \tau).
\end{equation}
Given the properties $h > 0$ and $h(-1) = \infty$, the function $\frac{\sqrt{1 + y(r)^2}}{h(r)}$ is well-defined for all $r \in [-1,\infty)$, so $L$ is defined for all $\tau \in [0,\infty)$ with $L(0) = 0$.
Moreover, $L(\tau)$ satisfies the equation
\begin{equation}\label{eqn:L(tau)-Riccati-equation}
    \cP[L] := L'(\tau) - (g-1) L(\tau)^2 + (c-1) \tanh \tau \, L(\tau) - c = 0 , \qquad L(0) = 0.
\end{equation}
To see this, note that $y' = \tau'(r) \cosh \tau$ and $y'' = \tau'' \cosh \tau + (\tau')^2 \sinh \tau$, so the equation~\eqref{eqn:phi-limit-equation} becomes
\[
\cosh \tau \bigl( \tau'' + (\tau')^2 \tanh \tau \bigr) + c (\tau')^2 \bigl( \sinh \tau - gr \cosh \tau \, \tau' \bigr) = 0.
\]
On the other hand, the definition of $L$ in~\eqref{eqn:L(tau)-function} implies that $g r \tau' = \tanh \tau - L^{-1}$, and differentiating this relation allows us to eliminate $\tau''$ from the above equation and obtain~\eqref{eqn:L(tau)-Riccati-equation}.

Observe that the function $c \tanh \tau$ is a Riccati subsolution for the operator $\cP$, with $\cP [ c \tanh \tau] = - c \bigl((g-2)c+2\bigr)\tanh^2 \tau < 0$, so $L(\tau) > c \tanh \tau$ for $\tau>0$.
We may then write $(c-1) \tanh \tau \leq \frac{c-1}{c} L$, so the differential equation~\eqref{eqn:L(tau)-Riccati-equation} leads to
\begin{align*}
& L'(\tau) = L(\tau) \bigl( (g-1) L(\tau) - (c-1) \tanh \tau \bigr) + c \geq c^{-1} L(\tau)^2 + c,
\end{align*}
due to $g \geq 2$.
The equation $\underline{L}' = c^{-1} \underline{L}^2 + c$ has solution $\underline{L}(\tau) = c \tan \tau$, so $L(\tau) \uparrow \infty$ as $\tau \uparrow \tau_*$ also blows up in finite time $\tau_* \leq \frac{\pi}{2}$, leading to a contradiction. 

Therefore, $h = y - gr y'$ must cross zero, thus it becomes negative at some $r_* > -1$, and so do all the functions $h_i = f_i - \frac{g}{2} A_M f'_i$ for $\epsilon_i$ sufficiently small.
Then, parts \hyperref[lemma:general-properties-of-solutions]{$(i) , (iii)$} of Lemma~\ref{lemma:general-properties-of-solutions} show that the solutions $f_i$ cannot reach a second zero past $t_{\alpha}$, so a sequence of $\epsilon_i \downarrow 0$ as above cannot exist.
We conclude that $f_{t_1}$ cannot reach a second zero if $t_1 > t_{\alpha} - \bar{\ve}_M$, as claimed.
\end{proof}

\begin{proposition}\label{prop:window-near-zero}
    There exists some $\ve_M$ such that for any $t_1 \in (0,\ve_M)$ and $a \in (0,\infty]$, the solution $f$ to equation~\eqref{eqn:rescaled-equation-lambda-ODE} with initial data $\{f(t_1) = 0,f'(t_1) = a\}$ reaches a second zero $t_2$ in its maximal domain of existence, with $(1-t_2^2) f'(t_2)^2 < \frac{1}{2} a^2$.
\end{proposition}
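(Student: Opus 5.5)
The plan is to treat the shot from a small $t_1$ as a two-scale problem. There is an inner region $t\sim t_1$, where the singular drift $\tfrac{m_1}{t}f'$ dominates. There is an outer region $t\gg t_1$, where the solution is small, the nonlinearity in \eqref{eqn:rescaled-equation-lambda-ODE} is negligible, and the profile is governed by the linear operator $\cL_M$. The conclusion $(1-t_2^2)f'(t_2)^2<\tfrac12 a^2$ should come from the inner region converting the initial slope $a$ into a height $O(a t_1)$. The outer linear problem then returns only a slope of size $O(a t_1)\ll a$ at the second zero.

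\textbf{Step 1 (inner region, finite $a$).} I would rescale $t=t_1 s$ and set $f(t)=a t_1\, w(s)$, so that $w(1)=0$ and $w'(1)=1$. In the variable $s$, the terms $(1-t^2)f''+\tfrac{m_1}{t}f'$ survive at leading order. The remaining terms of $\cL_M$ carry a factor $t_1^2$. I will compute the nonlinear term using $(n-2)\tfrac{g}{2}\cdot\tfrac{m_1}{m_1+m_2}=\tfrac{g^2 m_1}{4}$ together with $\lambda(f')^2/(1+\lambda f^2)\le \lambda a^2$ near $s=1$. It contributes a term $\tfrac{g^2m_1}{4}\,\tfrac{\lambda a^2 (w')^2}{1+\lambda a^2t_1^2w^2}\,\tfrac{w'}{s}$, which has the \emph{same sign} as the drift. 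Hence it only accelerates the decay of $w'$. Consequently $w'$ stays positive and decreasing, and by comparison with $w''+\tfrac{m_1}{s}w'\le 0$ it satisfies $w'(s)\le s^{-m_1}$. For $m_1\ge 2$ this gives $w\le \tfrac{1}{m_1-1}$; for $m_1=1$ it gives $w\lesssim\log s$. Translating back, on $[t_1,\delta]$ for a fixed small $\delta$, we get $0<f\le C a t_1\log(1/t_1)$ and $0\le f'\le a (t_1/t)^{m_1}$. Both are $o(a)$ at $t=\delta$, uniformly in $\lambda\ge 0$.

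\textbf{Step 2 (outer region).} On $[\delta,1)$ I would normalize by $\varepsilon:=f(\delta)$ and write $f=\varepsilon v$. Lemma~\ref{lemma:general-properties-of-solutions} keeps $h_f>0$ while $f>0$. Then $v$ solves \eqref{eqn:rescaled-equation-lambda-ODE} with parameter $\lambda\varepsilon^2$, and with data $v(\delta)=1$ and $v'(\delta)=f'(\delta)/\varepsilon$ bounded by Step 1. In this regime I will run the Sturm comparison of Lemma~\ref{lemma:reach-zero-before-hypergeometric}. Since $h_f>0$, the nonlinear term makes $\cL_M v<0$. The Wronskian identity against the solution of $\cL_M$ that is regular at $0$ (or against the solution with matching data at $\delta$, which is close to $f_M$ as $\delta\to 0$) then forces $v$ to vanish at some $t_2\le t_M+o(1)<1$ or to blow up while positive. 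To exclude blowup I would use the asymptotics \eqref{eqn:f-prime-t,f-t} and the $\Psi$-dichotomy of Lemma~\ref{lemma:psi-detect-blowup}. Blowup requires height at $t_\alpha$ above $1/\sqrt{n-2}$ (after undoing the rescaling of \eqref{eqn:rescaled-equation-lambda-ODE}), whereas our $f$ has height $O(at_1\log(1/t_1))$. Continuous dependence on data then gives $|f'(t_2)|\le C\varepsilon$ with $C$ uniform. This yields $(1-t_2^2)f'(t_2)^2\le C a^2 t_1^2\log^2(1/t_1)<\tfrac12 a^2$ once $t_1<\ve_M$.

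\textbf{Step 3 ($a$ large or $a=\infty$) and the main obstacle.} The weak point is uniformity in $a$. When $\lambda a^2t_1^2$ is not small, the height $a t_1$ need not be small in the nonlinear scaling, and the bound from Step 2 degenerates. I would handle this by noting that in the inner region the accelerating cubic term caps the outgoing height independently of $a$. With $\tau=\on{arcsinh}(\sqrt{\lambda}f)$, an analysis analogous to the Riccati argument in Proposition~\ref{prop:small-breather} gives $\sqrt{\lambda}f(\delta)\le C t_1^{\gamma}$ for some $\gamma>0$, with $f'(\delta)$ controlled likewise. The infinite-slope shots of Lemma~\ref{lemma:slope-infinity-solution}, whose local form is \eqref{eqn:f-prime-t,f-t}, are covered by the same bound and by the continuity of Lemma~\ref{lemma:convergence-of-solutions}. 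For them the conclusion $(1-t_2^2)f'(t_2)^2<\infty=\tfrac12 a^2$ is automatic once a second zero with finite slope is produced. I expect that proving this $a$-uniform inner estimate, and showing that the second zero is reached with finite slope rather than with $f'\to-\infty$, is the main obstacle. The first thing I would try is a contradiction-compactness argument: take $t_1^{(i)}\to 0$ and rescale, as in Proposition~\ref{prop:i-will-take-a-limit} but centered at $0$.
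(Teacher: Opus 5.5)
\textbf{There is a genuine gap in Step 3.} Your two-scale architecture is sound. Steps 1--2 do handle slopes with $\sqrt{\lambda}\,a\,t_1\log(1/t_1)$ small, and the $O(a^2t_1^2\log^2(1/t_1))$ bound is the right mechanism for small $a$. But $\ve_M$ must be independent of $a\in(0,\infty]$. The $a$-uniform inner estimate you postpone to Step 3 is the real content of the proposition, and as written it is only asserted. The Riccati argument you cite from Proposition~\ref{prop:small-breather} concerns the blow-up limit \eqref{eqn:phi-limit-equation} at $t_\alpha$, not the layer at $t=0$. The compactness route is not carried out.

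The missing idea is half-present in your Step 1. You noticed that the cubic term has the same sign as the drift, then discarded it by comparing only with $w''+\frac{m_1}{s}w'\le 0$. That is exactly why every bound you obtain scales with $a$. Keep the cubic term instead:
\begin{itemize}
\item Scale to $\lambda=1$; the conclusion is invariant under $f\mapsto\sqrt{\lambda}f$, $a\mapsto\sqrt{\lambda}a$.
\item While $t$ and $f$ are small and $f'\ge 0$, you have $A_M\le-\frac34\frac{m_1}{m_1+m_2}t^{-1}$, hence $h_f\ge\frac{3gm_1}{8(m_1+m_2)}t^{-1}f'$.
\item Equation \eqref{eqn:ode-star} then gives $f''\le-\beta t^{-1}f'\bigl(1+(f')^2\bigr)$ for some $\beta>0$ depending only on $(g,m_1,m_2)$.
\item The comparison equation $p'=-\beta t^{-1}p(1+p^2)$ is linear in $p^{-2}$. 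Its solution satisfies $p(t)=\bigl((1+p(t_1)^{-2})(t/t_1)^{2\beta}-1\bigr)^{-1/2}\le\bigl((t/t_1)^{2\beta}-1\bigr)^{-1/2}$ for \emph{every} $p(t_1)\in(0,\infty]$.
\item Integrating, $f(Ct_1)\le t_1\int_1^{C}(u^{2\beta}-1)^{-1/2}\,du=O_C(t_1)$ and $f'(Ct_1)\le (C^{2\beta}-1)^{-1/2}$, or else $f'$ has already vanished while $f$ is small. This holds uniformly in $a$, including $a=\infty$.
\end{itemize}
This is how the paper closes the argument: the cubic term caps the slope after a time of order $t_1$, whatever the initial slope.

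\textbf{Two points in Step 2 also need repair.} First, your appeal to $\Psi$ runs in the wrong direction. Lemma~\ref{lemma:psi-detect-blowup} says that $\Psi,\Psi'\ge 0$ at some $t_0\ge t_\alpha$ prevents a zero. It does not say that height below $1/\sqrt{n-2}$ at $t_\alpha$ forces a zero with finite slope. In fact $\Psi\to 0$ at a zero where $f'=-\infty$, so $\Psi$ cannot rule out vertical contact. What excludes both blow-up and vertical contact is perturbation from the linear problem. For $\lambda\ve^2$ small and data in a compact set on which the $\cL_M$-solution crosses zero transversally, smooth dependence on $\lambda$ gives a transversal second zero with $|f'(t_2)|\le C\ve$. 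This is the paper's property~\eqref{eqn:original-assumption}.

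Second, normalizing by $\ve=f(\delta)$ needs $v'(\delta)=f'(\delta)/f(\delta)$ to stay in such a compact set. That requires a lower bound on $f(\delta)$, which Step 1 does not provide: it gives only upper bounds. Nor is $f'\ge 0$ justified there, since the term $\frac{4(n-1)}{g^2}f$ can turn $f'$ negative. The paper avoids this by restarting at $\hat t_1\sim Ct_1$, where both $f$ and $f'$ are small. It then applies the linear-perturbation property to data in $(0,\delta)\times(0,\delta)$ at a small starting point, rather than normalizing by $f$.
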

\begin{proof}
We study solutions of equations~\eqref{eqn:ode-star} and~\eqref{eqn:rescaled-equation-lambda-ODE} starting from a point $t_1$ with initial data $(f(t_1), f'(t_1))$, which we denote by $f_{(f(t_1),f'(t_1))}$.
For any $\alpha > 0$, we consider the property
\begin{equation}\label{eqn:original-assumption}\tag{S}
\begin{split}
    & \exists \; \delta = \delta(\alpha) > 0 : t_1 \in (0, \delta), \quad (f(t_1), f'(t_1)) \in (0,\delta) \times (0,\delta), \\
    & \implies f_{(f(t_1),f'(t_1))} \; \text{ has a zero } \; t_2 > \sqrt{\tfrac{m_1}{m_1+m_2}} \; \text{ with } \; (1-t_2^2) f'(t_2)^2 < \alpha.
\end{split}
\end{equation}
This property is obtained by applying a compactness argument as in Step 1 of~\cite{new-minimal-surfaces}*{Proposition 2.9}.
Indeed,~\cite{one-phase-isoparametric}*{Proposition 3.5} shows the linear problem $\cL_M \bar{f} = 0$ has the property
\begin{equation}\label{eqn:linear-problem-property}
    \exists \; \delta > 0 \; : \; (\bar{f}(t_1), \bar{f}'(t_1)) \in (0,2) \times (0,2) \implies \bar{f}(t_2) = 0, \; \sqrt{1-t_2^2} \, |\bar{f}'(t_2)| < C.
\end{equation}
Applying the smooth dependence theorem for solutions of ODE in terms of the parameter $\lambda$ in equation~\eqref{eqn:rescaled-equation-lambda-ODE}, we find some small $\lambda_M > 0$ such that the property~\eqref{eqn:linear-problem-property} holds for solutions $f_{\lambda}$ of equation~\eqref{eqn:rescaled-equation-lambda-ODE} with any $\lambda \in [0,\lambda_M)$.
Finally, $f = \sqrt{\lambda} f_\lambda$ solves equation~\eqref{eqn:ode-star} with 
\[
(f(t_1), f'(t_1)) \in (0, 2 \sqrt{\lambda}) \times (0,2 \sqrt{\lambda}), \qquad f(t_2) = 0, \quad \sqrt{1-t_2^2} \, |f'(t_2)| < C \sqrt{\lambda},
\]
which implies the property~\eqref{eqn:original-assumption} upon taking $\lambda$ and $\delta$ sufficiently small.
Applying a further compactness argument in the parameters $\lambda, a$ as in~\cite{new-minimal-surfaces}*{Proposition 2.8}, using the results of Lemma~\ref{lemma:convergence-of-solutions}, we may reduce the existence of such an $\ve_M > 0$ to the case when $\lambda=1$ and $a = \infty$.
We will prove that any solution of equation~\eqref{eqn:ode-star} with $\{ f(t_1) = 0 , f'(t_1) = a \}$ enters the regime~\eqref{eqn:original-assumption} provided that $t_1 \in (0,\ve_M)$ is sufficiently small, which will imply our claim.
By the above-mentioned compactness argument with $a = \infty$, it suffices to show that the solution $f$ reaches zero with finite derivative; therefore, we can take $\delta = \delta(1)$ in~\eqref{eqn:original-assumption}, and $0 < \ve \ll \frac{\delta^2}{200 C(\delta)^2}$ where $C(\delta) := (1 + 10^3 \delta^{-2})^{\frac{1}{\beta}}$.

Taking $t_1 \in (0,\ve)$, in the region $\{ f \leq \frac{\delta}{10}, f' \leq \frac{\delta}{2}, t \leq 2C(\delta) \ve \}$, we bound 
\[
A_M(t) \leq - \tfrac{3}{4} \tfrac{m_1}{m_1 + m_2} t^{-1}, \qquad f - \tfrac{g}{2} A_M f' \geq \tfrac{3}{8} \tfrac{g m_1}{m_1 + m_2} t^{-1} f', \qquad |f'| < 1 + (f')^2. 
\]
Applying these properties in equation~\eqref{eqn:ode-star} and bounding $t f' \leq 4 C(\delta)^2 \ve^2 f' (1+ (f')^2) t^{-1}$, we obtain
\begin{align*}
& f'' + (n-2) ( 1 + (f')^2 \bigr) \tfrac{2}{7} \tfrac{g m_1}{m_1 + m_2} t^{-1} f' <0 , \quad \implies \quad f'' < - \tfrac{1}{7} g^2 m_1 t^{-1} f' (1 + (f')^2 ).
\end{align*}
Comparing the differential inequality for $p = f'$ with the equation $p' = - \beta t^{-1} p(1 + p^2)$ for $\beta = \frac{1}{7} g^2 m_1$, whose solution with initial data $p(t_1) = f'(t_1)$ is $p(t) = \bigl( (1+f'(t_1)^{-2}) ( \frac{t}{t_1} )^{2 \beta} - 1 \bigr)^{-\frac{1}{2}}$, we find
\begin{equation}\label{eqn:f-prime-bound}
f'(t) \leq \frac{1}{\sqrt{(1+f'(t_1)^{-2}) ( \frac{t}{t_1} )^{2 \beta} - 1}} \leq \frac{1}{\sqrt{( \frac{t}{t_1} )^{2 \beta} - 1}}
\end{equation}
Consequently, taking $\hat{t}_1 = C(\delta) t_1$ where $C(\delta) = (1 + 10^3 \delta^{-2})^{\frac{1}{\beta}}$ makes $f'(\hat{t}_1) < \frac{\delta}{10}$.
For $u \in [1, C(\delta)]$, the mean value theorem makes $u^{2 \beta} - 1 = ( 2 \beta) u_*^{2 \beta - 1}(u-1)$ with $u_* \in [1, u]$ and $2 \beta-1 \geq \frac{1}{7}$, so $u^{2 \beta} - 1 \geq u-1$. Integrating the bound~\eqref{eqn:f-prime-bound} on $[t_1, \hat{t}_1]$, using $f(t_1) = 0$ and $u = \frac{t}{t_1}$, we find
\[
f( \hat{t}_1) \leq t_1 \int_1^{C(\delta)} \frac{du}{\sqrt{u^{2 \beta}-1}} \leq t_1 \int_1^{C(\delta)} \frac{du}{\sqrt{u-1}} < 2 \sqrt{C(\delta)} \, t_1  .
\]
At the last step, we may take $t_1$ sufficiently small to conclude that at least one of 
\[
\{ f \leq \tfrac{\delta}{10},\ f' = 0 \}
\qquad\text{or}\qquad
\{ f \leq \tfrac{\delta}{10},\ f' \leq \tfrac{\delta}{10} \}
\]
must occur at some point $\hat{t}_1 < 2 C(\delta)t_1 < \delta$, where we used $\ve \ll \frac{\delta^2}{200 C(\delta)^2}$ and $\frac{1}{\sqrt{C(\delta)^{\beta}-1}} \leq \frac{\delta}{10}$.
Therefore, either $f'$ vanishes somewhere in $[t_1,C(\delta)t_1]$, in which case at the first such point $\hat{t}_1$ we have $f(\hat{t}_1)\le \frac{\delta}{100}$ and $f'(\hat{t}_1)=0$, or $f'>0$ on $[t_1,C(\delta)t_1]$ and taking $\hat{t}_1=C(\delta)t_1$ yields $f(\hat{t}_1) \leq \frac{\delta}{100}$ and $f'(\hat{t}_1) \leq \frac{\delta}{10}$; in either case $\hat{t}_1<2C(\delta)t_1<\delta$ follows from $\ve \ll \frac{\delta^2}{200 C(\delta)^2}$.

In both cases, this shows that $f$ satisfies the assumption~\eqref{eqn:original-assumption} of Step 1, therefore it reaches a zero $t_2 > \sqrt{\frac{m_1}{m_1 + m_2}}$ with finite derivative and $(1-t_2^2) f'(t_2)^2 < \frac{1}{2} a^2$ as desired.
\end{proof}

\begin{proof}[Proof of Theorem~\ref{thm:capillary-interpolation}, Part II]
We fix an angle $\theta \in (0,\frac{\pi}{2}]$ and let $f_{t_1}$ denote the solution of equation~\eqref{eqn:ode-star} with initial data $\bigl\{ f_{t_1}(t_1) = 0, f'_{t_1}(t_1) = \frac{2}{g} \frac{\tan \theta}{\sqrt{1 - t_1^2}} \bigr\}$.
Using Proposition~\ref{prop:window-near-zero}, we see that for $t_1 \in (0,\ve_M)$ sufficiently small, the solution $f_{t_1}$ reaches a second zero at $\tau(t_1) > \sqrt{\frac{m_1}{m_1+m_2}}$, where
    \begin{align*}
    R[t_1] &:= (1-t_1^2)f'_{t_1}(t_1)^2 - (1 - \tau(t_1)^2) f'_{t_1}(\tau(t_1))^2 \\
    &\;> \tfrac{4}{g^2} \tan^2 \theta - \tfrac{1}{2} \cdot \tfrac{4}{g^2} \cdot \tfrac{1}{1 - t_1^2} \tan^2 \theta > \tfrac{1}{g^2} \tan^2 \theta, \qquad t_1 < \tfrac{1}{2},
    \end{align*}
    so $R[t_1] > 0$ for $t_1 \in (0,\ve_M)$.
    On the other hand, taking $t_1 \uparrow \sqrt{\frac{m_1}{m_1 + m_2}} - \bar{\ve}_M$ makes the solution $f_{t_1}$ remain strictly positive until blowup, by Proposition~\ref{prop:small-breather}.
    Using the compactness result of Lemma~\ref{lemma:convergence-of-solutions}, we can therefore extract some $t_* \in ( \ve_M , \sqrt{\frac{m_1}{m_1 + m_2}} - \bar{\ve}_M)$ such that the solution $f_{t_*}$ of $\{ f_{t_*}(t_*) = 0 , f'_{t_*}(t_*) = \frac{2}{g} \frac{\tan \theta}{\sqrt{1 - t_*^2}} \}$ reaches another zero, at a point $\tau(t_*) > \sqrt{\frac{m_1}{m_1 + m_2}}$, with $f'_{t_*}(\tau(t_*)) = -\infty$.
    If $\theta = \frac{\pi}{2}$, then this discussion shows that $|f'(t)| \to \infty$ as $t \to t_*$ or $t \to \tau(t_*)$, so the solution $f_{t_*}$ is the desired profile curve of a free-boundary minimal cone $\bar{\mathbf{C}}_{M,\frac{\pi}{2}}$ in $\bR^{n+1}_+$. 
    If $\theta < \frac{\pi}{2}$, then the property of the $f_{t_*}$ makes $R[t_1] \to - \infty$ as $t_1 \uparrow t_*$, so there exists a $t_1(\theta) \in (\ve_M, \sqrt{\frac{m_1}{m_1+m_2}} - \bar{\ve}_M)$ such that $R[t_1(\theta)] = 0$ and $ \frac{g}{2} \sqrt{1 - t_1^2} \, |f'_{t_1}(t_1)| = \frac{g}{2} \sqrt{1 - \tau(t_1)^2} \, |f'_{t_1}(\tau(t_1))| = \tan \theta$. 
    Then, $\bar{\mathbf{C}}_{M,\theta} := \text{graph} ( \rho f_{t_1(\theta)})$ is the desired capillary cone with angle $\theta$ and the topology described in Section~\ref{section:topology}.
\end{proof}

\begin{proof}[Proof of Theorem~\ref{thm:smooth-interpolation}, Part II]
    The smooth interpolation result follows similarly to Theorem~\ref{thm:smooth-interpolation}, using an analogous shooting procedure with initial data prescribed along $t = \sqrt{\frac{m_1}{m_1 + m_2}} = \frac{1}{\sqrt{2}}$, which is a line of symmetry for equation~\eqref{eqn:ode-star} under the map $t \leftrightsquigarrow \sqrt{1-t^2}$.
    Namely, given a solution $f_a$ of the problem $\{ f(\frac{1}{\sqrt{2}}) = a, f'(\frac{1}{\sqrt{2}}) = 0 \}$ that reaches zero at a point $t_{2,a}$, we may extend $f_a$ to $[ \sqrt{1- t_{2,a}^2} , \frac{1}{\sqrt{2}}]$ by $\bar{f}_a( t) := f_a(\sqrt{1-t^2})$, where the extension is smooth due to $f'(\frac{1}{\sqrt{2}}) = 0$.
    Moreover, $\bar{f}(t_{1,a}) = 0$ at $t_{1,a} = \sqrt{1 - t_{2,a}^2}$, and Lemma~\ref{lemma:change-of-variables} shows that $\bar{f}_a$ solves equation~\eqref{eqn:ode-star} with $\sqrt{1-t_{1,a}^2} \bar{f}'_a(t_{1,a})^2 = \sqrt{1 - t_{2,a}^2} \bar{f}_a(t_{2,a})^2$, so $\bar{f}'_a$ satisfies the condition~\eqref{eqn:capillary-angle-condition} and produces a capillary profile curve.
    Thus, it suffices to study forward solutions $f_a$.

    Using~\cite{one-phase-isoparametric}*{Proposition 3.5}, we obtain a solution $f_0$ of the linear problem $\cL_M f_0 = 0$ with $\{ f_0( \frac{1}{\sqrt{2}}) = 1, f'_0(\frac{1}{\sqrt{2}}) = 0\}$, for $\cL_M$ the operator in~\eqref{eqn:self-adjoint-legendre}, which reaches zero with finite derivative.
    Therefore, the equation~\eqref{eqn:rescaled-equation-lambda-ODE} also admits a solution of the initial value problem $\{ \tilde{f}_{\lambda}( \frac{1}{\sqrt{2}}) = 1, \tilde{f}'_{\lambda} ( \frac{1}{\sqrt{2}}) = 0 \}$ reaching a zero, for $\lambda \in (0,\ve_0)$ sufficiently small, so $f_a := \sqrt{\lambda} \tilde{f}_{\lambda}$ solves~\eqref{eqn:ode-star} and has a zero, for $a = \sqrt{\lambda}$ sufficiently small.
    On the other hand, Proposition~\ref{prop:dont-shoot-too-high} shows that $f_a$ has derivative blow-up while positive if $a > C(M)$.
    Combining these two facts, the shooting-continuity argument of Section~\ref{section:type-I} produces a smooth family of solutions $\{ f_a \}_{a \in (0,a_M^*)}$ such that $f_{a_M^*}$ reaches zero with contact angle $\frac{\pi}{2}$ and the map $a \mapsto \arctan \bigl( \tfrac{g}{2}\sqrt{1 - t_a^2} \, |f'_a(t_a)| \bigr)$ surjects onto $(0,\frac{\pi}{2}]$.
    Finally, the extensions $\bar{f}_a$ produce the desired smooth capillary family $\{ \check{\mathbf{C}}_{M,a} \}_{a \in (0,a^*_M]}$.
\end{proof}

It would be interesting to apply these capillary methods to produce minimal embeddings into the sphere with other diffeomorphism types.
For example, Carlotto-Schulz~\cite{carlotto-schulz} have constructed minimal tori $\bS^{n-1} \times \bS^{n-1} \times \bS^{n-1} \times \bS^1$ in the sphere $\bS^{3n-1}$ conjectured by Hsiang-Hsiang~\cite{hsiang-hsiang}.

\section{CMC hypersurfaces in the sphere}\label{section:CMC-in-the-sphere}
The results of Theorems~\ref{thm:new-minimal-surfaces}\thru\ref{thm:smooth-interpolation} can more generally be extended to the construction of constant mean curvature hypersurfaces of the sphere $\bS^n$ with the various topologies described in Section~\ref{section:topology}.
For example,~\cites{huang-wei-2022 , lai-wei-tori} constructed CMC embeddings $\bS^{n-1} \times \bS^{n-1} \times \bS^1 \hookrightarrow \bS^{2n}$ by a direct adaptation of the arguments in~\cite{carlotto-schulz}.
These tori have the interesting feature that for mean curvature $H_0 \in (-\ve,0)$, there exist \textit{two} CMC surfaces of Type II, in the sense of Theorem~\ref{thm:new-minimal-surfaces}.
In our general situation, the topology of the resulting CMC surfaces in the sphere follows the characterization of Section~\ref{section:topology}, since that only entails qualitative information about the endpoints and reflection symmetry of the profile curve.

After proving general results about the existence of the CMC surfaces $\mathbf{S}^H_{M_1}  , \mathbf{S}^H_{M_2}, \bar{\mathbf{S}}^H_M$ in the sphere, in Section~\ref{section:axisymmetric} we prove uniqueness and rigidity results for axisymmetric solutions of the capillary CMC problem.
These characterizations include the uniqueness of the sphere and the Clifford torus as rotationally symmetric minimal surfaces in the sphere, as well as the capillary counterpart of this result formulated in Theorem~\ref{thm:uniqueness-of-axisymmetric}.

First, we recall some special solutions of the general CMC equation~\eqref{eqn:CMC-equation}.
For $g=2$, this equation has the CMC Clifford tori as explicit solutions: for $(m_1, m_2) = (k-1,n-k-1)$ and any $a > 0$, we find the family of solutions
\begin{equation}\label{eqn:cmc-clifford-solution}
f_a(t) = \sqrt{a - (a+1) t^2}, \qquad H_0(a) = \tfrac{k}{\sqrt{a}} - (n-k-1) \sqrt{a}.
\end{equation}
The map $a \mapsto H_0(a)$ is a strictly decreasing bijection $(0,\infty) \to (-\infty , \infty)$, so the profiles $f_a$ are free-boundary solutions recovering every value of $H_0$.
The graph of $\rho f_a(t)$ produces the cone $z = \sqrt{a |x|^2 - |y|^2}$ in $(x,y,z)$-coordinates, thus forming the link $\Sigma = \text{graph}( \rho f_a) \cap \bS^n_+$ and doubling across the equator results in the CMC Clifford torus $\bS^{n-k-1}( \sqrt{\frac{1}{a+1}}) \times \bS^{k} ( \sqrt{\frac{a}{a+1}})$.
The Lawson height $a = \frac{k}{n-k-1}$ recovers the minimal Clifford torus.
The analogous CMC solutions for the $g=1$ foliation of $\bS^{n-1}$ are presented in Section~\ref{section:axisymmetric}, including the Clifford torus $\bS^{n-2}( \sqrt{\frac{1}{a+1}}) \times \bS^1( \sqrt{\frac{a}{a+1}})$.

\subsection{The CMC equation}

In the neighborhood of a point where $\{ f(t_0) = 0, \sigma f'(t_0) = + \infty \}$ for $\sigma = \pm 1$, a local analysis of equation~\eqref{eqn:CMC-equation} analogous to the case $H_0 = 0$ produces
\begin{equation}\label{eqn:cmc-f(t0)-signs}
\begin{split}
    f(t)^2 &= \tfrac{4}{g \bigl( \sigma H_0 \sqrt{1-t_0^2} - (n-2) A_M(t_0) \bigr)} |t-t_0| + O (|t - t_0|^{\frac{3}{2}}), \\
    |f'(t)| &= \sqrt{\tfrac{1}{g \, |\sigma H_0 \sqrt{1-t_0^2}- (n-2) A_M(t_0)|}} |t - t_0|^{- \frac{1}{2}} + O(1) ,
\end{split}
\end{equation}
leading to an analogous power series expansion in terms of $\sqrt{t_0 - t}$, as in~\eqref{eqn:f-prime-t,f-t}.
In particular, the solutions of equation~\eqref{eqn:CMC-equation} satisfy the convergence and compactness Lemmas~\ref{lemma:slope-infinity-solution} and~\ref{lemma:convergence-of-solutions}.

We now introduce the Lyapunov quantity
\begin{equation}\label{eqn:lyapunov-quantity}
    \Psi_{H_0}(t) := f \Bigl( f - \frac{g}{2} A_M f' \Bigr) + \frac{H_0}{n-2} f \sqrt{1 + \frac{g^2}{4} (1-t^2) \frac{(f')^2}{1+f^2}} - \frac{1}{n-2}
\end{equation}
for $H_0$ the constant mean curvature appearing in equation~\eqref{eqn:CMC-equation}.
This quantity is constructed to satisfy the following properties, for $g \geq 2$:
\begin{enumerate}[(i)]
    \item When $H_0 = 0$, we have $\Psi_{H_0} = \Psi$ recovering the expression~\eqref{eqn:Psi}.
    \item At a point $t_0$ where $\{ f(t_0) = 0 , f'(t_0) = \pm \infty\}$, we have $\Psi_{H_0}(t_0) = 0$.
    \item For the Clifford solutions $f_a(t)$ of~\eqref{eqn:cmc-clifford-solution}, we have $\Psi_{H_0} \equiv 0$ identically.
    \item At the point $t_{\alpha} = \sqrt{\frac{m_1}{m_1 + m_2}}$ where $A_M(t_{\alpha}) = 0$, we have $\Psi'_{H_0}(t_{\alpha}) \geq 0$ whenever $\Psi_{H_0}(t_{\alpha}) \geq 0$. 
\end{enumerate}
To see the property $(iv)$, we compute that
\[
\Psi'_{H_0}(t_{\alpha}) = - f'(t_{\alpha}) \Bigl[ (g-2) f(t_{\alpha}) + \tfrac{H_0}{1 + f(t_{\alpha})^2} \sqrt{1 + \tfrac{g^2}{4} (1-t_{\alpha}^2) \tfrac{f'(t_{\alpha})^2}{1 + f(t_{\alpha})^2}} \,  \Psi_{H_0}(t_{\alpha}) \Bigr]
\]
where $- f'(t_{\alpha}) \geq 0$.
The quantity $\Psi_{H_0}$ has the same invariant regions as $\Psi$ from Proposition~\ref{prop:Psi-one-sided}.
\begin{proposition}\label{prop:psi-more-general}
    Suppose that $H_0 \geq 0$, $g \geq 3$, and consider a point $t_* \geq \sqrt{\frac{m_1}{m_1 +m_2}}$ and a solution $f$ of equation~\eqref{eqn:CMC-equation} with $\{ f(t_*) > 0, f'(t_*) \leq 0 \}$.
    \begin{enumerate}[(i)]
        \item If $\Psi'_{H_0}(t_*) = 0$, then $A_M(t_*) = f'(t_*) = 0$ or $\Psi_{H_0}(t_*) <0$.
        \item If $\Psi_{H_0}(t_*) = 0$, then $A_M(t_*) = f'(t_*) = 0$ or $\Psi'_{H_0}(t_*) > 0$.
    \end{enumerate}
    Notably, if $\Psi_{H_0}(t_*) , \Psi'_{H_0}(t_*) \geq 0$ then $f$ does not reach zero in its forward interval of definition.
\end{proposition}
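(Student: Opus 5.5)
We mirror the proof of Proposition~\ref{prop:Psi-one-sided} and Lemma~\ref{lemma:psi-detect-blowup}, treating the $H_0$-terms as perturbations with a favorable sign. Write $q := f'/f$, $\mu := m_1+m_2$, and
\[
W := \sqrt{1 + \tfrac{g^2}{4}(1-t^2)\tfrac{(f')^2}{1+f^2}},
\]
so that
\[
\Psi_{H_0} = f^2\bigl(1 - \tfrac{g}{2}Aq\bigr) + \tfrac{H_0}{n-2} f W - \tfrac{1}{n-2}.
\]

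First we differentiate $\Psi_{H_0}$ and substitute $f''$ from~\eqref{eqn:CMC-equation}. The $H_0$-free part reproduces~\eqref{eqn:Psi-prime-expression}, namely $A f^2\bigl[F(t,q) + \tfrac{g^2\mu}{4}q^2(1-\tfrac{g}{2}Aq)\tfrac{f^2}{1+f^2}\bigr]$. The additional contributions are of two kinds. One is the source term $-\tfrac{4H_0}{g^2}W^3$ entering through $-\tfrac{g}{2}Aff''$. The other is the derivative of $\tfrac{H_0}{n-2}fW$, which again contains $f''$ and has to be substituted in the same way.

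The cleanest organization is to differentiate $W$ via $\tfrac{d}{dt}\bigl[(1-t^2)\tfrac{(f')^2}{1+f^2}\bigr]$. With this, the $f''$ coming from $fW$ has coefficient $\tfrac{H_0}{n-2}\tfrac{g^2}{4}\tfrac{(1-t^2)ff'}{(1+f^2)W}$, and it combines with the $-\tfrac{g}{2}Aff''$ term into a single factor multiplying $f''$. That factor is $-\tfrac{g}{2}Af + O(H_0)\,ff'$.

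Next we examine the two cases at $t_*$ with $A(t_*)>0$, $f>0$ and $q\le 0$; the case $A(t_*)=0$ is handled by property~$(iv)$ already recorded. In each case we check that every new $H_0$-term has the sign which helps the desired inequality.

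\emph{Case $(ii)$, $\Psi_{H_0}(t_*)=0$.} Here we eliminate $f^2(1-\tfrac{g}{2}Aq) = \tfrac{1}{n-2}(1 - H_0 fW)$. Compared with the minimal case this lowers the $f^2$-weight, while the source term contributes $+\tfrac{g}{2}Af\cdot\tfrac{4H_0}{g^2}W^3 \ge 0$ to $\Psi'_{H_0}$. We aim to show that $\Psi'_{H_0}$ equals a positive multiple of $L(x,y)$ from~\eqref{eqn:N-N-tilde-equation} plus $H_0$ times a nonnegative expression. The nonnegativity should rest on $q\le0$, $A>0$, $Q_1,Q_2>0$, and the inequality $W\ge 1$.

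\emph{Case $(i)$, $\Psi'_{H_0}(t_*)=0$.} Here we solve for $f^2$ (or for the combination $H_0 f W$) as in~\eqref{eqn:f2-expression}. We then show that the numerator $N$ acquires only nonnegative $H_0$-corrections, so that $\Psi_{H_0}(t_*)<0$. A useful simplification is to note that $H_0\ge 0$ can only make $F(t,q)$ effectively larger, which again forces $q<0$ exactly as before.

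Finally, the ``notably'' clause follows verbatim from the argument of Lemma~\ref{lemma:psi-detect-blowup}, using parts $(i)$ and $(ii)$ together with properties $(ii)$ and $(iv)$ of $\Psi_{H_0}$. At a finite-slope zero we get $\Psi_{H_0} = -\tfrac{1}{n-2} < 0$. At an infinite-slope zero we have $\Psi_{H_0}\to 0$, by property $(ii)$ and the expansion~\eqref{eqn:cmc-f(t0)-signs}. The endpoint $t=1$ is excluded by the same comparison for $p=(f')^{-2}$, since the $H_0W^3$ term has the helpful sign there.

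The main obstacle is the algebra in case $(i)$. There the $W$-dependence makes the $f^2$-elimination non-polynomial. We would first try to treat $H_0 fW$ as a new independent nonnegative variable $\eta$, write $\Psi_{H_0}$ and $\Psi'_{H_0}$ as affine functions of $\eta$, and check monotonicity in $\eta$ at $\eta=0$. If that fails, the fallback is to bound $W\ge1$ and $W^3\ge W$ and reduce to the discriminant computations for $Q_1,Q_2$.
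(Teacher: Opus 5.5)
Your route, redoing Proposition~\ref{prop:Psi-one-sided} with the extra $H_0$-terms, is the one the paper intends. However, the mechanism you propose for closing the argument does not work, and the identity that does close it is missing.

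\textbf{Where the proposal fails.} The claim that every new $H_0$-term helps is false term by term.
\begin{itemize}
\item Differentiating $\tfrac{H_0}{n-2}fW$ produces $\tfrac{H_0}{n-2}f'W\le 0$.
\item In case $(ii)$, replacing $fh=\tfrac{1}{n-2}$ by $fh=\tfrac{1-H_0fW}{n-2}$ lowers the positive term $\tfrac{g^2\mu}{4}q^2\tfrac{fh}{1+f^2}$ of \eqref{eqn:Psi-prime-expression}. Relative to its minimal value $\tfrac g2\tfrac{q^2}{1+f_0^2}$, with $f_0^2:=\tfrac{1}{(n-2)(1-\frac g2 Aq)}$, the loss is exactly $\tfrac{g}{2}\tfrac{H_0fW\,q^2}{(1+f_0^2)(1+f^2)}$.
\end{itemize}
Your fallback $W\ge1$ (or $W^3\ge W$) cannot pay for this loss. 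Fix $t_*>\sqrt{\tfrac{m_1}{m_1+m_2}}$ and $H_0>0$. Along $\{\Psi_{H_0}=0\}$ with $q\to-\infty$, one has $H_0fW\to\tfrac{c}{1+c}$, where $c=\tfrac{H_0\sqrt{1-t_*^2}}{(n-2)A}$. So the loss is of order $q^2$. By contrast, $F+\tfrac g2\tfrac{q^2}{1+f_0^2}$ and the part $\tfrac{2H_0W}{g(1-t^2)f}$ of the source surviving $W^3\ge W$ grow only linearly in $|q|$. Likewise, checking monotonicity in $\eta=H_0fW$ at $\eta=0$ is only perturbative, while the statement covers all $H_0\ge0$ and $\eta$ is not small.

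\textbf{The missing identity.} Along \eqref{eqn:CMC-equation}, use $\tfrac{4}{g^2}+(1-t^2)\tfrac{(f')^2}{1+f^2}=\tfrac{4}{g^2}W^2$ to get $W'=-\tfrac{f'W}{1+f^2}\bigl(f+(n-2)h+H_0W\bigr)$. Hence
\[
\tfrac{d}{dt}(fW)=-\tfrac{(n-2)f'W}{1+f^2}\,\Psi_{H_0},
\]
so the correction term differentiates to a multiple of $\Psi_{H_0}$ itself. Now split the source term using $W^3=W+\tfrac{g^2}{4}(1-t^2)\tfrac{(f')^2}{1+f^2}W$. The second piece exactly restores the loss above, and one obtains
\[
\Psi'_{H_0}=Af^2\Bigl[F(t,q)+\tfrac g2\tfrac{q^2}{1+f^2}\Bigr]+\tfrac{2H_0AfW}{g(1-t^2)}+\Bigl[\tfrac{g(n-2)}{2}\tfrac{Af^2q^2}{1+f^2}-\tfrac{H_0f'W}{1+f^2}\Bigr]\Psi_{H_0}.
\]
When $A>0$, $f'\le0$ and $H_0\ge0$, the last two coefficients are nonnegative. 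The positivity established through \eqref{eqn:N-N-tilde-equation} is precisely $F+\tfrac g2\tfrac{q^2}{1+f_0^2}>0$ for all $q\le0$.

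\textbf{How the two cases follow.}
\begin{itemize}
\item Case $(ii)$: $\Psi_{H_0}=0$ forces $(n-2)fh\le1$, i.e.\ $f^2\le f_0^2$. The display then gives $\Psi'_{H_0}>0$.
\item Case $(i)$: suppose $\Psi_{H_0}\ge0$. If $f^2\le f_0^2$, the display gives $\Psi'_{H_0}>0$. If $f^2>f_0^2$, the unsplit form gives $\Psi'_{H_0}\ge Af^2\bigl[F+\tfrac{g^2\mu}{4}q^2(1-\tfrac g2Aq)\tfrac{f^2}{1+f^2}\bigr]$. This bracket is nondecreasing in $f^2$ and equals the same positive quantity at $f_0^2$. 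Either way $\Psi'_{H_0}\neq0$.
\item For $A(t_*)=0$, use the explicit formula behind property $(iv)$, not merely the stated inequality.
\end{itemize}
Your treatment of the ``notably'' clause is fine.
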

\begin{proof}
    The properties $(i), (ii)$ follow from computations identical to those of Proposition~\ref{prop:Psi-one-sided}. 
    The fact that solutions with $\Psi_{H_0}(t_*) , \Psi'_{H_0}(t_*) \geq 0$ blow up while positive then follows from the argument of Lemma~\ref{lemma:psi-detect-blowup} and the properties used in the construction of $\Psi_{H_0}$.
\end{proof}
A direct computation as in Proposition~\ref{prop:mean-curvature-equation} shows that a solution~\eqref{eqn:CMC-equation} with $f'(0) = 0$ satisfies
\begin{equation}\label{eqn:f''(0)}
    f''(0) = -4 \frac{H_0 + (n-1) f(0)}{g^2(m_1 + 1)}.
\end{equation}
In particular, the constant function $f \equiv - \frac{H_0}{n-1}$ is always a solution of equation~\eqref{eqn:CMC-equation}, corresponding to the horizontal latitude sphere $\bS^{n-1} \bigl( \frac{n-1}{\sqrt{H_0^2 + (n-1)^2}} \bigr)$ at constant height $\frac{|H_0|}{\sqrt{H_0^2 + (n-1)^2}}$.
At a critical point $t_* > 0$ of $f$, we compute
\begin{equation}\label{eqn:critical-point-of-CMC}
(1-t_*^2) f''(t_*) = - \tfrac{4}{g^2} \bigl( H_0 + (n-1) f(t_*) \bigr).
\end{equation}
Using these properties, we can analyze the ODE~\eqref{eqn:CMC-equation} similarly to Section~\ref{section:analysis-of-capillary-equation}.
\begin{lemma}\label{lemma:cmc-equation-preliminaries}
    For $H_0 \geq 0$, consider a positive solution $f$ of~\eqref{eqn:CMC-equation} on $(t_1, b) \subset (0,1)$ with $\{f(t_1)\geq 0, f'(t_1) > 0\}$ and let $h_f := f - \frac{g}{2} A_M f'$.
    If $\alpha = \frac{m_1}{m_1 + m_2}$ and $t_1 < \sqrt{\alpha}$, the following properties hold:
    \begin{enumerate}[$(i)$]
        \item Either $f$ is strictly increasing for all time or has a unique critical point $t_c$. 
        Furthermore, $h_f > 0$ on $[t_1,b]$ and $h_f' < 0$ for $t < \min \{t_c, \sqrt{\alpha}\}$, while $h_f' > 0$ for $t > \max \{ t_c ,\sqrt{\alpha}\}$.

        \item If $f(t_2) = 0$, then $t_2 \geq \sqrt{\alpha}$.
        If $f(t_1) = f(t_2) = 0$, then $t_1 < \sqrt{\alpha} < t_2$.

        \item The function $f$ is strictly increasing if and only if $h_f$ has a zero at $t_h$ and $h_f, h_f' < 0$ for $t > t_h$. 
    \end{enumerate}
\end{lemma}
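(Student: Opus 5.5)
The plan is to mirror the proof of Lemma~\ref{lemma:general-properties-of-solutions}, which only used the first-order system~\eqref{eqn:ODE-for-h}\thru\eqref{eqn:2.6NMS} and the positivity of the coefficients $S_\lambda, T$. First I will rewrite~\eqref{eqn:CMC-equation} in terms of $h_f = f - \frac{g}{2}A_M f'$. Using $n-2 = \frac{g(m_1+m_2)}{2}$ and the same algebra that produced~\eqref{eqn:ODE-for-h}, I expect
\[
(1-t^2) h_f' = A_M S_1 h_f - T f' + \tfrac{g}{2} A_M \cdot \tfrac{4H_0}{g^2}\Bigl(1 + \tfrac{g^2}{4}(1-t^2)\tfrac{(f')^2}{1+f^2}\Bigr)^{\frac{3}{2}},
\]
with $S_1 > 0$ and $T > 0$ exactly as before. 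The new forcing term has the sign of $A_M$ when $H_0 \geq 0$: it is negative for $t < \sqrt{\alpha}$ and positive for $t > \sqrt{\alpha}$. The identity $(f/\psi)' = -\frac{2}{g}\frac{h_f}{A_M\psi}$ in~\eqref{eqn:2.6NMS} is purely kinematic and holds unchanged.

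The key steps are the following. (a) At $t_1$ we have $A_M(t_1) < 0$, $f(t_1) \geq 0$ and $f'(t_1) > 0$, so $h_f(t_1) > 0$. (b) At any zero $t_*$ of $h_f$, the equation gives $(1-t_*^2) h_f'(t_*) = -T f'(t_*) + (\text{forcing})$. The sign analysis of~\cite{new-minimal-surfaces}*{Proposition 2.2} needs $h_f$ unable to vanish while $f' > 0$ before $\sqrt\alpha$, and $h_f$ to stay positive in the region $R = \{ t \geq \sqrt\alpha : h > 0, f' < 0 \}$.

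The region $R$ is the favorable case. There the forcing is $\geq 0$ and $-Tf' > 0$, so $h_f' > 0$ at any zero of $h_f$, and $R$ is forward-invariant. By~\eqref{eqn:2.6NMS} we also have $(f/\psi)' < 0$ there.

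For $t < \sqrt\alpha$, note that $h_f = f - \frac{g}{2}A_M f' > 0$ automatically whenever $f > 0$ and $f' \geq 0$, since $A_M < 0$. So $h_f$ can only vanish after $f'$ becomes negative, and for $t < \sqrt\alpha$ that requires $f < 0$ or $f' \le 0$. For the critical point structure I will use~\eqref{eqn:critical-point-of-CMC}: at a critical point $t_c$ with $f(t_c) > 0$ and $H_0 \geq 0$, we get $f''(t_c) < 0$. Hence every critical point is a strict local maximum, which gives uniqueness of $t_c$.

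The monotonicity claims for $h_f'$ follow from the displayed equation. For $t < \min\{t_c, \sqrt\alpha\}$ we have $A_M < 0$ and $f' > 0$, and $h_f > 0$ by the above. Then all three terms $A_M S_1 h_f$, $-Tf'$ and the forcing are negative, so $h_f' < 0$. For $t > \max\{t_c, \sqrt\alpha\}$ we have $A_M > 0$ and $f' < 0$, so all three terms are positive, provided $h_f > 0$. That positivity comes from the invariance of $R$.

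Part (ii) follows as before. At a zero $t_2$ of $f$ we have $0 < h_f(t_2) = -\frac{g}{2}A_M(t_2) f'(t_2)$ with $f'(t_2) \le 0$, which forces $A_M(t_2) \geq 0$, i.e. $t_2 \geq \sqrt\alpha$. If $f'(t_2) = 0$ there, ODE uniqueness fails to give contradiction only if $H_0=0$; otherwise~\eqref{eqn:critical-point-of-CMC} gives $f''<0$, which is incompatible with a zero approached from above. Part (iii) follows from the same dichotomy on the sign of $h_f$ once past $\sqrt\alpha$.

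The main obstacle is step (b) between $t_c$ and $\sqrt\alpha$, where $f' < 0$ and $A_M < 0$. There $-Tf' > 0$ while the forcing is negative, so the sign of $h_f'$ at a zero of $h_f$ is not immediate. I would try to show that $h_f$ cannot vanish in this window by comparison. At $h_f = 0$ we have $f = \frac{g}{2}A_M f'$, so $f' = \frac{2f}{gA_M}$, and $f, f'$ are tied to the single quantity $f$. I would then use~\eqref{eqn:critical-point-of-CMC} together with the concavity inherited from $H_0 \geq 0$ to show that $f$ stays above the level where this could happen. If that fails, I would instead show that $h_f$ is monotone on $(t_c, \sqrt\alpha)$ by differentiating~\eqref{eqn:2.6NMS}.
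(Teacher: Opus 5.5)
Your setup matches the paper's proof. It uses the same $h_f$ equation \eqref{eqn:h-ODE-general} with forcing $\tfrac{2H_0}{g}A_M(\cdots)^{3/2}$, uniqueness of $t_c$ from \eqref{eqn:critical-point-of-CMC}, and forward invariance of $R$. The obstacle you flag in the window $(t_c,\sqrt{\alpha})$ is a genuine gap. It cannot be closed by comparison or monotonicity, because for $H_0>0$ the statement is false there.

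Here is a counterexample. Fix $t_1\in(0,\sqrt{\alpha})$. Equation \eqref{eqn:CMC-equation} is a regular ODE on $(0,1)$, and the solution with $f(t_1)=f'(t_1)=0$ has $(1-t_1^2)f''(t_1)=-\tfrac{4H_0}{g^2}<0$. By continuous dependence there are $c,\delta>0$ such that, for all small $\epsilon>0$, the solution with data $\{f(t_1)=0,\ f'(t_1)=\epsilon\}$ satisfies $f''\le -c$ on $[t_1,t_1+\delta]$. Hence $f(t)\le \epsilon(t-t_1)-\tfrac{c}{2}(t-t_1)^2$. So $f>0$ on some $(t_1,t_2)$ with $f(t_2)=0$ and $t_2\le t_1+\tfrac{2\epsilon}{c}<\sqrt{\alpha}$, and $f'(t_2)<0$ by concavity.

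This solution satisfies every hypothesis of the lemma, with $b=t_2$. Yet $t_2<\sqrt{\alpha}$ contradicts (ii). Also $h_f(t_2)=\tfrac{g}{2}|A_M(t_2)|\,f'(t_2)<0$, which contradicts $h_f>0$ on $[t_1,b]$ in (i). Your argument for (ii) starts from $h_f(t_2)>0$, so it breaks on exactly this example. The same phenomenon, a zero well before $\sqrt{\alpha}$, already appears in the paper's own Type I computation in the proof of Theorem~\ref{thm:cmc-construction}, where $t_a\approx g\sqrt{(m_1+1)a/(2H_0)}\to 0$.

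The paper's proof does not supply the missing step either. It records the sign of $h_f'$ at zeros of $h_f$ only on two sets:
$\{t<\sqrt{\alpha},\,f'>0\}$, where, as you noted, $h_f$ cannot vanish at all; and
$\{t>\sqrt{\alpha},\,f'<0\}$.
It then defers to the $H_0=0$ argument. That argument relied on $h_f'>0$ at zeros with $t<\sqrt{\alpha}$ and $f'<0$, which is exactly what the forcing term destroys.

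The forcing term has the sign of $A_M$, so it opposes $-Tf'$ whenever $\operatorname{sgn} f'=\operatorname{sgn} A_M$. This also happens on $\{t>\sqrt{\alpha},\,f'>0\}$. There, for small $f$ and $f'$, the forcing dominates and $h_f'>0$ at a zero of $h_f$. So (iii) does not follow ``from the same dichotomy'' either.

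What survives for $H_0\ge 0$ is what you proved cleanly:
the critical point is a unique strict maximum;
$h_f'<0$ for $t<\min\{t_c,\sqrt{\alpha}\}$;
$R$ is forward-invariant, so $h_f>0$ on $[\sqrt{\alpha},b]$ whenever $f(\sqrt{\alpha})>0$ and $f'(\sqrt{\alpha})\le 0$, since $h_f(\sqrt{\alpha})=f(\sqrt{\alpha})$.
Parts (i) and (ii) fail for every $H_0>0$. For $H_0>0$ the lemma therefore has to be reformulated rather than proved.
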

\begin{proof}
If $f$ has a critical point $t_c$ in $\{ f >0\}$, then the expression~\eqref{eqn:critical-point-of-CMC} forces $f''(t_c) < 0$, so $t_c$ is unique if it exists.
The definition of $h_f$ leads to the relation~\eqref{eqn:2.6NMS}, and equation~\eqref{eqn:CMC-equation} implies that
\begin{equation}\label{eqn:h-ODE-general}
    (1-t^2) h'_f = A_M(t) S h_f - Tf' + \tfrac{2H_0}{g} A_M(t) \Bigl( 1 + \tfrac{g^2}{4} (1-t^2) \tfrac{(f')^2}{1+f^2} \Bigr)^{\frac{3}{2}} .
\end{equation}
The quantities $S,T$ are defined as in~\eqref{eqn:ODE-for-h}, with
\[
S = \tfrac{2(n-1)}{g} + \tfrac{g(n-2)}{2} (1-t^2) \tfrac{(f')^2}{1+f^2}, \qquad T = \alpha( \tfrac{g}{2} - \alpha)t^{-2} + \tfrac{1}{2} (g-2) (1-2\alpha) > 0 .
\]
For $H_0 \geq 0$, the identity~\eqref{eqn:h-ODE-general} again shows that at a zero of $h_f$, we have $h'_f < 0$ if $\{ t < \sqrt{\alpha}, f' > 0 \}$ and $h'_f > 0$ if $\{ t > \sqrt{\alpha}, f'< 0 \}$.
Therefore, we may again argue as in Lemma~\ref{lemma:general-properties-of-solutions} and~\cite{new-minimal-surfaces}*{Proposition 2.2} to deduce that the region $R := \{ t \geq \sqrt{\alpha} : h_f(t) > 0, f'(t) < 0 \}$ is forward-invariant in $t$, with $h_f > 0$ while $f>0$, hence the result follows from similar arguments to the ones in~\cite{new-minimal-surfaces}.
\end{proof}

\begin{lemma}\label{lemma:shooting-endpoint-H0geq0}
    Suppose that $H_0 \geq 0$ and $g \geq 2$, then the following hold:
    \begin{enumerate}[$(i)$]
        \item There exists a $C(M, H_0)$ such that for any $a > C(M,H_0)$, the solution of equation~\eqref{eqn:CMC-equation} with initial data $\{ f(0) = a, f'(0) = 0 \}$ blows up while non-negative.  
        \item Let $t_{\alpha,H_0} \geq \sqrt{\frac{m_1}{m_1+m_2}}$ be the point where $H_0 \sqrt{1-t_{\alpha,H_0}^2} = (n-2) A_M(t_{\alpha, H_0})$.
        There exists an $\bar{\ve}_{M,H_0}>0$ such that for any $t_1 \geq t_{\alpha, H_0} - \bar{\ve}_{M,H_0}$ and $a>0$, the solution of equation~\eqref{eqn:CMC-equation} with initial data $\{ f(t_1) = 0, f'(t_1) = a \}$ does not reach a second zero.  
    \end{enumerate}
\end{lemma}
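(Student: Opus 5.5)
Part $(i)$ can follow the scheme of Proposition~\ref{prop:dont-shoot-too-high}, using $\Psi_{H_0}$ of~\eqref{eqn:lyapunov-quantity} in place of $\Psi$. For $\{f(0)=a, f'(0)=0\}$ with $H_0 \geq 0$, the identity~\eqref{eqn:f''(0)} gives $f''(0)<0$. Lemma~\ref{lemma:cmc-equation-preliminaries} then shows that $f$ is decreasing with $h_f>0$ on $[0,t_\alpha]$ while positive, where $t_\alpha = \sqrt{m_1/(m_1+m_2)}$. At $t_\alpha$ the value $\Psi_{H_0}(t_\alpha) \geq f(t_\alpha)^2 - \frac{1}{n-2}$, since the $H_0$ term is nonnegative. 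So it suffices to show $f_a(t_\alpha) > \frac{1}{\sqrt{n-2}}$ for large $a$. Property $(iv)$ of $\Psi_{H_0}$ then gives $\Psi'_{H_0}(t_\alpha)\geq 0$, and the final clause of Proposition~\ref{prop:psi-more-general} rules out a zero. For $g\geq 3$ I would establish the height bound by the same blow-up/contradiction argument. Take $a_j\uparrow\infty$ with $f_j(t_\alpha)\to L\le \frac{1}{\sqrt{n-2}}$ and rescale by $\epsilon_j = 1/|f_j'(t_\alpha)|$. In the limit, the $H_0$ term of~\eqref{eqn:CMC-equation} carries a factor $\epsilon_j^2\cdot \epsilon_j^{-3}(1-t^2)^{3/2}$ relative to the others. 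I therefore have to check the scaling carefully: the right side is $\sim H_0 |f'|^3 = H_0\epsilon_j^{-3}|y'|^3$, while $y''=\epsilon_j^2 f''$. After multiplying by $\epsilon_j^2$ the term is $\epsilon_j^{-1}H_0|y'|^3$, which is of the same order as the term $(n-2)\frac{(f')^2}{1+f^2}\frac g2 A_M f'$ (also $\sim \epsilon_j^{-1}\cdot \epsilon_j r\cdot$). Hence the limit equation is a modification of~\eqref{eqn:phi-limit-equation} with an extra term, or the rescaling must be adjusted. An alternative that avoids this is to use comparison. Since $H_0\ge 0$ makes the right side of~\eqref{eqn:CMC-equation} negative, one would like $f_a^{H_0} \ge f_a^{0}$ up to $t_\alpha$, but the sign goes the wrong way for a lower bound. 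I would instead compare with $f_a^0$ through the quantity $w = f^{H_0}_a/f^0_a$ as in Step 2 of Lemma~\ref{lemma:f-infty-analysis}, or note that $H_0/a\to 0$, so that $f_a/a$ solves a $\lambda$-rescaled equation whose $H_0$ term is $O(1/a)$. The latter is cleaner: $\tilde f = f/a$ solves~\eqref{eqn:rescaled-equation-lambda-ODE}-type equation with $\lambda=a^2$ plus a perturbation $-\frac{4H_0}{g^2 a}(\cdots)^{3/2}$. I would carry out Steps 1--2 of Proposition~\ref{prop:dont-shoot-too-high} for the family in $(a, H_0/a)$; the perturbation vanishes in the blow-up limit once one checks it is $o(\epsilon_j^{-1})$ in the rescaled equation. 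For $g=2$ I would use the explicit Clifford solutions~\eqref{eqn:cmc-clifford-solution}, on which $\Psi_{H_0}\equiv0$, as barriers.

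For part $(ii)$, the point $t_{\alpha,H_0}$ is precisely where the coefficient of the cubic $(f')^3$ term in~\eqref{eqn:CMC-equation} changes sign, per~\eqref{eqn:cmc-f(t0)-signs} with $\sigma=+1$. I would repeat the proof of Proposition~\ref{prop:small-breather} centered at $t_{\alpha,H_0}$. Suppose $t_i = t_{\alpha,H_0}-\epsilon_i$ with vertical initial slope yields solutions reaching a second zero. The height bound from part $(i)$, with the reflection from Lemma~\ref{lemma:change-of-variables} if $f'(t_{\alpha,H_0})>0$, keeps $f_i(t_{\alpha,H_0})$ bounded. Rescaling $r=(t-t_{\alpha,H_0})/\epsilon_i$ then gives a limit equation of the form $y'' + c\frac{(y')^2}{1+y^2}\bigl[y - \kappa r y'\bigr]=0$ for a positive $\kappa$ depending on $H_0$. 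It has vertical slope at $r=-1$ and a positivity quantity $h = y-\kappa r y'$, and the Riccati argument via $L(\tau)$ in~\eqref{eqn:L(tau)-Riccati-equation} forces $h$ to vanish. The last step is to transfer $h<0$ back to $f_i$ via Lemma~\ref{lemma:cmc-equation-preliminaries}$(iii)$, suitably adapted so that the sign change of $h'$ occurs at $t_{\alpha,H_0}$ rather than $t_\alpha$.

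The main obstacle is the precise form of the rescaled limit equation with $H_0\neq 0$: the $H_0|f'|^3$ term has exactly the critical scaling. One must verify that the limit is still of type~\eqref{eqn:phi-limit-equation}, with the effective linear coefficient $\kappa$ replacing $g$ and the Riccati subsolution $c\tanh\tau$ still valid, which needs $\kappa\ge 2$ or a modified barrier.
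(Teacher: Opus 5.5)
\textbf{Part $(i)$.} Your reduction of $(i)$ to the height bound $f_a(t_\alpha)>\frac{1}{\sqrt{n-2}}$ is the same as the paper's: it goes through $\Psi_{H_0}$, its property $(iv)$, and Proposition~\ref{prop:psi-more-general}. The height bound itself, however, is not established, because your scaling comparison is wrong.

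After $t=t_\alpha+\epsilon_j r$ and multiplication by $\epsilon_j^2$, the $A_M$-cubic term is $O(1)$. Your own factor $\epsilon_j^{-1}\cdot\epsilon_j r$ says so, since $A_M(t_\alpha+\epsilon_j r)=2\epsilon_j r+O(\epsilon_j^2)$. The right-hand side of \eqref{eqn:CMC-equation}, by contrast, has leading part $-\tfrac{gH_0}{2}\,\epsilon_j^{-1}(1-t^2)^{3/2}|y_j'|^3(1+y_j^2)^{-3/2}$, which dominates every other term. So there is no limit of type \eqref{eqn:phi-limit-equation}, modified or not. An $o(\epsilon_j^{-1})$ bound would not suffice anyway, since the competing terms are $O(1)$.

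Dividing by $a$ does not change this. It rescales every term equally, and near $t_\alpha$ one has $\lambda(\tilde f')^2/(1+\lambda\tilde f^2)=(f')^2/(1+f^2)\sim\epsilon_j^{-2}$. Hence the ``$O(1/a)$'' perturbation is not small there and does not disappear in the blow-up.

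The missing idea is that for $H_0>0$ this dominant term helps you, and no blow-up is needed. This is the paper's argument:
\begin{itemize}
\item Granting, as you do, $f'<0$ and $h>0$ on $(0,t_\alpha)$, every term on the right of $(1-t^2)f''=\cdots$ is non-positive.
\item Keeping only the $H_0$-term gives $f''\le -c_N|f'|^3$ wherever $f\le N$, with $c_N=\tfrac{gH_0}{2}\sqrt{1-t_\alpha^2}\,(1+N^2)^{-3/2}$.
\item Integrating $((f')^{-2})'\le -2c_N$ backward from $t_\alpha$ shows that $f$ rises by at most $\sqrt{2(t_\alpha-\tau)/c_N}$ on $[\tau,t_\alpha]$.
\item So if $f_{a_i}(t_\alpha)$ stayed bounded along $a_i\to\infty$, the point $\tau_i$ with $f_{a_i}(\tau_i)=N\ge f_{a_i}(t_\alpha)+1$ would satisfy $t_\alpha-\tau_i\ge c_N/2$.
\item But $h>0$ and $A_M<0$ give $|f'|<2f/(g|A_M|)$, hence $f_a(t)\ge a(1-t^2/t_\alpha^2)^{1/g}$. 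This contradicts $f_{a_i}(\tau_i)=N$ for large $a_i$.
\end{itemize}
This yields $f_a(t_\alpha)\to\infty$ directly.

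\textbf{Part $(ii)$.} The limit equation you posit does not arise. At $t_{\alpha,H_0}$ the two order-$\epsilon^{-1}$ cubic terms are $\tfrac{g(n-2)}{2}A_M(1-t^2)(y')^3(1+y^2)^{-1}$ and $\tfrac{gH_0}{2}(1-t^2)^{3/2}|y'|^3(1+y^2)^{-3/2}$. The definition of $t_{\alpha,H_0}$ equates their coefficients, but the different powers of $1+y^2$ make them cancel only where $y=0$. For $y>0$ an order-$\epsilon^{-1}$ term of definite sign survives, so no equation of the form $y''+c\frac{(y')^2}{1+y^2}[y-\kappa r y']=0$ emerges.

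More fundamentally, $(ii)$ is false as stated when $H_0>0$. Inserting $f(t_1)=0$, $f'(t_1)=a$ into \eqref{eqn:CMC-equation} gives $(1-t_1^2)f''(t_1)=-\tfrac{4H_0}{g^2}+O(a)$. So for small $a$ the shot is a thin arch that returns to zero near $t_1+\tfrac{g^2(1-t_1^2)}{2H_0}\,a$, for any $t_1\in(0,1)$.

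The paper argues differently from you. Since $t_{\alpha,H_0}>t_\alpha$ when $H_0>0$, it chooses $\bar\ve_{M,H_0}$ so that $A_M(t_1)>0$. It then notes $h(t_1)=-\tfrac g2 aA_M(t_1)<0$ and invokes Lemma~\ref{lemma:cmc-equation-preliminaries} to keep $f$ increasing. That step does not survive the same example. In \eqref{eqn:h-ODE-general}, for $t>t_\alpha$ and $f'>0$, the $H_0$-term is positive and can force $h'>0$ at a zero of $h$; the thin arch does exactly this, since $h=f>0$ at its maximum. So the minimal-case invariance of $\{h<0\}$ does not carry over. A correct version of $(ii)$ must at least exclude small $a$, and neither your blow-up nor the sign of $h$ at the shooting point addresses this.
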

\begin{proof}
For $H_0 = 0$, these results are proved in Propositions~\ref{prop:dont-shoot-too-high} and~\ref{prop:small-breather}, respectively, so we denote $t_{\alpha} := \sqrt{\frac{m_1}{m_1+m_2}}$ and focus on $H_0 > 0$ in what follows.
For the first result, the expression~\eqref{eqn:cmc-clifford-solution} gives an explicit solution with derivative blowup when $g=2$, so we focus on $g \geq 3$ in what follows.
Using Proposition~\ref{prop:psi-more-general} in place of Proposition~\ref{prop:Psi-one-sided} and Lemma~\ref{lemma:psi-detect-blowup}, it suffices to show that $\Psi_{H_0} (t_{\alpha}) > 0$, which in turn reduces to $f_a( t_{\alpha}) > \frac{1}{\sqrt{n-2}}$ for $a$ sufficiently large, due to Proposition~\ref{prop:psi-more-general}.

In fact, we prove the stronger statement $f_a(t_{\alpha}) \to \infty$ as $a \to \infty$.
Suppose, for contradiction, that there exists a sequence $a_i \to \infty$ with $\limsup_i f_i(t_{\alpha}) < \infty$, and fix some $N > \limsup_i f_i(t_{\alpha}) + 2$.
On the interval where $f_i>0$, Proposition~\ref{prop:psi-more-general} implies $h_i := f_i - \frac{g}{2} A_M f_i' > 0$.
Equation~\eqref{eqn:CMC-equation} shows
\begin{equation}\label{eqn:pi'bound}
(1-t^2) f_i'' = -\frac{4}{g^2} f_i + t f_i' - (n-2)\left(\frac{4}{g^2} - \frac{(1-t^2)(f_i')^2}{1+f_i^2}\right)  h_i - \frac{4H_0}{g^2}
\left( 1 + \frac{g^2}{4}(1-t^2)\frac{(f_i')^2}{1+f_i^2} \right)^{\frac{3}{2}},
\end{equation}
so $f_i'(t) < 0$ on $(0,t_{\alpha})$ and the solutions are concave.

Since $A_M(t) < 0$ on $(0,t_{\alpha})$ and $h_i>0$, we have $f_i + \frac{g}{2} |A_M(t)| f_i' > 0$, so $|f_i'| < \frac{2 f_i}{g |A_M(t)|}$ with $|A_M(t)| = \frac{t_{\alpha}^2- t^2}{t}$, and integrating from $0$ to $t < t_{\alpha}$ implies that $f_i(t) \geq a_i \bigl( \frac{t_{\alpha}^2 - t^2}{t_{\alpha}^2} \bigr)^{\frac{1}{g}}$, so $f_i(t) \uparrow \infty$ as $i\to\infty$ for every fixed $t< t_{\alpha}$.
For all sufficiently large $i$, we therefore find a unique $\tau_i \in (0,t_{\alpha})$ where $f_i(\tau_i) = N$, so $L_i \leq f_i \leq N$ and $0 < h_i \leq f_i \leq N$ in $[\tau_i, t_{\alpha}]$.

Each term on the right-hand side of~\eqref{eqn:pi'bound} is non-positive, so keeping only the $H_0$ term in the slope equation, on $[\tau_i,t_{\alpha}]$, we can bound $f_i'' \leq \frac{gH_0}{2} \frac{\sqrt{1-t^2}}{(1+f_i^2)^{3/2}} \, (f_i')^3$.
Since $t\leq t_{\alpha}$ and $f_i\leq N$ on this interval, there exists a $c_N := \frac{gH_0}{2} \frac{\sqrt{1-t_{\alpha}^2}}{(1+N^2)^{3/2}} > 0$ such that $f_i'' \leq c_N (f_i')^3$ on $[\tau_i,t_{\alpha}]$, and integrating on $[ t, t_{\alpha}]$ shows that $\bigl( (f'_i)^{-2} \bigr)' = - 2 (f'_i)^{-3} f'' \leq - 2c_N$, so
\[
f'_i(t)^{-2} \geq f'_i(t_{\alpha})^{-2} + 2 c_N(t_{\alpha}-t) \implies |f'(t)| \leq \tfrac{1}{\sqrt{f'_i(t_{\alpha})^{-2} + 2 c_N(t_{\alpha}-t)}}.
\]
Integrating once more from $\tau_i$ to $t_{\alpha}$, we obtain
\[
N - L_i = -\int_{\tau_i}^{t_{\alpha}} f_i'(s)\, ds \leq  \int_{\tau_i}^{t_{\alpha}} \frac{ds}{ \sqrt{f'_i(t_{\alpha})^{-2} + 2 c_N (t_{\alpha}-s)} } \leq \int_0^{t_{\alpha}-\tau_i} \frac{d\sigma}{\sqrt{2 c_N \sigma}} =
\sqrt{\frac{2(t_{\alpha}-\tau_i)}{c_N}}.
\]
Since $N-L_i \geq 1$, it follows that $t_{\alpha} - \tau_i \geq \frac{c_N}{2}$ for all sufficiently large $i$.
This contradicts the bound $f_i(t) \geq a_i \bigl( \frac{t_{\alpha}^2 - t^2}{t_{\alpha}^2} \bigr)^{\frac{1}{g}}$ for $a_i \to \infty$, so $(i)$ is proved.

For the second result, we consider the point $t_{\alpha, H_0}$ where $H_0 \sqrt{1-t_{\alpha,H_0}^2} = (n-2) A_M(t_{\alpha, H_0})$.
Since $A_M(t)$ is increasing in $t$ with $A_M(1) = \frac{m_2}{m_1+m_2}$, this point is uniquely defined and satisfies $t_{\alpha, H_0} \in \bigl( \sqrt{\frac{m_1}{m_1+m_2}}, 1 \bigr)$ for any $H_0 \in (0,+\infty)$.
Therefore, $H_0 \sqrt{1-t_1^2} - (n-2) A_M(t_1) > 0$ for any $t_1 < t_{\alpha, H_0}$, so the expression~\eqref{eqn:cmc-f(t0)-signs} shows that the forward solution of equation~\eqref{eqn:CMC-equation} with initial data $\{ f(t_1) = 0, f'(t_1) = a \}$ with $a > 0$ is well-defined and initially increasing.
This computation also shows that solutions with this initial data remain strictly increasing for $t_1 > t_{\alpha, H_0}$, so we can restrict ourselves to $t_1 \in (t_{\alpha, H_0} - \bar{\ve}, t_{\alpha, H_0})$ in what follows. 

Using the expressions~\eqref{eqn:cmc-f(t0)-signs}, we have convergence and compactness as in Lemmas~\ref{lemma:slope-infinity-solution} and~\ref{lemma:convergence-of-solutions} for such solutions in terms of the shooting point $t_1$ and initial velocity $a$.
On the other hand, $H_0 > 0$ implies that $A_M(t_{\alpha,H_0}) > 0$, so we can find some $\bar{\ve}_{M,H_0} > 0$ with $A_M(t_{\alpha,H_0} - \bar{\ve}_{M,H_0}) > 0$.
For any $t_1$ with $t_{\alpha, H_0} - t_1 \in (0,\bar{\ve}_{M, H_0})$, we therefore find
\[
\lim_{t \downarrow t_1} h(t) = f(t_1) - \tfrac{g}{2} A_M(t_1) f'(t_1) = - \tfrac{1}{2} g a A_M(t_1) < 0
\]
for the solution of equation~\eqref{eqn:CMC-equation} with initial data $\{ f(t_1) = 0, f'(t_1) = a \}$.
Then, Lemma~\ref{lemma:cmc-equation-preliminaries} shows that this solution remains increasing for all $t > t_1$ and cannot reach a second zero, as desired.
\end{proof}

\begin{lemma}\label{lemma:some-M-breather}
    For any $H_0 \geq 0$ and $g \geq 2$, there exists some $\ve(M,H_0)$ such that for any $t_1 \in (0,\ve)$ and $a \in (0,\infty]$, the solution $f$ of equation~\eqref{eqn:CMC-equation} with initial data $\{ f(t_1) = 0 , f'(t_1) = a \}$ reaches a second zero $t_2$ in its maximal domain of existence, with $(1-t_2^2) f'(t_2)^2 < \frac{1}{2} a^2$.
\end{lemma}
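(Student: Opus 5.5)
The plan is to adapt the proof of Proposition~\ref{prop:window-near-zero} to the CMC equation~\eqref{eqn:CMC-equation}. It proceeds in two stages. First, a perturbative ``small data'' property analogous to~\eqref{eqn:original-assumption}. Second, a quantitative argument near $t=0$ showing that any shot from $t_1$ small enters the small-data regime.

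\textbf{Stage 1: small-data property.} This stage needs care, because the rescaling $f=\sqrt{\lambda}f_\lambda$ no longer sends~\eqref{eqn:CMC-equation} to a perturbation of $\cL_M f=0$ when $H_0>0$. The forcing term $-\frac{4H_0}{g^2}(\cdots)^{3/2}$ is of order one rather than $O(\sqrt\lambda)$.

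I would instead compare with the explicit constant solution $f\equiv -\frac{H_0}{n-1}\le 0$. Near $f\approx 0$ and $f'$ small, the linearization of~\eqref{eqn:CMC-equation} is $\cL_M f = -\frac{4H_0}{g^2}+O(|f|^2+|f'|^2)$. So the relevant linear problem is $\cL_M \bar f=-\frac{4H_0}{g^2}$, equivalently $\cL_M(\bar f+\frac{H_0}{n-1})=0$.

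For $H_0\ge0$ the forcing only pushes solutions down. Starting from data $(f(t_1),f'(t_1))\in(0,\delta)^2$ with $t_1<\delta$, the solution is therefore dominated by the solution of the unforced problem. That unforced solution reaches zero with bounded slope by~\cite{one-phase-isoparametric}*{Proposition 3.5}.

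The comparison itself would use the self-adjoint form~\eqref{eqn:self-adjoint-legendre} exactly as in Lemma~\ref{lemma:reach-zero-before-hypergeometric}. Since $h_f>0$ by Lemma~\ref{lemma:cmc-equation-preliminaries}, we have $\cL_M f<0$ while $f>0$. This forces a zero $t_2>\sqrt{\alpha}$ before the hypergeometric comparison function vanishes.

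It remains to show $|f'(t_2)|$ is small. I would bound $f'$ from below on the descending part by integrating the equation: $f$ stays $O(\delta)$, and the $H_0$ term contributes at most $O(H_0)$ slope per unit $t$. This yields $(1-t_2^2)f'(t_2)^2<C(M,H_0)$.

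This is weaker than the required $<\frac12 a^2$. However, the compactness reduction below reduces to $a=\infty$, where any finite bound suffices. For finite $a$, I would invoke the same continuity argument as in Proposition~\ref{prop:window-near-zero}, using Lemmas~\ref{lemma:slope-infinity-solution} and~\ref{lemma:convergence-of-solutions}, which hold for~\eqref{eqn:CMC-equation} by the expansion~\eqref{eqn:cmc-f(t0)-signs}.

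\textbf{Stage 2: entering the small-data regime.} This step I would copy almost verbatim from Proposition~\ref{prop:window-near-zero}. In the region $\{f\le\delta/10,\ f'\le\delta/2,\ t\le 2C(\delta)\varepsilon\}$ we again have $A_M\le-\frac34\alpha t^{-1}$. The extra term $-\frac{4H_0}{g^2}(\cdots)^{3/2}$ is nonpositive, so it only strengthens the inequality $f''<-\beta t^{-1}f'(1+(f')^2)$. The bound~\eqref{eqn:f-prime-bound} and the integration giving $f(\hat t_1)<2\sqrt{C(\delta)}\,t_1$ carry over unchanged. One extra case arises: $f'$ may vanish, with $f$ then decreasing. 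This is harmless, since we land in $\{f\le\delta/10,\ f'\le 0\}$, which is also covered by Stage 1 after allowing $f'(t_1)\in(-\delta,\delta)$ there.

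\textbf{Main obstacle.} The main obstacle is the endpoint estimate $(1-t_2^2)f'(t_2)^2<\frac12a^2$ for large but finite $a$ when $H_0>0$. The slope at the second zero gains a contribution from $H_0$ that does not vanish as the data shrink, and it is also influenced by the blowup threshold $t_{\alpha,H_0}$ of Lemma~\ref{lemma:shooting-endpoint-H0geq0}. My first attempt would be to make $\varepsilon$ depend on $H_0$ through $C(\delta)$, and to use the compactness in $(t_1,a)$ to restrict attention to $a$ bounded below. For $a$ near zero, the claim would instead follow from continuity of the endpoint slope, given that the solution from $\{f(t_1)=0,f'(t_1)=0\}$ is $\le 0$ when $H_0>0$. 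If that fails, the fallback is to state the conclusion with a constant $C(M,H_0)$ in place of $\frac12a^2$, which is all that the subsequent shooting argument for Type II CMC surfaces actually uses.
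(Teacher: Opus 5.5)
You are right that the rescaling $f=\sqrt{\lambda}f_\lambda$ behind Proposition~\ref{prop:window-near-zero} does not survive $H_0>0$. The paper's proof only asserts that this argument carries over, and for $H_0>0$ it does not. Your proposal therefore never proves the required estimate. Moreover, the step you suggest for small $a$ cannot work, because the estimate is false there.

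Fix $t_1\in(0,\min\{\ve,\frac{1}{\sqrt2}\})$ and let $a\downarrow 0$. Near $(t,f,f')=(t_1,0,0)$, equation \eqref{eqn:CMC-equation} is a regular ODE. Along the solution it reads $f''=-\frac{4H_0}{g^2(1-t_1^2)}+O(a)$, since the drift term $\frac{m_1}{t}f'$ is $O(a/t_1)$ and the nonlinear terms are $O(a^2)$. So the profile is an almost symmetric parabola. It returns to zero at $t_2=t_1+O(a)$ with $f'(t_2)=-a(1+O(a))$, hence $(1-t_2^2)f'(t_2)^2/a^2\to 1-t_1^2>\frac12$.

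Continuity of the endpoint slope in $a$ only gives $f'(t_2)\to 0$, not the required ratio. Likewise, the reduction to $a=\infty$ in Proposition~\ref{prop:window-near-zero} handles small $a$ through exactly the scaling that is lost. The same example has $t_2<\sqrt{\alpha}$ and $h_f(t_2)=-\frac g2A_M(t_2)f'(t_2)<0$. Indeed, for $t<\sqrt\alpha$ and $f'<0$, the term $\frac{2H_0}{g}A_M(\cdots)^{3/2}$ in \eqref{eqn:h-ODE-general} is negative and can drive $h_f$ through zero. So the conclusions $h_f>0$ and $t_2>\sqrt\alpha$ of Lemma~\ref{lemma:cmc-equation-preliminaries}, on which your Stage~1 comparison rests, are not available when $H_0>0$.

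Your fallback does not rescue the application either. The Type II shooting in Theorem~\ref{thm:cmc-construction} needs $R[t_1]>0$, i.e.\ the endpoint quantity must lie below a fixed fraction of $\frac{4}{g^2}\tan^2\theta$. An absolute bound $C(M,H_0)$ gives this only for large $\theta$.

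The missing idea is to make the endpoint quantity arbitrarily small rather than merely bounded, at the price of letting $\ve$ depend on a lower bound $a_0$ for $a$. Replace \eqref{eqn:original-assumption} by the following: data $(t_1,f(t_1),f'(t_1))\in(0,\delta)\times(0,\delta)\times(-\delta,\delta)$ produce a zero with $(1-t_2^2)f'(t_2)^2<\frac12a_0^2$, for some $\delta=\delta(M,H_0,a_0)$. For $H_0>0$ this holds because the forcing $-\frac{4H_0}{g^2}$ returns such small data to zero near $t=0$ with slope tending to $0$ as $\delta\downarrow0$. One can see this by comparison with $f''+\frac{m_1}{t}f'=-\frac{4H_0}{g^2}$, or with the regular solution from $f(0)=f'(0)=0$, which has $f''(0)<0$ by \eqref{eqn:f''(0)}.

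Your Stage~2, where the $H_0$ term indeed only helps, then brings every shot with $a\in(0,\infty]$ from $t_1<\ve(M,H_0,a_0)$ into this regime. This gives the conclusion for all $a\in[a_0,\infty]$; for $H_0=0$ it is Proposition~\ref{prop:window-near-zero}. In the Type II construction $\theta$ is fixed, so $a=\frac{2}{g}\frac{\tan\theta}{\sqrt{1-t_1^2}}\geq\frac{2}{g}\tan\theta$. Hence this corrected statement is exactly what that argument uses.
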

\begin{proof}
    This result follows from the same arguments as in Proposition~\ref{prop:window-near-zero}.
\end{proof}

With these ingredients, we can now prove Theorem~\ref{thm:cmc-construction}.

\begin{figure}
    \centering
    \includegraphics[width=0.3\linewidth]{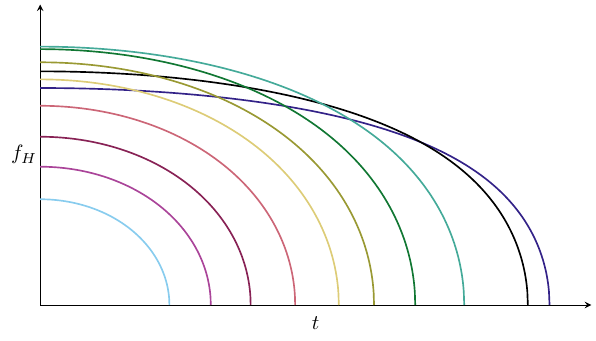}
    \includegraphics[width=0.3\linewidth]{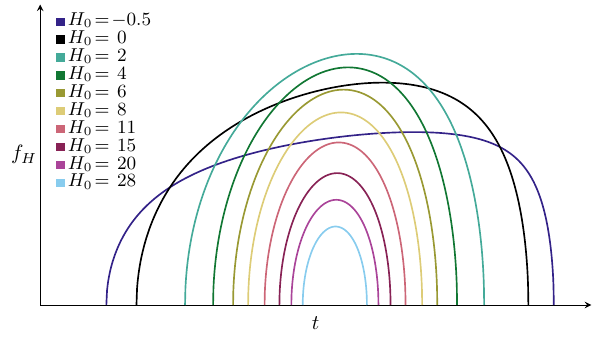}
    \includegraphics[width=0.3\linewidth]{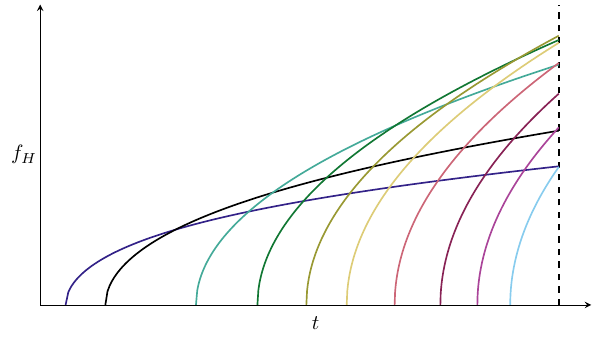}
    \caption{We display numerically computed profile curves for varying mean curvature $H$ for both Type I and II cones for $(g,m_1, m_2) = (4,2,5)$ corresponding to $\Sigma^H_{M_1, \frac{\pi}{2}}$. We can see some intervals of $H$ where the surfaces do not intersect, thereby laminating a region.
    However, this property fails for the minimal leaf $\Sigma_{M_1, \frac{\pi}{2}}$, which intersects the CMC profiles.}
    \label{fig:typeI&IICMCProfiles}
\end{figure}

\begin{proof}[Proof of Theorem~\ref{thm:cmc-construction}]
    The proof closely mirrors the shooting-continuity arguments in Theorems~\ref{thm:new-minimal-surfaces} - \ref{thm:smooth-interpolation} for the CMC surfaces $\mathbf{S}^{H}_{M_i}$ and $\bar{\mathbf{S}}^H_M$, so we highlight the modifications needed.
    We now restrict ourselves to $H_0>0$, which is in fact easier.

\smallskip \noindent \textbf{Type I:}
Considering solutions $f_a$ of equation~\eqref{eqn:CMC-equation} with $\{ f(0) = a, f'(0) =0 \} $, the expression~\eqref{eqn:f''(0)} shows that $f', f'' < 0$ for small $t>0$, so $f_a$ is strictly decreasing by Lemma~\ref{lemma:cmc-equation-preliminaries}.
The property~\eqref{eqn:f''(0)} together with the smooth dependence of ODE solutions shows that there exists some $t_0 \in (0,1)$ and a sufficiently small $\ve_0$ such that any solution $f_a$ with $a \in (0,\ve_0)$ satisfies $f_a''(t) \leq - \frac{2 H_0}{g^2(1+m_1)}$ for $t \in [0,t_0]$.
Since $f'_a(0) = 0$, integrating twice yields $f_a(t) \leq a - \frac{H_0}{g^2(1+m_1)} t^2$, so $f_a(t_0) < 0$ for $a < \ve^*_0 := \frac{H_0 t_0^2}{g^2(1+m_1)}$ and the solutions $f_a$ reach a zero $t_a$ with finite derivative.

In fact, we obtain $\sqrt{1-t_a^2} |f'_a(t_a)| \to 0$ as $a \downarrow 0$, with the rate
\begin{equation}\label{eqn:rate-of-derivative}
\sqrt{1-t_a^2} |f'_a(t_a)| = \frac{2 \sqrt{2H_0}}{g \sqrt{m_1 + 1}} a^{\frac{1}{2}} (1 + o(1)), \qquad \text{as } \; a \downarrow 0.
\end{equation}
Indeed, the above arguments give Taylor expansions $f_a(t) = a + \frac{1}{2} f''_a(0) t^2 + o(t^2)$ and $f'_a(t) = f''_a(0) t + o(t)$ uniformly as $t< \frac{t_0}{10},a < \frac{H_0t_0^2}{10 g^2(1+m_1)}$.
Using $f''_a(t) = -\frac{4 H_0}{g^2(m_1+1)} + o(1)$ for $t < \frac{t_0}{10}$, we find
\[
0 = f_a(t_a) = a + \tfrac{1}{2} f''_a(0) t_a^2 + o(t_a^2) \implies t_a^2 = 2\bigl( \tfrac{4 H_0}{g^2(m_1+1)} \bigr)^{-1} a(1 + o(1)) = \tfrac{g^2(m_1+1)}{2H_0} a(1 + o(1)),
\]
so $t_a = g \sqrt{\frac{(m_1+1) a}{2 H_0}}(1 + o(1))$ and $|f'_a(t_a)| = \frac{4 H_0}{g^2(m_1+1)} t_a(1+o(1)) = \frac{2 \sqrt{2H_0}}{g \sqrt{m_1+1}} a^{\frac{1}{2}}$.
Combining these properties proves the claimed behavior~\eqref{eqn:rate-of-derivative} as $a \downarrow 0$.

For $g = 2$ principal curvatures, we have an explicit solution~\eqref{eqn:cmc-clifford-solution} of equation~\eqref{eqn:CMC-equation} with derivative blowup while non-negative; for $g \geq 3$, Lemma~\hyperref[lemma:shooting-endpoint-H0geq0]{\ref{lemma:shooting-endpoint-H0geq0} $(i)$} shows that there exists an $a_{H_0}$ for which $f_{a_{H_0}}$ has derivative blowup while positive.
Combined with the property $\sqrt{1-t_a^2} |f'_a(t_a)| \to 0$ as $a \downarrow 0$ from~\eqref{eqn:rate-of-derivative}, this shows that a subset of the interval $(0 , a_{H_0}]$ produces solutions $f_a$ with a zero at $t_a$ such that the map $a \mapsto \arctan \bigl(\frac{g}{2} \sqrt{1-t_a^2} |f'_a(t_a)| \bigr)$ is surjective onto $(0, \frac{\pi}{2}]$.
This proves the claim.

\smallskip \noindent \textbf{Type II:}
The proof of this result proceeds identically to part II of Theorem~\ref{thm:new-minimal-surfaces}.
We use Lemma~\ref{lemma:some-M-breather} to show that the shooting procedure with $\{ f_{t_1}(t_1) = 0 , f'_{t_1}(t_1)= \frac{2}{g} \frac{\tan \theta}{\sqrt{1-t_1^2}} \}$ can be initiated for $t_1$ sufficiently close to $0$, producing by Lemma~\ref{lemma:cmc-equation-preliminaries} a second zero $\tau(t_1) > \sqrt{\frac{m_1}{m_1+m_2}}$ where
\[
R[t_1] := (1-t_1^2) f'_{t_1}(t_1)^2 - (1 - \tau(t_1)^2) f'_{t_1}(\tau(t_1))^2 > \tfrac{1}{g^2} \tan^2 \theta.
\]
Then, Lemma~\hyperref[lemma:shooting-endpoint-H0geq0]{\ref{lemma:shooting-endpoint-H0geq0} $(ii)$} yields $R[t_1] \to + \infty$ as $t_1 \uparrow (t_{\alpha, H_0} - \bar{\ve}_{M, H_0})$, so the same arguments apply.

\smallskip \noindent \textbf{Type II, $m_1 = m_2$:}
Arguing as in the Type I case shows that for $a \in (0,\ve_0)$ sufficiently small, the solution $f_a$ of equation~\eqref{eqn:CMC-equation} with initial data $\{ f( \frac{1}{\sqrt{2}}) = a, f'(\frac{1}{\sqrt{2}}) = 0\}$ reaches zero with finite derivative.
Moreover, Proposition~\ref{prop:psi-more-general} shows that $f_a$ blows up while positive for $a > C(M, H_0)$ sufficiently large, therefore, we may argue as in the second part of Theorem~\ref{thm:smooth-interpolation} to conclude.
\end{proof}

Finally, we comment on the modifications needed in the construction of Theorem~\ref{thm:cmc-construction} to produce closed embedded CMC surfaces in $\bS^n$ with negative mean curvature.
Unlike the above result, which allows any $H_0 \geq 0$, the negative CMC construction is only possible for $H_0 \in (-\ve_n,0)$ for $\ve_n>0$ a small dimensional constant.
Geometrically, the resulting hypersurfaces are small normal perturbations of the corresponding minimal surfaces $\mathbf{S}_{M_i}, \bar{\mathbf{S}}_M$ produced in Theorem~\ref{thm:new-minimal-surfaces}.

For equation~\eqref{eqn:CMC-equation} with $H_0 \leq 0$, the quantity $h = f - \frac{g}{2} A_M f'$ does not satisfy the full monotonicity of Lemma~\ref{lemma:cmc-equation-preliminaries}.
We have the following replacement:
\begin{lemma}\label{lemma:max-followed-by-min}
Consider a solution $f$ of equation~\eqref{eqn:CMC-equation} with $H_0 \leq 0$ and let $t_{\alpha} := \sqrt{\frac{m_1}{m_1 +m_2}}$.
Then,
\begin{enumerate}[(i)]
    \item A local maximum of $f$ in $(0, t_{\alpha})$ cannot be followed by a local minimum still in $(0, t_{\alpha})$.
    \item A local minimum in $(t_{\alpha},1)$ cannot be followed by a local maximum still in $(t_{\alpha},1)$.
\end{enumerate}
\end{lemma}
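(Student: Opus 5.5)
The plan is to prove part $(i)$ directly and then deduce part $(ii)$ from it by the involution of Lemma~\ref{lemma:change-of-variables}. Under $t \leftrightsquigarrow \sqrt{1-t^2}$ with $m_1 \leftrightsquigarrow m_2$, the equation~\eqref{eqn:CMC-equation} is preserved with the same $H_0$. The interval $(t_\alpha,1)$ is mapped onto $(0,\sqrt{\tfrac{m_2}{m_1+m_2}})$, which is exactly the interval $(0,\hat t_\alpha)$ for the swapped triple $(g,m_2,m_1)$. Since the map reverses the order of points, a local minimum followed by a local maximum in $(t_\alpha,1)$ becomes a local maximum followed by a local minimum in $(0,\hat t_\alpha)$. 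Part $(i)$ for $(g,m_2,m_1)$ excludes this.

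For part $(i)$, argue by contradiction. Suppose $t_M<t_m$ in $(0,t_\alpha)$ are a local maximum and the next local minimum, so that $f'<0$ on $(t_M,t_m)$. Apply the critical point identity~\eqref{eqn:critical-point-of-CMC} at both points. From $f''(t_M)\le 0\le f''(t_m)$ we get
\[
f(t_m)\;\le\; -\tfrac{H_0}{n-1}\;\le\; f(t_M).
\]
So $f$ crosses the constant solution $c_0:=-\tfrac{H_0}{n-1}\ge 0$ on $[t_M,t_m]$. This alone is not contradictory, so I will look for a first-order quantity that is monotone on the decreasing stretch. The natural candidate is
\[
K(t):=\tfrac{4}{g^2}\Bigl(H_0\bigl(1+\tfrac{g^2}{4}(1-t^2)\tfrac{(f')^2}{1+f^2}\bigr)^{3/2}+(n-1)f\Bigr) - t f' + \dots,
\]
obtained by moving every term of~\eqref{eqn:CMC-equation} except $(1-t^2)f''$ to the right-hand side. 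Then $(1-t^2)f''=-K$, and at a critical point $K$ reduces to the expression in~\eqref{eqn:critical-point-of-CMC}. The aim is to show that $K$ is strictly increasing wherever $f'<0$ and $t<t_\alpha$. That forces $K(t_m)>K(t_M)\ge 0$, hence $f''(t_m)<0$, contradicting the minimum. Differentiating $K$ and substituting $f''=-K/(1-t^2)$ produces a term $(n-1)f'$, which has the wrong sign. There are also terms coming from $h_f=f-\tfrac g2A_Mf'$, and these carry the favorable sign because $A_M<0$ and $f'<0$: they are the same sign structure that drives~\eqref{eqn:h-ODE-general}. Finally, the $H_0$-terms are non-negative, since $H_0\le0$ and $A_M<0$ on $(0,t_\alpha)$.

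If the direct monotonicity of $K$ fails, the fallback is to work with $h_f$ through~\eqref{eqn:h-ODE-general}. At $t_M$ and $t_m$ we have $h_f=f$, while on $(t_M,t_m)$ we have $h_f<f$ because $A_Mf'>0$. For $t<t_\alpha$ and $f'<0$, equation~\eqref{eqn:h-ODE-general} gives $(1-t^2)h_f'=A_MSh_f - Tf' + (\text{term}\ge0)$. If $h_f>0$ throughout, then $h_f$ is pushed upward whenever $A_MSh_f$ is dominated by $-Tf'$. I would combine this with $h_f'=f'-\tfrac g2(A_Mf')'$ evaluated at the endpoints, where $f'=0$ forces $h_f'=-\tfrac g2A_Mf''$. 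This has sign opposite to $f''$ at each endpoint, which produces an ordering of $h_f-f$ that is incompatible with a return to equality at $t_m$.

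The main obstacle is the sign bookkeeping in the derivative of $K$ (or of $h_f-f$). The term coming from $(n-1)f'$ must be absorbed by the term $-T f'$ together with the $A_M$-weighted nonlinear terms, using $T>0$ and $\tfrac g2+\alpha>1$ as in Section~\ref{section:analysis-of-capillary-equation}. I expect this to work only because $t<t_\alpha$, and I would first test it numerically on the Clifford case $g=2$, where~\eqref{eqn:cmc-clifford-solution} provides explicit profiles for checking.
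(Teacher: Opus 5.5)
Your reduction of $(ii)$ to $(i)$ through the involution of Lemma~\ref{lemma:change-of-variables} is correct: it keeps $H_0$, maps $(t_\alpha,1)$ onto $(0,\sqrt{m_2/(m_1+m_2)})$, and reverses order. It is a legitimate substitute for the paper's direct treatment of $(ii)$.

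The gap is in $(i)$, where neither mechanism you propose works. Your $K$ is identically $-(1-t^2)f''$, and its claimed monotonicity is false. At a critical point $t_M$, both $q:=(1-t^2)\frac{(f')^2}{1+f^2}$ and $q'$ vanish and $h_f'=-\tfrac g2A_Mf''$, so
\[
K'(t_M)=f''(t_M)\Bigl(\tfrac{m_1}{t_M}-(m_1+m_2+1)\,t_M\Bigr).
\]
This is negative at every nondegenerate maximum with $t_M^2<\frac{m_1}{m_1+m_2+1}$, and such maxima occur for any data $f(t_M)>-\frac{H_0}{n-1}$, $f'(t_M)=0$. So $K$ decreases right after $t_M$, exactly where $f'<0$. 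The $H_0$-contributions to $K'$ involve $q'$, hence $f''$, so their sign is not controlled by $A_M$ either.

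The fallback contains a sign error. For $t<t_\alpha$ one has $A_M<0$, so at critical points $h_f'=-\tfrac g2A_Mf''$ has the \emph{same} sign as $f''$, not the opposite sign. The profile $h_f-f=-\tfrac g2A_Mf'\le0$, vanishing at both endpoints, is therefore perfectly consistent with a max followed by a min. Moreover, the level $h_f=0$ is the wrong threshold when $H_0<0$.

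The missing idea is to shift $h_f$ by the constant solution: set $\tilde h:=h_f+\frac{H_0}{n-1}$.

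\textbf{Endpoint signs.} Since $h_f=f$ at critical points, \eqref{eqn:critical-point-of-CMC} reads $(1-t_c^2)f''(t_c)=-\frac{4(n-1)}{g^2}\tilde h(t_c)$. Your inequalities therefore become $\tilde h(t_M)\ge0\ge\tilde h(t_m)$.

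\textbf{Equation for $\tilde h$.} Write $S=\frac{2(n-1)}{g}+\frac{2(n-2)}{g}x$ with $x=\frac{g^2}{4}q$. Then equation~\eqref{eqn:h-ODE-general} turns into
\[
(1-t^2)\tilde h'=A_MS\,\tilde h-Tf'+\tfrac{2H_0}{g(n-1)}A_M\,\Phi(x),\qquad \Phi(x)=(n-1)\bigl((1+x)^{3/2}-1\bigr)-(n-2)x\ge0,
\]
where $\Phi\ge0$ because $\Phi(0)=0$ and $\Phi'>0$.

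\textbf{Contradiction.} On $(t_M,t_m)$ the forcing is positive, since $-Tf'>0$ and $H_0A_M\ge0$. With $\mu(t)=\exp\bigl(-\int_{t_M}^t\frac{A_M(s)S(s)}{1-s^2}\,ds\bigr)$, the product $\mu\tilde h$ is strictly increasing, which contradicts $\tilde h(t_M)\ge0\ge\tilde h(t_m)$.

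You already had the critical-point identity and \eqref{eqn:h-ODE-general}. What was missing is this shift and the positivity of $\Phi$, which makes the $H_0$-forcing sign-definite.
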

\begin{proof}
Similarly to Lemma~\ref{lemma:cmc-equation-preliminaries}, we consider the quantity $\tilde{h} := h_f + \frac{H_0}{n-1}$, which satisfies $\tilde{h} \equiv 0$ for the constant solution $f_0 = - \frac{H_0}{n-1}$.
We introduce the integrating factor $\mu(t) := \exp \bigl( - \int_{t_0}^t \frac{A(s) S(s)}{1-s^2} \, ds \bigr)$, so the equation~\eqref{eqn:h-ODE-general} for $h_f$ becomes
\begin{equation}\label{eqn:mu-h-tilde-equation}
    (1-t^2) (\mu \tilde{h})' = - \mu \left[ Tf' -  \tfrac{2 H_0}{g(n-1)} A_M(t) \Phi \Bigl( \tfrac{g^2}{4} (1-t^2) \tfrac{(f')^2}{1+f^2} \Bigr) \right] , 
\end{equation}
where $\Phi(x) := (n-1) \bigl( (1+x)^{\frac{3}{2}} - 1 \bigr) - (n-2) x$.
We observe that $\Phi(0) = 0$ and $\Phi'(x) = \frac{3}{2} (n-1) \sqrt{1+x} - (n-2) > \frac{n+1}{2} > 0$, so $\Phi(x) > 0$ for $x>0$.
The equation~\eqref{eqn:mu-h-tilde-equation} implies that
\begin{equation}\label{eqn:t-sqrt-alpha-sufficient-property}
\{ t \leq t_{\alpha}, \; f' \leq 0 \} \implies ( \mu \tilde{h})' \geq 0, \qquad \{ t \geq t_{\alpha}, \; f' \geq 0 \} \implies ( \mu \tilde{h})' \leq 0,
\end{equation}
with strict inequality whenever one of the above inequalities is strict.
Moreover, at a critical point $t_c$ of $f$, the identity~\eqref{eqn:critical-point-of-CMC} shows that $(1-t_c^2) f''(t_c) = - \frac{4(n-1)}{g^2} \tilde{h}(t_c)$, so $\tilde{h}(t_c) > 0$ if and only if $t_c$ is a local maximum, and $\tilde{h}(t_c) < 0$ if and only if it is a local minimum.

These properties imply the Lemma as follows.
Taking $s_1 < s_2$ to be the two successive critical points in case $(i)$ or $(ii)$, we would have $(\mu \tilde{h})(s_2) < 0 < (\mu \tilde{h}) (s_1)$ and $f' < 0$ in $(s_1,s_2)$  (respectively, $( \mu \tilde{h}) (s_1) < 0 < ( \mu \tilde{h})(s_2)$ and $f' > 0$), contradicting the sign of $( \mu \tilde{h})'$ from~\eqref{eqn:t-sqrt-alpha-sufficient-property}. 
\end{proof}

Moreover, Lemma~\hyperref[lemma:shooting-endpoint-H0geq0]{\ref{lemma:shooting-endpoint-H0geq0}$(i)$} also holds for $H_0 < 0$, in the sense that solutions with $\{ f_a(0)= a, f'_a(0) =0 \}$ satisfy $f_a(\sqrt{\frac{m_1}{m_1+m_2}}) \to + \infty$ as $a \to \infty$.
However, the quantity $\Psi_{H_0}$ introduced in~\eqref{eqn:lyapunov-quantity} to detect the blowup of Type I solutions no longer enjoys the monotonicity properties of Proposition~\ref{prop:psi-more-general}.
Instead, one argues by continuity of the shooting problem, showing that the profile curve of a CMC hypersurface cannot degenerate to the focal manifold endpoints $\{ 0,1\}$.

\begin{lemma}\label{lemma:corner-of-the-box}
    For $g \geq 2$ and $H_0 \in \bR$, there do not exist solutions of equation~\eqref{eqn:CMC-equation} on $[t_1,t_2]$ with
    \begin{align*}
    & \{ (f(t_1)= 0 \; \textup{ and } \; f'(t_1) = + \infty) \quad \textup{or} \quad t_1 = 0 \} \qquad \textup{and} \qquad \{ t_2 = 1, \; f(1) = 0 \}, \quad \qquad\textup{or} \\
    & \{ t_1 =0, \; f(t_1) = 0 \} \qquad \textup{and} \qquad \{ ( f(t_2) = 0 \; \text{ and } \; f'(t_2) = - \infty) \quad \textup{or} \quad t_2 = 1 \}.
    \end{align*}
    Equivalently, the resulting CMC surface $\mathbf{S} \subset \bS^n$ cannot be tangent to the equatorial $\bS^{n-1}$.
\end{lemma}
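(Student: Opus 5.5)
By the involution of Lemma~\ref{lemma:change-of-variables}, which exchanges $t \leftrightsquigarrow \sqrt{1-t^2}$ and $m_1 \leftrightsquigarrow m_2$, the second configuration maps onto the first. So I only need to rule out solutions with $f(1)=0$ at the right endpoint. The left endpoint is either $t_1=0$ with $f'(0)=0$, or a vertical free boundary $f(t_1)=0$, $f'(t_1)=+\infty$.

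\textbf{Step 1: behavior at $t=1$.} Since $t_2=1$ is a focal endpoint, the smoothness condition of Proposition~\ref{prop:mean-curvature-equation} reads
\[
f'(1) = \tfrac{4\bigl((n-1)f(1)+H_0\bigr)}{g^2(m_2+1)} = \tfrac{4H_0}{g^2(m_2+1)}.
\]
The alternative $f'(1)=\pm\infty$ is excluded by the $p=(f')^{-2}$ integrating-factor argument at the end of Lemma~\ref{lemma:psi-detect-blowup}. That argument only uses the dominant cubic term near $t=1$, where $A_M(1)=\frac{m_2}{m_1+m_2}>0$, and the $H_0$ term there is of lower order $O(|f'|^3)$ with a bounded coefficient times $\sqrt{1-t^2}\to 0$. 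Hence $f$ is smooth up to $t=1$, and it is a smooth even function of the normal distance $r$ to $M_2$, with $\tilde f(0)=0$.

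\textbf{Step 2: contradiction via the $r$-variable.} In $r=s_g-s$, the surface near $M_2$ is the graph of a smooth function $u=\rho\tilde f(r)$ vanishing on $M_2$ (up to the cone scaling). The relevant cases are these:
\begin{enumerate}[(a)]
\item If $H_0=0$, then $\tilde f''(0)=0$ by Step 1. The ODE for $\tilde f$ is a regular-singular equation whose indicial behavior at $r=0$, together with the even extension, gives uniqueness of the solution with data $\tilde f(0)=0$, $\tilde f''(0)=0$. Since $f\equiv 0$ solves it, we get $f\equiv 0$, contradicting positivity on $(t_1,1)$.
\item For general $H_0$, the constant $f\equiv -\frac{H_0}{n-1}$ is not relevant here. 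Instead I argue by uniqueness of the singular initial value problem at $t=1$: a solution smooth at $t=1$ is determined by $f(1)$ alone, via the iterative series in $1-t$, just as for $t=0$ in Proposition~\ref{prop:mean-curvature-equation}. With $f(1)=0$, the graph is then the unique CMC surface tangent to the equatorial plane along $M_2$ with that symmetry.
\end{enumerate}

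\textbf{Step 3: sign/maximum-principle argument (main obstacle).} The hard part is excluding the nontrivial branch when $H_0\neq 0$, where $f\equiv 0$ is not a solution. I would argue geometrically. The doubled surface $\mathbf S$ touches the totally geodesic equator $\bS^{n-1}$ tangentially along $M_2$, and it lies on one side. That side is $z\ge 0$ before doubling and is preserved by the reflection. The equator has $H=0$. Comparing with $\mathbf S$ of constant $H_0$ via the tangential maximum principle along $M_2$ gives the sign constraint $H_0\cdot(\text{side})\ge 0$, which is what the sign of $f'(1)$ encodes.

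To get an actual contradiction, I would integrate the divergence form of the CMC equation against the Killing/conformal field $e_z$ (the flux, or balancing, identity) over $\Sigma$. The boundary terms vanish: at $M_2$ by tangency, and at the left endpoint either because $t_1=0$ is a smooth interior point, or because at a vertical free boundary the conormal is $-e_z$ and contributes a term of definite sign. This leaves
\[
H_0\int_\Sigma \langle \nu, e_z\rangle = \text{(boundary term)}.
\]
Both sides have fixed and opposite signs unless $\Sigma$ lies in the equator, which is a contradiction.

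I expect fixing these signs consistently across the two endpoint alternatives to be the delicate step. As a fallback, I would use the phase-plane monotonicity of $\tilde h=h_f+\frac{H_0}{n-1}$ from Lemma~\ref{lemma:max-followed-by-min}: evaluate $\mu\tilde h$ at both endpoints and compare signs through equation~\eqref{eqn:mu-h-tilde-equation}.
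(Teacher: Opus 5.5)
Your treatment of $H_0\ge 0$ is essentially the paper's. The endpoint condition of Proposition~\ref{prop:mean-curvature-equation} gives $f'(1)=\frac{4H_0}{g^2(m_2+1)}$, and for $H_0>0$ this is incompatible with $f>0$ on $(t_1,1)$ and $f(1)=0$. You note this sign constraint only in passing, but it settles that case outright. For $H_0=0$, uniqueness of the smooth solution at the regular singular point $t=1$ forces $f\equiv 0$. Step~2(b) adds nothing: knowing there is at most one smooth solution with $f(1)=0$ does not show that it violates the left endpoint condition. The real content of the lemma is $H_0<0$, where $f'(1)<0$ is perfectly compatible with positivity. There Step~3 has a genuine gap.

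\textbf{The identity is wrong.} On $\bS^n$ the field $\nabla z=e_z-z\,p$ is only conformal ($\nabla^2 z=-z\,g_{\bS^n}$), not Killing. Integrating $\mathrm{div}_\Sigma$ of its tangential part therefore gives the Minkowski-type formula
\[
\int_{\partial\Sigma}\la \nabla z,\eta\rg=-(n-1)\int_\Sigma z-H\int_\Sigma\la e_z,\nu\rg, \qquad H:=\mathrm{div}_\Sigma\,\nu .
\]
The term $-(n-1)\int_\Sigma z$ that you dropped is exactly what balances the $H_0$ term.

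\textbf{Even corrected, no contradiction follows.} The tangency along $M_2$ is an interior point of $\Sigma$, since the surface is smooth across $M_2$. It produces no boundary term, and $f(1)=0$ never enters the argument. So your reasoning would apply verbatim to the latitude spheres $f\equiv-\frac{H_0}{n-1}$. These are closed radial graphs over the whole equator (the case $t_1=0$, $t_2=1$), exist for every $H_0<0$, and violate your identity while satisfying the corrected one. The same applies to free-boundary CMC examples such as the $g=2$ Clifford ones. In short, the argument proves too much.

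\textbf{What is missing.} You need a genuine global rigidity input. The paper disposes of $H_0<0$ by appealing to Brendle's rigidity theorem for CMC hypersurfaces~\cite{brendle-ihes}. That is an Alexandrov-type result whose proof pairs the Minkowski identity with a Heintze--Karcher-type inequality to force umbilicity. It is this inequality, not the identity, that excludes a surface tangent to the equator along $M_2$.

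\textbf{The fallback does not help.} Near the tangency one has $t>t_\alpha$ and $f'<0$. In that region \eqref{eqn:t-sqrt-alpha-sufficient-property} gives no sign for $(\mu\tilde h)'$, so Lemma~\ref{lemma:max-followed-by-min} yields no contradiction there.
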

For $H_0 \geq 0$, this result is clear by the endpoint computation of Proposition~\ref{prop:mean-curvature-equation}.
For $H_0 < 0$, one may argue using Brendle's rigidity theorem for CMC hypersurfaces~\cite{brendle-ihes}.

Finally, one needs to replace the shooting-continuity argument of Theorem~\ref{thm:cmc-construction} by a more delicate argument.
The necessary modification lies in the first step of the construction of Type I solutions, namely in proving that solutions $f_a$ of the problem with initial data $\{ f(0) = a, f'(0) = 0 \}$ reach a zero with finite derivative when $a$ is in an appropriate range.
Then, letting $a \to \infty$ produces a critical initial height $a_*$ such that the solution $f_{a_*}$ is the profile curve of a free-boundary minimal surface, which can be doubled to a complete embedded CMC surface $\mathbf{S}^H_{M_i}$ as in Theorem~\ref{thm:cmc-construction}.
The following existence argument is only valid for $H_0 \in (-\ve_n,0)$ sufficiently small, indicating why a general construction is not possible for $H_0<0$: the expression~\eqref{eqn:f''(0)} forces solutions with $f'(0) = 0$ to satisfy $f(0) > \frac{|H_0|}{n-1}$ if $f'<0$ initially, but such profiles blow up before reaching zero if $|H_0| \to \infty$.
\begin{lemma}\label{lemma:reach-zero-H0-negative}
    There exist sufficiently small $\ve_0, \lambda_0$ depending only on $(g, m_1, m_2)$ such that for any $a \in (0,\ve_0)$ and $H_0 \in ( - \lambda_0 a, 0)$, the solution of equation~\eqref{eqn:CMC-equation} with data $\{ f(0) = a, f'(0) = 0 \}$ reaches zero with finite derivative, at a point $t > \sqrt{\frac{m_1}{m_1 + m_2}}$.
\end{lemma}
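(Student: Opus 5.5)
The natural approach is a perturbation from the linear one-phase problem. I would rescale exactly as for \eqref{eqn:rescaled-equation-lambda-ODE}, but keep track of the mean curvature term. Write $f = a\,\tilde f$ and $H_0 = -\kappa a$ with $\kappa \in (0,\lambda_0)$. Dividing \eqref{eqn:CMC-equation} by $a$ gives
\[
\cL_M \tilde f + (n-2)(1-t^2)\tfrac{a^2 (\tilde f')^2}{1+a^2\tilde f^2}\bigl(\tilde f - \tfrac{g}{2}A_M\tilde f'\bigr) = \tfrac{4\kappa}{g^2}\Bigl(1+\tfrac{g^2}{4}(1-t^2)\tfrac{a^2(\tilde f')^2}{1+a^2\tilde f^2}\Bigr)^{3/2},
\]
with $\tilde f(0)=1$ and $\tilde f'(0)=0$. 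The equation depends smoothly on the parameters $(a^2,\kappa)$. At $(a,\kappa)=(0,0)$ it reduces to $\cL_M \tilde f = 0$, whose solution is the hypergeometric function $f_M$ of Lemma~\ref{lemma:reach-zero-before-hypergeometric}. That function has a simple zero $t_M\in(0,1)$ with $f_M'(t_M)<0$, since $f_M$ is strictly decreasing and concave there by~\cite{one-phase-isoparametric}*{Lemma 3.1}.

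The singular point $t=0$ requires care. Here I would use the even power-series structure from the proof of Proposition~\ref{prop:mean-curvature-equation}. Near $t=0$ the equation is regular in the variable $t^2$, so the solution of the singular initial value problem depends smoothly on $(a^2,\kappa)$ on $[0,\delta]$. On $[\delta, t_M+\delta']$ the equation is a regular ODE, so standard smooth dependence applies there. Together these give $\tilde f_{a,\kappa}\to f_M$ in $C^1([0,t_M+\delta'])$ as $(a,\kappa)\to 0$. Transversality of the zero, $f_M'(t_M)\neq 0$, then gives a zero $t_{a,\kappa}$ near $t_M$ with finite, nonzero derivative, for all $a<\ve_0$ and $\kappa<\lambda_0$. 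Undoing the scaling, $f$ reaches zero with derivative $a\tilde f'(t_{a,\kappa})$, which is finite. The condition $H_0\in(-\lambda_0 a,0)$ is exactly what keeps the forcing term $O(\kappa)$ uniformly in this rescaled picture. This also explains why an absolute bound on $|H_0|$ would not suffice.

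It remains to check the location $t_{a,\kappa} > \sqrt{m_1/(m_1+m_2)}$. For this I would show $t_M > t_\alpha := \sqrt{m_1/(m_1+m_2)}$, since then the nearby zeros inherit the strict inequality. The function $f_M$ is a positive solution of the $\lambda=0$ equation starting with $f'(0)=0$. Running the argument of Lemma~\ref{lemma:general-properties-of-solutions}(ii) at $\lambda=0$ shows that $h = f_M - \tfrac{g}{2}A_M f_M' > 0$ while $f_M>0$. At the zero this gives $0 < -\tfrac{g}{2}A_M(t_M) f_M'(t_M)$. Because $f_M'(t_M)<0$, this forces $A_M(t_M)>0$, which is $t_M>t_\alpha$. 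That argument only uses positivity of $S_\lambda$ and $T$, and both are unaffected at $\lambda=0$.

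The main obstacle is the uniform smooth-dependence statement across the singular point $t=0$, with $\kappa$ entering as a source term; for $\kappa>0$ this makes $f''(0)$ differ from the linear value via \eqref{eqn:f''(0)}. I would handle it by rewriting the equation as a first-order regular-singular system in $s=t^2$. I would then invoke the standard analytic (Briot–Bouquet type) existence theorem with parameters, and verify that the indicial root producing the even solution is simple.
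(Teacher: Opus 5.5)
Your proposal is correct and is essentially the paper's argument. Both rescale $f=a\tilde f$ with $\lambda=|H_0|/a$, pass to the limit problem $\cL_M \tilde f=\tfrac{4\lambda}{g^2}$ with $\tilde f(0)=1$, $\tilde f'(0)=0$, and use the transversal zero of the limit profile lying beyond $\sqrt{m_1/(m_1+m_2)}$. The differences are minor: the paper fixes $\lambda$ and writes the limit explicitly as $\tfrac{\lambda}{n-1}+\bigl(1-\tfrac{\lambda}{n-1}\bigr)f_M$, locating its zero via $f_M>0$ on $[0,\sqrt{m_1/(m_1+m_2)}]$ and $f_M\to-\infty$ as $t\uparrow 1$, whereas you perturb jointly in $(a,\kappa)$ around $f_M$ and locate $t_M$ through the positivity of $h$, which suffices since only small $\lambda_0$ is needed and makes the uniformity of $\ve_0$ in $\lambda$ explicit.
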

\begin{proof}
    Let $f(t) = a \, f_{a,\lambda}(t)$ and $\lambda := a^{-1} |H_0|$, so $f_{a,\lambda}$ solves the equation
    \begin{equation}\label{eqn:inhomogeneous-equation}
    \cL_M f_{a,\lambda} = \tfrac{4 \lambda}{g^2} + a\, E(a,f_{a,\lambda},f_{a,\lambda}',f_{a,\lambda}''), \qquad \{ f_{a,\lambda}(0) = 1, f_{a,\lambda}'(0) = 0 \}
    \end{equation}
    where $\cL_M$ is the operator from~\eqref{eqn:self-adjoint-legendre} and $E(a,f_{a,\lambda},f_{a,\lambda}',f_{a,\lambda}'')$ is a remainder term obtained by rescaling as in equation~\eqref{eqn:rescaled-equation-lambda-ODE} and Proposition~\ref{prop:i-will-take-a-limit}, which satisfies $\sup |E(a,u,u',u'')| \leq M( |u|_{C^{2,\alpha}})$.
    For fixed $\lambda$, the solution $f_{a,\lambda}$ of equation~\eqref{eqn:inhomogeneous-equation} converges uniformly in $C^{\infty}$ to the solution $f_{\lambda}$ of the linear problem $\cL_M f_{\lambda} = \frac{4 \lambda}{g^2}$, which admits the constant solution $ \frac{\lambda}{n-1}$.
    We therefore obtain
    \[
    \cL_M f_{\lambda} = \tfrac{4 \lambda}{g^2}, \quad \{ f_{\lambda}(0) = 1, f_{\lambda}'(0) = 0 \} \quad \implies \quad f_{\lambda}(t) = \tfrac{\lambda}{n-1} + \bigl( 1 - \tfrac{\lambda}{n-1} \bigr) f_M(t),
    \]
    where $f_M(t)$ is the hypergeometric function~\eqref{eqn:hypergeometric-isoparametric}.
    For $M = \bS^{n-2}$, we compute this function as $f_M(t) = 1 - 2t^2$, while $g \geq 2$ produces $f_M(t) \to - \infty$ as $t \uparrow 1$.
    We therefore obtain $f_{\lambda}(1) < 0$ in either case, so $f_{\lambda}$ has a zero $t_{\lambda}$.
    Moreover, using the fact that $f_M > 0$ on $[0,\sqrt{\frac{m_1}{m_1+m_2}}]$ from~\cite{one-phase-isoparametric}*{Lemma 3.2}, we find $f_{\lambda} > 0$ there, so $t_{\lambda} > \sqrt{\frac{m_1}{m_1 + m_2}}$.
    Finally, the convergence $f_{a,\lambda} \to f_{\lambda}$ shows that $f_{a,\lambda}$ also has a zero $t_{a,\lambda} > \sqrt{\frac{m_1}{m_1 + m_2}}$ for $a \in (0,\ve_0)$ sufficiently small.
    We then compute
    \[
    \lim_{a \downarrow 0} (1 - t_{a,\lambda}^2) f'_{a,\lambda}(t_{a,\lambda})^2 = (1 - t^2_{\lambda}) f'_{\lambda}(t_{\lambda})^2 = (1 - t_{\lambda}^2) \bigl( 1 - \tfrac{\lambda}{n-1} \bigr)^2 f'_M(t_{\lambda})^2 < + \infty \, ,
    \]
    hence $f_{a,\lambda}$ reaches zero with finite derivative for $a \in (0,\ve_0)$ sufficiently small.
\end{proof}

The existence of Type II surfaces $\bar{\mathbf{S}}^H_M$ for $H \in (-\ve_n,0)$ can be obtained by similar methods to Lemma~\ref{lemma:shooting-endpoint-H0geq0} and Theorem~\ref{thm:cmc-construction}.
Indeed, Proposition~\ref{prop:small-breather} shows that the solutions of equation~\eqref{eqn:ode-star} with initial data $\{ f(t_1) = 0, f'(t_1) = a\}$ remain increasing if $t_1 > \sqrt{\frac{m_1}{m_1+m_2}} - \bar{\ve}_M$, and equation~\eqref{eqn:CMC-equation} has the same property for $H_0 \in (-\ve_n,0)$.

\subsection{Uniqueness of the axisymmetric capillary cone}\label{section:axisymmetric}

We now study the minimal and CMC surfaces coming from the foliation of $\bS^{n-1}$ with $g=1$, given by CMC spheres $M_s = \{ \omega \in \bS^{n-1} : \la \omega, e_n \rg = \cos s \}$ with focal submanifolds the north and south poles.
The equation~\eqref{eqn:ode-star} becomes
\[
(1-t^2) f'' + (f - tf') + 3f + (n-2) \left( 4 + (1-t^2) \tfrac{(f')^2}{1+f^2} \right) \left( f - \tfrac{1}{2} \bigl( t - \tfrac{1}{2} t^{-1} \bigr) f' \right) = 0.
\]
For any $a$, the function $f(t) = a(1-2t^2)$ satisfies
\[
f - \tfrac{1}{2} \bigl( t - \tfrac{1}{2} t^{-1} \bigr) f' = 0, \qquad (1-t^2) f'' + (f - tf') + 3f = 0
\]
and $f'(0) = 0$, so it is the unique solution of the above equation with data $\{ f(0) = a, f'(0) = 0 \}$.
Recalling that $t = \sin \frac{s}{2}$, we find $\phi(s) = f ( \sin \frac{s}{2}) = a \cos s$, so $\cos s = \la \omega, e_n \rg$ produces $\phi(\omega) = a \omega_n$ for $\omega = \frac{x}{|x|}$ and $\la \omega, e_n \rg = \frac{x_n}{|x|}$.
The resulting homogeneous degree-one function on $\bR^n$ is $u(x) = \rho \phi(\omega) = a x_n$, whose graph is the capillary half-plane solution $u(x) = (x_n)_+$.
Notably, there exist no non-trivial Type I solutions in this case.
On the other hand, the Clifford torus $\bS^{n-2} ( \sqrt{\frac{n-2}{n-1}}) \times \bS^1 ( \sqrt{\frac{1}{n-1}}) \subset \bS^n$ is obtained as the unique Type II minimal surface in these coordinates, with $\bar{f}_{\bS^{n-2}}(t)$ given in~\eqref{eqn:clifford-solution}.
To classify the $F$-invariant capillary surfaces in this case, we first recall the equation for axisymmetric profile curves obtained in~\cite{FTW-1} in terms of the ambient coordinate $t = \frac{x_n}{|x|}$, given by
\begin{equation}\label{eqn:axisymmetric-equation}\tag{A}
     f'' + (f-tf')\left(\frac{n-1}{1-t^2} + (n-2)\frac{(f')^2}{1+f^2}\right) = -\frac{H_0}{1-t^2}\left(1 + (1-t^2)\frac{(f')^2}{1+f^2}\right)^{\frac{3}{2}}.
\end{equation}
The capillary boundary condition becomes $(1-t_*^2) f'(t_*)^2 = \tan^2 \theta$ at every zero $t_*$ of $f$.

The main result of this section is the following rigidity property, leading to a uniqueness result of rotationally symmetric capillary minimal and CMC surfaces.
\begin{theorem}\label{thm:even-function-axisymmetric}
    Any solution of the axisymmetric capillary CMC problem for~\eqref{eqn:CMC-equation} is even.
\end{theorem}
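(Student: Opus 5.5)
The plan is to reduce evenness to locating the unique critical point of the profile at $t=0$, and then to conclude by ODE uniqueness. Equation~\eqref{eqn:axisymmetric-equation} and the boundary condition $(1-t_*^2)f'(t_*)^2=\tan^2\theta$ are both invariant under $t\mapsto -t$. Hence if a positive solution $f$ on $(t_1,t_2)$ has a critical point at $t=0$, then $f(-t)$ solves the same initial value problem $\{f(0)=a,\ f'(0)=0\}$ and must coincide with $f$. So it suffices to prove two things: that $f$ has exactly one critical point $t_c$ in its positive phase, and that $t_c=0$.

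\textbf{Uniqueness of the critical point.} Existence of some critical point follows from $f$ vanishing at both ends of a Type II phase, or from the smoothness condition $f'=0$ at a pole in the Type I case. For uniqueness I would argue as in Lemma~\ref{lemma:cmc-equation-preliminaries} and Lemma~\ref{lemma:max-followed-by-min}, with the quantity $\tilde h:=f-tf'+\frac{H_0}{n-1}$. At a critical point $t_c$, equation~\eqref{eqn:axisymmetric-equation} gives $(1-t_c^2)f''(t_c)=-(n-1)\tilde h(t_c)$. Differentiating also gives $\tilde h'=-tf''$, which leads to a linear first-order equation for $\tilde h$ with an integrating factor, as in~\eqref{eqn:mu-h-tilde-equation}. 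From the sign structure of this equation one checks that a local maximum cannot be followed by a local minimum inside the positive phase. For $H_0\ge 0$ this is immediate because every critical point is a strict maximum. For $H_0<0$ it is Lemma~\ref{lemma:max-followed-by-min}, with $t_\alpha=0$ now playing the role of the turning point.

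\textbf{Locating $t_c$ (main step).} Pass to the latitude variable $\sigma$ with $t=\sin\sigma$ and set $\varphi(\sigma)=f(t)$. The operator becomes $\varphi''-(n-2)\tan\sigma\,\varphi'$ plus terms that are even in $\sigma$. The first-order term is a friction that is odd in $\sigma$: it damps for $\sigma>0$ and anti-damps for $\sigma<0$. Suppose for contradiction that $\sigma_c>0$. Split the profile into its two monotone branches from the maximum, and write each branch as a function of the height $y\in(0,\varphi(\sigma_c)]$, with slope $p_\pm(y)^2:=\varphi'^2$. Each $p_\pm^2$ solves a first-order ODE in $y$ whose only asymmetric coefficient is $\mp(n-2)\tan\sigma_\pm(y)$, with the same data $p_\pm=0$ at $y=\varphi(\sigma_c)$. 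The capillary condition forces $p_+(0)^2=p_-(0)^2$, since $\tan^2\theta=(1-t^2)f'^2=\varphi'^2$ in the $\sigma$-variable; in the Type I case it instead forces the branch to reach the pole with zero slope. I would then compare the two branches: show $\tan\sigma_+(y)>-\tan\sigma_-(y)$ for all $y$, i.e.\ that the right branch stays farther from the equator $\sigma=0$ than the left one. This would make the right branch strictly more damped, so $p_+(0)^2<p_-(0)^2$, which is a contradiction. The inequality holds near the top, since $\sigma_c>0$. To propagate it I would use a continuity argument at a first height where $\sigma_+(y)=-\sigma_-(y)$. There the two branches have the same height and position up to reflection, and they can be compared through the reflected solution $\varphi(-\sigma)$ using ODE uniqueness.

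This comparison is the step I expect to be the obstacle. The two branches can cover intervals of very different lengths, and the nonlinear factor $\frac{(f')^2}{1+f^2}$ couples slope and height, so it is not clear that the ordering persists. If the direct comparison fails, my fallback is an intersection-counting argument between $f$ and its reflection $f(-t)$. Both solve~\eqref{eqn:axisymmetric-equation}, so their difference satisfies a linear second-order ODE, and a Sturm-type count of zeros of this difference, combined with the matching boundary data at both free boundaries, should force $f\equiv f(-\cdot)$.
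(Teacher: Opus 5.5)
Your reduction (a critical point at $t=0$, plus reflection invariance and ODE uniqueness) is fine, and so is your uniqueness argument for the critical point via $\tilde h$. The main step, forcing $t_c=0$ when $t_1<0<t_2$, has a genuine gap, and the mechanism you propose works against you.

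In the latitude variable the equation reads
\[
\varphi''-(n-2)\tan\sigma\,\varphi'\Bigl(1+\tfrac{\varphi'^2}{1+\varphi^2}\Bigr)+(n-1)\varphi+(n-2)\tfrac{\varphi\,\varphi'^2}{1+\varphi^2}+H_0\Bigl(1+\tfrac{\varphi'^2}{1+\varphi^2}\Bigr)^{\frac{3}{2}}=0 .
\]
The odd term comes from the weight $\cos^{n-2}\sigma$ of the spherical Laplacian. It \emph{increases} $|\varphi'|$ along a branch moving away from the equator, so your damping sign is reversed. This matters for your continuity step:
\begin{enumerate}
\item The inequality $\tan\sigma_+>-\tan\sigma_-$ gives $p_+>p_-$, hence $|d\sigma_+/dy|<|d\sigma_-/dy|$.
\item Therefore the gap $\sigma_++\sigma_-$ strictly decreases from $2\sigma_c$ as $y$ decreases. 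The ordering you want to propagate is eroded by the very asymmetry it produces, and nothing prevents it from closing before $y=0$.
\item At the first height where $\sigma_+=-\sigma_-$, the heights match and the positions are reflected, but $p_+>p_-$ there. So ODE uniqueness gives no contradiction.
\item Beyond that height the inequality reverses, and $p_-$ can catch up exactly at $y=0$.
\end{enumerate}

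Your comparison does work in the same-side case. When $0\notin(t_1,t_2)$, after reflecting so that $t_1\ge 0$, one has $\tan\sigma_+>0\ge-\tan\sigma_-$ throughout. The first-touching argument then gives $p_+(0)>p_-(0)$, at least for $\theta<\frac{\pi}{2}$. This is a short alternative to Lemma~\ref{lemma:axisymmetric-same-side-angle-inequality}, but the straddling case is the heart of the theorem.

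The Sturm fallback is not yet an argument. The functions $f$ and $f(-\cdot)$ have different positive phases, and $f(t)-f(-t)$ is odd, so it vanishes at $0$ for free. The equation for the difference also carries the odd friction term as a forcing, so there is no zero count to exploit.

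The missing idea is an extra symmetry of the problem. Rotations of the $(x_n,x_{n+1})$-plane are isometries of $\bS^n$ commuting with $\textup{O}(n-1)$. Moreover, $u=\frac{t^2+f^2}{1+f^2}$ and $\theta=\arg(t+if)$ are exactly the squared radius and polar angle of the link projected to that plane. The paper uses this as follows:
\begin{enumerate}
\item The symmetry yields the first integral $\cW$ of Lemma~\ref{lemma:conserved-quantity} and makes the profile equation~\eqref{eqn:shared-autonomous-u-theta} autonomous in $\theta$.
\item Hence the two branches leaving the minimum of $u$ at $\theta_0$ coincide exactly, $u(\theta_0+\tau)=u(\theta_0-\tau)$, with no comparison needed.
\item Only the container $\{x_{n+1}=0\}$ breaks the symmetry, so it remains to show the free boundaries $\tau=\theta_0$ and $\tau=\pi-\theta_0$ sit symmetrically.
\item Once $\cW>0$ is established, the contact-angle function $w=\sqrt{1-z^2}$ satisfies $w''+w<0$ by~\eqref{eqn:w-double-prime-ODE}. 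So $w(\tau)/\sin\tau$ is strictly decreasing, and $w(\theta_0)=w(\pi-\theta_0)$ forces $\theta_0=\frac{\pi}{2}$, i.e.\ $t_0=0$ and $f'(0)=0$.
\end{enumerate}

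Finally, drop your Type I clause. The latitude caps $f_\lambda(t)=\sqrt{(\lambda+1)t^2-1}$ on $(\frac{1}{\sqrt{1+\lambda}},1]$ from~\eqref{eqn:axisymmetric-solutions} are axisymmetric free-boundary CMC solutions whose positive phase does not contain $t=0$. The statement concerns the two-sided case $f(t_1)=f(t_2)=0$ of Proposition~\ref{prop:axisymmetric-is-symmetric}.
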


We first reduce equation~\eqref{eqn:CMC-equation} to the ODE~\eqref{eqn:axisymmetric-equation} and derive the capillary boundary condition.

\begin{lemma}\label{lemma:axisymmetric-equivalence}
    The equation~\eqref{eqn:CMC-equation} for $\tau = \sin \frac{s}{2}$ is equivalent to equation~\eqref{eqn:axisymmetric-equation} for $\xi = \frac{x_n}{|x|}$ under the change of variables $\xi = 1 - 2 \tau^2$ and $\tau = \sqrt{\frac{1-\xi}{2}}$.
    The profile curve $\bar{f}_{\bS^{n-2}}(t)$ with expression~\eqref{eqn:clifford-solution} produces the Clifford torus $\bS^{n-2} (\sqrt{\frac{n-2}{n-1}}) \times \bS^1 ( \sqrt{\frac{1}{n-1}})$.
\end{lemma}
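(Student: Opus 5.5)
The plan is to substitute $\xi = 1-2\tau^2$ directly into~\eqref{eqn:CMC-equation} with $g=1$ and $m_1=m_2=n-2$, and then to check the Clifford profile by converting $\bar{f}_{\bS^{n-2}}$ into the ambient graph $u = |x|\,F(x_n/|x|)$. For the first part it is cleaner to start from the $s$-form~\eqref{eqn:equation-for-phi(s)} rather than from~\eqref{eqn:CMC-equation} itself. For $g=1$ the volume density is $v(s) = C(\sin s)^{n-2}$, so $\cH(s) = -(n-2)\cot s$, and the isoparametric distance $s$ coincides with the polar angle from $e_n$. Hence $\xi = \cos s = x_n/|x|$, and with $\tau = \sin\frac{s}{2}$ we get $\xi = 1-2\tau^2$. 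This matches the identification already recorded for $g=1$: the profile $a(1-2t^2)$ becomes $\phi(s) = a\cos s$.

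Writing $\phi(s) = F(\cos s)$, I will compute
\[
\phi' = -\sin s\, F', \qquad \phi'' = \sin^2 s\, F'' - \cos s\, F', \qquad \sin^2 s = 1-\xi^2 ,
\]
together with
\[
\phi'' - \cH\phi' = (1-\xi^2)F'' - (n-1)\xi F'
\]
and
\[
(n-2)\phi - \cH\phi' = (n-2)(F - \xi F').
\]
Substituting these into~\eqref{eqn:equation-for-phi(s)}, I divide by $(1+F^2)^{3/2}(1-\xi^2)$ and regroup using $(n-1)F - (n-1)\xi F' = (n-1)(F-\xi F')$. This should reproduce~\eqref{eqn:axisymmetric-equation} exactly, with $1+F^2+(1-\xi^2)F'^2$ on the right factoring as $(1+F^2)\bigl(1+(1-\xi^2)\tfrac{F'^2}{1+F^2}\bigr)$. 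Since~\eqref{eqn:CMC-equation} was itself derived from~\eqref{eqn:equation-for-phi(s)} through the smooth change of variables $\tau = \sin\frac{s}{2}$, which is invertible on $s\in(0,\pi)$, the equivalence follows.

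I also need the boundary condition. From $\phi' = -\sqrt{1-\xi^2}\,F'$, the capillary condition $|\phi'| = \tan\theta$ becomes $(1-\xi^2)F'(\xi)^2 = \tan^2\theta$. This is consistent with~\eqref{eqn:capillary-angle-condition} at $g=1$, because $\tfrac{g}{2}\sqrt{1-\tau^2}\,f'$ equals $\phi'$ in both coordinate systems.

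For the Clifford profile, I will substitute $\tau^2 = \frac{1-\xi}{2}$ into~\eqref{eqn:clifford-solution}. Then $4\tau^2(1-\tau^2) = 1-\xi^2$ and $(1-2\tau^2)^2 = \xi^2$, so
\[
F(\xi) = \sqrt{\tfrac{1-\xi^2}{n-2} - \xi^2} = \sqrt{\tfrac{1-(n-1)\xi^2}{n-2}} = \hat{f}_{n,1}(\xi).
\]
The graph $z = |x|F(x_n/|x|)$ is then the cone
\[
(n-2)z^2 = |x'|^2 + x_n^2 - (n-1)x_n^2 = |x'|^2 - (n-2)x_n^2 ,
\]
that is, $(n-2)(x_n^2 + z^2) = |x'|^2$. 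Intersecting with the unit sphere gives $|x'|^2 = \frac{n-2}{n-1}$ and $x_n^2 + z^2 = \frac{1}{n-1}$, which is exactly the Clifford torus $\bS^{n-2}\bigl(\sqrt{\tfrac{n-2}{n-1}}\bigr)\times\bS^1\bigl(\sqrt{\tfrac{1}{n-1}}\bigr)$ after doubling across $z=0$.

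The only real obstacle is the bookkeeping in the first substitution. The expressions $A_M$ and $(n-2)\frac{4}{g^2}$ in~\eqref{eqn:CMC-equation} look singular at $\tau = 0$ and $\tau = 1$, so I will work from~\eqref{eqn:equation-for-phi(s)} to avoid tracking them. I will also check the sign of $\cH$, since the orientation of $s$ relative to $e_n$ flips the sign of $F'$ but not of any term in the final equation.
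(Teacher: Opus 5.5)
Your proposal is correct and follows the paper's route: a direct chain-rule substitution based on $\xi=\cos s$ and $\tau=\sin\frac{s}{2}$. Your passage through~\eqref{eqn:equation-for-phi(s)} with $\cH=-(n-2)\cot s$ spells out the computation the paper leaves implicit. The one difference is in the Clifford part: you check it by hand through the cone $(n-2)(x_n^2+z^2)=|x'|^2$, whereas the paper quotes the axisymmetric profile $\hat f_{n,1}$ from earlier work and then substitutes $\xi=1-2\tau^2$.
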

\begin{proof}
    This result follows by direct computation, using the chain rule.
    Note that $\omega = ( \sin s \eta, \cos s) \in \bS^{n-1}$ for $\eta \in \bS^{n-2}$, where $s$ is the isoparametric coordinate, so $\xi = \la \omega, e_n \rg = \cos s$ and $\tau = \sin \frac{s}{2}$ are related by $\xi = 1-2 \tau^2$ and $\tau = \sqrt{\frac{1-\xi}{2}}$ as claimed.
    The computation of~\cite{FTW-1}*{Lemma 3.3} shows that the Clifford torus $\bS^{n-2} ( \sqrt{\frac{n-2}{n-1}}) \times \bS^1( \sqrt{\frac{1}{n-1}})$ in the axisymmetric variable $\xi$ is the solution $\hat{f}_{n,1}(\xi) = \sqrt{\frac{1 - (n-1) \xi^2}{n-2}}$ of equation~\eqref{eqn:axisymmetric-equation}, and substituting $\xi = 1 - 2 \tau^2$ produces~\eqref{eqn:clifford-solution}.
\end{proof}
For arbitrary $H_0$, the family of axisymmetric solutions~\eqref{eqn:clifford-solution} generalizes to
\begin{equation}\label{eqn:clifford-solution-a=1}
    f_a (\tau) = \sqrt{a - (a+1) (1-2\tau^2)^2}, \qquad H_0(a) = \tfrac{1}{\sqrt{a}} - (n-2) \sqrt{a}
\end{equation}
with the properties discussed above, and the resulting hypersurface is the CMC torus $\bS^{n-2}( \sqrt{\frac{1}{a+1}}) \times \bS^1(\sqrt{\frac{a}{a+1}})$.
Unlike the case $H_0 = 0$, for every $H_0 > 0$ we also obtain axisymmetric solutions
\begin{equation}\label{eqn:axisymmetric-solutions}
    f_{\lambda}(\tau) = \sqrt{\lambda- 4 (\lambda+1) \tau^2(1-\tau^2)}, \qquad H_0(\lambda) = \tfrac{n-1}{\sqrt{\lambda}}
\end{equation}
to equation~\eqref{eqn:CMC-equation}.
Under the change of variables $t = 1 -2 \tau^2$, the functions $f_{\lambda}(\tau)$ become $f_{\lambda}(t) = \sqrt{(\lambda+1) t^2 - 1}$ for $t = \frac{x_n}{|x|}$, with resulting surfaces the vertical latitude spheres $\bS^{n-1} ( \sqrt{\frac{\lambda}{1+\lambda}}) \subset \bS^n$ along the hyperplane $\{ x_n = \frac{1}{\sqrt{1+\lambda}} \}$.
We may therefore study the axisymmetric equation~\eqref{eqn:axisymmetric-equation} in the variable $t = \frac{x_n}{|x|}$ for $x = (x', x_n) \in \bR^{n-1} \times \bR$, where $t$ is allowed to be positive as well as negative.

To prove the uniqueness of solutions to equation~\eqref{eqn:axisymmetric-equation}, we introduce some auxiliary quantities.
\begin{definition}\label{def:auxiliary-quantities}
Given a solution $f$ of equation~\eqref{eqn:axisymmetric-equation}, we denote
\[
h:=f-tf', \qquad u:=\frac{t^2+f^2}{1+f^2},\qquad \theta:=\arg(t+if), \qquad
W:=\sqrt{1+f^2+(1-t^2)(f')^2}.
\]
We also define
\begin{align}
    \cW &:=(1-t^2)^{\frac{n-1}{2}} (1+f^2)^{- \frac{n-1}{2}} \left( \frac{\sqrt{1+f^2} ( f - tf')}{\sqrt{1 + f^2 + (1-t^2) (f')^2}} + \frac{H_0}{n-1} \right), \label{eqn:conserved-quantity} \\
    D(u)&:=\mathcal W(1-u)^{-\frac{n-1}{2}}-\frac{H_0}{n-1} = \frac{\sqrt{1+f^2} \, h}{W},  \label{eqn:D(u)-quantity} \\
    x &:= \sqrt{\frac{u}{1-u}}, \qquad z := \frac{D(u)}{x} = D(u) \sqrt{\frac{1-u}{u}}. \label{eqn:introduce-x,z}
\end{align}
Applying equation~\eqref{eqn:axisymmetric-equation} shows that the above quantities satisfy the identities 
\begin{align}
    \frac{d\theta}{dt}&=-\frac{h}{t^2+f^2},
\qquad
u' = 2\frac{t(1+f^2)+(1-t^2)ff'}{(1+f^2)^2}, \label{eqn:shared-identities-axisymmetric} \\
    h' &= t\left(\frac{n-1}{1-t^2}+(n-2)\frac{(f')^2}{1+f^2}\right)h + \frac{tH_0}{1-t^2}\left(1+(1-t^2)\frac{(f')^2}{1+f^2}\right)^{\frac{3}{2}}, \label{eqn:h-ODE-axisymmetric} \\
\bigl( t(1+f^2)&+(1-t^2)ff' \bigr)^2 =
(1+f^2)W^2\bigl(u-(1-u)D(u)^2\bigr), \label{eqn:shared-energy-identity-axisymmetric} \\
D(u) &= \cW (1+x^2)^{\frac{n-1}{2}} - \tfrac{H_0}{n-1}, \qquad z = \cW (1+x^2)^{\frac{n-1}{2}} x^{-1} - \tfrac{H_0}{n-1} x^{-1}, \label{eqn:D(u)-from-x} \\
\frac{du}{d\theta} &= \pm \frac{2u\sqrt{u-(1-u)D(u)^2}}{D(u)}. \label{eqn:shared-autonomous-u-theta}
\end{align}
The last property is valid on any interval where $h$ has a strict sign, so $\theta$ defines a global coordinate.
\end{definition}

\begin{lemma}\label{lemma:conserved-quantity}
For any $H_0  \in \bR$, the quantity $\cW$ is constant along solutions of equation~\eqref{eqn:axisymmetric-equation}.
\end{lemma}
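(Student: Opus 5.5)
We will prove that $\frac{d\cW}{dt}=0$ by differentiating the definition~\eqref{eqn:conserved-quantity} and substituting equation~\eqref{eqn:axisymmetric-equation}. Write $\cW = P\cdot\bigl(Q + \frac{H_0}{n-1}\bigr)$, where
\[
P := (1-t^2)^{\frac{n-1}{2}}(1+f^2)^{-\frac{n-1}{2}}, \qquad Q := \frac{\sqrt{1+f^2}\,h}{W} = D(u).
\]
The logarithmic derivative of $P$ is
\[
\frac{P'}{P} = -(n-1)\Bigl(\frac{t}{1-t^2}+\frac{ff'}{1+f^2}\Bigr).
\]
Hence $\cW'=0$ is equivalent to the scalar identity
\[
Q' = (n-1)\Bigl(\frac{t}{1-t^2}+\frac{ff'}{1+f^2}\Bigr)\Bigl(Q+\frac{H_0}{n-1}\Bigr),
\]
and the whole proof reduces to checking it.

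To verify the identity, we compute $Q'$ using $h' = -tf''$ and $(W^2)' = 2ff' - 2t(f')^2 + 2(1-t^2)f'f''$. We then eliminate $f''$ through~\eqref{eqn:axisymmetric-equation}, written in the form
\[
(1-t^2)f'' = -h\Bigl((n-1) + (n-2)(1-t^2)\tfrac{(f')^2}{1+f^2}\Bigr) - H_0\,\frac{W^3}{(1+f^2)^{3/2}},
\]
using that $1+(1-t^2)\frac{(f')^2}{1+f^2} = \frac{W^2}{1+f^2}$. The resulting expression splits into a part proportional to $h$ and a part proportional to $H_0$, and we match these separately against the right-hand side.

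For the $H_0$ part, only the $f''$ contributions matter. In $Q'$ the term $f''$ enters through $-tf''\sqrt{1+f^2}/W - \sqrt{1+f^2}\,h(1-t^2)f'f''/W^3$. The combination $t W^2 + h(1-t^2)f'$ simplifies to $(1+f^2)t + (1-t^2)ff'$. Multiplied by $W^3/(1+f^2)^{3/2}\cdot \frac{\sqrt{1+f^2}}{W^3(1-t^2)}$, this gives exactly $H_0\bigl(\frac{t}{1-t^2}+\frac{ff'}{1+f^2}\bigr)$, as required. The $h$ part is similar bookkeeping: the same combination $(1+f^2)t + (1-t^2)ff'$, which appears in $u'$ in~\eqref{eqn:shared-identities-axisymmetric}, should factor out. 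After factoring, one must check that the coefficient is $(n-1)\frac{h}{W}\frac{1}{(1-t^2)\sqrt{1+f^2}}$; the $(n-2)$ term has to combine with the direct derivative terms $ff'/\sqrt{1+f^2}$ and $-\frac{1}{2}(W^2)'$ to upgrade $n-2$ to $n-1$.

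The main obstacle is this algebraic verification of the $h$-coefficient: ensuring that all terms without $f''$ (coming from differentiating $\sqrt{1+f^2}$ and from $(W^2)'$) recombine into the same factor. We will organize the computation by multiplying through by $W^3(1-t^2)(1+f^2)^{1/2}$ and comparing polynomials in $f'$. As a sanity check, the explicit Clifford solutions~\eqref{eqn:clifford-solution-a=1} and the latitude spheres~\eqref{eqn:axisymmetric-solutions} should give constant $\cW$. Alternatively, $\cW$ can be interpreted as a Noether flux: the first integral is the flux of the rotational Killing field in the $(x_n,z)$-plane for the CMC functional with volume term. This interpretation could replace the computation if it becomes unwieldy.
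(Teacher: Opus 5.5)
Your proposal is correct and follows essentially the paper's route: your splitting $\cW = PQ + \frac{H_0}{n-1}P$ is exactly the paper's $\cW_1 + \frac{H_0}{n-1}\cW_2$. The paper also shows $\cW_1' = -\frac{H_0}{n-1}\cW_2'$ after substituting the ODE, using the same identity $tW^2 + (1-t^2)hf' = t(1+f^2)+(1-t^2)ff'$. The $h$-part you deferred does close as you predicted: the non-$f''$ terms of $Q'$ reduce to $\frac{h(f')^2}{\sqrt{1+f^2}\,W^3}\bigl(t(1+f^2)+(1-t^2)ff'\bigr)$, which upgrades $n-2$ to $n-1$ and gives the required coefficient.
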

\begin{proof}
The quantity $W := \sqrt{1 + f^2 + (1-t^2)(f')^2}$ introduced in Definition~\ref{def:auxiliary-quantities} satisfies
\begin{align*}
\frac{f - tf' + (1-t^2) f''}{W^2} &= - \left[ \frac{(n-2)(f-tf')}{1+f^2} + \frac{H_0 W}{(1+f^2)^{\frac{3}{2}}} \right], \qquad 2WW' = 2f'(f - tf' + (1-t^2)f''),
\end{align*}
whereby $W' = -(n-2) \frac{f'(f-tf')}{1+f^2}W - H_0 \frac{f'}{(1+f^2)^{\frac{3}{2}}}W^2$.
Applying~\eqref{eqn:axisymmetric-equation} again yields
\[
\frac{(f-tf')'}{f-tf'} = \frac{(n-1)t}{1-t^2} + (n-2)\frac{t(f')^2}{1+f^2} + \frac{t H_0}{(1-t^2)(f-tf')} \frac{W^3}{(1+f^2)^{\frac{3}{2}}}.
\]
We define $\cW_1 := (1-t^2)^{\frac{n-1}{2}} (1+f^2)^{-\frac{n-2}{2}} \frac{f-tf'}{W}$ and $\cW_2:= (1-t^2)^{\frac{n-1}{2}} (1+f^2)^{-\frac{n-1}{2}}$, which satisfy
\begin{align*}
    \frac{d}{dt} \log \cW_1 &= -\frac{(n-1)t}{1-t^2} - (n-2)\frac{ff'}{1+f^2} + \frac{(f-tf')'}{f-tf'} - \frac{W'}{W}, \\
   \frac{d}{dt} \cW_1 &= \frac{H_0 W}{(1+f^2)^{\frac{3}{2}}} \left( \frac{tW^2}{(1-t^2)(f-tf')} + f' \right) \\
    &= H_0 (1-t^2)^{\frac{n-3}{2}} (1+f^2)^{-\frac{n+1}{2}} \bigl( t(1+f^2) + (1-t^2)ff' \bigr) \\
    &= - \frac{H_0}{n-1} \frac{d}{dt} \cW_2.
\end{align*}
In the third step, we used the identity $tW^2 + (1-t^2)(f-tf')f' = t(1+f^2) + (1-t^2)ff'$.
We conclude that $\frac{d}{dt} \cW = \frac{d}{dt} ( \cW_1 + \frac{H_0}{n-1} \cW_2) = 0$, so $\cW$ is conserved as claimed.
\end{proof}

\begin{lemma}\label{lemma:axisymmetric-same-side-angle-inequality}
There exists no positive solution $f$ of equation~\eqref{eqn:axisymmetric-equation} on an interval $0 \not\in (t_1,t_2)$ with $f(t_1)=f(t_2)=0$ satisfying the capillary boundary condition $(1-t_1^2)f'(t_1)^2 = (1-t_2^2)f'(t_2)^2$.
\end{lemma}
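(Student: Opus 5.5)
We may assume $0 < t_1 < t_2 < 1$, since equation~\eqref{eqn:axisymmetric-equation} is invariant under $t \mapsto -t$ and the capillary condition is preserved. The plan is to evaluate the conserved quantity $\cW$ of Lemma~\ref{lemma:conserved-quantity} at both endpoints and compare. At a zero $t_i$ of $f$, write $(1-t_i^2) f'(t_i)^2 = \tan^2\vartheta$ for the common angle. Then $W(t_i) = \sec\vartheta$, and positivity of $f$ on $(t_1,t_2)$ forces $f'(t_1) > 0 > f'(t_2)$. Writing $c := \frac{H_0}{n-1}$, substitution into~\eqref{eqn:conserved-quantity} gives
\[
\cW = -t_1 \sin\vartheta \, (1-t_1^2)^{\frac{n-2}{2}} + c\,(1-t_1^2)^{\frac{n-1}{2}} = t_2 \sin\vartheta \, (1-t_2^2)^{\frac{n-2}{2}} + c\,(1-t_2^2)^{\frac{n-1}{2}} .
\]
Since $t_1,t_2>0$, the $\sin\vartheta$ terms have opposite signs. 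Moreover, $(1-t_1^2)^{\frac{n-1}{2}} > (1-t_2^2)^{\frac{n-1}{2}}$. Hence for $H_0 \le 0$ the left-hand side is strictly smaller than the right-hand side, which is a contradiction. This settles the minimal case and all $H_0<0$ directly.

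For $H_0 > 0$ the endpoint identity alone does not decide the sign, so I would use the phase-plane reduction of Definition~\ref{def:auxiliary-quantities}. At the zeros we have $u = t_i^2$, hence $x_i = t_i/\sqrt{1-t_i^2}$, and $D = h/W = -t_i f'(t_i)\cos\vartheta$. This yields $z(t_1) = -\sin\vartheta$ and $z(t_2) = +\sin\vartheta$. By~\eqref{eqn:D(u)-from-x}, the whole orbit lies on the single explicit graph
\[
z = Z(x) := x^{-1}\bigl(\cW(1+x^2)^{\frac{n-1}{2}} - c\bigr),
\]
and the energy identity~\eqref{eqn:shared-energy-identity-axisymmetric} confines it to $|z| \le 1$. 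Since $h$ goes from negative at $t_1$ to positive at $t_2$, the monotonicity from~\eqref{eqn:h-ODE-axisymmetric} shows that $h$ has a unique zero, where the orbit crosses $z=0$.

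The orbit then has to run along the graph of $Z$ from $(x_1,-\sin\vartheta)$ to $(x_2,\sin\vartheta)$. Using~\eqref{eqn:shared-identities-axisymmetric} and~\eqref{eqn:shared-autonomous-u-theta}, the direction of travel in $x$ is controlled by the sign of $t(1+f^2)+(1-t^2)ff'$, which is positive near both endpoints. I would show that $x$ increases along the orbit except possibly at turning points, which occur only where $|z| = 1$. I would then use the shape of $Z$: it is convex-type in $x$, with at most one critical point on $x>0$ when $\cW>0$. From this, reaching $z = +\sin\vartheta$ from $z = -\sin\vartheta$ inside the strip $|z|<1$ forces either $Z(x_2) > \sin\vartheta$ or a violation of the $\cW$ identity above.

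The main obstacle is exactly this $H_0 > 0$ case. There the latitude-sphere solutions~\eqref{eqn:axisymmetric-solutions} show that genuine same-side arcs with zeros do exist, namely $f = \sqrt{(\lambda+1)t^2-1}$, which has a single zero and is increasing. So the argument must really use that the orbit returns to $f = 0$ with matching angle, and not merely a sign count. If the monotonicity analysis of $Z$ proves too delicate, my first fallback is a reflection argument. I would compare $f$ with the reflected curve obtained through the conserved quantity, defining the $\theta$-symmetric partner via~\eqref{eqn:shared-autonomous-u-theta}. I would then show that equal endpoint values $z = \mp\sin\vartheta$ together with the autonomy of the $(u,\theta)$ system force the orbit to be symmetric about the unique zero of $h$, where $z=0$. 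Such a symmetry makes $u(t_1) = u(t_2)$, i.e.\ $t_1 = t_2$, which is impossible.
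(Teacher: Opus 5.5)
Your argument for $H_0\le 0$ is correct and differs from the paper's. Evaluating the conserved quantity $\cW$ at the two zeros gives exactly the identity you wrote, and for $c\le 0$ that identity fails. The paper instead argues through the sign of the inhomogeneous term in \eqref{eqn:h-ODE-axisymmetric}. The problem is the case $H_0>0$, which is the real content of the lemma.

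Your main plan uses only three facts: the orbit lies on the graph $z=Z(x)$, it starts at $(x_1,-\sin\vartheta)$, and it ends at $(x_2,\sin\vartheta)$. This information is literally equivalent to the two endpoint evaluations of $\cW$, and it is always consistent. At the unique zero $t_h$ of $h$ one has $D=0$, so $\cW=c\,(1+x_0^2)^{-\frac{n-1}{2}}\in(0,c)$ with $x_0=x(t_h)$. Then $x^2Z'(x)=\cW(1+x^2)^{\frac{n-3}{2}}\bigl((n-2)x^2-1\bigr)+c\ge c-\cW>0$. Hence $Z$ increases strictly from $-\infty$ to $+\infty$ on $(0,\infty)$. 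For every $\vartheta$, the choice $x_1=Z^{-1}(-\sin\vartheta)<x_0<x_2=Z^{-1}(\sin\vartheta)$ satisfies every constraint you list, so $Z(x_2)=\sin\vartheta$ and the $\cW$ identity both hold. No statement about the shape of $Z$ can give a contradiction. The condition $f=0$ at the endpoints is invisible in the $(x,z)$-plane: it is the condition that the polar angle $\theta=\arg(t+if)$ vanishes there.

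The missing idea is to track $\theta$. Along the orbit $x$ is monotone, and $\frac{d\theta}{dx}=-\frac{Z(x)}{x\sqrt{1+x^2}\sqrt{1-Z(x)^2}}$ depends on $x$ alone. So $\theta(t_1)=\theta(t_2)=0$ says the angle gained on the branch $z<0$ (from level $-\sin\vartheta$ up to $z=0$) equals the angle lost on the branch $z>0$ (from $z=0$ up to level $\sin\vartheta$). In the paper's notation this is $\Theta_-(\lambda_*)=\Theta_+(\lambda_*)$ with $\lambda_*=\sin\vartheta$. The paper then proves $\Theta_-'(\lambda)>\Theta_+'(\lambda)$ on $(0,1)$, by solving $z(x_\pm(\lambda))=\pm\lambda$ explicitly and applying the mean value theorem. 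That yields the contradiction, and this quantitative comparison of the two branches is what your proposal lacks.

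Your fallback does not replace it. Autonomy of \eqref{eqn:shared-autonomous-u-theta} gives reflection symmetry about points where $u$ is critical (where $|z|=1$), as used in Proposition~\ref{prop:axisymmetric-is-symmetric}. It gives nothing about the point where $z=0$. There it is $\theta$ that is critical (a strict maximum), while $u$ is strictly increasing and the curve is tangent to the $u$-direction. Any symmetry exchanging the two branches and fixing that point would therefore reverse the $u$-direction, so it could not preserve $u$, and $u(t_1)=u(t_2)$ does not follow. In fact no such symmetry exists: the strict inequality $\Theta_->\Theta_+$ shows the two branches are not mirror images.
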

\begin{proof}
    It suffices to prove the claim when $0<t_1<t_2$, otherwise we replace $f(t)$ with $f(-t)$.
    The differential equation~\eqref{eqn:h-ODE-axisymmetric} for $h = f - tf'$ has the form $h' = a(t) h + b(t)$ with $\on{sgn} b(t) = \on{sgn} H_0$ while $f'(t_2) < 0 < f'(t_1)$ makes $h(t_1) < 0 < h(t_2)$, so $h$ has a first zero $t_* > t_1$ with $h'(t_*) \geq 0$.
    For $H_0 \leq 0$, computing at $t_h$ forces $h'(t_*) = h(t_*) = H_0 = 0$, and ODE uniqueness implies that $f \equiv 0$.

    For $H_0 > 0$, let $ \tilde{H}_0 := \frac{H_0}{n-1}$, so $h'(t_h) = b(t_h) > 0$ at a zero $t_h \in (t_1,t_2)$ of $h$ shows that this zero is unique.
    Setting $x_0 := x(t_h)$ and using the relation~\eqref{eqn:D(u)-from-x} together with $D(u) (t_h)= 0$, we find
    \begin{equation}\label{eqn:D(u)-z-properties}
    \begin{split}
        D(u) &= \tilde{H}_0 \left(\frac{(1+x^2)^{\frac{n-1}{2}}}{(1+x_0^2)^{\frac{n-1}{2}}}-1\right), \qquad z(x)=\frac{\tilde{H}_0}{x}\left(\frac{(1+x^2)^{\frac{n-1}{2}}}{(1+x_0^2)^{\frac{n-1}{2}}}-1\right), \\
        \frac{\partial z}{\partial x} &= \frac{\tilde{H}_0}{x^2(1+x_0^2)^{\frac{n-1}{2}}}
\left[ (1+x^2)^{\frac{n-3}{2}}\bigl((n-2)x^2-1\bigr)+(1+x_0^2)^{\frac{n-1}{2}}\right].
    \end{split}
    \end{equation}
The function $s \mapsto s^{\alpha} \bigl( s - \tfrac{2(\alpha+1)}{2\alpha+1} \bigr)$ has derivative $\frac{1+\alpha}{1+2\alpha} \bigl( s + 2 \alpha(s-1) \bigr) s^{\alpha-1} > 0$ for $s>1$ and $\alpha>- \frac{1}{2}$, so it is strictly increasing; using this fact for $s = 1+x^2$ and $\alpha = \frac{n-3}{2}$, we find
\[
\tfrac{1}{\tilde{H}_0} x^2 (1 + x_0^2)^{\frac{n-1}{2}} z_x \geq (1+x_0^2)^{\frac{n-1}{2}} - 1 > 0,
\]
so $z_x > 0$ on $(0,\infty)$ due to $n \geq 3$.
At $t_h$, we have $f'(t_h) = t_h^{-1} f(t_h) > 0$, so $u'(t_h) > 0$ and $x'(t_h)>0$, while identity~\eqref{eqn:shared-identities-axisymmetric} shows that $\theta = \textup{arg}(t + i f)$ increases in $(t_1, t_h)$ and decreases on $(t_h, t_2)$.
Therefore, $\frac{d \theta}{dx} > 0$ on the left branch and $\frac{d \theta}{dx} < 0$ on the right branch near $t_h$, whereby 
\[
\left( \frac{d \theta}{dx} \right)^2 = \frac{z^2}{x^2 (1+x^2) (1 - z^2)} \quad \implies \quad \frac{d \theta}{dx} = - \frac{z(x)}{x \sqrt{1+x^2} \sqrt{1-z(x)^2}}
\]
upon expressing $z = z(x)$ via~\eqref{eqn:D(u)-from-x}.
The right-hand side of this equation depends only on $x$, so the first-order ODE for $\theta$ with initial value $\theta(x_0)= \theta(t_h)$ determines $\theta$ as a single valued function of $x$; the same property holds on the increasing branch.
Note that $u = \frac{t^2 + f^2}{1+f^2}$ maps $[t_1, t_2]$ into a compact subset of $(0,1)$ because $[t_1,t_2] \subset (0,1)$ imply that $u$ is bounded away from $\{ 0,1\}$, hence $x = \sqrt{\frac{u}{1-u}}$ is bounded away from $\{ 0,\infty\}$.
Moreover, the identity~\eqref{eqn:shared-energy-identity-axisymmetric} immediately shows that $|z| \leq 1$, where equality cannot occur in this situation, so $|z| < 1$ and $x$ cannot assume the same value twice on a branch.
We have then that $x$ is monotone and $\frac{d \theta}{dx}$ is well-defined.

Geometrically, the function $\theta = \textup{arg}(t + if)$ represents the polar angle, so $\theta(t_1) = \theta(t_2) =0$.
For $\lambda \in (0,1)$, we define $x_-(\lambda) < x_0 = x(t_h) < x_+ ( \lambda)$ be the unique points with $z( x_{\pm}(\lambda)) = \pm \lambda$ and let
\[
\Theta_{\pm}(\lambda) := \int_{x_0}^{x_{\pm}(\lambda)} \frac{z(x)}{x \sqrt{1+x^2} \sqrt{1 - z(x)^2}} \, dx, \qquad \Theta'_{\pm}(\lambda) = \frac{\lambda}{\sqrt{1-\lambda^2}} \frac{1}{x_{\pm}(\lambda) \sqrt{1 + x_{\pm}(\lambda)^2} \, z_x(x_{\pm}(\lambda))},
\]
by using $z$ as a coordinate due to $z_x > 0$.

We now claim that $\Theta_-(\lambda) > \Theta_+(\lambda)$ for all $\lambda \in (0,1]$.
Since $\Theta_{\pm}(\lambda) \to 0$ as $\lambda \downarrow 0$, this property will follow from proving $\Theta'_-(\lambda) > \Theta'_+(\lambda)$ for every $\lambda \in (0,1)$.
Let $v_{\pm} := \sqrt{1 + x_{\pm}(\lambda)^2}$ for brevity, so using $z(x_{\pm}) = \pm \lambda$ in the explicit $z$-expression~\eqref{eqn:D(u)-z-properties} produces
\begin{align*}
\frac{v_{\pm}^{n-1}}{(1 + x_0^2)^{\frac{n-1}{2}}} &= 1 \pm \frac{\lambda}{\tilde{H}_0} x_{\pm} \implies \frac{\lambda}{\tilde{H}_0} = \frac{v_+^{n-1} - v_-^{n-1}}{x_+ v_-^{n-1} + x_- v_+^{n-1}}, \\
\sqrt{\lambda^{-2}-1} \Theta_{\pm}'(\lambda) &= \frac{1}{x_{\pm} v_{\pm} z_x(x_{\pm})} = \frac{1}{\tilde{H}_0} \frac{v_{\pm}}{(n-1) x_{\pm} \mp \frac{\lambda}{\tilde{H}_0} \pm (n-2) \frac{\lambda}{\tilde{H}_0} x_{\pm}^2}  ,
\end{align*}
where the latter property comes from evaluating the expression $x(1+x^2) z_x = \bigl( (n-2) x^2 - 1 \bigr) z + H_0 x$ at $x = x_{\pm}$.
Substituting the expression for $\frac{\lambda}{\tilde{H}_0}$ computed above, we obtain
\begin{align*}
     &\tilde{H}_0 \sqrt{\lambda^{-2}-1} \bigl( \Theta'_-(\lambda) - \Theta'_+(\lambda) \bigr) \\
    &= \frac{(x_+ v_+^{n-1} + x_- v_-^{n-1}) \bigl[ (n-1) (x_+ + x_-) (x_+ v_+^{n-2} - x_- v_-^{n-2}) - (v_+ + v_-)(v_+^{n-1} - v_-^{n-1}) \bigr]}{ \big[ (n-1) (x_+ + x_+) x_- v_-^{n-2} + v_- ( v_+^{n-1} - v_-^{n-1}) \bigr] \cdot \big[(n-1) (x_+ + x_+) x_+ v_+^{n-2} - v_+ ( v_+^{n-1} - v_-^{n-1}) \bigr]}.
\end{align*}
The claimed positivity is therefore equivalent to the positivity of the numerator in brackets.
Since $v_{\pm} = \sqrt{1 + x_{\pm}^2}$, we have $v_+ - v_- = \frac{x_+^2 - x_-^2}{v_+ + v_-} = \frac{(x_+ - x_-)(x_+ + x_-)}{v_+ + v_-}$, so the above positivity becomes
\[
(x_+ v_+^{n-2} - x_- v_-^{n-2}) > (x_+ - x_-) v_+^{n-2} >  (x_+ - x_-) \cdot \tfrac{1}{n-1} \tfrac{v_+^{n-1} - v_-^{n-1}}{v_+ - v_-} = (x_+ - x_-) \xi^{n-2}
\]
by the mean value theorem, where $\xi \in (v_- , v_+)$ due to $x_- < x_+$ and $v_- < v_+$.
Therefore, $\Theta'_- (\lambda)  > \Theta'_+(\lambda)$ and $\Theta_- > \Theta_+$ for $\lambda \in (0,1)$.
To complete the proof as $\lambda \uparrow 1$, note that $\lambda_{\pm} = 1$ makes $z(x_{\pm}) = \pm 1$, hence $(1-z^2)'(x_{\pm})= - 2 z(x_{\pm})z_x(x_{\pm}) \neq 0$ means that $1 - z(x_{\pm})^2$ has a simple zero at $x_{\pm}$, due to $z_x > 0$.
The $\Theta_{\pm}$ integrands have an integrable singularity $|x - x_{\pm}|^{-\frac{1}{2}}$ near those points, hence they remain finite as $\lambda \uparrow 1$ and $\Theta_-(1) > \Theta_+(1)$ as well.

Returning to the integral identity for the polar angle $\theta(t_h)$, we have
\[
\theta(t_h) = \int_{x_0}^{x(t_1)} \frac{z(x)}{x \sqrt{1+x^2} \sqrt{1-z(x)^2}} \, dx = \int_{x_0}^{x(t_2)} \frac{z(x)}{x \sqrt{1+x^2} \sqrt{1-z(x)^2}} \, dx
\]
due to $\theta(t_1) = \theta(t_2) = 0$.
Moreover, the level $\lambda_* := |z(x(t_i))| = \frac{(1-t_i^2) |f'(t_i)|}{\sqrt{1 + (1-t_i^2) f'(t_i)^2}}$ is well-defined for $i=1,2$ due to $(1-t_1^2) f'(t_1)^2 = (1 - t_2^2) f'(t_2)^2$, with $\lambda_* \in (0,1]$.
Consequently,
\[
\theta(t_h) = \Theta_-(\lambda_*) > \Theta_+(\lambda_*) = \theta(t_h)
\]
for arbitrary $\lambda_* \in (0,1]$, producing a contradiction.
We conclude that no such solutions exist.
\end{proof}

This rules out capillary Type II solutions, so now we show that any ODE solution to~\eqref{eqn:axisymmetric-equation} on an interval containing $0$ is even.

\begin{proposition}\label{prop:axisymmetric-is-symmetric}
    Let $f(t)$ be a positive solution of equation~\eqref{eqn:axisymmetric-equation} on an interval $(t_1, t_2)$ with $f(t_1) = f(t_2) = 0$.
    If $(1 - t_1^2) f'(t_1)^2 = (1 - t_2^2) f'(t_2)^2$, then $f'(0) = 0$ and $f$ is an even function.
\end{proposition}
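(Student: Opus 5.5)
The plan mirrors the polar-angle comparison of Lemma~\ref{lemma:axisymmetric-same-side-angle-inequality}, now on an interval containing $0$. By that lemma, a positive solution with the capillary condition must have $t_1<0<t_2$. The equation~\eqref{eqn:axisymmetric-equation} is invariant under $t\mapsto -t$, so $\tilde f(t):=f(-t)$ is again a solution. The goal is to show $f'(0)=0$; ODE uniqueness at the regular point $t=0$ (where $1-t^2\neq 0$) then gives $f\equiv \tilde f$, i.e.\ $f$ is even.

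\textbf{Step 1: the sign structure of $h=f-tf'$.} At the endpoints, $f'(t_1)>0>f'(t_2)$ together with $t_1<0<t_2$ gives $h(t_1)=-t_1f'(t_1)>0$ and $h(t_2)=-t_2f'(t_2)>0$. At $t=0$ we have $h(0)=f(0)>0$. Using~\eqref{eqn:h-ODE-axisymmetric}, written as $h'=a(t)h+tb(t)$ with $\on{sgn} b=\on{sgn} H_0$, I will show that $h$ does not change sign on $[t_1,t_2]$. For $H_0\ge 0$, a zero of $h$ at some $t>0$ forces $h'\ge 0$ there, and a zero at $t<0$ forces $h'\le 0$, which is incompatible with $h$ being positive at $0$ and at both ends. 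For $H_0<0$ the signs flip, and I expect a separate argument via the conserved quantity $\cW$: evaluating~\eqref{eqn:conserved-quantity} at $t_1$, $t_2$ and $0$ should constrain $D(u)$. With $h>0$, identity~\eqref{eqn:shared-identities-axisymmetric} shows that $\theta=\arg(t+if)$ is strictly decreasing, from $\pi$ at $t_1$ to $0$ at $t_2$, so $\theta$ is a global coordinate.

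\textbf{Step 2: parametrize by $x$ and compare the two halves.} By Lemma~\ref{lemma:conserved-quantity}, $D(u)$ and $z$ are explicit functions of $x=\sqrt{u/(1-u)}$ through~\eqref{eqn:D(u)-from-x}, with a single constant $\cW$. The endpoint values agree: $u(t_i)=t_i^2$, and the capillary condition makes $|z(x(t_1))|=|z(x(t_2))|=\lambda_*$, with the same formula as in the previous lemma. Since $D=\sqrt{1+f^2}\,h/W>0$, we have $z>0$ throughout. The function $u$ is minimized where $u'=0$, that is where $t(1+f^2)+(1-t^2)ff'=0$, and there~\eqref{eqn:shared-energy-identity-axisymmetric} gives $z=1$ (a turning point). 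On each side of this turning point, $x$ is monotone and $d\theta/dx=\pm z/(x\sqrt{1+x^2}\sqrt{1-z^2})$ by~\eqref{eqn:shared-autonomous-u-theta}. Both branches start at $x_{\min}$ (the root of $z=1$) and end at the same value of $x$, namely the root of $z(x)=\lambda_*$ on the appropriate side of $x_{\min}$; here one must check which root each endpoint corresponds to, and I expect both lie on the same side since $u(t_i)=t_i^2$ and both endpoints reach the same level $\lambda_*$. Integrating, the left branch sweeps an angle $\pi-\theta_{\min}$ and the right branch sweeps $\theta_{\min}$, and the two integrals coincide. Hence $\theta_{\min}=\pi/2$, so the turning point sits at $t=0$. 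There $u'(0)=0$ yields $f(0)f'(0)=0$, hence $f'(0)=0$.

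\textbf{Main obstacle.} The delicate step is the branch bookkeeping: showing that both endpoints correspond to the same root $x(t_1)=x(t_2)$ of $z=\lambda_*$, rather than one on each side of $x_{\min}$, and handling $H_0<0$ in Step 1. I would first try to establish monotonicity of $z$ in $x$ on each side of $x_{\min}$, by analogy with the derivative computation in~\eqref{eqn:D(u)-z-properties}. If that fails, I would instead compare $\Theta$-type integrals as in Lemma~\ref{lemma:axisymmetric-same-side-angle-inequality} to rule out the asymmetric configuration.
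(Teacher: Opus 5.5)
Your Step~2 has a genuine gap, located where you flagged it. Since $f(t_i)=0$ gives $u(t_i)=t_i^2$, you have $x(t_i)=|t_i|/\sqrt{1-t_i^2}$. So the claim that both endpoints correspond to the same root of $z(x)=\lambda_*$ is equivalent to $|t_1|=|t_2|$, which is essentially the symmetry you are trying to prove.

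The route you suggest for it also fails. Along the curve there is only one side of $x_{\min}$, since every point has $x\ge x_{\min}$. On that side $z$ is not monotone. For instance, when $H_0=0$ one has $z(x)=\cW(1+x^2)^{\frac{n-1}{2}}/x$, which decreases on $(0,(n-2)^{-1/2})$ and increases afterwards. The branch issuing from $x_{\min}$ stays in $\{z\le 1\}$, on which $z$ first decreases and then increases back to $1$ at the other turning point of $u$. Hence every level $\lambda_*\in(\min z,1)$ is attained at two different values of $x$ on the common branch.

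Your integral comparison only shows that the branch ending at the larger root sweeps the larger angle. Consider the configuration in which the right branch (angle $\theta_0$) ends at the smaller root and the left branch (angle $\pi-\theta_0$) ends at the larger one. It is consistent with everything you use, including the normalization $\theta_0\le\frac{\pi}{2}$. Ruling it out needs a quantitative bound on the angle swept by the branch, which the proposal does not supply. The fallback to $\Theta$-integrals as in Lemma~\ref{lemma:axisymmetric-same-side-angle-inequality} does not transfer, because there $z$ was monotone in $x$ and changed sign.

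The missing idea, used in the paper, is to treat $w=\sqrt{1-z^2}$ as a function of the angular deviation $\tau=|\theta-\theta_0|$ along the common branch $U(\tau)$. It satisfies $w''+w=-(n-2)(n-1)\,\cW\,x^3(1+x^2)^{\frac{n-3}{2}}\,w/\sqrt{1-w^2}$. One first excludes $\cW\le 0$. In that case $D(U)\le D(u_0)=x(0)$, which bounds $d\tau/dx$ and gives $\tau_{\max}\le\arctan(1/x(0))<\frac{\pi}{2}$. This contradicts the fact that one branch has length $\pi-\theta_0\ge\frac{\pi}{2}$.

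With $\cW>0$ one gets $w''+w<0$, so $w(\tau)/\sin\tau$ is strictly decreasing by Sturm comparison with $\sin\tau$. Equivalently, any two angles $\tau_a<\tau_b$ with $w(\tau_a)=w(\tau_b)$ satisfy $\tau_a+\tau_b<\pi$. The capillary condition places the endpoints at $\tau=\theta_0$ and $\tau=\pi-\theta_0$ with equal $w$, and these sum to $\pi$, so they must coincide. This forces $\theta_0=\frac{\pi}{2}$, i.e.\ $t_0=0$ and $f'(0)=0$.

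The same dichotomy also closes your unresolved $H_0<0$ case in Step~1. Once $\cW>0$, $D(u)=\cW(1-u)^{-\frac{n-1}{2}}-\frac{H_0}{n-1}$ is increasing in $u$, so $D\ge D(u_0)>0$ along the branch regardless of the sign of $H_0$.
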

\begin{proof}
Applying Lemma~\ref{lemma:axisymmetric-same-side-angle-inequality}, we must have $t_1 \in (-1,0)$ and $t_2 \in (0,1)$.
We study the ODE~\eqref{eqn:shared-autonomous-u-theta}, which is an autonomous equation for $u(\theta)$. 
Computing $u'= 2\frac{t(1+f^2)+(1-t^2)ff'}{(1+f^2)^2}$ shows $u'(t_i) = 2t_i$, so $u$ must have an interior minimum at some $t_0\in(t_1,t_2)$, which we may assume satisfies $t_0\geq 0$, otherwise we replace $f(t)$ by $f(-t)$. 
Let $u_0:=u(t_0)$ and $\theta_0:=\on{arg}(t_0+if(t_0)) = \arctan \frac{f(t_0)}{t_0}$, so $\theta_0\leq \frac{\pi}{2}$ with equality if and only if $t_0 = 0$.
We will prove that $u$, and therefore $f$, is symmetric about $t=0$ by forcing $\theta_0 = \frac{\pi}{2}$.

Using $f(t_0) - t_0 f'(t_0) = \frac{t_0^2+f(t_0)^2}{(1-t_0^2)f(t_0)}>0$ and $u'(t_0) = 0$, we find $D(u_0) > 0$ and $u_0 = (1-u_0) D(u_0)^2$.
If $u$ is constant, then $f(t)=\sqrt{\frac{u_0-t^2}{1-u_0}}$, so $f'(0)=0$ and $f$ is even.
Otherwise,~\eqref{eqn:shared-identities-axisymmetric} and~\eqref{eqn:D(u)-quantity} show that $D(u_0) > 0$ and $\frac{d \theta}{d t}(t_0) = - \frac{h(t_0)}{t_0^2 + f(t_0)^2} < 0$, so $\theta$ is a local parameter near $t_0$ and we define $u_\pm(\tau):=u(\theta_0\pm\tau)$ for $\tau \geq 0$.
Since $u$ is minimized at $\theta_0$, both branches are increasing for $\tau>0$ and satisfy \eqref{eqn:shared-autonomous-u-theta} with the positive sign. 
Moreover, the function $G:u \mapsto \int_{u_0}^u \frac{D(s)}{2s \sqrt{s - (1-s) D(s)^2}} ds$ is strictly increasing on the relevant branch, and $G(u_{\pm}(\tau)) = \tau$ for both $u_{\pm}$, hence $u_+=u_-$ on their common domain.
Let $U(\tau)$ denote the common increasing branch, so $D(U(\tau)) > 0$ and $f-tf'>0$ along the solution, thus~\eqref{eqn:D(u)-quantity}, \eqref{eqn:h-ODE-axisymmetric} show that $\theta$ is a global coordinate along $f$ that decreases from $\pi$ to $0$, with endpoints corresponding to the values $\theta = \pi - \theta_0$ and $\theta = \theta_0$.
In particular, we must have $\tau_{\max} \geq \max \{ \theta_0, \pi - \theta_0 \} \geq \frac{\pi}{2}$.
For $x = \sqrt{\frac{U}{1-U}}$ and $w := \sqrt{1-z^2} = \sqrt{1 - \frac{D(U)^2}{x^2}}$ as in~\eqref{eqn:introduce-x,z}, we have $x(0) = \sqrt{\frac{u_0}{1 - u_0}} = D(u_0) > 0$ and $w(0) = 0$, while $\tau(t_1) = \pi - \theta_0$ and $\tau(t_2) = \theta_0$, so computing as in Lemma~\ref{lemma:axisymmetric-same-side-angle-inequality} makes the contact angle condition for $(1-t_i^2) f'(t_i)^2$ equivalent to $w(\pi - \theta_0) = w(\theta_0)$.
Denoting derivatives in $\tau$ by primes, we now compute
\begin{align*}
& x' = \frac{x \sqrt{1+x^2}}{\sqrt{1-w^2}} w, \qquad x(1+x^2) \Bigl( \frac{\partial }{\partial x} \sqrt{1-w^2} \Bigr) = \bigl( (n-2) x^2 - 1 \bigr) \sqrt{1-w^2} + H_0 x, \\
& \quad \implies w'=-\frac{((n-2)x^2-1) \sqrt{1-w^2} +H_0x}{\sqrt{1+x^2}}.
\end{align*}
Differentiating again, we conclude that
\begin{equation}\label{eqn:w-double-prime-ODE}
    w''+w=-(n-2)(n-1) \, \cW \, \frac{x^3(1+x^2)^{\frac{n-3}{2}}}{\sqrt{1-w^2}}\,w.
\end{equation}
We first rule out the case $\cW \leq 0$.
Since $D(u_0)=x(0)$, we have
\[
\cW = \left(x(0)+\frac{H_0}{n-1}\right)(1+x(0)^2)^{-\frac{n-1}{2}} \implies \on{sgn} \cW = \on{sgn} \Bigl( x(0) + \frac{H_0}{n-1} \Bigr).
\]
Therefore, $\cW \leq 0$ forces $x(0) + \frac{H_0}{n-1} \leq 0$, so $x \geq x(0)$ implies that
\[
D(U) = \cW (1 +x^2)^{\frac{n-1}{2}} - \tfrac{H_0}{n-1} \leq \cW (1 + x(0)^2)^{\frac{n-1}{2}} - \tfrac{H_0}{n-1} = D(u_0) = x(0).
\]
As a result, we obtain $w \geq \sqrt{1 - \frac{x(0)^2}{x^2}}$ and $\frac{\sqrt{1-w^2}}{w} \leq \frac{x(0)}{\sqrt{x^2 - x(0)^2}}$ for all $\tau$.
We conclude that
\[
\frac{d \tau}{dx} = \frac{\sqrt{1-w^2}}{x \sqrt{1+x^2} \, w} \leq \frac{x(0)}{x \sqrt{1+x^2} \sqrt{x^2 - x(0)^2}}.
\]
Integrating this inequality from $x(0)$ to $\infty$ shows
\[
\tau_{\mr{max}} \leq \int_{x(0)}^\infty \frac{x(0)}{x\sqrt{1 + x^2}\sqrt{x^2 - x(0)^2}}\,dx \leq \mr{arctan}\frac{1}{x(0)} < \frac{\pi}{2},
\]
producing a contradiction, thus proving $\cW>0$.

Returning to equation~\eqref{eqn:w-double-prime-ODE}, we use $\cW,w>0$ to see that $w'' + w < 0$.
In particular, this implies
\[
\bigl( w'(\tau) \sin \tau - w(\tau) \cos \tau \bigr)' = (w'' + w) \sin \tau < 0, \qquad \bigl[ w'(\tau) \sin \tau - w(\tau) \cos \tau \bigr]_{\tau=0} = 0,
\]
so $\bigl( \frac{w(\tau)}{\sin \tau} \bigr)' = \frac{w'(\tau) \sin \tau - w(\tau) \cos \tau}{\sin^2 \tau} < 0$ for $\tau>0$.
Since this function is strictly decreasing on $(0, \tau_{\max})$, we obtain $\frac{w(\pi - \theta)}{\sin (\pi - \theta)} < \frac{w(\theta)}{\sin \theta}$ strictly for $\theta \in (0,\frac{\pi}{2})$, so $w( \pi - \theta_0) = w(\theta_0)$ forces $\theta_0 = \frac{\pi}{2}$ and $t_0 = 0$.
The identity $t_0(1+f(t_0)^2)+(1-t_0^2)f(t_0)f'(t_0)=0$ and the fact that $t_0 = 0$ shows $f'(0)=0$, so $f$ is even by ODE uniqueness, proving the claim.
\end{proof}

\begin{proof}[Proof of Theorem~\ref{thm:uniqueness-of-axisymmetric}]
Applying the reductions considered above, the study of $\textup{O}(n-1)$-invariant cones is equivalent to finding solutions of the axisymmetric equation~\eqref{eqn:axisymmetric-equation} with $f(t_1) = f(t_2) = 0$ and $(1 - t_1^2) f'(t_1)^2 = (1 - t_2^2) f'(t_2)^2$.
Proposition~\ref{prop:axisymmetric-is-symmetric} implies that any such function $f$ must be even, so $f'(0) = 0$.
Therefore, $f$ is determined uniquely by the value $a =f(0)$.
Denoting the solution with data $\{ f(0) = a, f'(0) = 0 \}$ by $f_a$ with a zero at $t_a$,~\cite{FTW-1}*{Theorem 1.2} shows that $f_a$ reaches a zero if and only if $a \leq \sqrt{\frac{1}{n-2}}$ and the map $a \mapsto \sqrt{1-t_a^2} \, |f'_a(t_a)|$ is strictly increasing on $[0, \frac{1}{\sqrt{n-2}}]$, producing a unique profile curve for each value of $\theta_a := \arctan ( \sqrt{1-t_a^2} |f'_a(t_a)|) \in (0, \frac{\pi}{2}]$.
Thus, the cone $\mathbf{C}_{n,1,\theta}$ of~\cite{FTW-1} is the unique axisymmetric capillary minimal cone of angle $\theta$.
\end{proof}

\bibliography{ref}
\end{document}